\newtheorem{theo}{Theorem}[subsection]
\newtheorem{defi}[theo]{Definition}
\newtheorem{lem}[theo]{Lemma}
\newtheorem{rem}[theo]{Remark}
\newtheorem{prop}[theo]{Proposition}
\newtheorem{cor}[theo]{Corollary}
\newtheorem{ex}[theo]{Example}
\newtheorem{Thm}{Theorem}
\newcommand{\nc}{\newcommand}
\nc{\on}{\operatorname}
\nc{\III}{I\hspace{-.1em}I\hspace{-.1em}I }
\nc{\C}{\mathbb{C}}
\nc{\R}{\mathbb{R}}
\nc{\Q}{\mathbb{Q}}
\nc{\Z}{\mathbb{Z}}
\nc{\N}{\mathbb{N}}
\nc{\bbB}{\mathbb{B}}
\nc{\bbI}{\mathbb{I}}
\nc{\bbS}{\mathbb{S}}
\nc{\bbT}{\mathbb{T}}
\nc{\bfa}{\mathbf{a}}
\nc{\bfA}{\mathbf{A}}
\nc{\bfb}{\mathbf{b}}
\nc{\bfB}{\mathbf{B}}
\nc{\bfC}{\mathbf{C}}
\nc{\bfH}{\mathbf{H}}
\nc{\bfi}{\mathbf{i}}
\nc{\bfL}{\mathbf{L}}
\nc{\bfr}{\mathbf{r}}
\nc{\bfs}{\mathbf{s}}
\nc{\bfT}{\mathbf{T}}
\nc{\bfU}{\mathbf{U}}
\nc{\bfV}{\mathbf{V}}
\nc{\V}{\mathbf{V}}
\nc{\bfw}{\mathbf{w}}
\nc{\bflm}{{\bm \lambda}}
\nc{\bfmu}{{\bm \mu}}
\nc{\bfnu}{{\bm \nu}}
\nc{\clB}{\mathcal{B}}
\nc{\clF}{\mathcal{F}}
\nc{\clL}{\mathcal{L}}
\nc{\clO}{\mathcal{O}}
\nc{\Hc}{\mathcal{H}}
\nc{\Ss}{\mathfrak{S}}
\nc{\Sym}{\Ss}
\nc{\frb}{\mathfrak{b}}
\nc{\g}{\mathfrak{g}}
\nc{\frh}{\mathfrak{h}}
\nc{\frn}{\mathfrak{n}}
\nc{\hf}{\frac{1}{2}}
\nc{\inv}{^{-1}}
\nc{\qu}{\quad}
\nc{\qqu}{\qquad}
\nc{\la}{\langle}
\nc{\ra}{\rangle}
\nc{\ol}{\overline}
\nc{\ul}{\underline}
\nc{\vphi}{\varphi}
\nc{\vpi}{\varpi}
\nc{\vep}{\varepsilon}
\nc{\lm}{\lambda}
\nc{\Lm}{\Lambda}
\nc{\Lmj}{\Lm^{\jmath}}
\nc{\Ker}{\on{Ker}}
\nc{\im}{\on{Im}}
\nc{\Hom}{\on{Hom}}
\nc{\End}{\on{End}}
\nc{\Span}{\on{Span}}
\nc{\Lie}{\on{Lie}}
\nc{\Gr}{\on{Gr}}
\nc{\op}{\on{op}}
\nc{\tr}{\on{tr}}
\nc{\id}{\on{id}}
\nc{\rk}{\on{rk}}
\nc{\ad}{\on{ad}}
\nc{\sr}{\on{sr}}
\nc{\ME}{\on{ME}}
\nc{\EM}{\on{EM}}
\nc{\para}{\on{par}}
\nc{\sgn}{\on{sgn}}
\nc{\gr}{\on{gr}}
\nc{\ef}{\on{ef}}
\nc{\Read}{\on{R}}
\nc{\rev}{\on{rev}}
\nc{\IF}{\text{ if }}
\nc{\AND}{\text{ and }}
\nc{\Or}{\text{ or }}
\nc{\OW}{\text{ otherwise}}
\nc{\Forall}{\text{ for all }}
\nc{\Forsome}{\text{ for some }}
\nc{\lowerterms}{\text{(lower terms)}}
\nc{\higherterms}{\text{(higher terms)}}
\nc{\otherterms}{\text{(other terms)}}
\nc{\IFF}{\text{ if and only if }}
\nc{\Lcell}{\underset{L}{\sim}}
\nc{\Rcell}{\underset{R}{\sim}}
\nc{\LRcell}{\underset{LR}{\sim}}
\nc{\Ij}{\bbI^{\jmath}}
\nc{\psij}{\psi^{\jmath}}
\nc{\sigmaj}{\sigma^{\jmath}}
\nc{\ej}{e^{\jmath}}
\nc{\Oj}{\Oint^{\jmath}}
\nc{\Oint}{\mathcal{O}_{\mathrm{int}}}
\nc{\clFj}{\clF^{\jmath}}
\nc{\A}{\mathbf{A}}
\nc{\Ao}{\A}
\nc{\Ai}{\A_{\infty}}
\nc{\Lo}{\clL_0}
\nc{\Li}{\clL_\infty}
\nc{\Gj}{G^{\jmath}}
\nc{\U}{\mathbf{U}}
\nc{\UA}{\U_{\A}}
\nc{\Uj}{\bfU^{\jmath}}
\nc{\Ujo}{\U^{\jmath,0}}
\nc{\UjA}{\Uj_{\A}}
\nc{\UjLevi}{\Uj_{\on{Levi}}}
\nc{\wt}{\on{wt}}
\nc{\wtj}{\wt^{\jmath}}
\nc{\Wtj}{\on{Wt}^{\jmath}}
\nc{\Pj}{P^{\jmath}}
\nc{\Par}{\on{Par}}
\nc{\DP}{\on{DP}}
\nc{\DPj}{\DP^{\jmath}}
\nc{\ST}{\on{ST}}
\nc{\SST}{\on{SST}}
\nc{\Cj}{C^{\jmath}}
\nc{\bbBj}{\bbB^{\jmath}}
\nc{\pij}{\pi^{\jmath}}
\nc{\IIj}{\ol{\Ij}}
\nc{\Qj}{Q^{\jmath}}
\nc{\etil}{\widetilde{e}}
\nc{\ftil}{\widetilde{f}}
\nc{\xtil}{\widetilde{x}}
\nc{\Etil}{\widetilde{E}}
\nc{\Ftil}{\widetilde{F}}
\nc{\Xtil}{\widetilde{X}}
\nc{\etilmax}{\etil^{\max}}
\nc{\ftilmax}{\ftil^{\max}}
\nc{\Lambdaj}{\Lambda^{\jmath}}
\nc{\leqj}{\leq^{\jmath}}
\nc{\geqj}{\geq^{\jmath}}
\nc{\simR}{\underset{R}{\sim}}
\nc{\simL}{\underset{L}{\sim}}
\nc{\ch}{\on{ch}}
\nc{\tauj}{\tau^{\jmath}}
\nc{\Psij}{\Psi^{\jmath}}
\nc{\trilefteq}{\trianglelefteq}
\nc{\trileft}{\triangleleft}
\nc{\tririghteq}{\trianglerighteq}
\nc{\triright}{\triangleright}
\nc{\qL}{\on{qL}}
\nc{\qB}{\on{qB}}
\nc{\vo}{v \fbox{$0$}}
\nc{\vi}{v \fbox{$1$}}
\nc{\vmi}{v \fbox{$-1$}}
\nc{\Yam}{\on{Yam}}
\nc{\LR}{\on{LR}}
\nc{\TBA}{\textcolor{red}{TBA}}
\title{Crystal basis theory for a quantum symmetric pair $(\U,\Uj)$}
\author[H. Watanabe]{Hideya Watanabe}
\address{(H. Watanabe) Department of Mathematics, Tokyo Institute of Technology, 2-12-1 Oh-okayama, Meguro-ku, Tokyo 152-8550, Japan}
\email{watanabe.h.at@m.titech.ac.jp}
\subjclass[2010]{Primary~17B10; Secondary~05E10}
\keywords{}
\date{\today}
\begin{document}
\maketitle

\begin{abstract}
We study the representation theory of a quantum symmetric pair $(\U,\Uj)$ with two parameters $p,q$ of type AIII, by using highest weight theory and a variant of Kashiwara's crystal basis theory. Namely, we classify the irreducible $\Uj$-modules in a suitable category and associate with each of them a basis at $p=q=0$, the $\jmath$-crystal basis. The $\jmath$-crystal bases have nice  combinatorial properties as the ordinary crystal bases do.
\end{abstract}

\section{Introduction}
\subsection{Quantum Schur-Weyl duality}
Jimbo \cite{J86} established a quantum analog of the classical Schur-Weyl duality. Let $U_q(\mathfrak{gl}_n)$ denote the quantum group of $\mathfrak{gl}_n$, and $\Hc(\Ss_d)$ the Hecke algebra associated with the $d$-th symmetric group $\mathfrak{S}_d$, where $n,d \in \N$. Let $\bfV$ denote the vector representation of $U_q(\mathfrak{gl}_n)$. Jimbo defined an $\Hc(\Ss_d)$-module structure on $\bfV^{\otimes d}$ by using the $R$-matrix for $\bfV \otimes \bfV$. Also, he proved that the actions of $U_q(\mathfrak{gl}_n)$ and $\Hc(\Ss_d)$ on $\bfV^{\otimes d}$ satisfy the double centralizer property, and hence, $\bfV^{\otimes d}$ decomposes as a $U_q(\mathfrak{gl}_n)$-$\Hc(\Ss_d)$-bimodule as:
\begin{align}
\bfV^{\otimes d} = \bigoplus_{\lambda \in \Lambda} L(\lambda) \boxtimes S^\lambda, \nonumber
\end{align}
where $\Lambda$ is an index set, and $\{ L(\lambda) \mid \lambda \in \Lambda \}$ and $\{ S^\lambda \mid \lambda \in \Lambda \}$ are families of pairwise nonisomorphic irreducible modules of $U_q(\mathfrak{gl}_n)$ and $\Hc(\Ss_d)$, respectively.

\subsection{Quantum Schur-Weyl duality in type $B$}
It is known that the quantum groups of type $B$ and the Hecke algebra $\Hc(W_d)$ of type $B_d$ do not form the double centralizer. Recently, Bao and Wang discovered the double centralizer property between a quantum symmetric pair and $\Hc(W_d)$ (\cite{BW13}). More precisely, let $\Uj = \Uj_r$ be a coideal subalgebra of $\U = \U_{2r+1} = U_q(\mathfrak{sl}_{2r+1})$ such that $(\U,\Uj)$ forms a quantum analog of the symmetric pair of type AIII (\cite{Le99}, \cite{Ko14}). In \cite{BW13}, Bao and Wang introduced the intertwiner $\Upsilon$, which played a central role when they defined the action of $\Hc(W_d)$ on $\bfV^{\otimes d}$, and then, proved that the actions of $\Uj$ and $\Hc(W_d)$ on $\bfV^{\otimes d}$ satisfy the double centralizer property. A variant of this work, where $\Hc(W_d)$ is replaced with the Hecke algebra of type $B_d$ with unequal parameters $(p,q)$, was done in \cite{BWW16}.

\subsection{Representation theory of $\Uj$}
From the quantum Schur-Weyl duality in type $B$, we expect that there should exist a deep connection between the representation theory of $\Uj$ and that of $\Hc(W_d)$. However, here arises a problem: although the representation theory of $\Hc(W_d)$ has been well-studied, little is known about that of $\Uj$. This paper gives some fundamental results in the representation theory of $\Uj$ by using analogs of highest weight theory and Kashiwara's crystal basis theory.

In this paper, we treat the category $\Oj$ consisting of all $\Uj$-modules $M$ satisfying the following: $M$ is decomposed into its ``weight spaces", each of which is finite-dimensional; the set of weights of $M$ is bounded from above; $M$ is ``integrable".

We begin our study by decomposing $\Uj$ into three parts. This is an analog of the triangular decomposition of $\U$. Using this decomposition of $\Uj$, we define the ``Verma module" associated with each weight. By its definition and the triangular decomposition of $\Uj$, it possesses a unique irreducible quotient. Our first main result is the following classification theorem.

\begin{Thm}\label{classification theorem}
Every $\Uj$-module in $\Oj$ is completely reducible, and each irreducible $\Uj$-module in $\Oj$ is isomorphic to the irreducible quotient of a Verma module. Moreover, the isomorphism classes of irreducible $\Uj$-modules in $\Oj$ are parametrized by the pairs of partitions of length $r+1$ and $r$.
\end{Thm}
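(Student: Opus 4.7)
The plan is to follow the classical pattern of highest weight theory, adapted to the coideal subalgebra $\Uj$. Using the triangular decomposition of $\Uj$ announced in the introduction, I would first, for each admissible weight $\lambda$, build the Verma module $M(\lambda)$ and extract its unique maximal proper submodule, producing the irreducible quotient $L(\lambda)$. Given any nonzero $M \in \Oj$, the hypothesis that the weight set of $M$ is bounded above furnishes a weight vector $v$ annihilated by the ``raising'' part in the triangular decomposition of $\Uj$; the cyclic submodule $\Uj \cdot v$ is then a quotient of some $M(\lambda)$, and when $M$ is irreducible this forces $M \cong L(\lambda)$. This handles the second assertion.

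For the parametrization, the issue is to pin down exactly which $\lambda$ yield an $L(\lambda)$ lying in $\Oj$. The integrability hypothesis requires local nilpotence of the appropriate analogues of the raising and lowering operators on $L(\lambda)$, which should cut out a set of ``$\jmath$-dominant'' weights. I expect the Levi subalgebra $\UjLevi$, which ought to be a tensor product of two type-$A$ quantum groups mirroring the fixed-point subalgebra $\mathfrak{s}(\mathfrak{gl}_{r+1}\oplus\mathfrak{gl}_r)$ inside $\mathfrak{sl}_{2r+1}$, to act on the highest weight line so that its highest weight is a dominant integral weight of $\UjLevi$; via the standard type-$A$ bijection this gives the parametrization by pairs of partitions of lengths $r+1$ and $r$. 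Checking that every such pair is actually realised by an object of $\Oj$ amounts to constructing $L(\lambda)$ as a subquotient inside the Schur--Weyl module $\bfV^{\otimes d}$ of \cite{BW13,BWW16} for suitable $d$, or alternatively by truncating an appropriate induced module.

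The most delicate step is complete reducibility. My target would be to establish $\on{Ext}^1_{\Uj}(L(\lambda), L(\mu)) = 0$ for all admissible pairs $(\lambda,\mu)$. The natural strategy is to restrict any extension to $\UjLevi$, where semisimplicity is inherited from the well-known theory for $U_q(\mathfrak{gl}_n)$, and then to lift a Levi-splitting to a genuine $\Uj$-splitting using a central element that separates distinct $\jmath$-highest weights. The main obstacle is that $\Uj$ is only a coideal subalgebra and not a Hopf algebra, so classical constructions of Casimir-type elements do not transport verbatim; a workable substitute should combine the central elements of $\UjLevi$ with the intertwiner $\Upsilon$ of Bao--Wang, whose compatibility with the $\Uj$-action is precisely what is needed to propagate the Levi decomposition. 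Once distinct highest weights are thus separated, a standard dévissage reduces the vanishing of $\on{Ext}^1$ to the case $\lambda = \mu$, which can be handled by the usual argument that a self-extension in $\Oj$ would produce a second maximal vector of the same weight, contradicting the universal property of $M(\lambda)$.
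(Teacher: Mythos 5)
There is a genuine gap, and it sits at the heart of both your parametrization and your complete-reducibility arguments: the proposal presupposes a Levi subalgebra $\UjLevi \simeq U_q(\mathfrak{gl}_{r+1}) \otimes U_q(\mathfrak{gl}_r)$ inside $\Uj$ whose dominant integral weights would supply the pairs of partitions and whose semisimplicity would drive the $\on{Ext}^1$-vanishing. No such subalgebra exists. The coideal subalgebra $\Uj$ contains only the rank-$r$ torus generated by the $k_i$, so the weight lattice $\Lambdaj$ has rank $r$ --- half of what a $\mathfrak{gl}_{r+1}\oplus\mathfrak{gl}_r$ Levi would require --- and the pair $(e_1,f_1)$ is not part of an $\mathfrak{sl}_2$-triple (the relation $e_1f_1-f_1e_1=\frac{k_1-k_1^{-1}}{q-q^{-1}}$ fails for $(i,j)=(1,1)$; instead one has the deformed Serre relations involving $pqk_1+p^{-1}q^{-1}k_1^{-1}$). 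Consequently a highest weight line is \emph{not} determined by its weight: the Cartan part of the triangular decomposition contains, besides the $k_i$, the quadratic elements $h'_1=[e_1,f_1]_1$ and $h'_{i+1}=[[e_{i+1},h'_i]_1,f_{i+1}]_1$, whose eigenvalues $\bfH$ on a highest weight vector are extra parameters. The second half of the parametrization comes from constraining these eigenvalues (the paper shows $H_i$ must equal $[b_1+\cdots+b_i]\{a_1+\cdots+a_i-(b_1+\cdots+b_i)-1\}$ for integers $b_i$ with $0\leq b_i\leq a_i$), and this requires genuine work: a rank-one analysis showing $h'_1$ acts semisimply with quantized eigenvalues on the top weight space, a braid-group action transporting this to $h_i=\tau_i\cdots\tau_2(h_1)$, a proof that the $h_i$ commute on the top weight space, and a rank-two lemma controlling the possible joint eigenvalues. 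None of this is visible from a Levi-restriction picture, and the Verma module itself depends on the pair $(\lambda;\bfH)$ and may vanish, so indexing it by $\lambda$ alone already loses information.

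The complete-reducibility step inherits the same defect: with no $\UjLevi$ to restrict to, semisimplicity cannot be ``inherited and lifted.'' What the paper does instead is, for $r=1$, exhibit by hand the element $z_1=h'_1+\frac{[2]pq}{1-q^2}k_1+\frac{[2]p^{-1}q^{-1}}{1-q^{-4}}k_1^{-1}$, which $q$-commutes with $e_1,f_1$ and whose eigenvalues separate all composition factors (your Casimir instinct is right in spirit here, but the element is built from $h'_1$, not transported from $\U$ via $\Upsilon$); and for general $r$, construct a duality functor from an antipode-like anti-homomorphism $S$ of $\Uj$ into the algebra obtained by replacing $p$ with $p^{-1}q$, which forces any irreducible submodule to split off. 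Finally, the nonvanishing of every candidate $L(\bfa;\bfb)$ is not obtained by truncating an induced module but only later, by locating each irreducible inside $\bfV^{\otimes d}$ via the Hecke-algebra cell decomposition and the crystal combinatorics --- that last part of your plan is the one that matches the paper.
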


After submitting the first version of this paper, the author was told by one of the referees that the finite-dimensional irreducible modules of $\Uj_1$ had been classified in \cite{AKR17}. When $r = 1$, our classification theorem for the irreducible modules almost coincides with their result \cite[Corollary 3.6]{AKR17}. The main difference is that they only treat the finite-dimensional modules while we do not assume the finite-dimensionality in this paper. In fact, the category $\Oj$ has a module whose dimension is infinite.

Our second main result is the existence and uniqueness theorem of $\jmath$-crystal bases, which are analogs of Kashiwara's crystal bases. Before stating our result, let us briefly review the ordinary crystal basis theory; see the original paper \cite{K90}, or a textbook \cite{HK02} for details. Let $\bfA \subset \Q(q)$ be the ring of rational functions which are regular at $q = 0$. A crystal basis $(\clL,\clB)$ of a $\bfU$-module $M$ consists of a free $\bfA$-submodule $\clL \subset M$ of rank $\dim_{\Q(q)} M$ and a $\Q$-basis $\clB$ of $\clL/q\clL$ both of which are closed under the Kashiwara operators. We often identify a crystal basis $(\clL,\clB)$ with a colored directed graph called its crystal graph. It is known that every finite-dimensional irreducible $\bfU$-module have a unique crystal basis, which extracts a combinatorial property of the module structures of the irreducible module. Moreover, it is connected with a single source.

Following Kashiwara's crystal basis theory, we introduce the notions of $\jmath$-crystal basis and its $\jmath$-crystal graph. The second main result of this paper is the following.
\begin{Thm}
Each irreducible $\Uj$-module in $\Oj$ admits a unique $\jmath$-crystal basis. Moreover, its $\jmath$-crystal graph is connected with a single source.
\end{Thm}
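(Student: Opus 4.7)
The plan is to adapt Kashiwara's original argument from \cite{K90} to the $\jmath$-setting, with the key new ingredient being a tensor product theorem for $\jmath$-crystal bases. I would begin by producing, via an explicit computation on a small module, a $\jmath$-crystal basis on the restriction to $\Uj$ of the vector representation $\bfV$ of $\U$, and then establish the basic functorial properties of the $\jmath$-Kashiwara operators (compatibility with submodules and quotients, behaviour on weight spaces, and the specialization at $p=q=0$).

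The central technical step is a tensor product rule of the following form: if $M \in \Oj$ carries a $\jmath$-crystal basis $(\clL,\clB)$ and $N$ is a $\U$-module in the ordinary category $\Oint$ with crystal basis $(\clL',\clB')$, then $(\clL \otimes_{\bfA} \clL', \clB \otimes \clB')$ is a $\jmath$-crystal basis of $M \otimes N$, with the $\jmath$-Kashiwara operators computed by combinatorial rules analogous to Kashiwara's tensor product formula. Iterating, $\bfV^{\otimes d}$ acquires a $\jmath$-crystal basis for every $d$. Combined with the Bao--Wang quantum Schur--Weyl duality and the complete reducibility part of Theorem~A, this exhibits as direct summands of various $\bfV^{\otimes d}$ (possibly also tensored with a dual vector representation in order to access both halves $\lambda$ and $\mu$ of the highest weight) every irreducible $\Uj$-module in $\Oj$. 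Restricting $\jmath$-crystal bases to these summands via the usual projection yields existence for all irreducibles classified in Theorem~A.

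Uniqueness and the single-source property follow from a standard highest-weight argument. Let $(\clL,\clB)$ be any $\jmath$-crystal basis of the irreducible module $L(\lambda,\mu)$. The highest-weight space is one-dimensional, and the image of a nonzero highest-weight vector in $\clL/(p,q)\clL$ gives, up to sign, a distinguished element $b_{(\lambda,\mu)} \in \clB$. The $\bfA$-span of the subset $\clB_0 \subset \clB$ obtained by applying monomials in the $\ftil^{\jmath}$ to $b_{(\lambda,\mu)}$ is a $\Uj$-stable sublattice of $\clL$; by irreducibility it must span $L(\lambda,\mu)$ over $\Q(p,q)$, forcing $\clB_0 = \clB$. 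Consequently $b_{(\lambda,\mu)}$ is the unique source and the $\jmath$-crystal graph is connected; moreover, the same chain of lowering operators reconstructs $\clL$ and $\clB$ from a highest-weight vector, giving uniqueness.

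The main obstacle is the tensor product theorem. Because $\Uj$ is only a coideal subalgebra of $\U$, with $\Delta(\Uj)\subset \Uj\otimes\U$ rather than $\Uj\otimes\Uj$, the $\jmath$-Kashiwara operators on $M \otimes N$ cannot be obtained by a direct transplant of Kashiwara's formula; one must analyze how the coideal coproduct intertwines the $\jmath$-operators on the first tensor factor with the ordinary Kashiwara operators on the second, keeping careful track of the fixed nodes of the diagram involution and of the two-parameter specialization $p=q=0$. A subsidiary difficulty is verifying that the combinatorial rules one writes down really do preserve the lattice $\clL\otimes\clL'$ integrally over $\bfA$, which forces a delicate analysis of the intertwiner $\Upsilon$ of \cite{BW13} modulo $(p,q)$.
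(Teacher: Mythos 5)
Your existence argument (a tensor product rule for $\jmath$-crystal bases iterated on $\bfV^{\otimes d}$, combined with Schur--Weyl duality and complete reducibility) is essentially the route the paper takes, and the difficulty you flag about the coideal coproduct is handled there as you anticipate: settle the rank-one case $\Uj_1$ by hand, then reduce the general tensor rule to embeddings into $(\bfL^{\otimes N},\bfB^{\otimes N})$. (One simplification you miss: no dual vector representation is needed, since $\bfV|_{\Uj}$ already splits into a negative and a positive piece, so every irreducible in $\Oj$, up to the shift $\sim_\pi$, occurs in some $\bfV^{\otimes d}$.)

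The genuine gap is in your uniqueness/connectedness step. You assume that monomials in the $r$ lowering operators $\ftil_i$, $i\in\Ij$, applied to the highest weight element exhaust $\clB$, and that their span is $\Uj$-stable; both fail, and the paper states explicitly that the quasi-$\jmath$-crystal graph of an irreducible module is in general neither connected nor unique. The reason is structural: on $\SST(\bflm)$ each $\ftil_i$ with $i\geq 2$ acts as \emph{either} $\Etil_{-(i-\hf)}$ \emph{or} $\Ftil_{i-\hf}$ according to a comparison of $\vep_{i-\hf}$ and $\vep_{-(i-\hf)}$, so only $r$ of the $2r-1$ edge colours of the underlying (connected) $A_r\times A_{r-1}$-crystal are available at any given vertex, and whole regions of $\SST(\bflm)$ are unreachable from the source. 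On the algebraic side, $\Uj_{<0}$ is only a subspace spanned by root vectors $f_{-j,-i}$ attached to all of $\Phi_{<0}$, not a subalgebra generated by the $f_i$, so the lattice at $q=0$ is not generated by $\ftil$-monomials from a highest weight vector. The paper repairs this by enlarging the structure: it introduces additional operators $\etil_{i'},\ftil_{i'}$ for $i\in\{2,\ldots,r\}$, defined via the Kazhdan--Lusztig basis of the cell-module realization of $L(\bflm)$ and supplying the missing $\Etil_{i-\hf}$-edges under a leading-term condition; it then \emph{defines} a $\jmath$-crystal basis to be a quasi-$\jmath$-crystal basis closed under these extra operators, proves connectedness by induction on $d(T)=\sum_{i,j}\bigl(|T(i,j)|-|T_0(i,j)|\bigr)$, and only then deduces uniqueness. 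Without some such additional data your argument cannot close, and the statement would in fact be false for the naive notion of $\jmath$-crystal basis you are implicitly using.
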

This theorem and the complete reducibility of $\Uj$-modules lead to the existence and uniqueness of $\jmath$-crystal basis of an arbitrary $\Uj$-module in $\Oj$. Also, as in the ordinary crystal basis theory, $\jmath$-crystal bases have the tensor product rule.

\begin{Thm}\label{ThmC}
Let $M$ be a $\Uj$-module and $N$ a $\U$-module. Suppose that $M$ admits a $\jmath$-crystal basis $(\clL,\clB)$, and that $N$ has a crystal basis $(\clL',\clB')$. Then, $(\clL \otimes \clL', \clB \otimes \clB')$ is a $\jmath$-crystal basis of $M \otimes N$. In particular, (by taking $M$ to be the trivial $\Uj$-module) the crystal basis of a $\U$-module $N$ is the $\jmath$-crystal basis of $N$.
\end{Thm}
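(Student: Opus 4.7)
}

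The plan is to mimic Kashiwara's tensor product argument for ordinary crystal bases, carrying it over to the $\jmath$-setting via the coideal property of $\Uj$ inside $\U$. First, I would use the coideal embedding $\Delta(\Uj)\subset \Uj\otimes\U$ (which is what makes $M\otimes N$ into a $\Uj$-module in the first place) to expand each generator of $\Uj$ acting on $M\otimes N$ as a sum of terms where the first tensor factor is acted on by $\Uj$-elements and the second by $\U$-elements. From these coproduct formulas one reads off, for each $i\in\Ij$, an expression of the $\jmath$-Kashiwara operators $\Etil_i,\Ftil_i$ on $M\otimes N$ in terms of the $\jmath$-Kashiwara operators on $M$ and the ordinary Kashiwara operators $\etil_i,\ftil_i$ on $N$.

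Next, I would check the lattice condition, namely that $\clL\otimes\clL'$ is preserved by $\Etil_i$ and $\Ftil_i$. This reduces, via the coproduct formulas, to checking that the relevant coefficients lie in $\A$ (i.e.\ are regular at $p=q=0$), which follows from the known lattice conditions for $(\clL,\clB)$ on $M$ and for $(\clL',\clB')$ on $N$, together with the integrability of both modules. The usual string-length argument of Kashiwara, together with Theorem~\ref{classification theorem} to reduce to irreducibles, handles the book-keeping.

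Third, I would pass to the quotient $(\clL\otimes\clL')/(p,q)(\clL\otimes\clL')\cong (\clL/(p,q)\clL)\otimes(\clL'/q\clL')$, where all terms in the coproduct formulas other than the leading one specialize to zero. This should yield an explicit branching rule of the schematic form
\begin{align*}
\Etil_i(b\otimes b') &= \begin{cases} \Etil_i b\otimes b' & \text{if a sign/weight inequality holds,}\\ b\otimes \etil_i b' & \text{otherwise,}\end{cases}
\end{align*}
with an analogous rule for $\Ftil_i$, from which it is immediate that $\clB\otimes\clB'\cup\{0\}$ is stable under $\Etil_i,\Ftil_i$ and that $\clB\otimes\clB'$ is a $\Q$-basis of the quotient. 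For the ``in particular'' statement, one takes $M$ trivial: the coproduct formula then collapses the $\jmath$-operators to the ordinary ones on $N$, so the $\jmath$-crystal basis of $N$ coincides with its crystal basis.

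The main obstacle is the third step, specifically obtaining the explicit tensor product rule for $\Etil_i,\Ftil_i$. Unlike the classical case, where the two Hopf-algebraic tensor factors play symmetric roles, here the coideal structure forces an asymmetric expansion: terms coming from the ``$\jmath$-part'' of the coproduct (including contributions from the intertwiner $\Upsilon$ and from the nontrivial involution defining the type AIII symmetric pair) must be shown to either contribute to the leading term or to vanish modulo $(p,q)$. Identifying which indices $i\in\Ij$ behave like ordinary Kashiwara indices and which genuinely require the $\jmath$-modification, and showing that the higher-order corrections in the coproduct lie in $p\A+q\A$, is where the real work lies. Once that rule is in hand, every remaining verification is routine.
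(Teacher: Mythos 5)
Your plan correctly locates the difficulty but does not resolve it, and the route you sketch for resolving it would not go through as stated. The crux is $i=1$: the pair $(e_1,f_1)$ is precisely the one for which the $\mathfrak{sl}_2$-relation fails (the Serre-type relations in \eqref{defrel} involve $pqk_1+p\inv q\inv k_1\inv$, and $[e_1,f_1]$ is not a function of $k_1$), so ``the usual string-length argument of Kashiwara'' is not available for this index. The operators $\etil_1,\ftil_1$ are not defined by strings of an $\mathfrak{sl}_2$-triple at all; they are defined via the classification of irreducible $\Uj_1$-modules $L(a;b)$ and the divided powers $f_1^{(n)}$ applied to $h_1'$-eigenvectors. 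Consequently the paper does not derive the tensor rule from the coproduct \eqref{comult} by a formal expansion. Instead it performs an explicit rank-one computation (Proposition \ref{f1n}): it exhibits the three $\Uj_1$-highest weight vectors $\vo,\vi,\vmi$ in $L(a;b)\otimes\bfV$, computes $f_1^{(n)}$ of each, and checks by hand that these are monomial modulo $q\,\clL(a;b)\otimes\bfL$; only then does it bootstrap to general $N$ by embedding crystal bases into $(\bfL^{\otimes N},\bfB^{\otimes N})$, and to $i\neq 1$ by the standard $\mathfrak{sl}_2$ argument. Your proposal defers exactly this computation (``where the real work lies''), so as written it is a strategy outline with the essential step missing. (Incidentally, the coproduct \eqref{comult} contains no contribution from the intertwiner $\Upsilon$; the extra term for $e_1$ is $p^{-1}k_1\inv\otimes F_{-1/2}K_{1/2}\inv$.)

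There is a second omission: Theorem \ref{ThmC} asserts a $\jmath$-crystal basis, not merely a quasi-$\jmath$-crystal basis. The additional conditions ($\jmath$C1)--($\jmath$C2) concern the operators $\etil_{i'},\ftil_{i'}$ for $i=2,\ldots,r$, which are defined through the Kazhdan--Lusztig cell realization of $L(\bflm)$ inside $\bfV^{\otimes d}$ (equivalently through the $\SST(\bflm)$ combinatorics), not through the coproduct. A coproduct expansion of the generators of $\Uj$ says nothing about these operators, so your argument, even if completed for $\etil_i,\ftil_i$, would only establish the quasi-$\jmath$ statement. You would still need to verify stability of $\clL\otimes\clL'$ and $\clB\otimes\clB'\sqcup\{0\}$ under the primed operators, which in the paper ultimately rests on the identification of $\bfB^{\otimes d}$ with parabolic Kazhdan--Lusztig basis elements and the left-cell decomposition.
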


Here, let us recall a result about the representation theory of $\Uj$ from \cite{BW13}. There, Bao and Wang introduced the notion of $\jmath$-canonical basis for a finite-dimensional based $\U$-module (in the sense of \cite[Chapter 27]{L94}). They proved that a finite-dimensional based $\U$-module $(M,B)$ admits a unique $\jmath$-canonical basis $B^\jmath := \{ T_b \mid b \in B \}$ of the form
\begin{align}\label{T_b}
T_b = b + \sum_{b' \in B,\ b' \prec b} t_{b,b'}b', \qu t_{b,b'} \in q\Z[q],
\end{align}
where $\preceq$ denotes a partial order on $B$ (see \cite[Theorem 6.24]{BW13} for details). By equation \eqref{T_b}, the $\Z[q]$-span $\Z[q]B^\jmath$ of $B^\jmath$ coincides with that of $B$, and hence the set $\{ T_b + q\Z[q]B^\jmath \mid b \in B \}$ is the crystal basis of $M$. In particular, $\{ T_b + q\Z[q]B^\jmath \mid b \in B \}$ is the $\jmath$-crystal basis of $M$ by Theorem \ref{ThmC}. Thus, the $\jmath$-crystal basis of $M$ can be thought of as a localization of the $\jmath$-canonical basis. Note that the category $\Oj$ contains objects other than finite-dimensional based $\U$-modules. For those objects, the notion of $\jmath$-canonical basis has not been defined. We expect that we can ``globalize" the $\jmath$-crystal bases of such objects; namely, we expect that there exists a basis which we should call the $\jmath$-canonical basis for each module in $\Oj$. We will treat this problem in a future work.

Finally, we mention that the $\jmath$-crystal bases have rich combinatorial properties. In particular, the $\jmath$-crystal basis of an irreducible $\Uj$-module is realized as the set of pairs of semistandard Young tableaux of given shapes. As applications, we describe explicitly irreducible decompositions of $\bfV^{\otimes d}$ (Robinson-Schensted-type correspondence) and the tensor product of an irreducible $\Uj$-module with an irreducible $\U$-module (Littlewood-Richardson-type rule).

\subsection{Organization of the paper}
This paper consists of two parts. Part \ref{part 1} concerns a highest weight theory for $\Uj$. Section \ref{Basic} is devoted to introducing the quantum group $\U = \U_{2r+1} = U_q(\mathfrak{sl}_{2r+1})$, its coideal subalgebra $\Uj = \Uj_r$, and the category $\Oj$. Also, the triangular decomposition of $\Uj$ is formulated. We classify all the irreducible $\Uj$-modules in $\Oj$ and prove the complete reducibility of $\Uj$-modules in $\Oj$ for the case $r=1$ in Section \ref{r=1case}, and for a general $r$ in Section \ref{CompleteReducibility}.

The $\jmath$-crystal basis theory is developed in Part \ref{part 2}. After introducing basic notion of combinatorial tools in Section \ref{Combinatorics}, we introduce the notion of quasi-$\jmath$-crystal basis of $\Uj$-modules in $\Oj$ in a naive way in Section \ref{WeakCrystal}. In Section \ref{jcry}, we define $\jmath$-crystal bases as quasi-$\jmath$-crystal bases satisfying additional conditions, and prove the existence and uniqueness theorem for $\jmath$-crystal bases of $\Uj$-modules in $\Oj$. As applications of $\jmath$-crystal bases, Robinson-Schensted-type correspondence and Littlewood-Richardson-type rule for $\jmath$-crystal bases are studied in Section \ref{Applications}.

We end this paper by giving an explicit irreducible decomposition of a $(\Uj,\Hc(W_d))$-bimodule $\bfV^{\otimes d}$ in Appendix \ref{appxB}.

\subsection*{Acknowledgements}
The author should like to express his gratitude to Satoshi Naito for his many pieces of advice. He is also grateful to Hironori Oya for his helpful comments on the PBW-type basis of $\U$. He would like to thank the referees for giving many useful comments, which helped to improve this paper. This work was partially supported by JSPS KAKENHI Grant Number 17J00172.

\part{Highest weight theory for $\Uj$}\label{part 1}
Throughout this paper, $p$ and $q$ are independent indeterminates.
\section{Basics of the quantum symmetric pair $(\U,\Uj)$}\label{Basic}
\subsection{Definition of $\Uj$}
Let $r \geq 1$, and set
\begin{align}
\bbI := \left\{ -\left(r-\hf \right), \ldots, -\hf, \hf, \ldots, r - \hf \right\}, \qu \Ij := \{ 1, 2, \ldots, r \}. \nonumber
\end{align}
Let $\Phi$ denote the root system of type $A_{2r}$ with simple roots $\Pi = \{ \alpha_i := \epsilon_{i-\hf} - \epsilon_{i+\hf} \mid i \in \bbI \}$, where $\{ \epsilon_i \mid i = -r, -(r-1), \ldots, r \}$ is the standard basis of the Euclidean space $\R^{2r+1}$ with the inner product $(\cdot,\cdot)$; the associated Dynkin diagram is
\begin{align}
\xygraph{
    \bullet ([]!{+(0,-.3)} {-(r-\hf)}) - [r] \cdots - [r]
    \bullet ([]!{+(0,-.3)} {-\hf}) - [r]
    \bullet ([]!{+(0,-.3)} {\hf}) - [r] \cdots - [r]
    \bullet ([]!{+(0,-.3)} {r-\hf})}. \nonumber
\end{align}
We denote the set of positive roots by $\Phi_+$ and the weight lattice by $\Lambda = \bigoplus_{i = -r}^r \Z \epsilon_i$.

Let $\U = \U_{2r+1}$ denote the quantum group $U_q(\mathfrak{sl}_{2r+1})$ of type $A_{2r}$ over $\Q(p,q)$ with generators $E_i, F_i$, and $K_i^{\pm1}$, $i \in \bbI$, subject to the following relations:
\begin{align}
\begin{split}
&K_i K_i\inv = K_i\inv K_i = 1, \\
&K_i K_j = K_j K_i, \\
&K_i E_j K_i\inv = q^{(\alpha_i, \alpha_j)} E_j, \\
&K_i F_j K_i\inv = q^{-(\alpha_i, \alpha_j)} F_j, \\
&E_i F_j - F_j E_i = \delta_{i,j} \frac{K_i - K_i\inv}{q - q\inv}, \\
&E_i^2 E_j - (q+q\inv)E_iE_jE_i + E_jE_i^2 = 0 \qu \IF |i-j| = 1, \\
&F_i^2 F_j - (q+q\inv)F_iF_jF_i + F_jF_i^2 = 0 \qu \IF |i-j| = 1, \\
&E_i E_j - E_j E_i = 0 \qu \IF |i-j| > 1, \\
&F_i F_j - F_j F_i = 0 \qu \IF|i-j| > 1.
\end{split} \nonumber
\end{align}
Let $\U^+$ denote the subalgebra of $\U$ generated by $E_i$, $i \in \bbI$.

We employ the comultiplication $\Delta$ of $\U$ given by:
\begin{align}
\Delta(K_i^{\pm1}) = K_i^{\pm1} \otimes K_i^{\pm1}, \qu \Delta(E_i) = 1 \otimes E_i + E_i \otimes K_i\inv, \qu \Delta(F_i) = F_i \otimes 1 + K_i \otimes F_i \qu \text{for } i \in \bbI. \nonumber
\end{align}

Let $(\U,\Uj)$ denote the quantum symmetric pair (in the sense of \cite{Le99}) over $\Q(p,q)$ of type AIII, that is, $\Uj$ is the subalgebra of $\U$ generated by
\begin{align}
&k_i^{\pm1} := (K_{i-\hf}K_{-(i-\hf)}\inv)^{\pm1}, \nonumber\\
&e_i := E_{i-\hf} + p^{-\delta_{i,1}} F_{-(i-\hf)} K_{i-\hf}\inv, \nonumber\\
&f_i := E_{-(i-\hf)} + p^{\delta_{i,1}} K_{-(i-\hf)}\inv F_{i-\hf}, \nonumber
\end{align}
with $i \in \Ij$. When we want to emphasize the integer $r$, we denote this subalgebra by $\Uj_r$ instead of $\Uj$.

The $\Uj$ has the following defining relations (\cite{Le99}, see also \cite{BW13}, \cite{BWW16}): for $i,j \in \Ij$,
\begin{align}\label{defrel}
\begin{split}
&k_i k_i\inv = k_i\inv k_i = 1, \\
&k_i k_j = k_j k_i, \\
&k_i e_j k_i\inv = q^{(\alpha_{i-\hf} - \alpha_{-(i-\hf)}, \alpha_{j-\hf})} e_j, \\
&k_i f_j k_i\inv = q^{-(\alpha_{i-\hf} - \alpha_{-(i-\hf)}, \alpha_{j-\hf})} f_j, \\
&e_i f_j - f_j e_i = \delta_{i,j} \frac{k_i - k_i\inv}{q - q\inv} \qu \IF (i,j) \neq (1,1), \\
&e_i^2 e_j - (q+q\inv)e_ie_je_i + e_je_i^2 = 0 \qu \IF |i-j| = 1, \\
&f_i^2 f_j - (q+q\inv)f_if_jf_i + f_jf_i^2 = 0 \qu \IF |i-j| = 1, \\
&e_i e_j - e_j e_i = 0 \qu \IF |i-j| > 1, \\
&f_i f_j - f_j f_i = 0 \qu \IF |i-j| > 1, \\
&e_1^2 f_1 - (q+q\inv)e_1f_1e_1 + f_1e_1^2 = -(q+q\inv) e_1 (pqk_1 + p\inv q\inv k_1\inv), \\
&f_1^2 e_1 - (q+q\inv)f_1e_1f_1 + e_1f_1^2 = -(q+q\inv) (pqk_1 + p\inv q\inv k_1\inv) f_1.
\end{split}
\end{align}

Also, $\Uj$ is a right coideal of $\U$, that is, $\Delta(\Uj) \subset \Uj \otimes \U$. Indeed, we have
\begin{align}\label{comult}
\begin{split}
&\Delta(k_i^{\pm 1}) = k_i^{\pm 1} \otimes k_i^{\pm 1}, \\
&\Delta(e_i) = e_i \otimes K_{i-\hf}\inv + 1 \otimes E_{i-\hf} + p^{-\delta_{i,1}} k_i\inv \otimes F_{-(i-\hf)} K_{i-\hf}\inv, \\
&\Delta(f_i) = f_i \otimes K_{-(i-\hf)}\inv + 1 \otimes E_{-(i-\hf)} + p^{\delta_{i,1}} k_i \otimes K_{-(i-\hf)}\inv F_{i-\hf} \qu \text{for } i \in \Ij.
\end{split}
\end{align}
This fact enables us to regard the tensor product $M \otimes N$ of a $\Uj$-module $M$ and a $\U$-module $N$ as a $\Uj$-module. Thanks to the coassociativity of $\Delta$, we have a natural isomorphism $M \otimes (N_1 \otimes N_2) \simeq (M \otimes N_1) \otimes N_2$ of $\Uj$-modules, where $N_1$ and  $N_2$ are $\U$-modules.

\begin{prop}
\begin{enumerate}\label{automorphisms}
\item \cite[Lemma 6.1 (3)]{BW13} There exists a unique $\Q$-algebra automorphism $\psij$ of $\Uj$ which maps $e_i,f_i,k_i,p,q$ to $e_i,f_i,k_i\inv,p\inv,q\inv$, respectively.
\item There exists a unique $\Q(p,q)$-algebra anti-automorphism $\sigmaj$ of $\Uj$ which maps $e_i,f_i,k_i$ to $f_i,e_i,k_i$, respectively.
\end{enumerate}
\end{prop}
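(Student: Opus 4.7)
The strategy for both parts is the same: since $\Uj$ is presented by the generators $e_i, f_i, k_i^{\pm 1}$ subject to the relations \eqref{defrel}, it suffices to extend the prescribed assignment to an (anti-)homomorphism of the corresponding free algebra and verify that each defining relation is preserved modulo the ideal generated by \eqref{defrel}. Uniqueness is automatic since the listed elements generate $\Uj$.

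Part (1) is cited from \cite{BW13}; a direct verification amounts to inspecting each relation in \eqref{defrel}. Under $\psij$, the $k$-commutation relations are preserved because the exponents $\pm(\alpha_{i-\hf} - \alpha_{-(i-\hf)}, \alpha_{j-\hf})$ and the base $q$ are simultaneously negated by $q \mapsto q\inv$ and $k_i \mapsto k_i\inv$; the Drinfeld-type relation is preserved since $(k_i - k_i\inv)/(q - q\inv)$ is manifestly invariant under $k_i \mapsto k_i\inv, q \mapsto q\inv$; the Serre coefficient $q+q\inv$ is fixed; and the inhomogeneous term $pqk_1 + p\inv q\inv k_1\inv$ in the last two relations is invariant under the combined substitution $p, q, k_1 \mapsto p\inv, q\inv, k_1\inv$.

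For part (2), I would extend $\sigmaj$ to an anti-homomorphism of the free $\Q(p,q)$-algebra on $\{e_i, f_i, k_i^{\pm 1}\}$ and verify the relations pairwise. Applying $\sigmaj$ to the $k$-$e$ commutation relation and reversing products yields $k_i\inv f_j k_i = q^{(\alpha_{i-\hf} - \alpha_{-(i-\hf)}, \alpha_{j-\hf})} f_j$, which is equivalent to the $k$-$f$ commutation relation; the reverse direction is analogous. The Drinfeld relation $e_i f_j - f_j e_i = \delta_{ij}(k_i - k_i\inv)/(q-q\inv)$ for $(i,j) \neq (1,1)$ is sent to the same relation with $i$ and $j$ interchanged (only $i = j$ contributes nontrivially in any case). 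The $e$- and $f$-Serre relations are exchanged, and the purely-$k$ relations are trivial. The key nontrivial check is for the final pair of modified Serre relations at node $1$: applying $\sigmaj$ to the $e_1^2 f_1$-relation reverses products on both sides, producing
\[
e_1 f_1^2 - (q+q\inv) f_1 e_1 f_1 + f_1^2 e_1 = -(q+q\inv)(pqk_1 + p\inv q\inv k_1\inv) f_1,
\]
which is exactly the $f_1^2 e_1$-relation; the reverse direction holds by symmetry. The main obstacle, though purely computational, is the bookkeeping in this last verification, particularly tracking how the right-hand side transforms under product reversal.
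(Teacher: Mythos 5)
Your proposal is correct and follows exactly the route the paper takes: the paper's proof consists of the single line that both assertions are "easily verified by the defining relations \eqref{defrel}," and your verification of each relation (including the key check that the modified Serre relations at node $1$ are exchanged by $\sigmaj$ and that $pqk_1 + p\inv q\inv k_1\inv$ is fixed by $\psij$) is precisely that verification carried out. Nothing is missing.
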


\begin{proof}
These assertions are easily verified by the defining relations \eqref{defrel} of $\Uj$.
\end{proof}

For notational simplicity, we write $\overline{x}$ instead of $\psij(x)$ for $x \in \Uj$; it should be noted that $\psij$ is different from the restriction of the bar-involution of $\U$, which we will not use in this paper.

\subsection{Triangular decomposition of $\Uj$}
Recall Lusztig's braid group actions on $\U$.

\begin{defi}[{\cite[Chapter 37]{L94}}]\label{BraidGroupAction}\normalfont
Let $e \in \{ 1, -1 \}$. For each $i \in \bbI$, define four automorphisms $T'_{i,e}$ and $T''_{i,-e}$ on $\U$ by:
\begin{align}
T'_{i,e}(E_j) &= \begin{cases}
-K_i^e F_i & \text{if } j = i, \\
E_j \qu & \text{if } |i - j| > 1, \\
[E_j, E_i]_e \qu & \text{if } |i - j| = 1,
\end{cases} \qu \qu
T'_{i,e}(F_j) = \begin{cases}
-E_i K_i^{-e} \qu & \text{if } j = i, \\
F_j \qu & \text{if } |i - j| > 1, \\
[F_i, F_j]_{-e} \qu & \text{if } |i - j| = 1,
\end{cases} 
\nonumber\\
T''_{i,-e}(E_j) &= \begin{cases}
-F_i K_i^{-e} \qu & \text{if } j = i, \\
E_j \qu & \text{if } |i - j| > 1, \\
[E_i, E_j]_e \qu & \text{if } |i - j| = 1,
\end{cases} \qu \qu
T''_{i,-e}(F_j) = \begin{cases}
-K_i^{e} E_i \qu & \text{if } j = i, \\
F_j \qu & \text{if } |i - j| > 1, \\
[F_j, F_i]_{-e} \qu & \text{if } |i - j| = 1,
\end{cases} \nonumber\\
T'_{i,e}(K_j) &= T''_{i,-e}(K_j) = \begin{cases}
K_i\inv \qu & \text{if } j = i, \\
K_j \qu & \text{if } |i - j| > 1, \\
K_i K_j \qu & \text{if } |i - j| = 1;
\end{cases} \nonumber
\end{align}
\end{defi}

\noindent here we set $[X,Y]_e := XY - q^eYX$. By {\cite[Theorem 39.4.3]{L94}}, for each $e \in \{1, -1 \}$, the families $\{ T'_{i,e} \mid i \in \bbI \}$ and $\{ T''_{i,-e} \mid i \in \bbI \}$ both satisfy the braid relation of type $A_{2r}$. Let $W(\bbI)$ denote the Weyl group of type $A_{2r}$ with simple reflections $\{ s_i \mid i \in \bbI \}$. Then the PBW-type basis of $\U^+$ is described as follows.

\begin{defi}\normalfont
Let $\bfi = (i_1, \ldots, i_N)$ be a reduced word for the longest element $w_0 \in W(\bbI)$. The root vectors $E_j(\bfi)$, $j = 1, \ldots, N$, associated with $\bfi$ are given by:
\begin{align}
E_1(\bfi) = E_{i_1}, \qu\qu E_j(\bfi) = T''_{i_1,1} \cdots T''_{i_{j-1},1}(E_{i_j}). \nonumber
\end{align}
For each positive root $\alpha$, we set $E_\alpha(\bfi) := E_j(\bfi)$ if $\alpha = s_{i_1} \cdots s_{i_{j-1}}(\alpha_{i_j})$.
\end{defi}

\begin{theo}[{\cite[4.2]{L90}}]
Let $\bfi = (i_1, \ldots, i_N)$ be a reduced word for $w_0 \in W(\bbI)$. Then, the ordered monomials in the root vectors associated with $\bfi$ form a linear basis of $\U^+$.
\end{theo}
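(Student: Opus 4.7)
The plan is to reduce the theorem to two claims: (i) each root vector $E_\alpha(\bfi)$ lies in $\U^+$ and has weight $\alpha$, and (ii) the ordered monomials $\prod_{j=1}^N E_j(\bfi)^{a_j}$ are linearly independent and span $\U^+$. The weight assertion in (i) will be immediate from the Weyl-group covariance of the braid operators, so the real content there is membership in $\U^+$.

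For claim (i), the key combinatorial input is that, for a reduced $\bfi$, the vectors $\beta_j := s_{i_1}\cdots s_{i_{j-1}}(\alpha_{i_j})$ enumerate $\Phi_+$ without repetition. I would prove the stronger lemma: if $w \in W(\bbI)$ and $\ell(w s_i) > \ell(w)$, then for any reduced expression $T_w = T''_{j_1,1}\cdots T''_{j_k,1}$ one has $T_w(E_i) \in \U^+$. This goes by induction on $\ell(w)$, the inductive step requiring only that $T''_{j,1}$ sends a weight-$\gamma$ vector in $\U^+$ into $\U^+$ whenever $\gamma \neq \alpha_j$. The latter is a rank-$2$ statement verifiable directly from Definition \ref{BraidGroupAction}, using that a weight-$\gamma$ vector in $\U^+$ involves $E_j$ to bounded degree controlled by $(\gamma,\alpha_j)$ and the Serre relations.

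For claim (ii), my strategy is to prove the basis property first for one convenient reduced word --- specifically, one whose associated convex order on $\Phi_+$ realizes each root vector as an iterated $q$-commutator of Chevalley generators, so that a triangularity argument against a PBW-monomial-indexed filtration of words in the $E_i$ is transparent --- and then to transfer to an arbitrary reduced word. For the transfer, I would use the Matsumoto property: any two reduced words for $w_0$ are connected by a sequence of braid moves, each of which is a rank-$2$ move. A direct rank-$2$ computation using Definition \ref{BraidGroupAction} in $U_q(\mathfrak{sl}_3)$ shows that the PBW bases attached to two reduced words related by a single braid move are related by a unitriangular change of basis, hence one remains a basis iff the other does. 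Spanning is then confirmed by a weight-by-weight dimension count: $\dim_{\Q(p,q)} \U^+_\mu$ equals the Kostant partition function $P(\mu)$, which also counts ordered monomials in the $E_{\beta_j}(\bfi)$ of weight $\mu$.

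The main obstacle is the rank-$2$ braid-move computation underpinning (ii). Concretely, one must expand both sides of the braid move $T''_{i,1}T''_{i\pm 1,1}T''_{i,1} = T''_{i\pm 1,1}T''_{i,1}T''_{i\pm 1,1}$ applied to an $E_j$, and then compare the two resulting PBW parameterizations of $\U^+_\mu$ in rank $2$. This is conceptually routine but notationally delicate; once it is in hand, the rest of the argument proceeds cleanly by induction on the number of braid moves separating $\bfi$ from the chosen convenient reduced word, and combined with claim (i) gives the theorem.
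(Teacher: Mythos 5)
First, a point of comparison: the paper does not prove this statement at all --- it is quoted from Lusztig \cite[4.2]{L90} and used as a black box (e.g.\ in the proof that $\gr:\U^+\otimes\Ujo\to\Uj$ is an isomorphism). So there is no internal proof to measure your argument against; what can be assessed is whether your outline is a viable reconstruction of the cited result.

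It largely is: the two-step division into (i) $T_w(E_i)\in\U^+$ for $\ell(ws_i)>\ell(w)$, proved by induction on $\ell(w)$ with a rank-$2$ verification, and (ii) independence plus spanning of the ordered monomials, is exactly the architecture of Lusztig's proof. Two places in your sketch carry essentially all of the difficulty and are asserted rather than argued. First, the spanning step via ``$\dim_{\Q(p,q)}\U^+_\mu$ equals the Kostant partition function'' presupposes the inequality $\dim\U^+_\mu\le P(\mu)$, which is not free from the Serre presentation; one either needs a straightening (Levendorskii--Soibelman) argument for \emph{every} weight space, or a specialization-at-$q=1$ argument, and your ``triangularity against a PBW-monomial-indexed filtration'' for one convenient word is doing double duty here without being spelled out. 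Second, the claim that a single braid move induces a \emph{unitriangular} change between the two PBW families is stronger than what is true on the nose: the reparameterization of monomials under a rank-$2$ braid move is Lusztig's nontrivial piecewise-linear bijection, and the transition matrix is triangular with invertible (power-of-$q$) diagonal only after fixing a suitable order on Kostant partitions; ``unitriangular'' and ``routine'' undersell this. Note that Lusztig's own route to linear independence avoids the braid-move transfer entirely by exhibiting the PBW monomials as an orthogonal family with nonzero norms for the standard nondegenerate bilinear form on $\U^+$; that argument is uniform in the reduced word $\bfi$ and is the cleaner way to close your step (ii). As a blind reconstruction your plan identifies the right skeleton, but the two steps above are genuine gaps, not bookkeeping.
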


We define a filtration of $\Uj$ by setting $\deg(e_i) = \deg(f_i) = 1$ and $\deg(k_i) = 0$ for $i \in \Ij$, and set $\Uj(m)$ to be the subspace of $\Uj$ spanned by the elements of the form $x_{i_1} \cdots x_{i_l}$ with $x \in \{ e,f,k^{\pm1} \}$, $i_j \in \Ij$, and $\sum_{j} \deg(x_{i_j}) \leq m$. Set $\gr\Uj := \bigoplus_{m} \Uj(m) / \Uj(m-1)$ to be the associated graded algebra. Let $\Ujo$ denote the subalgebra of $\bfU$ generated by $k_i^{\pm1}$, $i \in \Ij$. Then, by the defining relation (equation \eqref{defrel}) of $\Uj$, there exists a unique surjective algebra homomorphism $\bfU^+ \otimes \Ujo \rightarrow \gr\Uj$ given by
$$
E_{i-\hf} \mapsto e_i, \qu E_{-(i-\hf)} \mapsto f_i, \qu k_i^{\pm1} \mapsto k_i^{\pm1}.
$$
We define a surjective linear map $\gr : \U^+ \bigotimes \Ujo \rightarrow \Uj$ to be the composite map of the surjection $\U^+ \bigotimes \Ujo \rightarrow \gr \Uj$ and the linear isomorphism $\gr\Uj \simeq \Uj$.

Recall that $\Phi_+ = \{ \epsilon_i - \epsilon_j \mid -r \leq i < j \leq r \}$ denotes the set of positive roots of $\Phi$ with respect to the simple roots $\Pi = \{ \epsilon_i - \epsilon_{i+1} \mid -r \leq i < r \}$. We decompose $\Phi_+$ into three parts as:
\begin{align}
\Phi_+ &= \Phi_{< 0} \sqcup \Phi_0 \sqcup \Phi_{> 0}, \nonumber\\
\Phi_{< 0} &:= \{ \epsilon_i - \epsilon_j \mid i + j < 0 \}, \nonumber\\
\Phi_{0} &:= \{ \epsilon_i - \epsilon_j \mid i + j = 0 \}, \nonumber\\
\Phi_{> 0} &:= \{ \epsilon_i - \epsilon_j \mid i + j > 0 \}. \nonumber
\end{align}
For example, when $r=3$, the positive roots are displayed as follows:
\begin{align}
\xygraph{
{(-3,3)} (
             -[ld]{(-3,2)} (
                                -[ld]{(-3,1)} (
                                                   -[ld]{(-3,0)} (
                                                                      -[ld]{(-3,-1)} (-[ld]{(-3,-2)},-[rd]{(-2,-1)}),
                                                                      -[rd]{(-2,0)} (-[ld]{(-2,-1)},-[rd]{(-1,0)})),
                                                   -[rd]{(-2,1)} (
                                                                      -[ld]{(-2,0)},
                                                                      -[rd]{(-1,1)} (-[ld]{(-1,0)},-[rd]{(0,1)}))),
                                -[rd]{(-2,2)} (
                                                   -[ld]{(-2,1)},
                                                   -[rd]{(-1,2)} (
                                                                      -[ld]{(-1,1)},
                                                                      -[rd]{(0,2)} (-[ld]{(0,1)},-[rd]{(1,2)})))),
             -[rd]{(-2,3)} (
                                -[ld]{(-2,2)},
                                -[rd]{(-1,3)} (
                                                   -[ld]{(-1,2)},
                                                   -[rd]{(0,3)} (
                                                                      -[ld]{(0,2)},
                                                                      -[rd]{(1,3)} (-[ld]{(1,2)},-[rd]{(2,3).})))))
} \nonumber
\end{align}
Here, $(i,j)$ represents $\epsilon_i - \epsilon_j$. Then, the roots in $\Phi_0$ lie on the vertical line through $(-3,3)$, those in $\Phi_{<0}$ on the left to the line, and those in $\Phi_{>0}$ on the right.

Here, we recall the notion of reflection orders (or convex orders).

\begin{defi}\normalfont
A total order $\preceq$ on $\Phi_+$ is said to be a reflection order if it satisfies the following: for each $\alpha, \beta \in \Phi_+$ and $a, b \in \R_{>0}$, if $a \alpha + b \beta \in \Phi_+$ and $\alpha \prec \beta$, then $\alpha \prec a \alpha + b \beta \prec \beta$.
\end{defi}

\begin{prop}[{\cite[Proposition 2.13]{D93}}]\label{reflection orders}
Let $\bfi = (i_1, \ldots, i_N)$ be a reduced word for $w_0 \in W(\bbI)$. Set $\alpha_j(\bfi) := s_{i_1} \cdots s_{i_{j-1}}(\alpha_{i_j})$. Then, the total order $\preceq$ on $\Phi_+$ defined by $\alpha_1(\bfi) \prec \cdots \prec \alpha_N(\bfi)$ is a reflection order. Moreover, this correspondence gives a bijection between the set of reduced words for $w_0 \in W(\bbI)$ and the set of reflection orders on $\Phi_+$.
\end{prop}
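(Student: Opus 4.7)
The plan is to combine the action of the partial products $w_k := s_{i_1} \cdots s_{i_k}$ on $\Phi$ with an inductive reconstruction of reduced words from reflection orders. First, I recall the standard fact that the sequence $(\alpha_1(\bfi), \ldots, \alpha_N(\bfi))$ enumerates $\Phi_+$ without repetition; equivalently, $w_k^{-1}$ sends $\alpha_m(\bfi)$ to a negative root precisely when $m \leq k$. In particular, $\prec$ is a well-defined total order on $\Phi_+$.

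To verify the reflection axiom, suppose $\alpha = \alpha_i(\bfi) \prec \beta = \alpha_j(\bfi)$, so $i < j$, and fix $a, b \in \R_{>0}$ with $\gamma := a\alpha + b\beta \in \Phi_+$; let $k$ denote the index of $\gamma$. Applying $w_{i-1}^{-1}$ to $\gamma$ yields $a\alpha_{i_i} + b\, w_{i-1}^{-1}(\beta)$: the first summand is a simple root, and the second is positive because $j > i-1$. Hence $w_{i-1}^{-1}(\gamma) \in \Phi_+$, so $k \geq i$, and since $\gamma \neq \alpha$ we obtain $k > i$. An entirely symmetric computation with $w_j^{-1}$, under which $\beta$ becomes the negative simple root $-\alpha_{i_j}$ while $\alpha$ stays negative since $i \leq j$, gives $k < j$. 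Thus $\alpha \prec \gamma \prec \beta$, as required.

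For the bijection, injectivity is immediate: the reduced word is recoverable from its associated sequence via $\alpha_{i_j} = s_{i_{j-1}} \cdots s_{i_1}(\alpha_j(\bfi))$, evaluated inductively. Surjectivity is the main obstacle, and I would handle it by induction on $N = |\Phi_+|$. Given a reflection order $\preceq$ on $\Phi_+$, its minimum element must be a simple root: if the minimum $\alpha$ decomposed as $\beta + \gamma$ with $\beta, \gamma \in \Phi_+$, the axiom applied to $\beta, \gamma$ (ordered by $\preceq$) would force $\min(\beta,\gamma) \prec \alpha$, contradicting minimality. Writing this minimum as $\alpha_{i_1}$, the restriction of $\preceq$ to $\Phi_+ \setminus \{\alpha_{i_1}\}$ should correspond, via transport along $s_{i_1}$, to a reflection-type order coming from a reduced word for $s_{i_1}w_0$, and prepending $i_1$ then yields the desired reduced word for $w_0$. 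The delicate point is to strengthen the induction hypothesis to cover arbitrary elements of $W(\bbI)$ rather than only $w_0$, since $s_{i_1}w_0$ is not a longest element; once this is set up, verifying that the reflection axiom survives transport by $s_{i_1}$ is a routine check.
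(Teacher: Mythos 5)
The paper itself offers no proof of this proposition; it is imported verbatim from Dyer \cite[Proposition 2.13]{D93}, so there is nothing internal to compare against. On its own merits, the first two thirds of your argument are sound: the identification of the initial segments $\{\alpha_1(\bfi),\ldots,\alpha_k(\bfi)\}$ with the inversion sets $N(w_k^{-1})$, the sandwich $i<k<j$ obtained by applying $w_{i-1}^{-1}$ and $w_j^{-1}$ to $a\alpha+b\beta$, and the inductive recovery of the word from the order (injectivity) are all correct and complete. The argument that the minimum of a reflection order on all of $\Phi_+$ is simple is also fine.

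The gap is in surjectivity, and it is not where you place it. You propose to induct by passing to the transported order on $\Phi_+\setminus\{\alpha_{i_1}\}=N(v^{-1})$, $v=s_{i_1}w_0$, and you identify the ``delicate point'' as merely having to allow arbitrary $v\in W(\bbI)$ in the induction hypothesis. But the natural hypothesis this suggests --- ``every total order on $N(v^{-1})$ satisfying the convexity axiom arises from a reduced word of $v$'' --- is false, so the induction does not close as described. Already in type $A_2$ with $v=s_1s_2$ one has $N(v^{-1})=\{\alpha_1,\alpha_1+\alpha_2\}$; the order $\alpha_1+\alpha_2\prec\alpha_1$ is vacuously convex (no positive combination of these two roots is a root), yet its minimum is not simple and it does not come from the unique reduced word of $v$, which yields $\alpha_1\prec\alpha_1+\alpha_2$. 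The point is that your ``minimum is simple'' argument uses that \emph{both} summands of a decomposition $\alpha=\beta+\gamma$ lie in the ordered set, which holds for $\Phi_+$ but fails for a proper $N(v^{-1})$; the induction hypothesis must therefore carry an extra coherence condition (in Papi's formulation: if $\beta+\gamma$ lies in the set then some summand lies in the set \emph{and precedes it}), and verifying that this extra condition survives deletion of the minimum and transport by $s_{i_1}$ is precisely the substance of the proof, not a routine check. Alternatively, one can avoid the induction on $v$ altogether by showing that every initial segment of a reflection order on $\Phi_+$ is biconvex, invoking the bijection between biconvex subsets and inversion sets in finite type, and reading off the reduced word from the resulting saturated chain in weak order.
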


\begin{lem}\label{reforder}
There exists a reflection order $\preceq$ on $\Phi_+$ such that
\begin{align}\label{AssumptionOnReforder}
\Phi_{<0} \prec \Phi_0 \prec \Phi_{>0}.
\end{align}
Here, for subsets $A, B \subset \Phi_+$, $A \prec B$ means that $\alpha \prec \beta$ for all $\alpha \in A$ and $\beta \in B$.
\end{lem}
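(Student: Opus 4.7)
The plan is to apply Proposition \ref{reflection orders}: it suffices to produce a reduced word $\bfi$ for $w_0 \in W(\bbI)$ whose associated sequence of positive roots enumerates $\Phi_{<0}$ first, then $\Phi_0$, then $\Phi_{>0}$. I will deduce the existence of such a $\bfi$ from the fact that the nested subsets $\Phi_{<0} \subsetneq \Phi_{<0} \sqcup \Phi_0$ of $\Phi_+$ are biconvex, meaning that both a subset $A$ and its complement $\Phi_+ \setminus A$ are closed under root addition whenever the sum lies in $\Phi_+$.

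The biconvexity checks reduce to a finite case analysis on pairs of positive roots $\alpha = \epsilon_a - \epsilon_b$, $\beta = \epsilon_c - \epsilon_d$ (with $a<b$, $c<d$) whose sum lies in $\Phi_+$; this forces either $b=c$ (giving $\alpha+\beta=\epsilon_a-\epsilon_d$) or $d=a$ (giving $\alpha+\beta=\epsilon_c-\epsilon_b$). In each branch, the required inequalities on the relevant index sums are immediate consequences of $a<b$ and $c<d$: for instance, when $b=c$ one has $a+d<b+d$, which gives closure of both $\Phi_{<0}$ and $\Phi_{\leq 0}$, and $a+d>a+b$, which gives closure of $\Phi_{>0}$ and $\Phi_{\geq 0}$; the case $d=a$ is handled dually. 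Hence $\Phi_{<0}$ and $\Phi_{\leq 0} = \Phi_{<0} \sqcup \Phi_0$ are both biconvex in $\Phi_+$.

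By the classical correspondence between biconvex subsets of $\Phi_+$ and inversion sets of Weyl group elements (which underlies Proposition \ref{reflection orders} via \cite{D93}), there exist $w_1, w_2 \in W(\bbI)$ with inversion sets $\Phi_{<0}$ and $\Phi_{<0} \sqcup \Phi_0$ respectively. The nesting of inversion sets forces $w_1 \leq w_2 \leq w_0$ in the weak Bruhat order, so concatenating reduced expressions for $w_1$, $w_1\inv w_2$, and $w_2\inv w_0$ yields a reduced word $\bfi$ for $w_0$. By Proposition \ref{reflection orders}, its associated reflection order has $\Phi_{<0}$ as the initial segment of length $|\Phi_{<0}|$ and $\Phi_{<0} \sqcup \Phi_0$ as the initial segment of length $|\Phi_{<0}|+|\Phi_0|$, which is precisely \eqref{AssumptionOnReforder}. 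The main obstacle is the (elementary) biconvexity verification, which the paper may well prefer to carry out directly rather than cite; a more hands-on alternative would be to write down an explicit reduced word for $w_0$ via a tailored bubble sort on $(r, r-1, \ldots, -r)$ that processes adjacent transpositions in the order dictated by $\Phi_{<0}$, $\Phi_0$, $\Phi_{>0}$, at the cost of heavier combinatorial bookkeeping.
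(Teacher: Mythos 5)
Your proof is correct, but it takes a genuinely different route from the paper. The paper simply writes down an explicit total order: it splits $\Phi_{<0}$ into the sub-blocks $\Phi_{<0,-}$ and $\Phi_{<0,+}$ (and $\Phi_{>0}$ analogously), imposes a lexicographic-type order inside each of the five blocks, and asserts that verifying the reflection-order axiom for this concrete order is straightforward. You instead prove existence abstractly: the height function $h(\epsilon_a-\epsilon_b)=a+b$ satisfies $h(\alpha)<h(\alpha+\beta)<h(\beta)$ whenever $\alpha=\epsilon_a-\epsilon_b$, $\beta=\epsilon_b-\epsilon_d$ sum to a root (and dually), so $\Phi_{<0}$ and $\Phi_{\leq 0}$ are biconvex; the classical biconvex-equals-inversion-set correspondence and the additivity of length along the weak order then produce the required reduced word, and the standard fact that initial segments of the order $\alpha_1(\bfi)\prec\cdots\prec\alpha_N(\bfi)$ are inversion sets of partial products gives \eqref{AssumptionOnReforder}. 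Your argument is cleaner to verify and generalizes immediately to any partition of $\Phi_+$ into level sets of a suitably convex height function; what it does not give is a distinguished order, whereas the paper explicitly fixes its constructed order for the rest of the text ("in what follows, we use only the reflection order given in the proof"). Since the subsequent computations (e.g.\ the Levendorskii--Soibelman relations yielding \eqref{h_i's}) in fact only use condition \eqref{AssumptionOnReforder} rather than the finer structure of the explicit order, your existence proof would suffice for the paper's purposes, but you would still want to fix one such order once and for all.
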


\begin{proof}
It suffices to construct such a reflection order. For simplicity, we write $(i,j)$ instead of $\epsilon_i -\epsilon_j$ for $i < j$. We decompose $\Phi_{< 0}$ into $\Phi_{<0,-} := \{ (i,j) \in \Phi_{<0} \mid j \leq 0 \}$ and $\Phi_{<0,+} := \{ (i,j) \in \Phi_{<0} \mid j > 0 \}$. Similarly, we set $\Phi_{>0,-} := \{ (i,j) \in \Phi_{>0} \mid i < 0 \}$ and $\Phi_{>0,+} := \{ (i,j) \in \Phi_{>0} \mid i \geq 0 \}$. Let us define a total order $\preceq$ on $\Phi_+$ by:
\begin{enumerate}
\item $\Phi_{<0,-} \prec \Phi_{<0,+} \prec \Phi_{0} \prec \Phi_{>0,-} \prec \Phi_{>0,+}$;
\item for $(i,j), (i',j') \in \Phi_{<0,-}$, $(i,j) \prec (i',j')$ if and only if $i < i'$ or ($i = i'$ and $j < j'$);
\item for $(i,j), (i',j') \in \Phi_{<0,+}$, $(i,j) \prec (i',j')$ if and only if $j < j'$ or ($j = j'$ and $i < i'$);
\item for $(i,j), (i',j') \in \Phi_{0}$, $(i,j) \prec (i',j')$ if and only if $j < j'$;
\item for $(i,j), (i',j') \in \Phi_{>0,-}$, $(i,j) \prec (i',j')$ if and only if $i < i'$ or ($i = i'$ and $j < j'$);
\item for $(i,j), (i',j') \in \Phi_{>0,+}$, $(i,j) \prec (i',j')$ if and only if $j < j'$ or ($j = j'$ and $i < i'$).
\end{enumerate}
Then, $\preceq$ is a reflection order on $\Phi_+$ satisfying $\Phi_{<0} \prec \Phi_0 \prec \Phi_{>0}$; the proof is straightforward.
\end{proof}

\begin{ex}\normalfont
When $r=3$, this total order is given as follows:
\begin{align}
\begin{split}
&(-3,-2) \prec (-3,-1) \prec (-3,0) \prec (-2,-1) \prec (-2,0) \prec (-1,0) \\
\prec &(-3,1) \prec (-2,1) \prec (-3,2) \\
\prec &(-1,1) \prec (-2,2) \prec (-3,3) \\
\prec &(-2,3) \prec (-1,2) \prec (-1,3) \\
\prec &(0,1) \prec (0,2) \prec (1,2) \prec (0,3) \prec (1,3) \prec (2,3).
\end{split} \nonumber
\end{align}
\end{ex}

Fix a reflection order $\preceq$ satisfying condition \eqref{AssumptionOnReforder} in Lemma \ref{reforder}. Let $\bfi$ be the reduced word for $w_0 \in W(\bbI)$ corresponding to $\preceq$ under the bijection of Proposition \ref{reflection orders}. We set $E_{i,j} := E_{\epsilon_i-\epsilon_j}(\bfi)$ for $-r \leq i < j \leq r$. For each $i,j$, define $E'_{i,j} := \gr(E_{i,j})$, and set
\begin{align}
f_{-j,-i} := E'_{i,j} \ \IF i + j < 0, \qu h'_i := E'_{-i,i}, \qu e_{i,j} := E'_{i,j} \ \IF i + j > 0. \nonumber
\end{align}

\begin{prop}
The surjection $\gr: \U^+ \otimes \Ujo \rightarrow \Uj$ is a linear isomorphism.
\end{prop}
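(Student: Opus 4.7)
The plan is to prove injectivity of $\gr$ via a leading-term argument inside $\U$, using the inclusion $\Uj \hookrightarrow \U$. Since surjectivity is already in hand, this will give the isomorphism.

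First, I would equip $\U$ with the multiplicative filtration $\U^{\leq n}$ spanned by PBW monomials $F^{\mathbf{a}}K^{\mathbf{b}}E^{\mathbf{c}}$ (in the fixed $FKE$-order) with $|\mathbf{c}|\leq n$. This is indeed a filtration because the only relation in $\U$ that can lower $E$-degree, namely $[E_i,F_j] = \delta_{i,j}(K_i - K_i\inv)/(q-q\inv)$, does so strictly. From the defining formulas for the generators of $\Uj$, I see that $e_i, f_i \in \U^{\leq 1}$ with top-$E$ parts $E_{i-\hf}$ and $E_{-(i-\hf)}$ respectively, while $k_i^{\pm 1} \in \U^{\leq 0}$. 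Hence $\Uj(m) \subseteq \U^{\leq m}$, and taking top-$E$ parts is multiplicative on products of generators of $\Uj$.

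Next, via the algebra surjection $\U^+ \otimes \Ujo \twoheadrightarrow \gr\Uj$, each positive root vector $E_\alpha \in \U^+$ (a fixed braid-group expression in the simple root vectors $E_{i-\hf}, E_{-(i-\hf)}$) is sent to the same formal expression in $\bar{e}_i, \bar{f}_i$. Any lift $E'_\alpha \in \Uj$ has a well-defined top-$E$ part in $\U$, since two lifts differ by an element of $\Uj(\on{ht}(\alpha) - 1) \subseteq \U^{\leq \on{ht}(\alpha) - 1}$. By multiplicativity, this top-$E$ part equals the braid-group expression evaluated at the genuine $E_{i-\hf}, E_{-(i-\hf)}$, which is exactly $E_\alpha$. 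More generally, for an ordered PBW monomial $X = E_{\alpha_1}^{a_1}\cdots E_{\alpha_N}^{a_N} \otimes k^\mu \in \U^+ \otimes \Ujo$, the image $\gr(X)$ has top-$E$ part $E_{\alpha_1}^{a_1}\cdots E_{\alpha_N}^{a_N}\cdot K^\mu \in \U$, where $K^\mu := \prod_i (K_{i-\hf} K_{-(i-\hf)}\inv)^{\mu_i}$.

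These top-$E$ parts are distinct PBW monomials of $\U$: the exponent tuple $(a_1,\ldots,a_N)$ is recovered from the $E$-part, and the element $\mu$ from the $K$-part, using that the $k_i$'s generate a rank-$r$ subalgebra of $\U^0$ whose monomials are linearly independent. Therefore $\{\gr(X)\}$ is linearly independent in $\U$, and hence in $\Uj$, which gives injectivity. The principal technical subtlety lies in verifying multiplicativity of top-$E$ parts through the braid-group construction of each $E_\alpha$: all of the commutations required to bring an expression into $FKE$-PBW form merely introduce $q$-scalar factors (from $KE$ and $KF$ swaps) or strictly lower $E$-degree (from the $EF$-to-$FE$ reduction), so no unexpected cancellation can occur in the leading part.
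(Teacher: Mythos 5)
Your proof is correct and follows essentially the same route as the paper's: both are leading-term arguments that identify, inside $\U$, the top part of $\gr\bigl(E_{\alpha_1}^{a_1}\cdots E_{\alpha_N}^{a_N}\otimes k^\mu\bigr)$ with the genuine PBW monomial $E_{\alpha_1}^{a_1}\cdots E_{\alpha_N}^{a_N}K^\mu$ and then invoke the linear independence of PBW monomials. The only difference is the bookkeeping device --- you filter by $E$-degree in the $F K E$-ordered PBW expansion, while the paper grades by weight and observes that each $E'_{i,j}$ equals $E_{i,j}$ plus terms of strictly lower weight --- and your version has the minor merit of treating the $\Ujo$-factor explicitly.
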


\begin{proof}
By the construction, each $E'_{i,j}$ is of the form $E'_{i,j} \in E_{i,j} + \sum_{\nu} \bfU_{\nu}$, where $\nu$ runs through all integral weight lower than $\epsilon_i-\epsilon_j$, and $\bfU_{\nu}$ denote the weight space of $\bfU$ of weight $\nu$. More generally, each ordered monomial $\prod_{i < j} (E'_{i,j})^{a_{i,j}}$, $a_{i,j} \in \Z_{\geq 0}$ is the sum of $\prod_{i<j} E_{i,j}^{a_{i,j}}$ and a linear combination of weight vectors in $\bfU$ of weight lower than $\sum_{i<j} a_{i,j}(\epsilon_i-\epsilon_j)$. This observation implies the following: since the ordered monomials in $E_{i,j}$'s form a linear basis of $\bfU^+$, the ordered monomials in $E'_{i,j}$'s are linearly independent. In particular, the surjection $\gr:\bfU^+ \otimes \Ujo \rightarrow \Uj$ is a linear isomorphism.
\end{proof}

\begin{cor}
The ordered monomials $\left( \prod_{i+j<0} f_{-j,-i}^{a_{i,j}} \right) \left( \prod_i (h'_i)^{b_i} \right) \left( \prod_{i=1}^r k_i^{d_i} \right) \left( \prod_{i+j>0} e_{i,j}^{c_{i,j}} \right)$, $a_{i,j},b_i,c_{i,j} \in \Z_{\geq 0}$, $d_i \in \Z$ form a linear basis of $\Uj$.
\end{cor}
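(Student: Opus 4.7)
The plan is to derive this basis statement as a direct consequence of the preceding proposition together with the PBW theorem for $\U^+$, applied to the reduced word $\bfi$ corresponding to the reflection order fixed above. First, I would invoke the PBW theorem of \cite{L90}: the ordered monomials $\prod_{-r \leq i < j \leq r} E_{i,j}^{n_{i,j}}$, taken in the reflection order $\preceq$, form a $\Q(p,q)$-basis of $\U^+$. Since $\Ujo$ is commutative and generated by the $k_i^{\pm1}$, the monomials $\prod_{i=1}^r k_i^{d_i}$ with $d_i \in \Z$ give the obvious basis of $\Ujo$. Tensoring these two bases produces a basis of $\U^+ \otimes \Ujo$.

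Next, I would transport this basis to $\Uj$ via the linear isomorphism $\gr: \U^+ \otimes \Ujo \rightarrow \Uj$ established just before. Because $\U^+ \otimes \Ujo \twoheadrightarrow \gr\Uj$ is an algebra homomorphism, the image under $\gr$ of a basis vector $\prod E_{i,j}^{n_{i,j}} \otimes \prod k_i^{d_i}$ is (after lifting back from $\gr\Uj$ to $\Uj$) the product of the $(E'_{i,j})^{n_{i,j}}$ in the reflection order, together with the factor $\prod k_i^{d_i}$. The key input from Lemma \ref{reforder} is that the reflection order satisfies $\Phi_{<0} \prec \Phi_0 \prec \Phi_{>0}$, so the ordered product over $\Phi_+$ splits into three consecutive blocks. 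By the very definitions of $f_{-j,-i}$ (for $i+j<0$), $h'_i$ (for $i+j=0$), and $e_{i,j}$ (for $i+j>0$), these three blocks are precisely $\prod_{i+j<0} f_{-j,-i}^{a_{i,j}}$, $\prod_i (h'_i)^{b_i}$, and $\prod_{i+j>0} e_{i,j}^{c_{i,j}}$.

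Finally, I would address the placement of the $\prod k_i^{d_i}$ factor, which the above produces at one end of the monomial rather than between the $h'$-block and the $e$-block as stated. This is immediate from the commutation relation $k_i e_j = q^{(\alpha_{i-\hf} - \alpha_{-(i-\hf)},\alpha_{j-\hf})} e_j k_i$ of \eqref{defrel}, which extends (by weight considerations applied to the non-simple root vectors constructed from Lusztig's braid operators) to $k_i e_{i',j'} = q^{(\alpha_{i-\hf} - \alpha_{-(i-\hf)},\, \epsilon_{i'} - \epsilon_{j'})} e_{i',j'} k_i$ in $\Uj$. Consequently, rearranging the $k$'s past the $e$-block modifies each basis element by a nonzero scalar in $\Q(p,q)$, so the set appearing in the statement is still a basis.

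The only part that needs genuine bookkeeping is the identification of $\gr$-images of PBW monomials with literal ordered products in $\Uj$, and the verification that the scalar from the commutation step is invertible; both are routine once the filtered-graded setup from the previous proposition is in hand, so I do not anticipate any substantive obstacle.
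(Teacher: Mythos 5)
Your argument is correct and takes essentially the same route as the paper: the corollary is stated there without separate proof precisely because it is the preceding proposition read off on the PBW basis of $\U^+ \otimes \Ujo$, whose proof already establishes the linear independence of the literal ordered products via the leading-term (weight-triangularity) comparison with the PBW monomials $\prod E_{i,j}^{a_{i,j}}$ in $\U$ --- which is exactly the ``bookkeeping'' you defer to at the end. Your additional step commuting $\prod_i k_i^{d_i}$ past the $e$-block is sound (each $e_{i',j'}$ is a $k_i$-weight vector, so only nonzero powers of $q$ appear) and makes explicit a detail the paper leaves implicit.
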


Let us compute some of the root vectors. By {\cite[Lemma 1]{LS91}} (with a slight modification), we have
\begin{align}
\begin{split}
E_{i-1,j} &= [E_{i,j},E_{i-\hf}]_{1} \qu \IF (i-1,i) \prec (i,j), \\
E_{i,j+1} &= [E_{j+\hf},E_{i,j}]_{1} \qu \IF (i,j) \prec (j,j+1). 
\end{split} \nonumber
\end{align}
In particular, it holds that
\begin{align}
E_{-1,1} = [E_{\hf},E_{-\hf}]_{1}, \qu E_{-(i+1),i+1} = \left[[E_{(i+\hf)},E_{-i,i}]_{1},E_{-(i+\hf)} \right]_{1} \qu \text{for } 1 \leq i \leq r-1. \nonumber
\end{align}
Applying $\gr$, we obtain
\begin{align}\label{h_i's}
h'_1 = [e_1,f_1]_{1}, \qu h'_{i+1} = \bigl[[e_{i+1},h'_i]_{1},f_{i+1} \bigr]_{1}.
\end{align}
This shows that the $h'_i$'s are independent of the choice of a reflection order $\preceq$ satisfying condition \eqref{AssumptionOnReforder} in Lemma \ref{reforder}.

Let $\Uj_{<0}$ (resp., $\Uj_0, \Uj_{>0}$) denote the subspace of $\Uj$ spanned by all ordered monomials in $f_{-j,-i}$ (resp., $h_i, e_{i,j}$). Then, we have an isomorphism of vector spaces
\begin{align}
\Uj \simeq \Uj_{<0} \bigotimes \left( \Uj_0 \bigotimes \Ujo \right) \bigotimes \Uj_{>0}. \nonumber
\end{align}
We call this linear isomorphism the triangular decomposition of $\Uj$ associated with the reflection order $\preceq$, and $\Uj_{<0}$ (resp., $\Uj_0 \bigotimes \Ujo$, $\Uj_{>0}$) the negative part (resp., Cartan part, positive part) of $\Uj$. The triangular decomposition enables us to establish an analog of highest weight theory for the representation theory of $\Uj$.

\begin{rem}\normalfont
Unlike the ordinary triangular decomposition of a quantum group, the negative part, the Cartan part, and the positive part of $\Uj$ are just subspaces, not subalgebras. In addition, the negative part and the positive part may depend on the choice of a reflection order. However, by equation \eqref{h_i's}, the Cartan part is independent of such a choice.
\end{rem}

\subsection{Verma modules and their irreducible quotients}\label{2.3}
Recall that $\R^{2r+1} = \bigoplus_{i = -r}^r \R \epsilon_i$ is the Euclidean space with standard basis $\{ \epsilon_i \mid -r \leq i \leq r \}$ with respect to the inner product $(\cdot,\cdot)$, and $\alpha_i = \epsilon_{i-\hf} - \epsilon_{i+\hf}$, $i \in \bbI$, are the simple roots. Set $\beta_i := \alpha_{i-\hf} - \alpha_{-(i-\hf)} = \epsilon_{i-1} - \epsilon_i - \epsilon_{-i} + \epsilon_{-(i-1)}$ for $i \in \Ij$.

\begin{defi}\normalfont
Let $J := \{ \lambda \in \R^{2r+1} \mid (\beta_i, \lambda) = 0 \text{ for all } i \in \Ij \}$. Then the bilinear form $(\cdot, \cdot)$ on $\R^{2r+1} \times \R^{2r+1}$ induces a bilinear map $\left( \bigoplus_{i \in \Ij} \R \beta_i \right) \times \left( \R^{2r+1}/J \right) \rightarrow \R$, which we also denote by $(\cdot, \cdot)$. For each $i \in \Ij$, there exists a unique $\delta_i \in \R^{2r+1}/J$ such that
\begin{align}
(\beta_j, \delta_i) = \delta_{i,j} \qu \text{ for } i,j \in \Ij. \nonumber
\end{align}
Set $\Lambdaj := \sum_{i \in \Ij} \Z \delta_i$ and $\Lambdaj_+ := \sum_{i \in \Ij} \Z_{\geq 0} \delta_i$. Also, we set $\gamma_i := \epsilon_{i-1} - \epsilon_{i} + J \in \Lambdaj$.
\end{defi}
By the definitions, we have
\begin{align}
(\beta_i,\gamma_j) = (\alpha_{i-\hf}-\alpha_{-(i-\hf)},\alpha_{j-\hf}) = \begin{cases}
3 & \IF i = j = 1, \\
2 & \IF i = j \neq 1, \\
-1 & \IF |i-j| = 1, \\
0 & \IF |i-j| > 1.
\end{cases} \nonumber
\end{align}
Define a partial order $\leq$ on $\Lambdaj$ by:
\begin{align}\label{dominance order}
\mu \leq \lambda \IFF \lambda - \mu \in \sum_{i \in \Ij} \Z_{\geq 0} \gamma_i.
\end{align}

For a $\Uj$-module $M$ and $m \in M$, we say that $m$ is of weight $\lambda \in \Lambda^\jmath$ if it satisfies
\begin{align}
k_i m = q^{(\beta_i , \lambda)}m \nonumber
\end{align}
for all $i \in \Ij$; we denote by $M_\lambda$ the subspace consisting of all $m \in M$ of weight $\lambda$.

\begin{lem}
Let $M$ be a $\Uj$-module and $\lambda \in \Lambdaj$. For each $i \in \Ij$, we have
\begin{align}
f_i(M_\lambda) \subset M_{\lambda - \gamma_i}, \qqu e_i(M_\lambda) \subset M_{\lambda+\gamma_i}. \nonumber
\end{align}
\end{lem}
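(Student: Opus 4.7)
The plan is to do a direct weight computation using the third and fourth defining relations in \eqref{defrel} applied to a vector $m \in M_\lambda$. The only thing needing a brief check is that the exponent in those relations, which is expressed in terms of the inner product on $\R^{2r+1}$, matches the pairing on $\bigl(\bigoplus_{i \in \Ij} \R \beta_i\bigr) \times (\R^{2r+1}/J)$ that defines weights of $\Uj$-modules.

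First I would observe that in $\R^{2r+1}/J$ we have $\alpha_{j-\hf} + J = \epsilon_{j-1} - \epsilon_j + J = \gamma_j$. Since $\beta_i$ pairs to zero with $J$, the equality $(\alpha_{i-\hf} - \alpha_{-(i-\hf)}, \alpha_{j-\hf}) = (\beta_i, \gamma_j)$ holds, so the relations in \eqref{defrel} can be rewritten as
\begin{align*}
k_i e_j k_i\inv = q^{(\beta_i,\gamma_j)} e_j, \qquad k_i f_j k_i\inv = q^{-(\beta_i,\gamma_j)} f_j.
\end{align*}

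Next, for $m \in M_\lambda$ and any $i \in \Ij$, I would compute
\begin{align*}
k_i (e_j m) = (k_i e_j k_i\inv)(k_i m) = q^{(\beta_i,\gamma_j)} e_j \cdot q^{(\beta_i,\lambda)} m = q^{(\beta_i,\lambda+\gamma_j)} (e_j m),
\end{align*}
which shows $e_j m \in M_{\lambda + \gamma_j}$. The same calculation with $f_j$ in place of $e_j$ gives $f_j m \in M_{\lambda - \gamma_j}$. Since this holds for every $i \in \Ij$, the weight assignment is unambiguous. There is essentially no obstacle here; the only subtle point, which I would state explicitly at the beginning, is the identification of inner products with the induced pairing on $\R^{2r+1}/J$, which is exactly what the definition of $\gamma_j$ and of $(\cdot,\cdot)$ on $\left( \bigoplus_{i \in \Ij} \R \beta_i \right) \times \left( \R^{2r+1}/J \right)$ is designed to encode.
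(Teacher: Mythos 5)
Your proposal is correct and follows exactly the same route as the paper: the paper's proof simply cites the relations $k_i e_j k_i\inv = q^{(\beta_i,\gamma_j)} e_j$ and $k_i f_j k_i\inv = q^{(\beta_i,-\gamma_j)} f_j$, and the identification $(\alpha_{i-\hf}-\alpha_{-(i-\hf)},\alpha_{j-\hf}) = (\beta_i,\gamma_j)$ that you verify at the outset is already recorded in the paper just after the definition of $\gamma_j$. Your write-up only makes the one-line computation explicit.
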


\begin{proof}
This follows immediately from the relations $k_i f_j k_i\inv = q^{(\beta_i,-\gamma_j)} f_j$ and $k_i e_j k_i\inv = q^{(\beta_i,\gamma_j)} e_j$.
\end{proof}

Recall the triangular decomposition of $\Uj$
\begin{align}
\Uj \simeq \Uj_{<0} \bigotimes \left( \Uj_0 \bigotimes \Ujo \right) \bigotimes \Uj_{>0}, \nonumber
\end{align}
and the root vectors $f_{-j,-i}, h'_i, e_{i,j}$ associated with a reflection order satisfying condition \eqref{AssumptionOnReforder} in Lemma \ref{reforder}. Let $(\Uj_{>0})_+$ denote the subspace of $\Uj_{>0}$ spanned by all ordered monomials in $e_{i,j}$'s other than $1$.

\begin{defi}\normalfont
Let $\lambda \in \Lambda^\jmath$ and $H'_i \in \Q(p,q)$, $i = 1,2, \ldots ,r$. The Verma module $V'(\lambda;\bfH')$ over $\Uj$ with highest weight $\lambda$ associated with $\bfH' := (H'_1,\ldots,H'_r) \in \Q(p,q)^r$ is defined to be
\begin{align}
V'(\lambda;\bfH') := \Uj/I(\lambda;\bfH'), \nonumber
\end{align}
where $I(\lambda;\bfH')$ denotes the left ideal of $\Uj$ generated by $(\Uj_{>0})_+$ and $k_i - q^{(\beta_i,\lambda)}$, $h'_i - H'_i$ for $i \in \Ij$.
\end{defi}

\begin{rem}\normalfont
Verma modules $V'(\lm;\bfH')$ can be $0$.
\end{rem}

By the triangular decomposition of $\Uj$, a nonzero Verma module $V'(\lambda;\bfH')$ has a unique maximal submodule, and hence, it has a unique irreducible quotient. We denote it by $L'(\lambda;\bfH')$ and call it the irreducible highest weight $\Uj$-module with highest weight $\lambda$ associated with $\bfH'$, or simply, with highest weight $(\lambda;\bfH')$.

\begin{defi}\normalfont
A nonzero $\Uj$-module $M$ is called a highest weight module with highest weight $(\lambda;\bfH') \in \Lambdaj \times \Q(p,q)^r$ if there exists $m \in M_\lambda$ such that $(\Uj_{>0})_+m = 0$, $h'_im = H'_im$ for $i \in \Ij$, and $M = \Uj m$. We call such an $m$ a highest weight vector of $M$ with highest weight $(\lambda;\bfH')$.
\end{defi}

Our definition of highest weight modules over $\Uj$ depends on the choice of a reflection order satisfying condition \eqref{AssumptionOnReforder} in Lemma \ref{reforder}. However, their $\Uj$-module structure is independent of such a choice, as we explain below.

Let $M$ be a highest weight $\Uj$-module with highest weight $(\lambda;\bfH')$ associated with a reflection order $\preceq$. Let $v \in M$ be a highest weight vector. Take another reflection order $\preceq'$, and denote the corresponding root vectors by $f_{i,j}', h''_i, e_{i,j}'$. Then, we see from equation \eqref{h_i's} that $h''_i = h'_i$. Also, by the triangular decomposition associated with $\prec$, we have
\begin{align}
e'_{i,j} \in \sum_{\substack{\nu, \mu \in \Lambdaj_{+} \\ \nu < \mu}} (\Uj_{<0})_{-\nu} \otimes (\Uj_0 \otimes \Ujo) \otimes (\Uj_{>0})_\mu; \nonumber
\end{align}
here, $(\Uj_{< 0})_{-\nu} := \{ x \in \Uj_{<0} \mid k_i x k_i\inv = q^{(\beta_i,-\nu)}x \text{ for all } i \in \Ij \}$, and define $(\Uj_{>0})_\mu$ similarly. Therefore, it holds that $e'_{i,j}v = 0$ for all $i,j$. In addition, by expanding $f_{i,j}$ in ordered monomials in $f'_{i,j},h''_i,e'_{i,j}$, we see that $f_{i,j}v$ is a linear combination of $f'_{i,j}v$'s. From these, we conclude that $M$ is a highest weight module with highest weight $(\lambda;\bfH')$ associated with $\preceq'$. In particular, if we denote Verma modules and their irreducible quotients associated with $\preceq'$ by $V''(\cdot;\cdot)$ and $L''(\cdot;\cdot)$, respectively, then we have
\begin{align}
V'(\lambda;\bfH') = V''(\lambda;\bfH'), \qu L'(\lambda;\bfH') = L''(\lambda;\bfH'). \nonumber
\end{align}
Hence, in what follows, we use only the reflection order given in the proof of Lemma \ref{reforder}.

Let $\Oj$ denote the category of all $\Uj$-modules $M$ satisfying the following:
\begin{enumerate}
\item[$(M1)$] $M$ is decomposed into weight spaces, i.e., $M = \bigoplus_{\lambda \in \Lambda^\jmath} M_\lambda$.
\item[$(M2)$] Each weight space is finite-dimensional.
\item[$(M3)$] There exist finitely many weights $\mu_1, \ldots \mu_n \in \Lambdaj$ such that each weight $\lambda \in \Lambdaj$ for which $M_\lambda \neq 0$ satisfies $\lambda \leq \mu_i$ for some $i = 1, \ldots, n$.
\item[$(M4)$] $e_i$ and $f_i$ act on $M$ locally nilpotently, that is, for each $m \in M$, there exists $N \in \N$ such that $e_i^Nm = 0 = f_i^Nm$.
\end{enumerate}
Note that Verma modules and their irreducible quotients are not necessarily objects of $\Oj$, i.e., the actions of $f_i$ on these modules are not always locally nilpotent. Also, $\Oj$ has an infinite-dimensional object as we will see in the next section.

\section{The case $r = 1$}\label{r=1case}
\subsection{Classification of the irreducible modules in $\Oj$}
We introduce some more notation.
\begin{defi}\normalfont
\begin{enumerate}
\item[$(1)$] For $n \in \Z$, $[n] := \frac{q^n - q^{-n}}{q - q\inv}$.
\item[$(2)$] For $n \in \Z_{>0}$, $[n]! := \prod_{i = 1}^n [i]$; we set $[0]! := 1$.
\item[$(3)$] For $x \in \U$ and $n \in \Z_{>0}$, $x^{(n)} := \frac{x^n}{[n]!}$; we set $x^{(0)} := 1$, and $x^{(n)} := 0$ if $n < 0$.
\item[$(4)$] For $x, y \in \U$ and $a \in \Z$, $[x,y]_a := xy - q^ayx$.
\item[$(5)$] For an invertible element $h$, $\{ h \} := h + h\inv$.
\item[$(6)$] For an integer $n \in \Z$, $\{ n \} := \{ pq^n \} = pq^n + p\inv q^{-n}$.
\end{enumerate}
\end{defi}
In the case $r=1$, the root vectors are
\begin{align}
f_{0,1} = f_1, \qu h'_1 = [e_1, f_1]_1, \qu e_{0,1} = e_1. \nonumber
\end{align}

\begin{lem}\label{h1}
In $\Uj_1$, we have
\begin{align}
[h'_1,f_1]_{-1} = - [2] \{ pqk_1 \} f_1, \qu [e_1,h'_1]_{-1} = - [2] e_1 \{ pqk_1 \}. \nonumber
\end{align}
\end{lem}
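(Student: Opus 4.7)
The plan is a direct computation using only the definition $h'_1 = [e_1,f_1]_1 = e_1 f_1 - q f_1 e_1$ together with the two ``modified Serre'' relations at $(i,j)=(1,1)$ from the defining relations \eqref{defrel}, namely
\begin{align*}
e_1^2 f_1 - (q+q\inv) e_1 f_1 e_1 + f_1 e_1^2 &= -(q+q\inv) e_1\{pqk_1\}, \\
f_1^2 e_1 - (q+q\inv) f_1 e_1 f_1 + e_1 f_1^2 &= -(q+q\inv) \{pqk_1\} f_1.
\end{align*}

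First I would handle the identity for $[h'_1,f_1]_{-1}$. Expanding using the definition of $h'_1$,
\begin{align*}
[h'_1,f_1]_{-1} = h'_1 f_1 - q\inv f_1 h'_1 &= (e_1 f_1 - q f_1 e_1) f_1 - q\inv f_1(e_1 f_1 - q f_1 e_1),
\end{align*}
and then collecting the four resulting monomials in $e_1,f_1$, the cross terms combine to give precisely
\[
e_1 f_1^2 + f_1 e_1^2 \cdot 0 + f_1^2 e_1 - (q+q\inv) f_1 e_1 f_1,
\]
i.e.\ the left-hand side of the second modified Serre relation. Substituting the Serre relation in gives $-[2]\{pqk_1\} f_1$, as required. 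The computation for $[e_1,h'_1]_{-1}$ is entirely parallel: expanding
\[
[e_1,h'_1]_{-1} = e_1 h'_1 - q\inv h'_1 e_1 = e_1(e_1 f_1 - q f_1 e_1) - q\inv (e_1 f_1 - q f_1 e_1) e_1,
\]
the four terms rearrange into $e_1^2 f_1 + f_1 e_1^2 - (q+q\inv) e_1 f_1 e_1$, which is the left-hand side of the first modified Serre relation, equal to $-[2] e_1 \{pqk_1\}$.

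There is no serious obstacle in the argument; the only thing to check is that the coefficients in the $q\inv$-bracket conspire to produce exactly the Serre combination $X^2 Y - (q+q\inv) XYX + Y X^2$ rather than some other quadratic. Concretely, the coefficient of the ``middle'' term $f_1 e_1 f_1$ (respectively $e_1 f_1 e_1$) becomes $-q - q\inv$ because one copy comes with coefficient $-q$ from $h'_1 f_1$ (respectively $e_1 h'_1$) and another with $-q\inv$ from the $q\inv$-twist in the bracket, while the two ``outer'' terms appear with coefficient $+1$ after the two signs cancel. No use of the other relations (weight relations, $e_1 f_2$-commutator, etc.) is needed, so the proof is essentially a two-line identity in each case followed by an appeal to \eqref{defrel}.
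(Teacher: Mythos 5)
Your proof is correct and is essentially the paper's own argument: the paper's proof consists of the single line ``By equation \eqref{defrel}'', i.e.\ exactly the observation that expanding $h'_1 = e_1f_1 - qf_1e_1$ inside the $q\inv$-twisted brackets reproduces the two modified Serre relations at $(i,j)=(1,1)$. The only blemish is the stray ``$f_1e_1^2\cdot 0$'' term in your first display, which is harmless but should be deleted.
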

\begin{proof}
By equation \eqref{defrel}.
\end{proof}

\begin{lem}\label{e1f1^n}
For each $n \in \Z_{\geq 0}$, we have
\begin{align}
e_1 f_1^{(n)} = f_1^{(n-1)} \left(h'_1 - [n-1]\{ pq^{-n}k_1 \} \right) + q^n f_1^{(n)}e_1. \nonumber
\end{align}
\end{lem}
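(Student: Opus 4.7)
The plan is to prove the identity by induction on $n$. The base case $n=0$ is immediate under the convention $f_1^{(-1)} = 0$, and the base case $n=1$ is just the defining relation $h'_1 = e_1 f_1 - q f_1 e_1$, i.e.\ $e_1 f_1 = h'_1 + q f_1 e_1$.

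For the inductive step, I will write $f_1^{(n+1)} = \tfrac{1}{[n+1]} f_1^{(n)} f_1$ and compute
\begin{align}
e_1 f_1^{(n+1)} = \frac{1}{[n+1]} \bigl( e_1 f_1^{(n)} \bigr) f_1, \nonumber
\end{align}
substituting the inductive hypothesis into the parenthesized factor. Each of the resulting terms is then pushed past the trailing $f_1$ using three ingredients: the $n=1$ case $e_1 f_1 = h'_1 + q f_1 e_1$; the commutation $h'_1 f_1 = q^{-1} f_1 h'_1 - [2]\{pqk_1\} f_1$ coming from Lemma \ref{h1}; and the Cartan-level relation $\{pq^m k_1\} f_1 = f_1 \{pq^{m-3} k_1\}$ derived from $k_1 f_1 = q^{-3} f_1 k_1$. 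This yields three kinds of terms. The terms ending in $e_1$ will combine into $q^{n+1} f_1^{(n+1)} e_1$. The coefficient of $f_1^{(n)} h'_1$ will come out to $q^{-1}[n] + q^n$, which equals $[n+1]$ by an elementary quantum-integer identity, so after dividing by $[n+1]$ we get $f_1^{(n)} h'_1$ cleanly. The remaining $\{pq^m k_1\}$-contributions collect into $-\tfrac{[n]}{[n+1]} f_1^{(n)} \bigl( [2]\{pq^{-2} k_1\} + [n-1]\{pq^{-n-3}k_1\} \bigr)$.

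To match the right hand side of the lemma I must therefore establish the combinatorial identity
\begin{align}
[n+1]\{pq^{-(n+1)}k_1\} = [2]\{pq^{-2}k_1\} + [n-1]\{pq^{-n-3}k_1\}, \nonumber
\end{align}
which after expanding $\{pq^a k_1\} = pq^a k_1 + p^{-1} q^{-a} k_1^{-1}$ reduces to the geometric-series expansion $[n+1] q^{-n-1} = q^{-1} + q^{-3} + \cdots + q^{-2n-1}$ and its bar-swapped counterpart obtained by sending $(p,q) \mapsto (p^{-1},q^{-1})$ and $k_1 \mapsto k_1^{-1}$. The main obstacle will be the careful bookkeeping of the shifts $m \mapsto m-3$ each time a $\{pq^m k_1\}$-symbol is commuted past an $f_1$, and the recognition that the two distinct $\{pq^m k_1\}$-terms produced by the induction collapse, via the telescoping identity above, into the single expected $\{pq^{-(n+1)} k_1\}$-term. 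Aside from these bookkeeping steps, the manipulations are routine.
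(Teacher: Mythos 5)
Your proposal is correct and follows essentially the same route as the paper: induction on $n$ via $f_1^{(n+1)} = \tfrac{1}{[n+1]}f_1^{(n)}f_1$, using Lemma \ref{h1}, the relation $e_1f_1 = h'_1 + qf_1e_1$, and the shift $\{pq^mk_1\}f_1 = f_1\{pq^{m-3}k_1\}$. The telescoping identity $[n+1]\{pq^{-n-1}k_1\} = [2]\{pq^{-2}k_1\} + [n-1]\{pq^{-n-3}k_1\}$ you isolate is exactly the step the paper leaves as ``the rest is straightforward,'' and your verification of it is sound.
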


\begin{proof}
We prove the assertion by induction on $n$. This is trivial when $n = 0$. Assume that the assertion holds for a fixed $n \in \Z_{\geq 0}$. Then, we compute as follows:
\begin{align}
e_1 f_1^{(n+1)} &= \frac{1}{[n+1]} e_1 f_1^{(n)} f_1 \nonumber\\
&= \frac{1}{[n+1]} \left(f_1^{(n-1)} \left(h'_1 - [n-1]\{ pq^{-n}k_1 \} \right) + q^n f_1^{(n)}e_1 \right) f_1 \nonumber\\
&= \frac{1}{[n+1]} \left(f_1^{(n-1)} \left(q\inv f_1 h'_1 - [2] \{ pqk_1 \} f_1 - [n-1]\{ pq^{-n}k_1 \} f_1 \right) + q^n f_1^{(n)} (h'_1 + q f_1 e_1) \right) \nonumber\\
&= \frac{1}{[n+1]} \left(f_1^{(n-1)} \left(q\inv f_1 h'_1 - [2] f_1 \{ pq^{-2}k_1 \} - [n-1] f_1 \{ pq^{-n-3}k_1 \} \right) + q^n f_1^{(n)} (h'_1 + q f_1 e_1) \right) \nonumber\\
&= \frac{1}{[n+1]} f_1^{(n)} \left(q\inv [n] h'_1 - [2][n] \{ pq^{-2}k_1 \} - [n-1][n] \{ pq^{-n-3}k_1 \} + q^n h'_1 \right) + q^{n+1} f_1^{(n+1)} e_1 \nonumber\\
&= f_1^{(n)} \left(h'_1 - [n]\{ pq^{-n-1}k_1 \} \right) + q^{n+1} f_1^{(n+1)}e_1; \nonumber
\end{align}
the second equality follows from our inductive hypothesis, the third from Lemma \ref{h1}, and the rest is straightforward. This proves the lemma.
\end{proof}

Note that when $r = 1$, we have $\Lambdaj = \Z \delta_1$ and $\gamma_1 = 3\delta_1$. Let $M \in \Oj$. By the definition of $\Oj$, there exists $a \in \Z$ such that $M_{a \delta_1} \neq \{ 0 \}$ and $M_{(a+3) \delta_1} = \{0\}$. Since the action of $h'_1$ preserves weights, it defines a linear endomorphism of $M_{a \delta_1}$. In order to consider the Jordan canonical form for the action of $h'_1$ on $M_{a \delta_1}$, we extend the base field $\Q(p,q)$ to its algebraic closure $\overline{\Q(p,q)}$ until the proof of Proposition \ref{EigenvalueOfh1}.  Let us write the Jordan canonical form as:
\begin{align}
\begin{pmatrix}
J_{d_1}(\mu_1) & & & \\
& J_{d_2}(\mu_2) & & \\
& & \ddots & \\
& & & J_{d_m}(\mu_m)
\end{pmatrix}, \nonumber
\end{align}
where $J_{d_i}(\mu_i)$ denotes the Jordan block of size $d_i$ whose eigenvalue is $\mu_i \in \overline{\Q(p,q)}$. We take a basis $\{ v_{j, k} \mid j = 1, \ldots, m,\ k = 1, \ldots, d_j \}$ of $M_{a \delta_1}$ in such a way that
\begin{align}
h'_1 v_{j, k} = \mu_j v_{j, k} + v_{j, k-1} \nonumber
\end{align}
for all $j = 1, \ldots, m,\ k = 1, \ldots, d_j$, where $v_{j, 0} := 0$. By Lemma \ref{e1f1^n}, we have
\begin{align}\label{e1f1^n'}
e_1 f_1^{(n)} v_{j,k} = (\mu_j - [n-1] \{a-n\}) f_1^{(n-1)} v_{j,k} + f_1^{(n-1)} v_{j,k-1}.
\end{align}

\begin{prop}\label{EigenvalueOfh1}
We have $\mu_j = [N_j] \{a-N_j-1\}$ for some $N_j \in \Z_{\geq 0}$. In particular, each $\mu_j$ belongs to $\Q(p,q)$.
\end{prop}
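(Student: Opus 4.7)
The plan is to extract each eigenvalue $\mu_j$ by testing the identity of Lemma \ref{e1f1^n} on the ``start'' vector $v_{j,1}$ of the $j$-th Jordan chain at the precise point where the $f_1$-tower dies. The key input is the weight-bound $M_{(a+3)\delta_1}=\{0\}$ together with the local nilpotency of $f_1$ coming from axiom $(M4)$, both of which pass to the scalar extension $M \otimes_{\Q(p,q)} \overline{\Q(p,q)}$.

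First, I observe that $v_{j,1}$ is a genuine $h'_1$-eigenvector with eigenvalue $\mu_j$, because $v_{j,0}=0$ by convention and hence $h'_1 v_{j,1} = \mu_j v_{j,1}$. Since $v_{j,1} \in M_{a\delta_1}$ and $e_1$ shifts weights by $\gamma_1 = 3\delta_1$, the maximality hypothesis $M_{(a+3)\delta_1}=\{0\}$ forces
$$e_1 v_{j,1} = 0.$$
Next, by the integrability axiom $(M4)$ (which survives extension of scalars), there is a smallest positive integer $N$ with $f_1^N v_{j,1} = 0$; equivalently $f_1^{(N)} v_{j,1}=0$ while $f_1^{(N-1)} v_{j,1} \neq 0$, since $[N]! \neq 0$.

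Now I apply Lemma \ref{e1f1^n} to $v_{j,1}$ with $n=N$. Using $v_{j,0}=0$, equation \eqref{e1f1^n'} reduces to
$$0 \;=\; e_1 f_1^{(N)} v_{j,1} \;=\; \bigl(\mu_j - [N-1]\{a-N\}\bigr)\, f_1^{(N-1)} v_{j,1}.$$
As $f_1^{(N-1)} v_{j,1} \neq 0$, this forces $\mu_j = [N-1]\{a-N\}$. Setting $N_j := N-1 \in \Z_{\geq 0}$ gives $\mu_j = [N_j]\{a-N_j-1\}$; in particular $\mu_j \in \Q(p,q)$, which justifies (in retrospect) that no genuine field extension was needed.

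The argument is short, so the ``main obstacle'' is really just choosing the right test vector: one must use the true eigenvector $v_{j,1}$ rather than a generalized one, so that the inhomogeneous term $f_1^{(n-1)} v_{j,k-1}$ in \eqref{e1f1^n'} vanishes and the identity collapses to a scalar equation for $\mu_j$. The generalized eigenvectors $v_{j,k}$ with $k>1$ are not needed for the eigenvalue computation, which is exactly what the proposition asserts.
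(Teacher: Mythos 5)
Your proof is correct and follows essentially the same route as the paper: test the identity of Lemma \ref{e1f1^n} on the chain-starting vector $v_{j,1}$ (a genuine eigenvector, so the inhomogeneous term drops out) at the first $n$ where the $f_1$-tower vanishes, and read off $\mu_j$ from the resulting scalar equation. Your explicit verification that $e_1 v_{j,1}=0$ and that local nilpotency survives scalar extension only makes explicit what the paper leaves implicit in deriving equation \eqref{e1f1^n'}.
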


\begin{proof}
Consider the case $k = 1$. By the local nilpotency of $f_1$, there exists a unique nonnegative integer $N_j$ such that
\begin{align}
f_1^{(N_j)} v_{j,1} \neq 0 \AND f_1^{(N_j+1)} v_{j,1} = 0. \nonumber
\end{align}
Then, by equation \eqref{e1f1^n'}, we have
\begin{align}
0 = e_1 f_1^{(N_j+1)} v_{j,1} = (\mu_j - [N_j] \{a-N_j-1\}) f_1^{(N_j)} v_{j,1}. \nonumber
\end{align}
Since $f_1^{(N_j)} v_{j,1} \neq 0$, we conclude that $\mu_j = [N_j] \{a-N_j-1\}$, as desired.
\end{proof}

\begin{prop}\label{diagonalizable}
Each $d_j$ is equal to $1$, that is, $h'_1$ is diagonalizable on $M_{a \delta_1}$.
\end{prop}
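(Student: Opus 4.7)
The plan is to argue by contradiction: assume $d_j \geq 2$ for some $j$, and let $v_{j,1},v_{j,2}$ be the first two vectors of the corresponding Jordan chain, so that $h'_1 v_{j,1} = \mu_j v_{j,1}$ and $h'_1 v_{j,2} = \mu_j v_{j,2} + v_{j,1}$. I will then exhibit infinitely many nonzero vectors $f_1^{(N_j+n)} v_{j,2}$, contradicting the local nilpotency axiom (M4).

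Setting $c_n := \mu_j - [n-1]\{a-n\}$, equation \eqref{e1f1^n'} with $k = 2$ reads
\begin{equation*}
e_1 f_1^{(n)} v_{j,2} = c_n f_1^{(n-1)} v_{j,2} + f_1^{(n-1)} v_{j,1}.
\end{equation*}
Proposition \ref{EigenvalueOfh1} gives $\mu_j = [N_j]\{a-N_j-1\}$, whence $c_{N_j+1} = 0$; substituting $n = N_j+1$ yields $e_1 f_1^{(N_j+1)} v_{j,2} = f_1^{(N_j)} v_{j,1} \neq 0$, so in particular $f_1^{(N_j+1)} v_{j,2} \neq 0$. For $n \geq 2$, the minimality of $N_j$ forces $f_1^{(N_j+n-1)} v_{j,1} = 0$, and the displayed identity collapses to $e_1 f_1^{(N_j+n)} v_{j,2} = c_{N_j+n} f_1^{(N_j+n-1)} v_{j,2}$; a straightforward induction then produces $f_1^{(N_j+n)} v_{j,2} \neq 0$ for every $n \geq 1$, provided that $c_{N_j+n} \neq 0$ for every $n \geq 2$.

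The main obstacle, and the technical heart of the argument, is verifying that the map $n \mapsto \mu(n) := [n]\{a-n-1\}$ is injective on $\Z_{\geq 0}$; granting this, $c_m = \mu(N_j) - \mu(m-1)$ is nonzero whenever $m \neq N_j+1$. Expanding the definitions $\{n\} = pq^n + p\inv q^{-n}$ and $[n] = (q^n - q^{-n})/(q-q\inv)$, a direct computation yields
\begin{equation*}
\mu(n) - \mu(m) = \frac{q^{2n-2m}-1}{q-q\inv}\bigl(pq^{a-2n-1} + p\inv q^{-a+2m+1}\bigr).
\end{equation*}
Because $p$ and $q$ are algebraically independent indeterminates, the second factor is a sum of two distinct Laurent monomials (distinguished already by their $p$-degrees $\pm 1$) and hence nonzero in $\Q(p,q)$, while the first factor is nonzero for $n \neq m$. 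This gives the desired injectivity and completes the contradiction, proving that every $d_j$ equals $1$.
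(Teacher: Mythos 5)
Your proof is correct, and it reaches the contradiction by a cleaner route than the paper. The paper introduces $N'_j$, the nilpotency degree of $f_1$ on $v_{j,2}$, and splits into the three cases $N'_j > N_j$, $N'_j = N_j$, $N'_j < N_j$, deriving a separate contradiction in each (in the last case by applying $e_1^{N'_j}$ and invoking the linear independence of $v_{j,1}$ and $v_{j,2}$). You instead show that no such $N'_j$ exists at all: the base case $e_1 f_1^{(N_j+1)}v_{j,2} = f_1^{(N_j)}v_{j,1} \neq 0$ rules out $N'_j \leq N_j$ in one stroke, and the induction using the nonvanishing of $c_{N_j+n}$ for $n \geq 2$ rules out $N'_j > N_j$, so the contradiction is with axiom $(M4)$ directly. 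A genuine added value of your write-up is the explicit verification that $n \mapsto [n]\{a-n-1\}$ is injective via the factorization of $\mu(n)-\mu(m)$; the paper's first case silently relies on exactly this fact (the assertion $[N'_j]\{a-N'_j-1\} \neq [N_j]\{a-N_j-1\}$) without proof, and your two-monomial argument in $p$-degree is the right way to justify it. One cosmetic remark: the phrase ``minimality of $N_j$'' should be ``maximality'' (or better, the defining property $f_1^{(N_j+1)}v_{j,1}=0$, which propagates to $f_1^{(N_j+k)}v_{j,1}=0$ for all $k \geq 1$ since $f_1^{(N_j+k)}$ is a left multiple of $f_1^{(N_j+1)}$); the mathematics is unaffected.
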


\begin{proof}
We use the notation $N_j$ in the proof of Proposition \ref{EigenvalueOfh1}. Assume, for a contradiction, that there exists $d_j > 1$. By equation \eqref{e1f1^n'}, we have
\begin{align}
e_1 f_1^{(n)} v_{j,2} = (\mu_j - [n - 1]\{a-n\}) f_1^{(n-1)} v_{j,2} + f_1^{(n-1)} v_{j,1} \nonumber
\end{align}
for all $n \geq 0$. Let $N'_j$ denote the unique nonnegative integer such that
\begin{align}
f_1^{(N'_j)} v_{j,2} \neq 0, \AND f_1^{(N'_j+1)} v_{j,2} = 0. \nonumber
\end{align}
When $N'_j > N_j$, we have
\begin{align}
0 = (\mu_j - [N'_j]\{a-N'_j-1\}) f_1^{(N'_j)} v_{j,2} + f_1^{(N'_j)} v_{j,1} = (\mu_j - [N'_j]\{a-N'_j-1\}) f_1^{(N'_j)} v_{j,2}. \nonumber
\end{align}
This implies that $\mu_j = [N'_j]\{a-N'_j-1\} \neq [N_j]\{a-N_j-1\}$, which causes a contradiction. When $N'_j = N_j$, we have
\begin{align}
0 = (\mu_j - [N_j]\{a-N_j-1\}) f_1^{(N_j)} v_{j,2} + f_1^{(N_j)} v_{j,1} = f_1^{(N_j)} v_{j,1}. \nonumber
\end{align}
This contradicts the definition of $N_j$. When $N'_j < N_j$, we have
\begin{align}
0 = (\mu_j - [N'_j]\{a-N'_j-1\}) f_1^{(N'_j)} v_{j,2} + f_1^{(N'_j)} v_{j,1}. \nonumber
\end{align}
Applying $e_1^{N'_j}$ on both sides, we obtain
\begin{align}
0 = \prod_{l = 1}^{N'_j+1}(\mu_j - [l-1]\{ a-l \})v_{j,2} + Xv_{j,1} \qu \text{for some } X \in \Q(p,q). \nonumber
\end{align}
Since the coefficient of $v_{j,2}$ is nonzero, this contradicts the linear independence of $v_{j,1}$ and $v_{j,2}$. This proves the proposition.
\end{proof}

\begin{theo}\label{IrreducibleModulesForr=1}
For each $a \in \Z$ and $b \in \Z_{\geq 0}$, there exists a unique $(b + 1)$-dimensional irreducible $\Uj_1$-module $L(a;b) \in \Oj$ such that
\begin{align}
&L(a;b) = \bigoplus_{n = 0}^b \Q(p,q) v_n, \nonumber\\
&v_n = f_1^{(n)}v_0, \qu k_1 v_0 = q^a v_0, \qu h'_1 v_0 = [b] \{a-b-1\} v_0. \nonumber
\end{align}
Conversely, each irreducible $\Uj_1$-module in $\Oj$ is isomorphic to $L(a;b)$ for some $a \in \Z$ and $b \in \Z_{\geq 0}$.
\end{theo}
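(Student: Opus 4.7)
The plan is twofold: construct $L(a;b)$ as an irreducible quotient of an appropriate Verma module and verify its dimension is $b+1$, and then show every irreducible module in $\Oj$ arises in this way by reading off $(a,b)$ from its highest-weight data.

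For existence, I would start from $V := V'(a\delta_1;[b]\{a-b-1\})$, whose triangular decomposition gives the vector-space basis $v_n := f_1^{(n)} v_0$, $n \geq 0$, where $v_0$ is the canonical generator satisfying $e_1 v_0 = 0$, $k_1 v_0 = q^a v_0$, and $h'_1 v_0 = [b]\{a-b-1\} v_0$. Lemma \ref{e1f1^n} then yields
\begin{align}
e_1 v_n = c_n v_{n-1}, \qquad c_n := [b]\{a-b-1\} - [n-1]\{a-n\},\nonumber
\end{align}
and the choice of highest weight makes $c_{b+1}=0$, so $\Uj_1 v_{b+1} \subset V$ is a proper submodule contained in $\bigoplus_{n \geq b+1} \Q(p,q) v_n$. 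Set $L(a;b) := V/\Uj_1 v_{b+1}$; the images of $v_0,\dots,v_b$ form a basis. A direct expansion in $\Q(p,q)$ shows $c_n \neq 0$ for $1 \leq n \leq b$ (the equation $[b]\{a-b-1\} = [n-1]\{a-n\}$, once expanded in powers of $p$ and $q$, forces $n = b+1$), so any nonzero submodule of $L(a;b)$ contains some $v_n$ by weight considerations, hence contains $v_0$ by iterated application of $e_1$, hence is all of $L(a;b)$; thus $L(a;b)$ is irreducible and plainly lies in $\Oj$.

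For the converse, let $M$ be an irreducible module in $\Oj$. Condition $(M3)$ supplies a $\leq$-maximal weight $a\delta_1$, and $e_1$ annihilates $M_{a\delta_1}$ since it raises weight by $\gamma_1 = 3\delta_1$. By Proposition \ref{diagonalizable}, $h'_1$ is diagonalizable on $M_{a\delta_1}$, and by the proof of Proposition \ref{EigenvalueOfh1} any eigenvector $v$ has eigenvalue $[b]\{a-b-1\}$ for the unique $b \geq 0$ with $f_1^{(b)} v \neq 0 = f_1^{(b+1)} v$. Using Lemmas \ref{h1} and \ref{e1f1^n}, I would check that $\Span_{\Q(p,q)}\{f_1^{(n)} v : 0 \leq n \leq b\}$ is stable under all generators of $\Uj_1$; by irreducibility it equals $M$, and the assignment $v_n \mapsto f_1^{(n)} v$ then defines a $\Uj_1$-module isomorphism $L(a;b) \xrightarrow{\sim} M$.

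The main technical step is the verification that $c_n \neq 0$ for $1 \leq n \leq b$, since it underlies both the cleanness of the truncation at $b+1$ and the irreducibility of the quotient; once this is in hand, the remaining stability and universality arguments are routine consequences of Lemmas \ref{h1} and \ref{e1f1^n} and Propositions \ref{EigenvalueOfh1} and \ref{diagonalizable}.
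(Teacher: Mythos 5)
Your proposal is correct and follows essentially the same route as the paper: the converse direction (maximal weight, then Propositions \ref{EigenvalueOfh1} and \ref{diagonalizable} to produce an $h'_1$-eigenvector $v$ with eigenvalue $[b]\{a-b-1\}$ whose $f_1^{(n)}v$ span the module) is the paper's argument verbatim. For the existence part the paper simply says the verification is straightforward and omits it; your construction of $L(a;b)$ as the quotient of $V'(a\delta_1;[b]\{a-b-1\})$ by $\Uj_1 v_{b+1}$, together with the check that $c_n=[b]\{a-b-1\}-[n-1]\{a-n\}$ vanishes only at $n=b+1$, is exactly the intended (and correct) filling-in of those details.
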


\begin{proof}
It is straightforward to show that $L(a;b)$ is a $(b+1)$-dimensional irreducible $\Uj_1$-module, and so we omit the details. Let $V \in \Oj$ be an irreducible $\Uj_1$-module. By the definition of $\Oj$, there exists an integer $a \in \Z$ such that $V_{a\delta_1} \neq 0$ and $e_1V_{a \delta_1} = 0$. Also, by Propositions \ref{EigenvalueOfh1} and \ref{diagonalizable}, there exist $b \in \Z_{\geq 0}$ and $v \in V_{a \delta_1} \setminus \{ 0 \}$ such that $f_1^{(b)} v \neq 0$, $f_1^{(b+1)} v = 0$, and $h'_1 v = [b]\{a-b-1\}v$. Hence the $\Uj_1$-submodule generated by $v$ is identical to $\bigoplus_{n = 0}^b \Q(p,q) f_1^{(n)} v$, which is isomorphic to $L(a;b)$ by the definitions of $v,a$, and $b$. Since $V$ is irreducible, we have $V = \Uj_1 v \simeq L(a;b)$. This proves the theorem.
\end{proof}

\begin{rem}\normalfont
By this theorem, one can easily see that the category $\Oj$ has an infinite-dimensional object. For instance, consider $\bigoplus_{a \leq 0} L(a;0)$. This is an infinite-dimensional $\Uj_1$-module satisfying the defining conditions of $\Oj$.
\end{rem}

Note that $L(a;b)$ is the irreducible quotient $L'(\lambda;\bfH')$ of the Verma module $V'(\lambda;\bfH')$ with highest weight $(\lambda;\bfH') = (a\delta_1;[b]\{a-b-1\})$. Hence, Theorem \ref{IrreducibleModulesForr=1} gives a necessary and sufficient condition for $L'(\lambda;\bfH')$ to be an object of $\Oj$.

\begin{cor}
Let $a \in \Z$ and $H'_1 \in \Q(p,q)$. Then, the irreducible highest weight module $L'(a\delta_1;H'_1)$ belongs to $\Oj$ if and only if $H'_1 = [b]\{a-b-1\}$ for some $b \in \Z_{\geq 0}$. Moreover, the assignment $(a,b) \mapsto [L(a;b)]$, where $[L(a;b)]$ denotes the isomorphism class of $L(a;b)$, gives a bijection from $\Z \times \Z_{\geq 0}$ to the set of isomorphism classes of irreducible $\Uj_1$-modules in $\Oj$.
\end{cor}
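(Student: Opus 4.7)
The plan is to deduce everything from Theorem \ref{IrreducibleModulesForr=1} together with Lemma \ref{e1f1^n}; essentially no new ideas are needed.

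First, suppose $H'_1 = [b]\{a-b-1\}$ for some $b \in \Z_{\geq 0}$. The module $L(a;b)$ constructed in Theorem \ref{IrreducibleModulesForr=1} is a nonzero irreducible $\Uj_1$-module in $\Oj$, and its cyclic generator $v_0$ satisfies $e_1 v_0 = 0$, $k_1 v_0 = q^a v_0$, and $h'_1 v_0 = H'_1 v_0$. Since for $r = 1$ the subspace $(\Uj_{>0})_+$ is spanned by positive powers of $e_1$, the defining presentation of the Verma module yields a surjection $V'(a\delta_1;H'_1) \twoheadrightarrow L(a;b)$. As $V'(a\delta_1;H'_1)$ has a unique maximal submodule by the triangular decomposition, this map factors through the irreducible quotient $L'(a\delta_1;H'_1)$, and the resulting nonzero map between irreducibles is an isomorphism. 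Hence $L'(a\delta_1;H'_1) \cong L(a;b) \in \Oj$.

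Conversely, suppose $L'(a\delta_1;H'_1) \in \Oj$, and let $v$ be a highest weight vector. By axiom $(M4)$ there exists a minimal $b \in \Z_{\geq 0}$ with $f_1^{(b+1)} v = 0$ and $f_1^{(b)} v \neq 0$. Applying Lemma \ref{e1f1^n} together with $e_1 v = 0$ and $h'_1 v = H'_1 v$, we obtain
\begin{align}
0 = e_1 f_1^{(b+1)} v = \bigl(H'_1 - [b]\{a-b-1\}\bigr) f_1^{(b)} v, \nonumber
\end{align}
which forces $H'_1 = [b]\{a-b-1\}$.

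Finally, the assignment $(a,b) \mapsto [L(a;b)]$ is surjective by the second assertion of Theorem \ref{IrreducibleModulesForr=1}, and injective because both the highest weight $a\delta_1$ and the dimension $b+1$ are invariants of the isomorphism class of $L(a;b)$. No step presents a substantial obstacle: this corollary is essentially a bookkeeping consequence of Theorem \ref{IrreducibleModulesForr=1} and the universal property of Verma modules.
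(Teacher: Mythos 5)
Your proof is correct and follows essentially the same route as the paper, which simply notes that $L(a;b)$ is the irreducible quotient $L'(a\delta_1;[b]\{a-b-1\})$ of the corresponding Verma module and then invokes Theorem \ref{IrreducibleModulesForr=1}. Your explicit use of Lemma \ref{e1f1^n} for the "only if" direction and of the universal property of $V'(a\delta_1;H'_1)$ for the "if" direction just fills in the routine details the paper leaves implicit.
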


\subsection{Complete reducibility}
Set $z_1 := h'_1 + \frac{[2]pq}{1-q^2}k_1 + \frac{[2]p\inv q\inv}{1-q^{-4}}k_1\inv \in \Uj_1$.

\begin{lem}\label{z1-eigen}
In $\Uj_1$, we have
\begin{align}
z_1 f_1 = q\inv f_1 z_1, \qu z_1 e_1 = q e_1 z_1. \nonumber
\end{align}
\end{lem}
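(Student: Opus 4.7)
The plan is a direct verification. The element $z_1$ is designed so that the ``defect'' in the relation between $h'_1$ and $f_1$ recorded in Lemma \ref{h1} is exactly compensated by the commutation of $f_1$ with the $k_1$- and $k_1\inv$-terms. Concretely, I will expand $z_1 f_1 - q\inv f_1 z_1$ as a sum of three pieces, one for each summand of $z_1$, and show that each of them contributes a multiple of either $f_1 k_1$ or $f_1 k_1\inv$ which sum to zero.

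The ingredients I would use are: (i) Lemma \ref{h1}, which gives $h'_1 f_1 - q\inv f_1 h'_1 = -[2](pqk_1+p\inv q\inv k_1\inv)f_1$; (ii) the weight computation $(\beta_1,\gamma_1) = 3$ from Section \ref{2.3}, so that $k_1 f_1 = q^{-3} f_1 k_1$ and $k_1\inv f_1 = q^3 f_1 k_1\inv$. With these in hand, the contribution of the $h'_1$-term is $-[2] p q^{-2} f_1 k_1 - [2] p\inv q^2 f_1 k_1\inv$ (using $k_i f_i = q^{-3} f_i k_i$ to push $k_1^{\pm 1}$ past $f_1$), while the $k_1$-term contributes
\begin{align*}
\tfrac{[2]pq}{1-q^2}(q^{-3}-q\inv) f_1 k_1 = [2]p \cdot \tfrac{q^{-2}-1}{1-q^2} f_1 k_1 = [2] p q^{-2} f_1 k_1,
\end{align*}
and the $k_1\inv$-term analogously contributes $[2] p\inv q^2 f_1 k_1\inv$ after simplifying $\frac{q^2-q^{-2}}{1-q^{-4}} = q^2$. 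These three contributions cancel, giving $z_1 f_1 = q\inv f_1 z_1$. The relation $z_1 e_1 = q e_1 z_1$ is verified by the symmetric calculation, using the second identity of Lemma \ref{h1} together with $k_1 e_1 = q^3 e_1 k_1$; the sign conventions in $z_1$ are arranged so that the same cancellation works on this side.

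There is really no obstacle here: once one observes that the $k_1$- and $k_1\inv$-coefficients in $z_1$ have been chosen to solve the linear system $\alpha (q^{-3}-q\inv) = [2] pq$ and $\beta(q^3-q\inv) = [2] p\inv q\inv$ (up to the factor of $(1-q^{\pm 2})$ appearing in denominators), the whole computation reduces to the algebraic identities $\frac{q^{-2}-1}{1-q^2} = q^{-2}$ and $\frac{q^2-q^{-2}}{1-q^{-4}} = q^2$. The only minor care required is to keep track of how the weight normalization $(\beta_1,\gamma_1)=3$ (rather than $2$, as it would be away from the distinguished node) forces the appearance of the $p,q$-dependent constants above.
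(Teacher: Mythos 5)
Your computation is correct and matches the paper's proof, which likewise verifies $z_1 f_1 = q^{-1} f_1 z_1$ directly from Lemma \ref{h1} together with the identities $[k_1,f_1]_{-1}=(1-q^2)k_1f_1$ and $[k_1^{-1},f_1]_{-1}=(1-q^{-4})k_1^{-1}f_1$. The only difference is cosmetic: for the second identity the paper applies the anti-automorphism $\sigma^\jmath$ of Proposition \ref{automorphisms} (2), under which $z_1$ is invariant, rather than repeating the symmetric calculation with $e_1$.
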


\begin{proof}
By Lemma \ref{h1} and the equalities
\begin{align}
[k_1,f_1]_{-1} = (1-q^2)k_1f_1, \qu [k_1\inv,f_1]_{-1} = (1-q^{-4})k_1\inv f_1, \nonumber
\end{align}
it follows that $z_1 f_1 = q\inv f_1 z_1$. Noting that $z_1$ is invariant under the anti-automorphism $\sigmaj$ defined in Proposition \ref{automorphisms} $(2)$, we obtain the other equality.
\end{proof}

Let $a \in \Z$ and $b \in \Z_{\geq 0}$, and take a highest weight vector $v \in L(a;b)$. Then we have
\begin{align}
z_1f_1^{(n)}v = q^{-n}\left( [b]\{a-b-1\} + \frac{[2]pq^{1+a}}{1-q^2} + \frac{[2]p\inv q^{-1-a}}{1-q^{-4}} \right) f_1^{(n)}v. \nonumber
\end{align}
Denoting by $z_1(a,b,n)$ the coefficient of $f_1^{(n)}v$ on the right-hand side, one has
\begin{align}
z_1(a,b,n) = -\frac{pq^{a-b-n}(q^{b+1}+q^{-b-1})}{q-q\inv} + \frac{p\inv q^{-a+2b-n+1}}{q-q\inv}. \nonumber
\end{align}
Using this, one can verify that the function $\Z^3 \rightarrow \Q(p,q),\ (a,b,n) \mapsto z_1(a,b,n)$, is injective.

\begin{lem}\label{split}
Let $M \in \Oj$, $a, a' \in \Z$, and $b, b' \in \Z_{\geq 0}$. Then, each short exact sequence of the form
\begin{align}\label{ses}
0 \rightarrow L(a;b) \xrightarrow[]{\iota} M \xrightarrow[]{\pi} L(a';b') \rightarrow 0
\end{align}
splits.
\end{lem}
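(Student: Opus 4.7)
The plan is to split into two cases according to whether the composition factors $L(a;b)$ and $L(a';b')$ are isomorphic (i.e.\ $(a,b)=(a',b')$) or not, and in each case to exhibit a $\Uj_1$-submodule of $M$ complementary to $\iota(L(a;b))$. The central tool for the non-isomorphic case is the element $z_1$: its scaling relations $z_1 e_1 = q e_1 z_1$ and $z_1 f_1 = q\inv f_1 z_1$ from Lemma \ref{z1-eigen}, together with the injectivity of $(a,b,n)\mapsto z_1(a,b,n)$ established just before the lemma, will let us separate the two composition factors by $z_1$-eigenspaces.

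For the case $(a,b)\neq (a',b')$, I would observe that on each weight space $M_\lambda$ the subspace $\iota(L(a;b))_\lambda$ is a $z_1$-eigenspace with eigenvalue $z_1(a,b,n)$ (where $\lambda=(a-3n)\delta_1$), while the quotient $M_\lambda/\iota(L(a;b))_\lambda\simeq L(a';b')_\lambda$ contributes a second $z_1$-eigenvalue $z_1(a',b',n')$ (where $\lambda=(a'-3n')\delta_1$); these two scalars are distinct whenever both are defined, by the injectivity of $z_1$. Hence $z_1$ is diagonalizable on each $M_\lambda$, and I define $N_\lambda\subset M_\lambda$ to be the $z_1(a',b',n')$-eigenspace (understood to be $0$ when no valid $n'$ exists) and set $N:=\bigoplus_\lambda N_\lambda$. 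Stability of $N$ under $e_1$ and $f_1$ then follows: Lemma \ref{z1-eigen} sends a $\mu$-eigenvector of $z_1$ to a $q\mu$-eigenvector under $e_1$ and a $q\inv\mu$-eigenvector under $f_1$, and the two identities $q\cdot z_1(a',b',n')=z_1(a',b',n'-1)$ and $q\inv\cdot z_1(a',b',n')=z_1(a',b',n'+1)$ are immediate from the explicit formula for $z_1(a,b,n)$. Since $\iota(L(a;b))_\lambda$ and $N_\lambda$ lie in distinct eigenspaces of $z_1$ on every weight space, a weight-space-wise check yields $M=\iota(L(a;b))\oplus N$.

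For the remaining case $(a,b)=(a',b')$, the two $z_1$-eigenvalues collide and a different argument is needed; here I would work directly on the top weight space. In this case the top weight of $M$ is $a\delta_1$ with $M_{(a+3)\delta_1}=0$, and $M_{a\delta_1}$ is two-dimensional. By Proposition \ref{diagonalizable}, $h'_1$ acts diagonalizably on $M_{a\delta_1}$; since both the submodule and the quotient contribute the same eigenvalue $[b]\{a-b-1\}$, $h'_1$ must act as this scalar on the entire two-dimensional $M_{a\delta_1}$. Pick any $w\in M_{a\delta_1}$ with $\pi(w)\neq 0$. Then $e_1 w=0$ because $M_{(a+3)\delta_1}=0$, and $h'_1 w=[b]\{a-b-1\}w$ by the previous sentence, so $w$ is a highest weight vector; moreover $f_1^{(b+1)}w=0$ because $M_{(a-3b-3)\delta_1}=0$ by the same weight-support argument. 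Therefore $\Uj_1 w$ is a highest weight module of the correct type, hence irreducible and isomorphic to $L(a;b)$. Since $\Uj_1 w\cap\iota(L(a;b))$ is a submodule of the irreducible $\Uj_1 w$ and is proper (as $\pi(w)\neq 0$ forces $w\notin\iota(L(a;b))$), it is zero, and a dimension count yields $M=\iota(L(a;b))\oplus\Uj_1 w$.

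The main obstacle is the verification in Case 1 that $N$ is $e_1$- and $f_1$-stable, which reduces to the two scalar identities relating $z_1(a',b',n')$ to $z_1(a',b',n'\pm 1)$; these are routine from the explicit formula but are where the scaling relations of Lemma \ref{z1-eigen} pay off. Once this stability is established, the remaining weight-space-wise comparisons and the dimension count in Case 2 are straightforward.
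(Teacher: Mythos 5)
Your proof is correct. It rests on the same key tool as the paper's -- the element $z_1$, its scaling relations from Lemma \ref{z1-eigen}, and the injectivity of $(a,b,n)\mapsto z_1(a,b,n)$ -- but the execution differs. The paper gives a single uniform argument: lift the highest weight vector $v\in L(a';b')$ to a generalized $z_1$-eigenvector $u$ of eigenvalue $z_1(a',b',0)$; then $e_1u$ is a generalized eigenvector of eigenvalue $z_1(a',b',-1)$, which lies in $\iota(L(a;b))$ but can never equal any $z_1(a,b,n)$ with $0\le n\le b$ because the third index $-1$ is negative; hence $e_1u=0$ and $u$ generates a complementary copy of $L(a';b')$. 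This trick of pushing the third index below $0$ makes the case $(a,b)=(a',b')$ harmless, so no case distinction is needed. Your Case 1 is the same eigenvalue-separation idea carried out on every weight space so as to build the entire complement $N$ at once, and your Case 2 replaces the $z_1$-argument (which degenerates when the two eigenvalues collide) by Proposition \ref{diagonalizable} applied to the top weight space; both steps are sound, and Case 2 is a clean self-contained alternative. The only cost is length: the paper's single observation about $z_1(a',b',-1)$ subsumes both of your cases, and it also spares you the verification that $z_1$ is genuinely diagonalizable on each $M_\lambda$ and that $N$ is stable under $e_1$ and $f_1$, which your route requires.
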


\begin{proof}
Let $v \in L(a',b')$ be a highest weight vector, and take $u \in \pi\inv(v)$. Since $\Uj_1$-module homomorphisms preserve generalized eigenspaces of $z_1$, we may assume that $u$ is a generalized eigenvector of $z_1$ with eigenvalue $z_1(a',b',0)$. Then, $e_1u$ is a generalized eigenvector of $z_1$ with eigenvalue $z_1(a',b',-1)$. Since $\pi(e_1u) = e_1\pi(u) = e_1 v = 0$, it follows that $e_1u \in \iota(L(a',b'))$. However, the eigenvalues of $z_1$ on $L(a,b)$ are $z_1(a,b,n)$, $0 \leq n \leq b$. Therefore, $e_1u = 0$, and hence we obtain a section $v \mapsto u$ of $\pi$. This proves the lemma.
\end{proof}

Now, the complete reducibility of $\Uj$-modules in $\Oj$ follows from a standard argument; see, for example, {\cite[Section 3.5]{HK02}}.

\begin{theo}
Every $\Uj_1$-module in $\Oj$ is completely reducible.
\end{theo}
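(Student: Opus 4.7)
My plan is to follow the standard Zorn-plus-splitting argument, with the element $z_1$ playing the role of the Casimir. First, I apply Zorn's lemma to the poset $\mathcal{S}$ of semisimple submodules of $M$ ordered by inclusion (nonempty, and closed under unions of chains since a union of nested semisimple modules is again a sum of simples), obtaining a maximal $N \in \mathcal{S}$. Assuming $N \subsetneq M$ toward a contradiction, the quotient $M/N$ lies in $\Oj$ and is nonzero; picking a maximal weight $\mu$ of $M/N$, the space $(M/N)_\mu$ is annihilated by $e_1$, and Propositions \ref{EigenvalueOfh1} and \ref{diagonalizable} applied to $M/N$ yield $\bar v \in (M/N)_\mu$ with $h'_1 \bar v = [b]\{a-b-1\}\bar v$ for some $a \in \Z$ and $b \in \Z_{\geq 0}$. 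By the argument in the proof of Theorem \ref{IrreducibleModulesForr=1}, the highest weight module $\Uj_1 \bar v$ is then forced to be isomorphic to $L(a;b)$.

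Writing $\pi : M \to M/N$ for the projection and $W := \pi^{-1}(\Uj_1 \bar v)$, I obtain a short exact sequence $0 \to N \to W \to L(a;b) \to 0$ that I plan to split by adapting Lemma \ref{split}. The finite-dimensional space $W_{a\delta_1}$ decomposes into generalized $z_1$-eigenspaces, and since $\pi$ intertwines $z_1$, I can choose a preimage $u \in W_{a\delta_1}$ of $\bar v$ inside the generalized eigenspace for $z_1(a,b,0)$. Lemma \ref{z1-eigen} then gives that $e_1 u$ is a generalized $z_1$-eigenvector of eigenvalue $z_1(a,b,-1)$, and $\pi(e_1 u)=0$ places $e_1 u$ inside $N$. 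However, $z_1$ acts semisimply on every simple summand $L(a',b')$ of $N$ with eigenvalues $z_1(a',b',n')$, $0 \le n' \le b'$, and by the injectivity of $(a,b,n) \mapsto z_1(a,b,n)$ recorded just before Lemma \ref{split}, none of these values equals $z_1(a,b,-1)$; hence $e_1 u = 0$.

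The point where I expect the main difficulty is upgrading $u$ from a generalized $z_1$-eigenvector to a genuine highest weight vector, which I plan to handle internally. Since $e_1 u = 0$ and $k_1 u = q^a u$, the submodule $\Uj_1 u \in \Oj$ has $a\delta_1$ as its top weight, so Proposition \ref{diagonalizable} applies to $(\Uj_1 u)_{a\delta_1}$ and forces $h'_1$, hence also $z_1$ (which differs from $h'_1$ by a scalar on that weight space), to act semisimply there. The generalized $z_1$-eigenvector $u$ is therefore an actual eigenvector, which gives $h'_1 u = [b]\{a-b-1\}u$ and makes $u$ a highest weight vector of type $(a;b)$; Theorem \ref{IrreducibleModulesForr=1} then identifies $\Uj_1 u$ with $L(a;b)$. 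Since $\pi$ restricts to a nonzero, hence injective, map on the simple $\Uj_1 u$, we get $\Uj_1 u \cap N = 0$, so $N \oplus \Uj_1 u$ is a semisimple submodule of $M$ strictly larger than $N$, contradicting maximality.
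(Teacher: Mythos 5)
Your proof is correct and follows essentially the same route as the paper: the paper establishes Lemma \ref{split} (the $z_1$-eigenvalue separation forcing $e_1u=0$) and then invokes the standard Zorn/maximal-semisimple-submodule argument from \cite[Section 3.5]{HK02}, which is precisely what you have written out. The only addition is your careful upgrade of $u$ from a generalized $z_1$-eigenvector to a genuine highest weight vector via Proposition \ref{diagonalizable} applied to $\Uj_1 u$ — a point the paper's Lemma \ref{split} passes over silently — and that step is handled correctly.
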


\begin{cor}\label{CharacterizationOf1highestvec}
Let $M \in \Oj$. Then, $M$ is decomposed into a direct sum of $z_1$-eigenspaces with possible eigenvalues $z_1(a,b,n)$, $a \in \Z, 0 \leq n \leq b$. In particular, if $z_1 m = z_1(a,b,0)m$ for some $m \in M$, then $e_1m = 0$.
\end{cor}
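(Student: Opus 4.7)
The plan is to reduce this corollary to results already established: the complete reducibility theorem just proved, the explicit diagonalization of $z_1$ on each irreducible $L(a;b)$, and the injectivity of the map $(a,b,n) \mapsto z_1(a,b,n)$ that was verified immediately after computing the formula for $z_1(a,b,n)$.

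First I would apply the complete reducibility theorem to write $M = \bigoplus_{i \in I} L(a_i;b_i)$ for some indexing set $I$ and pairs $(a_i,b_i) \in \Z \times \Z_{\geq 0}$. On each summand $L(a_i;b_i)$, the calculation preceding Lemma \ref{split} shows that in the basis $\{f_1^{(n)} v \mid 0 \leq n \leq b_i\}$ the element $z_1$ acts diagonally with eigenvalues $z_1(a_i,b_i,n)$. Taking the direct sum, $M$ decomposes as a direct sum of $z_1$-eigenspaces whose eigenvalues all have the form $z_1(a,b,n)$ with $a \in \Z$ and $0 \leq n \leq b$, which is the first assertion.

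For the ``in particular'' part, suppose $z_1 m = z_1(a,b,0) m$ for some $m \in M$. Write $m = \sum_j m_j$ with $m_j \in L(a_j;b_j)$. Since $z_1$ preserves each irreducible summand, each component $m_j$ is itself a $z_1$-eigenvector with eigenvalue $z_1(a,b,0)$. By the already established diagonalization, if $m_j \neq 0$ then $z_1(a,b,0) = z_1(a_j,b_j,n_j)$ for some $0 \leq n_j \leq b_j$. Invoking the injectivity of $(a,b,n) \mapsto z_1(a,b,n)$ from $\Z^3$ to $\Q(p,q)$ (noted just after the formula for $z_1(a,b,n)$), we conclude $(a_j,b_j,n_j) = (a,b,0)$. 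In particular $n_j = 0$, so $m_j$ lies in the top weight space of $L(a_j;b_j) = L(a;b)$ and is thus a highest weight vector, so $e_1 m_j = 0$. Summing yields $e_1 m = 0$.

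There is essentially no obstacle here: everything has been prepared by the preceding theorem and lemmas. The only point one must be careful about is the remark that $z_1$ preserves each irreducible summand (a routine consequence of the fact that direct summands are $\Uj_1$-submodules), and the emphasis that the injectivity statement was proved over $\Q(p,q)$, where $M$ naturally lives, so no extension of scalars is needed at this stage.
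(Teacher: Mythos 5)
Your proof is correct and follows exactly the route the paper intends: the corollary is stated without proof as an immediate consequence of the complete reducibility theorem, the diagonal action of $z_1$ on each $L(a;b)$ with eigenvalues $z_1(a,b,n)$, and the injectivity of $(a,b,n) \mapsto z_1(a,b,n)$. Nothing is missing.
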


\section{Complete reducibility and the irreducible modules}\label{CompleteReducibility}
Throughout this section, we fix $e \in \{ 1,-1 \}$.
\subsection{Braid group action on $\Uj$}
%
%
\begin{prop}[{\cite[4.5]{KP11}}]
For $i \in \Ij \setminus \{1\}$, there exist unique automorphisms $\tau'_{i,e}$ and $\tau''_{i,-e}$ on $\Uj$ satisfying the following:
\begin{align}
\tau'_{i,e}(e_j) &= \begin{cases}
-k_i^e f_i & \text{if } j = i, \\
e_j \qu & \text{if } |i - j| > 1, \\
[e_j, e_i]_e \qu & \text{if } |i - j| = 1,
\end{cases} \qu \qu
\tau'_{i,e}(f_j) = \begin{cases}
-e_i k_i^{-e} \qu & \text{if } j = i, \\
f_j \qu & \text{if } |i - j| > 1, \\
[f_i, f_j]_{-e} \qu & \text{if } |i - j| = 1,
\end{cases} 
\nonumber\\
\tau''_{i,-e}(e_j) &= \begin{cases}
-f_i k_i^{-e} \qu & \text{if } j = i, \\
e_j \qu & \text{if } |i - j| > 1, \\
[e_i, e_j]_e \qu & \text{if } |i - j| = 1,
\end{cases} \qu \qu
\tau''_{i,-e}(f_j) = \begin{cases}
-k_i^{e} e_i \qu & \text{if } j = i, \\
f_j \qu & \text{if } |i - j| > 1, \\
[f_j, f_i]_{-e} \qu & \text{if } |i - j| = 1,
\end{cases} \nonumber\\
\tau'_{i,e}(k_j) &= \tau''_{i,-e}(k_j) = \begin{cases}
k_i\inv \qu & \text{if } j = i, \\
k_j \qu & \text{if } |i - j| > 1, \\
k_i k_j \qu & \text{if } |i - j| = 1.
\end{cases} \nonumber
\end{align}
Moreover, $\{ \tau'_{i,e} \}_{i \in \Ij \setminus \{1\}}$ and $\{ \tau''_{i,-e} \}_{i \in \Ij \setminus \{1\}}$ satisfy the braid relation of type $A_{r-1}$.
\end{prop}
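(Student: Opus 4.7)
The plan is to realize $\tau'_{i,e}$ and $\tau''_{i,-e}$ as restrictions to $\Uj$ of commuting products of Lusztig's braid operators on $\U$ from Definition \ref{BraidGroupAction}. For $i \in \Ij \setminus \{1\}$, set
\[
\widehat{\tau}'_{i,e} := T'_{i-\hf,e} T'_{-(i-\hf),e}, \qquad \widehat{\tau}''_{i,-e} := T''_{i-\hf,-e} T''_{-(i-\hf),-e}
\]
as automorphisms of $\U$. Since $|(i-\hf)-(-(i-\hf))| = 2i - 1 \geq 3$, the two factors in each product commute by the braid relations of $\{T'_{j,e}\}_{j\in\bbI}$ and $\{T''_{j,-e}\}_{j \in \bbI}$ established in \cite[Theorem 39.4.3]{L94}. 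Uniqueness of $\tau'_{i,e}$ and $\tau''_{i,-e}$ on $\Uj$ is immediate from the fact that $\{e_j, f_j, k_j^{\pm 1}\}_{j \in \Ij}$ generates $\Uj$.

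The first main step is to verify $\widehat{\tau}'_{i,e}(\Uj) \subseteq \Uj$ and that the action on each generator matches the prescribed formula. I would split into cases. For $j = i$, apply Lusztig's rule to each summand of $e_i = E_{i-\hf} + F_{-(i-\hf)} K_{i-\hf}\inv$ (the $p$-factor disappears since $i \neq 1$); because $\pm(i-\hf)$ are non-adjacent in the Dynkin diagram of $\U$, each Lusztig factor fixes the $E, F$ attached to the other index, while the surviving $K$-factors recombine into $k_i^{\pm e}$, producing $-k_i^e f_i$ as required. For $j = i \pm 1$, Lusztig's commutator formula $[E_a, E_b]_e$ applied to each summand of $e_j$ assembles into $[e_{i \pm 1}, e_i]_e$. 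For $j \in \Ij$ with $|i - j| > 1$, both Lusztig factors fix the relevant $E, F, K$, so $e_j, f_j, k_j^{\pm 1}$ are fixed. The action on the $k_j$ is read off from the $K$-action. Once these generator formulas are established, $\tau'_{i,e} := \widehat{\tau}'_{i,e}|_{\Uj}$ is an algebra endomorphism because $\widehat{\tau}'_{i,e}$ is an automorphism of $\U$; applying the same computation with the Lusztig inverses (of $T''$-type) exhibits a two-sided inverse, so $\tau'_{i,e}$ is in fact an automorphism of $\Uj$. The treatment of $\tau''_{i,-e}$ is entirely parallel.

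For the braid relations among $\{\tau'_{i,e}\}_{i \in \Ij \setminus \{1\}}$ (and similarly $\tau''$), I would expand
\[
\tau'_{i,e} \tau'_{i+1,e} \tau'_{i,e} = T'_{i-\hf,e} T'_{-(i-\hf),e} T'_{i+\hf,e} T'_{-(i+\hf),e} T'_{i-\hf,e} T'_{-(i-\hf),e}
\]
and use the commutation $T'_{a,e} T'_{b,e} = T'_{b,e} T'_{a,e}$ for $|a-b| > 1$ (which holds whenever one of $a, b$ is positive and the other negative among $\pm(i-\hf), \pm(i+\hf)$, since the smallest such gap is $1 + 2 = 3$) to rearrange the product as $\bigl(T'_{i-\hf,e} T'_{i+\hf,e} T'_{i-\hf,e}\bigr) \bigl(T'_{-(i-\hf),e} T'_{-(i+\hf),e} T'_{-(i-\hf),e}\bigr)$. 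Each triple is invariant under the type-$A_{2r}$ braid relation of \cite[Theorem 39.4.3]{L94}, and reversing the rearrangement yields $\tau'_{i+1,e} \tau'_{i,e} \tau'_{i+1,e}$. The commutation $\tau'_{i,e} \tau'_{j,e} = \tau'_{j,e} \tau'_{i,e}$ for $|i - j| > 1$ is a simpler instance of the same manipulation.

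The main obstacle will be the bookkeeping in the first step: tracking the $K$-factors so that they recombine exactly into $k_i^{\pm e}$ rather than mixed monomials in $K_{\pm(i-\hf)}^{\pm 1}$, and confirming that the two Lusztig commutator expansions on the two summands of $e_{i \pm 1}$ assemble into $[e_{i\pm 1}, e_i]_e$ with no leftover cross-terms. This is a routine though somewhat lengthy computation; crucially, $i \neq 1$ eliminates every $p$-factor, so none of the exceptional relations in \eqref{defrel} enter the verification.
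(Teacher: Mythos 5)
Your construction breaks down at its very first step: the product $\widehat{\tau}'_{i,e} = T'_{i-\hf,e}T'_{-(i-\hf),e}$ does \emph{not} preserve $\Uj$, and the $K$-factors do not recombine into $k_i^{\pm e}$. Write $a = i-\hf$, $b = -(i-\hf)$, so that $e_i = E_a + F_bK_a\inv$ (no $p$-factor since $i \neq 1$) and $k_i = K_aK_b\inv$. Applying the two commuting Lusztig operators gives
\begin{align}
\widehat{\tau}'_{i,e}(e_i) = -K_a^{e}F_a - E_bK_b^{-e}K_a, \nonumber
\end{align}
whereas the target is
\begin{align}
-k_i^{e}f_i = -q^{-2e}E_bK_a^{e}K_b^{-e} - K_a^{e}K_b^{-e-1}F_a. \nonumber
\end{align}
Already for $e=1$ the $F_a$-components differ by $K_b^{-2}$ and the $E_b$-components by $q^{-2}$; worse, the two weight components of $\widehat{\tau}'_{i,e}(e_i)$ are mutually incompatible with membership in $\Uj$ (any element of $\Uj$ supported on the $\U$-weights $\alpha_b$ and $-\alpha_a$ must be a combination of $f_i\kappa$ with $\kappa$ in the subalgebra generated by the $k_j^{\pm1}$, and no such combination matches both components). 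This is not a bookkeeping issue you can repair by reordering or switching to $T''$: Lusztig's braid operators simply do not stabilize the coideal subalgebra, which is precisely why Kolb--Pellegrini had to construct these automorphisms independently. The paper's proof accordingly takes a different route: it defines $\tau'_{i,e}$ and $\tau''_{i,-e}$ on generators and verifies that the defining relations \eqref{defrel} are preserved, observing that the only nontrivial checks are the two deformed Serre relations involving $e_1,f_1,k_1$ applied to $\tau_2(e_1),\tau_2(f_1),\tau_2(k_1)$ (verified by direct calculation or with GAP).

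Your closing remark that ``$i \neq 1$ eliminates every $p$-factor, so none of the exceptional relations in \eqref{defrel} enter the verification'' is a second, independent error, and it obscures where the real work lies. Although $i \geq 2$, the operator $\tau_2$ acts nontrivially on $e_1 = E_{\hf} + p\inv F_{-\hf}K_{\hf}\inv$ and $f_1$ (since $|2-1|=1$), so in any relation-checking argument one must confirm that the images of $e_1,f_1,k_1$ still satisfy the $p$-dependent relations in the last two lines of \eqref{defrel}. That verification is exactly the content of the paper's proof and cannot be waved away.
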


\begin{proof}
Set $\tau_i := \tau'_{i,e}$ (resp., $\tau''_{i,-e}$), $i \in \Ij \setminus\{1\}$. We need to verify that the relations in \eqref{defrel} hold if we replace $e_i, f_i, k_i$ by $\tau_j(e_i), \tau_j(f_i), \tau_j(k_i)$, respectively. By comparing equations above with Definition \ref{BraidGroupAction}, one immediately finds that the nontrivial assertions are
\begin{align}
\begin{split}
&\tau_2(e_1)^2 \tau_2(f_1) - (q+q\inv)\tau_2(e_1)\tau_2(f_1)\tau_2(e_1) + \tau_2(f_1)\tau_2(e_1)^2 \\
&\hspace{5cm}= -(q+q\inv) \tau_2(e_1) (pq\tau_2(k_1) + p\inv q\inv \tau_2(k_1)\inv), \\
&\tau_2(f_1)^2 \tau_2(e_1) - (q+q\inv)\tau_2(f_1)\tau_2(e_1)\tau_2(f_1) + \tau_2(e_1)\tau_2(f_1)^2 \\
&\hspace{5cm}= -(q+q\inv) (pq\tau_2(k_1) + p\inv q\inv \tau_2(k_1)\inv) \tau_2(f_1).
\end{split} \nonumber
\end{align}
These are checked by direct calculation, or by means of a computer program GAP \cite{GAP16} with a package Quagroup (see {\cite[4.5]{KP11}}). Also, one can verify the braid relation in the same way as for the braid group action on $\U$. This proves the proposition.
\end{proof}

\subsection{Braid group action on $\Uj$-modules}
In this subsection, we define a braid group action on $\Uj$-modules in $\Oj$. Since $\Uj$-contains $(r-1)$ $\mathfrak{sl}_2$-triples $(e_i,k_i,f_i)$, $i \in \Ij \setminus \{1\}$, most parts of the propositions in this subsection follow form the ordinary quantum group theory. Hence, we omit the details.

\begin{defi}\normalfont
Let $M \in \Oj$. For each $i \in \Ij \setminus \{1\}$, we define two linear automorphisms $\tau'_{i,e}$ and $\tau''_{i,e}$ on $M$ by:
\begin{align}
\tau'_{i,e}(m) &= \sum_{\substack{a, b, c \in \Z_{\geq 0} \\ a - b + c = n}} (-q)^b q^{e(-ac+b)} f_i^{(a)} e_i^{(b)} f_i^{(c)} m, \nonumber\\
\tau''_{i,e}(m) &= \sum_{\substack{a, b, c \in \Z_{\geq 0} \\ -a + b - c = n}} (-q)^b q^{e(-ac+b)} e_i^{(a)} f_i^{(b)} e_i^{(c)} m, \nonumber
\end{align}
where $n \in \Z$, and $m \in M$ is such that $k_i m = q^n m$.
\end{defi}

\begin{prop}[see {\cite[Proposition 5.2.2]{L94}}]
Let $M \in \Oj$, $i \in \Ij \setminus \{1\}$, and let $\lambda \in \Lambdaj$ be such that $(\beta_i, \lambda) \geq 0$, $j \in \{ 0, 1, \ldots, (\beta_i, \lambda) \}$; we set $h := (\beta_i,\lambda) - j$.
\begin{enumerate}
\item If $\eta \in M_\lambda$ is such that $e_i \eta = 0$, then $\tau'_{i,e}(f_i^{(j)} \eta) = (-1)^j q^{e(jh + j)} f_i^{(h)} \eta$.
\item If $\xi \in M_{-\lambda}$ is such that $f_i \xi = 0$, then $\tau''_{i,e}(e_i^{(j)} \xi) = (-1)^j q^{(e(jh + j))} e_i^{(h)} \xi$.
\end{enumerate}
\end{prop}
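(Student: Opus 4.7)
The plan is to reduce the statement to the standard $U_q(\mathfrak{sl}_2)$-case treated in \cite[Proposition 5.2.2]{L94}. Concretely, I would first observe that for $i \in \Ij \setminus \{1\}$, the triple $(e_i, f_i, k_i^{\pm 1})$ satisfies the defining relations of $U_q(\mathfrak{sl}_2)$: from \eqref{defrel} one reads off $k_i e_i k_i^{-1} = q^2 e_i$, $k_i f_i k_i^{-1} = q^{-2} f_i$, and $e_i f_i - f_i e_i = (k_i - k_i^{-1})/(q - q^{-1})$, since the hypothesis $i \neq 1$ excludes the extra relations in \eqref{defrel} involving $\{pq k_1\}$. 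Thus the subalgebra of $\Uj$ generated by $e_i, f_i, k_i^{\pm 1}$ is a homomorphic image of $U_q(\mathfrak{sl}_2)$, and by condition $(M4)$ the restriction of $M \in \Oj$ to this subalgebra is an integrable (locally finite) $U_q(\mathfrak{sl}_2)$-module.

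Next I would point out that the formulas used to define $\tau'_{i,e}$ and $\tau''_{i,e}$ on $M$ involve only $e_i, f_i, k_i$, and they coincide literally with Lusztig's defining formulas for the operators denoted $T'_{i,e}, T''_{i,e}$ acting on an integrable $U_q(\mathfrak{sl}_2)$-module in \cite[5.2.1]{L94}. Moreover, on an element of $k_i$-weight $q^n$ the summations are finite by integrability, so the operators are well defined. For part (1), the vector $\eta \in M_\lambda$ satisfies $k_i \eta = q^{(\beta_i, \lambda)}\eta$ and $e_i \eta = 0$, so $\eta$ generates (inside the $U_q(\mathfrak{sl}_2)$-submodule it spans together with the $f_i^{(a)}\eta$'s) a highest weight $U_q(\mathfrak{sl}_2)$-module of highest weight $(\beta_i,\lambda)$. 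Applying \cite[Proposition 5.2.2]{L94} to this sub-$U_q(\mathfrak{sl}_2)$-module gives the desired identity
\[
\tau'_{i,e}(f_i^{(j)} \eta) = (-1)^j q^{e(jh+j)} f_i^{(h)} \eta,
\]
where $h = (\beta_i, \lambda) - j$. Part (2) follows by the symmetric argument using $\xi \in M_{-\lambda}$ and $f_i \xi = 0$.

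The only point requiring care is the $\mathfrak{sl}_2$-reduction itself, i.e.\ ensuring that no nonstandard relations intervene; this is precisely why the hypothesis $i \neq 1$ is needed. Once that is in place the conclusion is an immediate citation of Lusztig's result, so I do not expect a genuine obstacle. If one preferred a self-contained derivation, one would expand $\tau'_{i,e}(f_i^{(j)}\eta) = \sum_{a-b+c=h}(-q)^b q^{e(-ac+b)} f_i^{(a)} e_i^{(b)} f_i^{(c)}\eta$, repeatedly apply the quantum Serre-type commutation formula
\[
e_i^{(b)} f_i^{(c)} \eta = \sum_{t \geq 0} f_i^{(c-t)} \begin{bmatrix} k_i; 2t - b - c \\ t \end{bmatrix} e_i^{(b-t)} \eta,
\]
use $e_i \eta = 0$ to kill all terms with $b > t$, and collapse the resulting sum via a $q$-binomial identity; this is exactly the content of Lusztig's proof and there is nothing new to verify in our setting.
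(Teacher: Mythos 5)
Your proposal is correct and is exactly the argument the paper intends: the paper explicitly notes that $\Uj$ contains the $\mathfrak{sl}_2$-triples $(e_i,k_i,f_i)$ for $i \in \Ij\setminus\{1\}$ and omits the details, deferring to the ordinary quantum group theory of \cite[Proposition 5.2.2]{L94}. Your extra care in checking that the $i\neq 1$ hypothesis excludes the nonstandard relations and that $(M4)$ gives integrability simply fills in the routine verification the paper leaves to the reader.
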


\begin{prop}[see {\cite[Proposition 5.2.3]{L94}}]\label{BraidGroupActionOnWeightVec}
Let $M \in \Oj$, $i \in \Ij \setminus \{1\}$, and $m \in M_\lambda$.
\begin{enumerate}
\item We have $\tau'_{i,e} \tau''_{i,-e} = \id_M = \tau''_{i,-e} \tau'_{i,e}$.
\item We have $\tau''_{i,e}(m) = (-1)^{(\beta_i,\lambda)} q^{e(\beta_i, \lambda)} \tau'_{i,e}(m)$.
\end{enumerate}
\end{prop}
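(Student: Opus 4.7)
The plan is to reduce both statements to Lusztig's original result \cite[Proposition 5.2.3]{L94} by exploiting the fact that, for $i \in \Ij \setminus \{1\}$, the triple $(e_i,f_i,k_i^{\pm 1})$ generates a copy of $U_q(\mathfrak{sl}_2)$ inside $\Uj$. Indeed, for such $i$ the relations in \eqref{defrel} specialize to $k_i e_i k_i\inv = q^2 e_i$, $k_i f_i k_i\inv = q^{-2} f_i$, and $e_i f_i - f_i e_i = (k_i - k_i\inv)/(q - q\inv)$, which are precisely the $U_q(\mathfrak{sl}_2)$-relations (the anomalous Serre-type relation at $i=1$ is avoided by our standing assumption $i \neq 1$). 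Denote this rank-one subalgebra by $\Uj(i)$.

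Given $M \in \Oj$, I would first observe that the restriction $M{\downarrow}_{\Uj(i)}$ is an integrable $U_q(\mathfrak{sl}_2)$-module in the sense of \cite[Chapter 5]{L94}. The weight-space decomposition $(M1)$ ensures that $k_i$ acts semisimply with eigenvalues of the form $q^{(\beta_i,\lambda)} \in q^{\Z}$, while the local nilpotency condition $(M4)$ guarantees the required integrability for $e_i$ and $f_i$. Moreover, comparing the definitions of $\tau'_{i,e}, \tau''_{i,e}$ on $M$ with Lusztig's definition of $T'_{i,e}, T''_{i,e}$ shows that they coincide as operators on $M{\downarrow}_{\Uj(i)}$, since both formulas are built from the same expressions in $e_i^{(a)}, f_i^{(b)}$ and read the weight index $n$ off the $k_i$-eigenvalue.

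With this identification both statements are immediate transcriptions of Lusztig's result. For $(1)$, \cite[Proposition 5.2.3]{L94} asserts that $T'_{i,e}$ and $T''_{i,-e}$ are mutually inverse on any integrable $U_q(\mathfrak{sl}_2)$-module; applied to $M{\downarrow}_{\Uj(i)}$ this gives $\tau'_{i,e} \tau''_{i,-e} = \id_M = \tau''_{i,-e} \tau'_{i,e}$. For $(2)$, the same proposition gives $T''_{i,e}(m) = (-1)^n q^{en} T'_{i,e}(m)$ whenever $k_i m = q^n m$; since $k_i m = q^{(\beta_i,\lambda)} m$ for $m \in M_\lambda$, this is exactly the desired identity with $n = (\beta_i,\lambda)$.

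The only point warranting verification is the compatibility of sign and $q$-power conventions between our $(\tau'_{i,e}, \tau''_{i,e})$ and Lusztig's $(T'_{i,e}, T''_{i,e})$; this is a direct comparison of the two defining sums, using the previous proposition to evaluate both sides on the basis $\{ f_i^{(j)} \eta \}$ obtained from a highest weight vector $\eta$ of the $\Uj(i)$-action. I do not anticipate any genuine obstacle, as the nontrivial Serre-type phenomena specific to $\Uj$ (such as the modified relation at $i=1$) play no role once $i \neq 1$ is fixed.
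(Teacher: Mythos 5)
Your proposal is correct and follows exactly the route the paper intends: the paper explicitly remarks that $(e_i,k_i,f_i)$ for $i \in \Ij \setminus \{1\}$ are $\mathfrak{sl}_2$-triples in $\Uj$ and that the propositions of this subsection therefore follow from the ordinary quantum group theory (Lusztig's Proposition 5.2.3), omitting the details you have supplied. Your verification that the restriction to the rank-one subalgebra is integrable (via $(M1)$ and $(M4)$) and that the defining sums match Lusztig's is precisely the omitted routine check.
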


\begin{prop}[see {\cite[Proposition 37.1.2]{L94}}]\label{BraidGroupActionOnModule}
Let $M \in \Oj$ and $i \in \Ij \setminus \{1\}$. Then, for each $m \in M$ and $x \in \Uj_r$, we have
\begin{align}
\tau'_{i,e}(x m) = \tau'_{i,e}(x) \tau'_{i,e}(m), \qu \qu \tau''_{i,e}(x m) = \tau''_{i,e}(x) \tau''_{i,e}(m). \nonumber
\end{align}
\end{prop}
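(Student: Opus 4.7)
The plan is to treat the assertion for $\tau'_{i,e}$ in detail; the case of $\tau''_{i,e}$ is entirely parallel, and in fact follows from the first via Proposition~\ref{BraidGroupActionOnWeightVec}~(1) once one checks that $\tau''_{i,-e} = (\tau'_{i,e})\inv$ at both the algebra and the module level. First I would reduce to generators via the observation that the identity is multiplicative in $x$: if it holds for $x$ and for $y$, then
\begin{equation*}
\tau'_{i,e}((xy)m) = \tau'_{i,e}(x)\tau'_{i,e}(ym) = \tau'_{i,e}(x)\tau'_{i,e}(y)\tau'_{i,e}(m) = \tau'_{i,e}(xy)\tau'_{i,e}(m),
\end{equation*}
and $\Q(p,q)$-linearity in $x$ finishes the reduction. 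It therefore suffices to verify the identity for $x$ ranging over the generating set $\{e_j,f_j,k_j^{\pm 1} \mid j \in \Ij\}$.

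The next step is a case analysis on the relative position of $i$ and $j$. The key enabling observation is that, because $i \neq 1$, the anomalous relations of \eqref{defrel} never involve $e_i, f_i, k_i$, so the triple $(e_i,f_i,k_i^{\pm 1})$ generates a genuine copy of $U_q(\mathfrak{sl}_2)$ inside $\Uj_r$, and $M$ restricts to an integrable module over it. With $\tau'_{i,e}$ on $M$ coinciding by definition with Lusztig's symmetry for this $U_q(\mathfrak{sl}_2)$-action, the case $j = i$ is the rank-one version of \cite[Proposition 37.1.2]{L94}. The case $|i-j| > 1$ is equally immediate: the generators $e_j, f_j, k_j^{\pm 1}$ commute with $e_i, f_i, k_i^{\pm 1}$, hence with every monomial appearing in the defining expression of $\tau'_{i,e}$ on $M$, matching the algebra identity $\tau'_{i,e}(x) = x$.

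The heart of the argument is the case $|i - j| = 1$. Since $i \neq 1$, the pair $(i,j)$ avoids the exceptional index pair $(1,1)$, so the restriction of \eqref{defrel} to $\{e_i,f_i,e_j,f_j,k_i^{\pm 1},k_j^{\pm 1}\}$ coincides with that of the corresponding rank-two Chevalley generators in $U_q(\mathfrak{sl}_3)$, including the ordinary quantum Serre relations. I would decompose $m$ into sums of vectors of the form $f_i^{(a)}\eta$ with $e_i\eta = 0$, using the complete reducibility of $M$ as an integrable module over $U_q(\mathfrak{sl}_2)\langle e_i,f_i,k_i\rangle$, and apply the closed formulas of Proposition~\ref{BraidGroupActionOnWeightVec} to both sides of $\tau'_{i,e}(e_j m) = [e_j,e_i]_e\tau'_{i,e}(m)$; the quantum Serre relation between $e_i$ and $e_j$ then forces the two sides to agree, and the identity for $f_j$ is symmetric. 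The main obstacle will be the combinatorics of the divided powers and the coefficients $(-q)^b q^{e(-ac+b)}$, but because no $\Uj_r$-specific relation enters when $i \neq 1$, the computation is formally identical to that of \cite[Proposition 37.1.2]{L94} and I would cite it rather than reproduce it.
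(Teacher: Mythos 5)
Your proposal is correct and is essentially the paper's own (omitted) argument: the author merely remarks that $(e_i,k_i,f_i)$, $i\in\Ij\setminus\{1\}$, are genuine $\mathfrak{sl}_2$-triples and defers to \cite[Proposition 37.1.2]{L94}, which is exactly the reduction to generators and rank-two computation you describe. The only slip is the claim that for $|i-j|=1$ the generators $e_i,f_i,e_j,f_j,k_i^{\pm1},k_j^{\pm1}$ satisfy the $U_q(\mathfrak{sl}_3)$ relations --- false when $j=1$, where the relations internal to index $1$ are deformed --- but this is harmless, since Lusztig's verification uses only the cross-relations with node $i$ (the Serre relation, $e_if_j=f_je_i$, and $k_ie_jk_i\inv=q\inv e_j$), and these are undeformed in $\Uj$.
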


In what follows, we write $\tau_i = \tau''_{i,1}$ for $i \in \Ij \setminus \{1\}$.

\subsection{Classification of the irreducible modules in $\Oj$}
Recall the triangular decomposition $\Uj = \Uj_{<0} \otimes (\Uj_0 \otimes \Ujo) \otimes \Uj_{>0}$ associated with the reflection order $\preceq$ defined in the proof of Lemma \ref{reforder}. Also, recall from \eqref{h_i's} in Section \ref{2.3}, the explicit form of the root vectors $h'_i \in \Uj_0$, $i \in \Ij = \{ 1,\ldots,r \}$. We remark that an irreducible highest weight module is determined by the eigenvalues of $k_i$'s and $h'_i$'s for a highest weight vector. However, $h'_i$'s are sometimes difficult to deal with.

\begin{prop}
Let $V'(\lambda;\bfH')$ be the Verma module with highest weight $(\lambda;\bfH')$, and $v \in V'(\lm;\bfH')$ a highest weight vector. Then, $\bfH'$ is determined by the $\tau_i \cdots \tau_2(h'_1)$-eigenvalue of $v$ for $i \in \Ij$.
\end{prop}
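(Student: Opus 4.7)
The plan is to argue by induction on $i\in\Ij$ that the eigenvalue $\mu_i$ of $X_i:=\tau_i\tau_{i-1}\cdots\tau_2(h'_1)$ on $v$ (with the convention $X_1:=h'_1$) determines $H'_i$ once $H'_1,\ldots,H'_{i-1}$ are known. The first thing to check is that $X_iv$ really is a scalar multiple of $v$. Since $h'_1$ has $\Lambdaj$-weight $0$ and the explicit formulas for $\tau_j$ on the $k$-generators show that $\tau_j$ permutes $\Lambdaj$-weights by the reflection $s_{\beta_j}$ (which fixes $0$), each $X_i$ lies in the weight-$0$ part of $\Uj$. In the Verma module $V'(\lambda;\bfH')$ the weight space at $\lambda$ is one-dimensional and spanned by $v$, so $X_iv=\mu_iv$ for a unique $\mu_i\in\Q(p,q)$, and the base case $\mu_1=H'_1$ is immediate.

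For the inductive step I want to prove that $\mu_i=c_iH'_i+R_i$ where $c_i\in\Q(p,q)^{\times}$ and $R_i$ depends only on $H'_1,\ldots,H'_{i-1}$ and the $k_j$-eigenvalues of $v$ (i.e.\ on $\lambda$). As a warm-up and sanity check I would first handle $i=2$ directly: expanding $X_2=[[e_2,e_1]_{-1},[f_1,f_2]_1]_1$ and using $e_jv=0$, the mixed-index commutation $e_if_j=f_je_i$ for $i\ne j$, the identity $e_1f_1=qf_1e_1+h'_1$, the $\mathfrak{sl}_2$-relation $[e_2,f_2]=(k_2-k_2\inv)/(q-q\inv)$, and the $k_j$-eigenvalues of $v$, after routine cancellation one obtains $\mu_2=H'_2+q^{(\beta_2,\lambda)}H'_1$; in particular $c_2=1$. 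For general $i$ I would expand $X_i$ in the PBW basis coming from the triangular decomposition $\Uj=\Uj_{<0}\otimes(\Uj_0\otimes\Ujo)\otimes\Uj_{>0}$: acting on $v$ only the ``Cartan'' ordered monomials in $h'_j$ and $k_l^{\pm}$ contribute, since any $\Uj_{>0}$-tail kills $v$ and any nontrivial $\Uj_{<0}$-head moves the weight off $\lambda$. Using the recursion $h'_i=[[e_i,h'_{i-1}]_1,f_i]_1$ from \eqref{h_i's} and the defining formulas for $\tau_i$, one should identify a ``leading'' contribution in $X_i$ equal to a nonzero scalar multiple of $h'_i$, with all remaining Cartan terms being polynomials in $h'_j$ ($j<i$) and $k_l^{\pm}$; such terms act on $v$ by values determined inductively by $H'_1,\ldots,H'_{i-1}$ and $\lambda$. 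This closes the induction.

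The main obstacle is isolating the coefficient of $h'_i$ in the PBW-expansion of $X_i$, i.e.\ controlling how $\tau_i$ interacts with the ordering coming from the triangular decomposition. Because $\tau_i$ is merely an algebra automorphism of $\Uj$ (it does not preserve the triangular factors, and the triangular decomposition itself is only a linear, not an algebra, splitting), one must carefully track how the $\tau_i$-images of the root vectors rewrite in the PBW basis. The natural organizing principle is the analogy with the Lusztig braid operators $T''_{i-1/2,1}T''_{-(i-1/2),1}$ on $\bfU^+$, whose composition sends the root vector $E_{-1,1}$ to a nonzero multiple of $E_{-i,i}$: this should control the leading behaviour and guarantee that the coefficient of $h'_i$ in $X_i$ is a nonzero element of $\Q(p,q)$, which is all that the induction requires.
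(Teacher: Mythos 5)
Your overall strategy---exhibit a triangular relation between the tuple of $X_i$-eigenvalues $(\mu_i)$ and the tuple $(H'_i)$, with invertible diagonal---is the same as the paper's, and your preliminary reductions are sound: each $X_i=\tau_i\cdots\tau_2(h'_1)$ has $\Lambdaj$-weight $0$, hence acts on the one-dimensional weight space $V'(\lambda;\bfH')_\lambda$ by a scalar, and only the Cartan monomials of a PBW expansion of $X_i$ contribute to that scalar. The gap is exactly where you flag it: you never prove that the coefficient of $h'_i$ in that expansion is nonzero, and the mechanism you offer does not deliver it. The map $\gr:\U^+\otimes\Ujo\to\Uj$ is only a \emph{linear} isomorphism, and the operators $\tau_i=\tau''_{i,1}$ on $\Uj$ are not restrictions of, nor intertwined under $\gr$ with, the braid operators $T''_{i-\hf,1}T''_{-(i-\hf),1}$ on $\U$; so the fact that the latter carry $E_{-1,1}$ to a multiple of $E_{-i,i}$ says nothing a priori about the PBW coefficient of $h'_i=\gr(E_{-i,i})$ in $X_i$. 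There is also a secondary point you elide: by your own degree count the Cartan monomials involving $h'_i$ have the form $h'_i\prod_l k_l^{d_l}$, so the coefficient $c_i$ of $H'_i$ in $\mu_i$ is a Laurent polynomial in the $q^{(\beta_l,\lambda)}$; you would need it nonzero for \emph{every} $\lambda$, which does not follow from a single PBW coefficient being nonzero.

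The paper avoids the PBW-straightening problem with a more elementary bookkeeping device. Both $h'_i$ (via the recursion \eqref{h_i's}) and $X_i$ are $\Q(q)$-linear combinations of words in which each of $e_1,\ldots,e_i,f_1,\ldots,f_i$ occurs exactly once; applied to a highest weight vector, any such word reduces to a combination of the operators $\ef(j):=e_j\cdots e_1f_1\cdots f_j$ with $j\le i$ acting on $v$, and for $h'_i$ the coefficient of the top operator $\ef(i)$ is visibly $1$. Using the quantities $\ef(j)v$, $j\le i$, as intermediate coordinates in both directions then gives the desired determination without ever expanding $X_i$ in the PBW basis. If you want to keep your route, you must replace the appeal to the Lusztig braid operators by an honest computation of the leading coefficient---for instance an induction exploiting $h'_{i+1}=\bigl[[e_{i+1},h'_i]_1,f_{i+1}\bigr]_1$ together with the explicit formulas for $\tau_{i+1}$ on $e_i,f_i,k_i$---and then still address the possible $\lambda$-dependence of $c_i$ noted above.
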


\begin{proof}
For each $i \in \Ij$, set $\ef(i) := e_i \cdots e_2 e_1 f_1 f_2 \cdots f_i$. By equation \eqref{h_i's}, the $h'_i$ is of the form
\begin{align}
h'_i = \sum_{\sigma \in \Ss_{2i}} a_i(\sigma)x_{\sigma(1)} \cdots x_{\sigma(2i)}, \nonumber
\end{align}
where $\Ss_{2i}$ denotes the $2i$-th symmetric group, $a_i(\sigma) \in \Q(q)$, $x_j = e_{i+1-j}$ for $1 \leq j \leq i$, and $x_j = f_{j-i}$ for $i+1 \leq j \leq 2i$. From this, noting that $v$ is a highest weight vector, we deduce that $h'_iv$ is of the form
\begin{align}
h'_iv = \left( \ef(i) + \sum_{1 \leq j < i} g_{j} \ef(j) \right)v, \nonumber
\end{align}
where $g_{j} \in \Q(q)$. Therefore, the $\ef(j)$-eigenvalue of $v$ for $j \leq i$ determine the $h'_i$-eigenvalue $H'_i$.

Also, $\tau_i \cdots \tau_2(h'_1)$ is of the form
\begin{align}
\tau_i \cdots \tau_2(h'_1) = \sum_{\sigma \in \Ss_{2i}} b_i(\sigma)x_{\sigma(1)} \cdots x_{\sigma(2i)}, \nonumber
\end{align}
where $b_i(\sigma) \in \Q(q)$. In the same way as above, the $\tau_i \cdots \tau_2(h'_1)$-eigenvalue of $v$ is determined by the $\ef(j)$-eigenvalue of $v$ for $j \leq i$. Conversely, the $\tau_j \cdots \tau_2(h'_1)$-eigenvalue of $v$ for $j \leq i$ altogether determine the $\ef(j)$-eigenvalue of $v$ for $j \leq i$, which, in turn, determine the $h'_i$-eigenvalue $H'_i$ of $v$. This proves the proposition.
\end{proof}

This proposition enables us to replace $h'_iv$ with $\tau_i \cdots \tau_2(h'_1)v$ for $i \in \Ij$. Then, we define $h_i$, $i \in \Ij$, by $h_1 := [e_1,f_1]_1$ and $h_i := \tau_i \cdots \tau_2(h_1)$. Also, we set $V(\lm;\bfH) := V'(\lm;\bfH')$ and $L(\lm;\bfH) := L'(\lm;\bfH')$, where $\bfH = (H_1,\ldots,H_r)$ is uniquely determined by the equations $h_i v = H_i v$, $i \in \Ij$, where $v \in V'(\lm;\bfH')$ is a highest weight vector.

Let $L \in \Oj$ be an irreducible $\Uj$-module. By condition $(\mathrm{M}3)$, there exists $\lambda \in \Lambdaj$ such that $L_\lambda \neq 0$ and $L_\mu = 0$ for all $\mu > \lambda$. By the case $r=1$, $h_1$ acts on $L_\lambda$ semisimply.

\begin{lem}\label{[h_1,h_2]}
We have
\begin{align}
[h_1,h_2]_0 = [h_1,(q-q\inv)(f_2[e_2,h_1]_1 - p\inv q^2 f_2e_2k_1\inv)]_0 \in \Uj(e_2, e_2h_1, e_2h_1^2), \nonumber
\end{align}
where $\Uj(e_2, e_2h_1, e_2h_1^2)$ denotes the left ideal of $\Uj$ generated by $e_2, e_2h_1, e_2h_1^2$.
\end{lem}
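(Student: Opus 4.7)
The plan is to establish the identity by direct computation, using the explicit form of $\tau_2$ on the generators $e_1, f_1, k_1$. First I would expand
\[
h_2 = \tau_2([e_1,f_1]_1) = \tau_2(e_1)\tau_2(f_1) - q\,\tau_2(f_1)\tau_2(e_1),
\]
where $\tau_2(e_1) = e_2 e_1 - q\inv e_1 e_2$ and $\tau_2(f_1) = f_1 f_2 - q f_2 f_1$. Applying the relations $[e_1,f_2]=0=[e_2,f_1]$ (from the $|i-j|>1$ clauses of \eqref{defrel}) and the quantum Serre relations for the pairs $(e_1,e_2)$ and $(f_1,f_2)$, I would rewrite $h_2$ as a linear combination of ordered monomials in $e_1, f_1, e_2, f_2, k_1^{\pm 1}, k_2^{\pm 1}$.

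With $h_2$ in hand, I would compute both sides of the claimed equality, namely $[h_1, h_2]_0$ and $[h_1, A]_0$ where $A := (q-q\inv)(f_2[e_2,h_1]_1 - p\inv q^2 f_2 e_2 k_1\inv)$, by pushing each factor of $h_1 = e_1 f_1 - q f_1 e_1$ through the monomials using the defining relations of $\Uj$. The equality $[h_1, h_2]_0 = [h_1, A]_0$ is equivalent to $[h_1, h_2 - A]_0 = 0$, which should be verified directly from the explicit expansions. For the ideal membership, I would then rearrange $[h_1, A]_0$ so that every monomial ends in $e_2$, $e_2 h_1$, or $e_2 h_1^2$: the summand $f_2 e_2 h_1^2$ from $A \cdot h_1$ lies in $\Uj \cdot e_2 h_1^2$ immediately, while terms such as $h_1 f_2 e_2 h_1$ from $h_1 \cdot A$ would be reduced modulo the ideal by commuting $h_1$ past $f_2$ and absorbing the resulting correction terms into $\Uj \cdot e_2 h_1$ and $\Uj \cdot e_2$.

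The main obstacle is the length and intricacy of the computation rather than any conceptual issue: each of the monomials in $h_2$ expands further under the outer commutator with $h_1$, and the Serre relations together with the modified cubic relations
\[
e_1^2 f_1 - (q+q\inv) e_1 f_1 e_1 + f_1 e_1^2 = -(q+q\inv)e_1(pqk_1 + p\inv q\inv k_1\inv)
\]
and its $f$-counterpart in \eqref{defrel} must be applied repeatedly. Careful bookkeeping is essential, and as noted earlier in the paper for the braid group action, computer algebra verification via GAP with the Quagroup package furnishes a natural sanity check. Once the expansion is organized cleanly, both the identity and the ideal membership fall out simultaneously from the same calculation.
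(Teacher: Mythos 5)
Your proposal is correct and follows essentially the same route as the paper: the paper's own proof of this lemma is simply ``by direct calculation (or by using GAP),'' and your plan — expand $h_2=\tau_2(h_1)$ via the explicit action of $\tau_2$ on $e_1,f_1$, verify $[h_1,h_2-A]_0=0$ by pushing $h_1$ through the monomials, and then rewrite $[h_1,A]_0$ so every term ends in $e_2$, $e_2h_1$, or $e_2h_1^2$ (using that $k_1$ $q$-commutes past $e_2$ and commutes with $h_1$) — is exactly that computation made explicit.
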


\begin{proof}
By direct calculation (or by using GAP).
\end{proof}

This lemma implies that $[h_1,h_2]_0 L_\lambda = 0$; namely, the actions of $h_1$ and $h_2$ commute with each other on $L_\lambda$.

\begin{lem}\label{InvarianceOfh_i}
Let $i,j \in \Ij$. If $j \neq i,i+1$, then we have $\tau_j(h_i) = h_i$.
\end{lem}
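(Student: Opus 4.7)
The plan is to reduce everything to the braid relations of type $A_{r-1}$ satisfied by $\{\tau_j\}_{j \in \Ij \setminus \{1\}}$ together with the fact that $\tau_j$ fixes $h_1$ whenever $j \geq 3$. The key observations are:
\begin{itemize}
\item[(i)] $h_i = \tau_i(h_{i-1})$ for $i \geq 2$, directly from the definition $h_i := \tau_i \cdots \tau_2(h_1)$;
\item[(ii)] for $j \geq 3$, $\tau_j$ fixes each of $e_1, f_1, k_1$ (because $|j-1| > 1$), hence $\tau_j(h_1) = h_1$;
\item[(iii)] $\tau_j \tau_k = \tau_k \tau_j$ whenever $|j - k| \geq 2$, and $\tau_j \tau_{j+1} \tau_j = \tau_{j+1} \tau_j \tau_{j+1}$.
\end{itemize}
Since the hypothesis excludes $j = i$ and $j = i+1$, there are two cases to treat: $j \geq i+2$ and $2 \leq j \leq i-1$.

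\textbf{Case 1: $j \geq i+2$.} Then $|j - k| \geq 2$ for every $k = 2, \ldots, i$, so by (iii) the element $\tau_j$ commutes with each of $\tau_2, \ldots, \tau_i$. Using (ii),
\begin{align*}
\tau_j(h_i) = \tau_j \tau_i \cdots \tau_2(h_1) = \tau_i \cdots \tau_2 \tau_j(h_1) = \tau_i \cdots \tau_2(h_1) = h_i.
\end{align*}

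\textbf{Case 2: $2 \leq j \leq i - 1$.} Proceed by induction on $i$, with base case $i = 3$, $j = 2$. Here the braid relation $\tau_2 \tau_3 \tau_2 = \tau_3 \tau_2 \tau_3$ together with $\tau_3(h_1) = h_1$ gives $\tau_2(h_3) = \tau_2\tau_3\tau_2(h_1) = \tau_3 \tau_2 \tau_3(h_1) = \tau_3\tau_2(h_1) = h_3$. For the inductive step, split into two subcases. If $j \leq i - 2$, then $\tau_j$ commutes with $\tau_i$ by (iii), so $\tau_j(h_i) = \tau_j \tau_i(h_{i-1}) = \tau_i \tau_j(h_{i-1}) = \tau_i(h_{i-1}) = h_i$ by the inductive hypothesis. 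If $j = i - 1$, then, applying (i) twice and the braid relation $\tau_{i-1}\tau_i\tau_{i-1} = \tau_i \tau_{i-1} \tau_i$,
\begin{align*}
\tau_{i-1}(h_i) = \tau_{i-1} \tau_i(h_{i-1}) = \tau_{i-1}\tau_i\tau_{i-1}(h_{i-2}) = \tau_i \tau_{i-1} \tau_i(h_{i-2}) = \tau_i\tau_{i-1}(h_{i-2}) = \tau_i(h_{i-1}) = h_i,
\end{align*}
where the second-to-last equality uses $\tau_i(h_{i-2}) = h_{i-2}$, which holds by Case 1 applied to the pair $(i-2, i)$ (the hypothesis $i \geq i - 2 + 2$ being trivially satisfied).

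There is no genuine obstacle in this argument, and no explicit computation inside $\Uj$ is needed beyond the definition $h_1 = [e_1, f_1]_1$: everything is formal manipulation using the braid relations and the fact that $\tau_j(h_1) = h_1$ for $j \geq 3$. The only point requiring mild care is keeping track of the recursion $h_i = \tau_i(h_{i-1})$ and making sure the induction in Case 2 is set up so that the base case $i = 3$ is handled separately (since for $i = 3$ the chain $\tau_{i-2} \cdots \tau_2$ is trivial, and the Case 1 argument invoked inside Case 2b collapses accordingly).
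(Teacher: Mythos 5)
Your proof is correct and uses exactly the same ingredients as the paper's: the commutation relations among the $\tau_k$'s, one application of the braid relation, and the fact that $\tau_k(h_1)=h_1$ for $k\geq 3$. The only difference is packaging — you organize the case $j<i$ as an induction on $i$, whereas the paper pushes $\tau_j$ through the word $\tau_i\cdots\tau_2$ in a single chain of equalities — so the two arguments are essentially identical once your induction is unwound.
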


\begin{proof}
The assertion in the case $j > i+1$ follows from the definitions of $\tau_j$ and $h_i$. When $j < i$, by the braid relation for the $\tau_j$'s, we see that
\begin{align}
\begin{split}
\tau_j(h_i) &= \tau_j (\tau_i \tau_{i-1} \cdots \tau_2)(h_1) \\
&= \tau_i \cdots \tau_{j+2} \tau_j \tau_{j+1} \tau_j \cdots \tau_2(h_1) \\
&= \tau_i \cdots \tau_{j+2} \tau_{j+1} \tau_j \tau_{j+1} \tau_{j-1} \cdots \tau_2(h_1) \\
&= \tau_i \cdots \tau_{j+2} \tau_{j+1} \tau_j \cdots \tau_2 \tau_{j+1}(h_1) \\
&= \tau_i \cdots \tau_2(h_1) = h_i.
\end{split} \nonumber
\end{align}
This proves the lemma.
\end{proof}

\begin{prop}
Let $L \in \Oj$ be an irreducible module. Take $\lambda \in \Lambdaj$ such that $L_\lambda \neq 0$ and $L_\mu = 0$ for all $\mu > \lambda$. Then, the actions of $h_1, \ldots, h_r$ commute with each other on $L_\lambda$.
\end{prop}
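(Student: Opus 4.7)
The plan is to show $[h_i,h_j]_0 L_\lambda = 0$ for all $1 \leq i < j \leq r$ by iteratively propagating Lemma \ref{[h_1,h_2]} through the braid group action. First I would record the following annihilation principle: by the maximality of $\lambda$, $e_k L_\lambda = 0$ for every $k \in \Ij$; and each $h_i$ has $\Lambdaj$-weight zero (since $h_1 = [e_1,f_1]_1$ does, and each $\tau_k$ shifts $\Lambdaj$-weights by the type $A_{r-1}$ reflection $s_k$ acting on $\{\gamma_1, \ldots, \gamma_r\}$, which fixes $0$), so $h_i L_\lambda \subseteq L_\lambda$. Consequently, any left ideal of $\Uj$ generated by products of the form $\xi \cdot P(h_1,\ldots,h_r)$, with $\xi \in \Uj$ of strictly positive $\Lambdaj$-weight and $P$ a polynomial, annihilates $L_\lambda$.

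I would then prove the following strengthening by induction on $i \geq 1$: for every $j > i$,
\[
[h_i,h_j]_0 \in \Uj \cdot \bigl(\xi_{i,j},\ \xi_{i,j} h_i,\ \xi_{i,j} h_i^2 \bigr),
\]
where $\xi_{i,j} \in \Uj_{>0}$ is a weight vector of $\Lambdaj$-weight $\gamma_{i+1} + \gamma_{i+2} + \cdots + \gamma_j$. The case $(i,j)=(1,2)$ with $\xi_{1,2} = e_2$ is precisely Lemma \ref{[h_1,h_2]}. For the remaining base cases $i=1$, $j \geq 3$, apply $\tau_j \tau_{j-1} \cdots \tau_3$ to this inclusion: Lemma \ref{InvarianceOfh_i} gives $\tau_k(h_1) = h_1$ for $k \geq 3$, and a telescoping of the recursion $h_{k+1} = \tau_{k+1}(h_k)$ yields $\tau_j \cdots \tau_3(h_2) = h_j$, so $\tau_j \cdots \tau_3([h_1,h_2]_0) = [h_1, h_j]_0$; the element $\xi_{1,j} := \tau_j \cdots \tau_3(e_2)$ has weight $s_j \cdots s_3(\gamma_2) = \gamma_2 + \gamma_3 + \cdots + \gamma_j$, which is strictly positive.

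For the inductive step from $i-1$ to $i$, fix $j \geq i+1$ and apply $\tau_i$ to the containment at level $i-1$. By Lemma \ref{InvarianceOfh_i}, $\tau_i(h_j) = h_j$ (because $j \notin \{i-1,i\}$), and by construction $\tau_i(h_{i-1}) = h_i$, so $\tau_i([h_{i-1},h_j]_0) = [h_i,h_j]_0$. Setting $\xi_{i,j} := \tau_i(\xi_{i-1,j})$, its weight is $s_i(\gamma_i + \gamma_{i+1} + \cdots + \gamma_j) = \gamma_{i+1} + \cdots + \gamma_j$, which remains strictly positive since $j > i$. Combined with the annihilation principle, this yields $[h_i,h_j]_0 L_\lambda = 0$ for all $1 \leq i < j \leq r$, which is the desired commutativity on $L_\lambda$.

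The only subtle point I would need to pin down is the assertion that each $\tau_k$ (for $k \geq 2$) shifts $\Lambdaj$-weights by the standard type $A_{r-1}$ reflection $s_k$; this is the standard feature of Lusztig's braid operators, but the boundary pairing $(\beta_1,\gamma_1) = 3$ recorded after Definition~2.3.11 makes the inner product on $\Lambdaj$ nonstandard at position $1$, so a brief direct verification from the explicit formulas for $\tau_k(e_l), \tau_k(f_l), \tau_k(k_l)$ is warranted. Once this is in hand, the weight computations $s_j \cdots s_3(\gamma_2)$ and $s_i(\gamma_i + \cdots + \gamma_j)$ that drive the induction are immediate type $A$ calculations.
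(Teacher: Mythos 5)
Your argument is correct and is essentially the paper's own proof: both transport Lemma \ref{[h_1,h_2]} through the composite $\tau_j \cdots \tau_2\, \tau_i \cdots \tau_3$ using Lemma \ref{InvarianceOfh_i} (you merely unfold that single conjugation into an induction on $i$) and then annihilate the resulting left ideal on $L_\lambda$ by the same weight argument. The incidental assertion that $\xi_{i,j} \in \Uj_{>0}$ is neither needed nor obviously true for the fixed triangular decomposition, but this is harmless since your annihilation principle only invokes the strict positivity of its $\Lambdaj$-weight, which you verify correctly.
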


\begin{proof}
Let $i,j \in \Ij$ be such that $j < i$. By Lemma \ref{InvarianceOfh_i},
\begin{align}
[h_j,h_i]_0 = \tau_j \cdots \tau_2([h_1,h_i]_0) = \tau_j \cdots \tau_2 \tau_i \cdots \tau_3([h_1,h_2]_0). \nonumber
\end{align}
Also, by Lemma \ref{[h_1,h_2]},
\begin{align}
\tau_j \cdots \tau_2 \tau_i \cdots \tau_3([h_1,h_2]_0) \in \Uj(\tau_{j,i}(e_2), \tau_{j,i}(e_2)h_j, \tau_{j,i}(e_2)h_j^2), \nonumber
\end{align}
where $\tau_{j,i}$ denotes $\tau_j \cdots \tau_2 \tau_i \cdots \tau_3$. Since $\tau_{j,i}(e_2) \in \Uj (\Uj_{>0})_+$, the vectors $\tau_{j,i}(e_2)h_j^{l}$, $l = 0,1,2$, act on $L_\lambda$ by $0$. This proves the proposition.
\end{proof}

As a corollary of this proposition, we can take a simultaneous eigenvector $v \in L_\lambda$ for $h_1, \ldots, h_r$. Let $H_i \in \Q(p,q)$ denote the eigenvalue of $h_i$. Then the submodule generated by $v$ is a highest weight module. Since $L$ is irreducible, we conclude that $L$ is isomorphic to $L(\lm;\bfH)$ for some $\lm \in \Lambdaj$, $\bfH \in \Q(p,q)^r$.

Next, we investigate a necessary condition for $L(\lm;\bfH)$ being a nonzero object of $\Oj$.

\begin{lem}\label{decomp of f_2^n v}
Let $M \in \Oj$, $v \in M$ be such that $e_1v = 0$ and $h_1v = [b]\{ a-b-1 \}$ for some $a \in \Z$, $b \in \Z_{\geq 0}$. Let $n \in \Z_{>0}$. Then, there exist unique $v_0,v_1,\ldots,v_n \in M$ satisfying the following:
\begin{enumerate}
\item $f_2^{(n)}v = \sum_{k=0}^n v_k$.
\item $e_1 v_k = 0$, $h_1 v_k = [b+k]\{ a+n-(b+k)-1 \}v_k$.
\end{enumerate}
\end{lem}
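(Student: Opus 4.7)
The plan is to establish existence by induction on $n$; uniqueness follows because the target eigenvalues $\mu_k(n):=[b+k]\{a+n-(b+k)-1\}$ for $k=0,1,\ldots,n$ are pairwise distinct, as the triples $(a+n,b+k,0)$ are distinct and the map $(a',b',n')\mapsto z_1(a',b',n')$ from the paragraph preceding Lemma~\ref{split} is injective. Hence any two decompositions into $h_1$-eigenvectors with these eigenvalues must agree term by term.

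The base case $n=1$ is the heart of the argument. The key starting observations are $e_1f_2v=f_2e_1v=0$ (since $[e_1,f_2]=0$) and $k_1f_2v=q^{a+1}f_2v$. Restricting $M$ to a $\Uj_1$-module and using the complete reducibility result from Section~\ref{r=1case}, write $M=\bigoplus L^{(\alpha)}$ with $L^{(\alpha)}\cong L(a^{(\alpha)};b^{(\alpha)})$; since $f_2v$ projects into the one-dimensional $e_1$-kernel of each summand and has $k_1$-weight $q^{a+1}$, one obtains $f_2v=\sum_\alpha c_\alpha u^{(\alpha)}$ with $a^{(\alpha)}=a+1$ whenever $c_\alpha\neq 0$, where $u^{(\alpha)}$ is the highest weight vector of $L^{(\alpha)}$. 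For the upper bound $b^{(\alpha)}\leq b+1$, I would first derive from the quantum Serre relation the identity
\[
f_1^Nf_2=[N]\,f_{12}\,f_1^{N-1}+q^Nf_2f_1^N,\qquad f_{12}:=f_1f_2-qf_2f_1,
\]
and specialize to $N=b+2$; since $v$ generates a copy of $L(a;b)$ we have $f_1^{b+1}v=0$, so $f_1^{b+2}f_2v=0$, which forces $c_\alpha=0$ whenever $b^{(\alpha)}\geq b+2$ (because $f_1^{b+2}$ does not annihilate the highest weight vector of $L(a+1;b^{(\alpha)})$ in that range).

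The lower bound $b^{(\alpha)}\geq b$ is the main obstacle. My plan to address it is to verify directly the polynomial identity $(h_1-\mu_0(1))(h_1-\mu_1(1))f_2v=0$. The ingredients are $[h_1,f_2]=e_1g-qge_1$ with $g:=[f_1,f_2]$ (which follows from $[e_1,f_2]=0$ and $e_1f_1=h_1+qf_1e_1$), together with $[e_1,h_1]_{-1}=-[2]e_1\{pqk_1\}$ from Lemma~\ref{h1}. Using $e_1v=0$, $h_1v=[b]\{a-b-1\}v$, and $k_1v=q^av$ (which may be assumed after decomposing $v$ into $k_1$-eigencomponents), one computes $h_1f_2v$ and $h_1^2f_2v$ modulo the left ideal generated by these vanishing conditions, reducing the identity to a scalar verification. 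Once the base case is established, the inductive step is routine: applying the $n=1$ conclusion to each $v_k^{(n)}$, which is itself a $\Uj_1$-highest weight vector with parameters $(a+n,b+k)$, yields $f_2v_k^{(n)}=u_{k,0}+u_{k,1}$ with $u_{k,j}$ of $h_1$-eigenvalue $\mu_{k+j}(n+1)$; then $f_2^{(n+1)}v=\tfrac{1}{[n+1]}f_2f_2^{(n)}v=\tfrac{1}{[n+1]}\sum_{k=0}^n(u_{k,0}+u_{k,1})$, and regrouping by $h_1$-eigenvalue gives $v_{k'}^{(n+1)}=\tfrac{1}{[n+1]}(u_{k',0}+u_{k'-1,1})$ with the convention that out-of-range terms are zero.
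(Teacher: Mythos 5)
Your proof is correct, but it takes a genuinely different route from the paper's. The paper (Subsection 4.5) works with the full decomposition $M=\bigoplus M_{a,b,n}$ (allowing $n>0$) and splits the operator $f_2$ itself as $f_2=f_{2,1}+f_{2,2}+f_{2,3}$, where the three pieces are explicit $(a,b,n)$-dependent linear combinations of $f_2$, $f'_2:=q^{-2}\bigl[e_1,[f_1,f_2]_1\bigr]_1-p\inv q\inv f_2k_1\inv$ and $\overline{f'_2}$; the key step is a $3\times3$ eigenvector computation for $h''_1=h_1+\frac{p\inv qk_1\inv}{q-q\inv}$ on the span of $\{\overline{f'_2}m,f_2m,f'_2m\}$, which the paper delegates to a computer, and the lemma follows because the third piece vanishes on $M_{a,b,0}$. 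You instead reduce everything to the case $n=1$ (your induction step, re-applying the base case to each $v_k^{(n)}$ and regrouping by eigenvalue, is exactly right, and your uniqueness argument via injectivity of $z_1(\cdot,\cdot,\cdot)$ is the same mechanism the paper relies on) and then argue that the minimal polynomial of $h_1$ on $f_2v$ divides $(X-\mu_0(1))(X-\mu_1(1))$. This is leaner, and the computation does close up: $h_1$ preserves $\Ker e_1$ by $[e_1,h'_1]_{-1}=-[2]e_1\{pqk_1\}$, and $e_1gv\in\Ker e_1$ follows from the modified Serre relation $e_1^2f_1-(q+q\inv)e_1f_1e_1+f_1e_1^2=-(q+q\inv)e_1\{pqk_1\}$ applied to $f_2v$ (this relation is worth making explicit in your argument, since it is what replaces the missing $e_1f_1$-commutator at $i=1$); one is then computing the $2\times2$ matrix of $h_1$ on $\Span\{f_2v,e_1gv\}$. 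Your separate Serre-relation bound $b^{(\alpha)}\leq b+1$ is correct but becomes redundant once the quadratic identity is verified, since that identity rules out all $b^{(\alpha)}\notin\{b,b+1\}$ at once. The one remaining computational kernel (the scalar verification of the two matrix entries) is left as a plan, but the paper's own proof leaves its $3\times3$ eigenvector check in the same state, so I do not count this as a gap.
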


\begin{proof}
As the proof of this lemma needs some lengthy calculation, we put it in the end of this section.
\end{proof}

\begin{lem}\label{proof of (*)}
Let $M \in \Oj$ be a $\Uj_2$-module, $v \in M$ a highest weight vector with $k_iv = q^{a_i}v$, $h_iv = H_iv$ for some $a_1 \in \Z$, $a_2 \in \Z_{\geq 0}$, $H_1,H_2 \in \Q(p,q)$. If $H_1 = [b_1]\{a_1-b_1-1\}$ for some $b_1 \in \Z_{\geq 0}$, then $H_2 = [b_1+b_2]\{a_1+a_2-(b_1+b_2)-1\}$ for some $0 \leq b_2 \leq a_2$.
\end{lem}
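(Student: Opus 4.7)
The plan is to transport the eigenvalue condition on $h_2$ back to one on $h_1$ via the braid operator $\tau_2$, and then locate the resulting eigenvalue among the list of candidates supplied by Lemma \ref{decomp of f_2^n v}. Concretely, set $w := \tau_2^{-1}(v)$, where $\tau_2^{-1} = \tau'_{2,-1}$ is the module-level inverse of $\tau_2$ on $M \in \Oj$. Since $e_2 v = 0$ and $v$ has $k_2$-eigenvalue $q^{a_2}$, Proposition \ref{BraidGroupActionOnWeightVec}(1) applied with $j = 0$ gives $w = f_2^{(a_2)} v$. Applying $\tau_2^{-1}$ to the identity $h_2 v = H_2 v$ and using Proposition \ref{BraidGroupActionOnModule} together with $\tau_2^{-1}(h_2) = \tau_2^{-1}(\tau_2(h_1)) = h_1$, I deduce $h_1 w = H_2 w$.

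Next, I apply Lemma \ref{decomp of f_2^n v} with the parameters $a = a_1$, $b = b_1$, $n = a_2$, using the hypotheses $e_1 v = 0$ and $h_1 v = [b_1]\{a_1 - b_1 - 1\} v$. This yields a decomposition
\[
w = f_2^{(a_2)} v = \sum_{k = 0}^{a_2} v_k, \qquad h_1 v_k = [b_1 + k]\{ a_1 + a_2 - (b_1 + k) - 1\} v_k.
\]
Substituting into $h_1 w = H_2 w$ gives
\[
\sum_{k=0}^{a_2} \bigl( H_2 - [b_1 + k]\{ a_1 + a_2 - (b_1 + k) - 1\} \bigr) v_k = 0.
\]
Since $\tau_2^{-1}$ is a linear automorphism of $M$, $w \neq 0$, so some $v_k$ is nonzero. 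Grouping the $v_k$'s by their $h_1$-eigenvalue and comparing eigenspace components forces $H_2$ to equal $[b_1 + b_2]\{ a_1 + a_2 - (b_1 + b_2) - 1\}$ for at least one index $b_2$ with $0 \leq b_2 \leq a_2$, which is the asserted conclusion.

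The main obstacle is the final eigenspace extraction: the eigenvalues $[b_1+k]\{a_1+a_2-(b_1+k)-1\}$ for $0 \leq k \leq a_2$ need not all be distinct a priori, so one cannot simply invoke linear independence of eigenvectors for distinct eigenvalues. However, this is resolved by noting that $h_1$ acts semisimply on the relevant finite-dimensional $h_1$-invariant subspace (as established in the $r=1$ analysis of Section \ref{r=1case}, in particular Proposition \ref{diagonalizable}): the decomposition of $w$ into $h_1$-eigenspaces is then well-defined, and since $w$ is itself an $H_2$-eigenvector with $w\neq 0$, at least one of the listed values must coincide with $H_2$.
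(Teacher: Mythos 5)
Your proof is correct and follows essentially the same route as the paper's: pass to $u=\tau_2^{-1}(v)=f_2^{(a_2)}v$, deduce $h_1u=H_2u$ from $\tau_2^{-1}(h_2)=h_1$, and invoke Lemma \ref{decomp of f_2^n v}. The only (harmless, arguably cleaner) difference is that the paper first uses the $r=1$ classification to force $H_2=[b]\{a_1+a_2-b-1\}$ and only then uses Lemma \ref{decomp of f_2^n v} to pin $b=b_1+b_2$, whereas you extract the eigenvalue directly from the decomposition $u=\sum_k v_k$ by grouping eigenvectors with equal eigenvalues, which is legitimate since the $v_k$ are genuine $h_1$-eigenvectors and $u\neq 0$.
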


\begin{proof}
By the representation theory for $U_q(\mathfrak{sl}_2)$, we have $f_2^{a_2+1} v = 0$, and $\tau_2\inv(v) = f_2^{(a_2)}v$. Set $u := \tau_2\inv(v)$. We claim that $e_1 u = 0$ and $h_1 u = H_2 u$. The former is true as we have $e_1f_2 = f_2e_1$. The latter follows from an easy calculation
$$
h_1 u = \tau_2\inv(\tau_2(h_1) v) = \tau_2\inv(h_2 v) = H_2 u.
$$
Then, by the case $r=1$, $H_2$ must be of the form $H_2 = [b]\{ a_1+a_2-b-1 \}$ for some $0 \leq b \leq a_1+a_2$. Here, by Lemma \ref{decomp of f_2^n v}, it must hold that $b = b_1 + b_2$ for some $0 \leq b_2 \leq a_2$. This proves the assertion.
\end{proof}

\begin{theo}\label{Classification}
Each irreducible module in $\Oj$ is isomorphic to $L(\lambda;\bfH)$ for some $\lambda \in \Lambda$ and $\bfH = (H_1,\ldots,H_r) \in \Q(p,q)^r$ satisfying the following:
\begin{enumerate}
\item $a_i := (\beta_i,\lambda) \geq 0$ for each $i \in \Ij \setminus \{1\}$.
\item For each $i \in \Ij$, there exists $b_i \in \Z_{\geq 0}$ such that $0 \leq b_i \leq a_i$ for $i \in \Ij \setminus \{1\}$ and $H_i = [b_1+\cdots+b_i]\{ a_1+\cdots+a_i-(b_1+\cdots+b_i)-1 \}$ for $i \in \Ij$.
\end{enumerate}
\end{theo}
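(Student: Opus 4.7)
The plan is to combine the propositions immediately preceding this theorem---which have already shown that each irreducible $L \in \Oj$ is isomorphic to $L(\lambda;\bfH)$ for some $\lambda \in \Lambdaj$ and $\bfH \in \Q(p,q)^r$---with an inductive argument that pins down the constraints on $\lambda$ and $\bfH$. Let $v \in L$ denote a simultaneous $h_1,\ldots,h_r$-eigenvector in the top weight space provided by the earlier discussion. For condition~(1), for each $i \in \Ij \setminus \{1\}$ the subalgebra $\la e_i, f_i, k_i \ra \subset \Uj_r$ satisfies the ordinary $U_q(\mathfrak{sl}_2)$-relations (the $p$-dependent relation only appears at index~$1$); by the integrability condition~$(M4)$, the cyclic submodule it generates from $v$ is finite-dimensional and highest weight (since $e_iv = 0$ and $k_iv = q^{a_i}v$), and standard $\mathfrak{sl}_2$-representation theory forces $a_i = (\beta_i,\lambda) \geq 0$.

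For condition~(2), I would induct on $i$. The base case is immediate: $\Uj_1 v \subset L$ is a highest weight $\Uj_1$-module in $\Oj$, so by Theorem~\ref{IrreducibleModulesForr=1} and the complete reducibility of Section~\ref{r=1case} it is isomorphic to $L(a_1;b_1)$ for some $b_1 \in \Z_{\geq 0}$, yielding $H_1 = [b_1]\{a_1 - b_1 - 1\}$. For the inductive step, assume $H_j$ has the required form for each $j \leq i$ with common $b_1, \ldots, b_i$, and consider $u := \tau_{i+1}\inv(v)$, which equals $f_{i+1}^{(a_{i+1})} v$ by Proposition~\ref{BraidGroupActionOnWeightVec}. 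Using the identity $\tau_{i+1}(h_i) = h_{i+1}$, Lemma~\ref{InvarianceOfh_i}, and the commutation relations $[e_j, f_{i+1}] = 0$ for $j \leq i$ together with $k_i f_{i+1} = q f_{i+1} k_i$, a direct check shows $e_j u = 0$ for all $j \leq i$, $k_i u = q^{a_i + a_{i+1}} u$, $h_j u = H_j u$ for $j < i$, and crucially $h_i u = \tau_{i+1}\inv(h_{i+1} v) = H_{i+1} u$.

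To finish the induction, I would establish a generalization of Lemma~\ref{decomp of f_2^n v} at the index pair $(i,i+1)$: if $e_iv = 0$ and $h_iv = [B_i]\{A_i - B_i - 1\}v$, then
\begin{align}
f_{i+1}^{(n)} v = \sum_{k=0}^n w_k, \qquad e_i w_k = 0, \qquad h_i w_k = [B_i + k]\{A_i + n - (B_i + k) - 1\} w_k. \nonumber
\end{align}
Applied with $n = a_{i+1}$, this decomposes $u$ into $h_i$-eigenvectors whose eigenvalues are pairwise distinct as rational functions in $p,q$ (since $m \mapsto \{m\}$ is injective on $\Z$). Because $u$ is itself an $h_i$-eigenvector with eigenvalue $H_{i+1}$, only one $w_{b_{i+1}}$ can be nonzero, giving $H_{i+1} = [B_i + b_{i+1}]\{A_{i+1} - (B_i + b_{i+1}) - 1\}$ with $0 \leq b_{i+1} \leq a_{i+1}$, which is the desired form with $B_{i+1} = B_i + b_{i+1}$.

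The main obstacle I anticipate is proving this generalized decomposition lemma at $(i,i+1)$. My plan is to mimic the inductive computation in the proof of the original Lemma~\ref{decomp of f_2^n v} (for indices $(1,2)$), after first transporting the identities of Lemma~\ref{h1} through the braid automorphism $\tau_i \cdots \tau_2$ to obtain the analogous commutators of $h_i$ with $f_{i+1}$; alternatively, one might try to conjugate the original lemma directly through a $\Uj_r$-automorphism relating $(h_1,f_2)$ to $(h_i,f_{i+1})$, though the relationship between $f_{i+1}$ and the image of $f_2$ under such a conjugation would need to be worked out carefully.
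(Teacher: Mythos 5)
Your overall skeleton --- reduce to the already-established form $L(\lambda;\bfH)$, get $a_i \geq 0$ from $\mathfrak{sl}_2$-theory, and induct on $i$ using $u := \tau_{i+1}^{-1}(v) = f_{i+1}^{(a_{i+1})}v$ together with $h_i u = \tau_{i+1}^{-1}(h_{i+1}v) = H_{i+1}u$ --- is exactly the paper's. The genuine gap is the step you yourself flag: the ``generalized decomposition lemma at $(i,i+1)$'' is never proved, and neither of your two plans closes it. The first plan fails as stated: applying $\tau_i\cdots\tau_2$ to the identities underlying Lemma \ref{decomp of f_2^n v} (really Lemma \ref{A.1.1}, not Lemma \ref{h1}) produces commutators of $\tau_i\cdots\tau_2(h''_1)$ with $\tau_i\cdots\tau_2(f_2)$, and this latter element is not $f_{i+1}$ (already $\tau_2(f_2) = -k_2^{-1}e_2$), so the transported identities say nothing about $f_{i+1}^{(n)}v$. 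The second plan is the right idea, but your worry about ``the relationship between $f_{i+1}$ and the image of $f_2$'' shows you are trying to conjugate the wrong statement.

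The paper's resolution is to isolate the rank-two content as Lemma \ref{proof of (*)}, whose hypotheses and conclusion refer only to the eigenvalues of $k_1,k_2,h_1,h_2$ on a highest weight vector of a $\Uj_2$-module in $\Oj$; the element $f_2$ and the decomposition Lemma \ref{decomp of f_2^n v} appear only inside its proof, at indices $(1,2)$. For $i \geq 3$ one pulls back $L$ along the algebra homomorphism $T_i := (\tau_{i-1}\tau_i)\cdots(\tau_3\tau_4)(\tau_2\tau_3) : \Uj_2 \rightarrow \Uj_r$, which satisfies $T_i(k_1) = k_1\cdots k_{i-1}$, $T_i(k_2) = k_i$, $T_i(h_1) = h_{i-1}$, $T_i(h_2) = h_i$; the vector $v$ is a highest weight vector for this $\Uj_2$-structure with eigenvalue data $(a_1+\cdots+a_{i-1},\ a_i,\ H_{i-1},\ H_i)$, and Lemma \ref{proof of (*)} applies verbatim to give $H_i = [b_1+\cdots+b_i]\{ a_1+\cdots+a_i-(b_1+\cdots+b_i)-1 \}$ with $0 \leq b_i \leq a_i$. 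Because the lemma is applied as a black box to the pulled-back module, the identity of $T_i(f_2)$ is irrelevant and no commutator computation beyond the $(1,2)$ case is ever needed. Replacing your unproved decomposition lemma by this pullback argument completes your proof; as written, the proposal is incomplete at its central step.
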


\begin{proof}
We have shown that each irreducible module in $\Oj$ is isomorphic to $L(\lambda;\bfH)$ for some $\lambda \in \Lambdaj$ and $\bfH \in \Q(p,q)^r$. It is easy to verify that $L(\lambda;\bfH)$ belongs to $\Oj$ if and only if $f_i^Nv = 0$, $i \in \Ij$, for a sufficiently large $N$, where $v \in L(\lambda;\bfH)$ is a highest weight vector. By the case $r=1$, the equality $f_1^{N}v = 0$ is equivalent to the existence of $b_1 \in \Z_{\geq 0}$ satisfying the equality $H_1 = [b_1]\{ a_1-b_1-1 \}$. Also, by the representation theory of $U_q(\mathfrak{sl}_2)$, the condition $f_i^Nv = 0$, $i \geq 2$, implies $a_i \geq 0$.

It remains to determine the possible values of $H_2,\ldots,H_r$. By Lemma \ref{proof of (*)}, there exists $b_2 \in \Z_{\geq 0}$ such that $b_2 \leq a_2$ and $H_2 = [b_1+b_2] \{ a_1+a_2 - (b_1+b_2) - 1 \}$. Let $i \geq 3$, and assume that for all $j < i$, $H_j = [b_1+\cdots+b_j]\{a_1+\cdots+a_j-(b_1+\cdots+b_j)-1\}$ for some $0 \leq b_j \leq a_j$. Set $T_i := (\tau_{i-1}\tau_i) \cdots (\tau_3\tau_4)(\tau_2\tau_3)$, and consider the subalgebra $T_i(\Uj_2) \subset \Uj$. We have $T_i(k_1) = k_1 \cdots k_{i-1}$, $T_i(k_2) = k_i$, $T_i(h_1) = h_{i-1}$, and $T_i(h_2) = h_i$. If we regard $L$ as a $\Uj_2$-module via the algebra homomorphism $T_i : \Uj_2 \rightarrow \Uj_r$, the $v$ is a highest weight vector such that
$$
k_1v = q^{a_1+\cdots+a_{i-1}}v,\ k_2v = q^{a_i}v,\ h_1v = H_{i-1}v,\ h_2v = H_iv.
$$
By lemma \ref{proof of (*)}, $H_i$ must be of the form $[b_1+\cdots + b_{i-1} + b_i]\{ a_1+\cdots+a_{i-1}+a_i-(b_1+\cdots+b_{i-1}+b_i)-1 \}$ for some $0 \leq b_i \leq a_i$. This proves the theorem.
\end{proof}

From now on, we write $L(\bfa;\bfb)$ instead of $L(\lambda;\bfH)$, where $\bfa = (a_1,\ldots,a_r)$ and $\bfb = (b_1,\ldots,b_r)$ are such that $a_i = (\beta_i,\lambda)$, $H_i = [b_1+\cdots+b_i]\{ (a_1+\cdots+a_i)-(b_1+\cdots+b_i)-1 \}$. This causes no confusion since $\bfa \in \Z^r$, while $\lm \in \Lmj$. We call $L(\bfa;\bfb)$ the irreducible highest weight $\Uj$-module with highest weight $(\bfa;\bfb)$.

\begin{cor}
Let $\lambda \in \Lambdaj$ and $\bfH \in \Q(p,q)^r$. Then, $L(\lambda;\bfH) $ belongs to $\Oj$ if and only if $L(\lambda;\bfH) = L(\bfa;\bfb)$ for some $(\bfa,\bfb) \in \Z^r \times \Z_{\geq 0}^r$ such that $a_i \geq b_i$, $i \in \Ij \setminus\{1\}$. Moreover, the assignment $(\bfa,\bfb) \mapsto [L(\bfa;\bfb)]$, where $[L(\bfa;\bfb)]$ denotes the isomorphism class of $L(\bfa;\bfb)$, gives a bijection from $\{ (\bfa,\bfb) \in \Z^r \times \Z_{\geq 0}^r \mid a_i \geq b_i,\ i \in \Ij \setminus\{1\}, \AND L(\bfa;\bfb) \neq 0 \}$ to the set of isomorphism classes of irreducible $\Uj$-modules in $\Oj$.
\end{cor}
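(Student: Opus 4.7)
The plan is to derive the corollary directly from Theorem \ref{Classification}. The first assertion is essentially a re-statement of that theorem in the $(\bfa,\bfb)$ notation: the ``only if'' direction is immediate, while the ``if'' direction (that $L(\bfa;\bfb) \in \Oj$ whenever $a_i \geq b_i$ for $i \geq 2$) was already checked inside the proof of Theorem \ref{Classification}, via the $r=1$ theorem for the $f_1$-action together with standard $U_q(\mathfrak{sl}_2)$-theory for the triples $(e_i,k_i,f_i)$, $i \in \Ij \setminus \{1\}$, which handles $f_i$-nilpotency for $i \geq 2$. I would simply reassemble these two halves at the start of the proof.

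The substantive step is bijectivity of the assignment $(\bfa,\bfb) \mapsto [L(\bfa;\bfb)]$. Surjectivity onto the set of isomorphism classes of irreducibles in $\Oj$ is the first assertion read in reverse. For injectivity, suppose $L(\bfa;\bfb) \simeq L(\bfa';\bfb')$, and note that a highest weight vector is characterized up to scalar as a generator of the one-dimensional top weight space killed by $(\Uj_{>0})_+$, so any isomorphism sends a highest weight vector to a highest weight vector. Matching $k_i$-eigenvalues on such a vector recovers $a_i = a'_i$ as integers, so $\bfa = \bfa'$. Matching $h_i$-eigenvalues then forces
\[
[B_i]\{A_i - B_i - 1\} = [B'_i]\{A_i - B'_i - 1\} \qu \text{for all } i \in \Ij,
\]
where $A_i := a_1 + \cdots + a_i$, $B_i := b_1 + \cdots + b_i$, and similarly for $B'_i$.

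The main (mild) obstacle is then the following injectivity lemma: for each fixed $A \in \Z$, the map $B \mapsto [B]\{A-B-1\}$ from $\Z_{\geq 0}$ to $\Q(p,q)$ is injective. I would prove this by reading off the coefficient of $p\inv$, which is a scalar multiple of $q^{-A+2B+1} - q^{-A+1}$ and is therefore visibly injective in $B \in \Z_{\geq 0}$; alternatively, one can simply specialize the already-established injectivity of the function $(a,b,n) \mapsto z_1(a,b,n)$ from the discussion preceding Lemma \ref{split} at $n = 0$ with $a = A$. Once this lemma is in hand, $B_i = B'_i$ for every $i$, and taking successive differences $b_i = B_i - B_{i-1}$ yields $\bfb = \bfb'$, completing the injectivity and hence the corollary.
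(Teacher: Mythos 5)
Your proposal is correct and follows the route the paper intends: the paper states this corollary without a separate proof, treating it as the immediate repackaging of Theorem \ref{Classification} in the $(\bfa;\bfb)$ notation, which is exactly what you do. The only point needing independent verification is your injectivity lemma for $B \mapsto [B]\{A-B-1\}$ on $\Z_{\geq 0}$, and both of your arguments for it (reading off the coefficient of $p\inv$, or specializing the injectivity of $(a,b,n) \mapsto z_1(a,b,n)$ at $n=0$) are valid and consistent with what the paper already established in the $r=1$ section.
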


\begin{rem}\normalfont
We will prove that $L(\bfa;\bfb) \neq 0$ for all such $(\bfa;\bfb)$ by using the crystal basis theory for $(\bfU,\Uj)$ in subsection \ref{irr decomp of V^d}.
\end{rem}

\subsection{Complete reducibility}
In this subsection only, we set $A := \Uj$, and write $B$ for $\Uj$ with $p$ replaced by $p\inv q$. In order to avoid confusion, we denote by $e_i^A, f_i^A, k_i^{A,\pm1}$ the generators of $A$, and by $e_i^B, f_i^B, k_i^{B,\pm1}$ those of $B$. Consider the anti-algebra homomorphism $S:A \rightarrow B$ over $\Q(p,q)$ defined by:
\begin{align}
S(e_i^A) = -e_i^Bk_i^B, \qu S(f_i^A) = -k_i^{B,-1} f_i^B, \qu S(k_i^A) = k_i^{B,-1}. \nonumber
\end{align}
It is straightforwardly checked that $S$ is indeed an anti-algebra homomorphism. In addition, $S$ is invertible:
\begin{align}
S\inv(e_i^B) = -k_i^Ae_i^A, \qu S\inv(f_i^B) = -f_i^Ak_i^{A,-1}, \qu S\inv(k_i^B) = k_i^{A,-1}. \nonumber
\end{align}

For an $A$-module $M \in \Oj$, define a $B$-module $S_*(M) := M^\vee$ by:
\begin{align}
(x \cdot g)(m) = g(S\inv(x) \cdot m) \qu \text{for } x \in B,\ g \in M^\vee,\ m \in M, \nonumber
\end{align}
where $M^\vee$ denotes the restricted dual of $M$, i.e., $M^\vee = \bigoplus_{\lambda \in \Lambdaj} \Hom_{\Q(p,q)}(M_\lambda,\Q(p,q))$. Similarly, we associate an $A$-module $S^*(N)$ with each $B$-module $N$.

\begin{lem}
Let $L \in \Oj$ be the irreducible highest weight $A$-module with highest weight $(\lambda;\bfH)$. Then, $S_*(L)$ is the irreducible lowest weight $B$-module with lowest weight $(-\lambda;\bfH')$ for some $\bfH' \in \Q(p,q)^r$.
\end{lem}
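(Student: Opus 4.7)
The plan is to establish the three claims — weight structure, lowest-weight annihilation, and irreducibility — by direct dualization.

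First I would analyze the weight grading of $S_*(L) = L^\vee$. For a functional $g$ supported on $L_\mu$ one computes
\[
(k_i^B \cdot g)(m) = g(S\inv(k_i^B)m) = g(k_i^{A,-1}m) = q^{-(\beta_i,\mu)}g(m),
\]
so $g$ is a $B$-weight vector of weight $-\mu$; thus $S_*(L)_\nu = \Hom_{\Q(p,q)}(L_{-\nu},\Q(p,q))$ as graded vector spaces. Since $L$ is an irreducible highest weight module with highest weight $\lambda$, its weights lie in $\lambda - \sum_i \Z_{\geq 0}\gamma_i$, so the weights of $S_*(L)$ all lie in $-\lambda + \sum_i \Z_{\geq 0}\gamma_i$, and the space $S_*(L)_{-\lambda}=\Hom(L_\lambda,\Q(p,q))$ is one-dimensional. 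Let $g^*$ be a generator.

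Next I would show $g^*$ is a lowest weight vector. Any weight-lowering element of $B$ (for instance $f_i^B$, or any $f^B_{-j,-i}$ coming from the negative part of the triangular decomposition of $B$) sends $S_*(L)_{-\lambda}$ into a weight space $S_*(L)_{-\lambda - \eta}$ with $\eta \in \sum_i \Z_{>0}\gamma_i$, and by the previous paragraph this space is zero. Hence $(\Uj^B_{<0})_+\cdot g^* = 0$. Because the Cartan part of $B$ (in particular the elements $h_i^B$) preserves weights, it acts on the one-dimensional line $\Q(p,q)\,g^*$ by scalars; call these scalars $H_i'\in\Q(p,q)$ and set $\bfH':=(H_1',\ldots,H_r')$. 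This already exhibits $g^*$ as a lowest weight vector of weight $(-\lambda;\bfH')$, and shows that the $B$-submodule of $S_*(L)$ it generates is a lowest weight $B$-module in the obvious analog of the highest-weight setup.

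Finally I would prove irreducibility by the standard perpendicular-submodule argument. Let $V\subset S_*(L)$ be a nonzero $B$-submodule and put $V^\perp:=\{m\in L \mid g(m)=0 \text{ for all } g\in V\}$. For $x\in A$, $m\in V^\perp$, $g\in V$ one has $(S(x)\cdot g)(m) = g(S\inv(S(x))m) = g(xm)$, and since $S(x)g\in V$ we get $g(xm)=0$; thus $V^\perp$ is an $A$-submodule of $L$. Irreducibility of $L$ forces $V^\perp=0$ or $V^\perp=L$; the latter would give $V=0$, so $V^\perp=0$. Because each weight space $L_\xi$ is finite-dimensional and the natural pairing $L_\xi\otimes S_*(L)_{-\xi}\to\Q(p,q)$ is perfect, $V^\perp=0$ forces $V_{-\xi}=S_*(L)_{-\xi}$ for every $\xi$, whence $V=S_*(L)$. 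Combined with the previous paragraph, $S_*(L)$ is the irreducible lowest weight $B$-module with lowest weight $(-\lambda;\bfH')$.

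The argument is essentially formal dualization, so I do not expect a serious obstacle; the only points that require care are verifying that $S$ is a well-defined anti-isomorphism (already asserted in the text) and keeping straight that the Cartan generators $h_i^B$ of $B$, defined via the same reflection order as in \eqref{h_i's}, preserve weight, which is immediate from their commutation with the $k_j^B$.
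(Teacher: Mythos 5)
Your proposal is correct and follows essentially the same route as the paper: dualize, read off the weight $-\lambda$ and the scalars $H_i'$ on the functional dual to the highest weight vector, and use irreducibility of $L$ together with the perfect pairing on finite-dimensional weight spaces. The only cosmetic difference is in the last step, where the paper applies $S^*$ to a submodule $N\subset S_*(L)$ to obtain a quotient of $S^*(S_*(L))\simeq L$, while you pass to the perpendicular complement $V^\perp\subset L$; these are dual formulations of the same argument.
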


\begin{proof}
Let $v \in L$ be a highest weight vector, and let $g \in S_*(L)$ be a unique element satisfying $g(v) = 1$ and $g(u) = 0$ for all $u \in L_{\mu}$, $\mu < \lambda$. Then, we have
\begin{align}
&(k_i^Bg)(v) = g(k_i^{A,-1} v) = q^{-(\beta_i,\lambda)}g(v), \nonumber\\
&(h_i^B g)(v) = g(S\inv(h_i^B) v), \nonumber
\end{align}
where $h_i^B \in B$ is the elements corresponding to $h_i \in \Uj$. Since $S\inv(h_i^B) v \in L_\lambda = \Q(p,q)v$, we have $S\inv(h_i^B)v = H'_i v$ for some $H'_i \in \Q(p,q)$, and hence $h_i^Bg = H'_ig$. Therefore, $Bg$ is a lowest weight module with lowest weight $(-\lambda;H'_1,\ldots,H'_r)$.

Now, it remains to show that $S_*(L)$ is irreducible. Suppose that $N \subset S_*(L)$ is a $B$-submodule. Then $S^*(N)$ is a quotient of $S^*(S_*(L)) \simeq L$. Since $L$ is irreducible, $S^*(N)$ is identical either to $0$ or to $L$, and hence $N$ is identical either to $0$ or to $S_*(L)$. Thus, $S_*(L)$ is irreducible. This proves the lemma.
\end{proof}

\begin{lem}
Let $M$ be an $A$-module in $\Oj$. Suppose that $M$ contains an irreducible submodule $L \simeq L(\lambda;\bfH)$ for some $\lambda \in \Lambdaj$ and $\bfH \in \Q(p,q)^r$. Then, $M \simeq L \oplus (M/L)$.
\end{lem}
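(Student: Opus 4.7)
The plan is to argue via Zorn's lemma combined with an explicit lifting of highest weight vectors. Consider the poset $\mathcal{S}$ of submodules $N \subseteq M$ satisfying $N \cap L = 0$; this set is non-empty (it contains $0$) and inductively ordered (every chain has an upper bound given by the union), so it admits a maximal element $N_0$. It suffices to show $L + N_0 = M$, which would yield $M = L \oplus N_0 \simeq L \oplus (M/L)$. By passing to $M/N_0$ we may assume $N_0 = 0$, so that no nonzero submodule of $M$ is disjoint from $L$; arguing by contradiction, we assume $L \subsetneq M$ and try to produce such a submodule.

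Pick an irreducible submodule $\overline{L'} \simeq L(\lambda';\bfH')$ of $M/L$ (which exists because $M/L$ lies in $\Oj$, is nonzero, and its weights are bounded above by condition $(M3)$), and denote its preimage in $M$ by $M'$. I would construct a submodule $L' \subseteq M'$ with $L' \simeq L(\lambda';\bfH')$ and $L' \cap L = 0$; the irreducibility of $L$ would then force $L' \cap L = 0$ automatically (the intersection is a proper submodule of $L$), so $L'$ violates the maximality of $N_0 = 0$. To produce $L'$, I would choose a highest weight vector $\bar{u} \in \overline{L'}_{\lambda'}$, pick any lift $u \in M'_{\lambda'}$, and modify $u \mapsto u + w$ with $w \in L_{\lambda'}$ so that $u + w$ is a genuine highest weight vector of $M$, i.e.\ annihilated by $(\Uj_{>0})_+$ and satisfying $h_i(u+w) = H'_i(u+w)$ for all $i \in \Ij$; then $L' := \Uj(u + w)$ does the job. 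The obstructions $e_{i,j}(u+w)$ and $(h_i - H'_i)(u+w)$ lie in weight spaces of $L$ of weights $\lambda' + \epsilon_j - \epsilon_i$ (for $i+j > 0$) and $\lambda'$ respectively. By the classification theorem, these weight spaces vanish whenever $\lambda' \not\leq \lambda$, so the lift is automatic in that range. When $\lambda' < \lambda$ strictly, a Casimir-type separation mimicking the $z_1$-eigenvalue argument of the $r=1$ case (Lemma 3.1.7, Corollary 3.1.8) distinguishes the two highest weights and produces the desired $w$ via a generalized eigenspace decomposition.

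The main obstacle will be the self-extension case $(\lambda';\bfH') = (\lambda;\bfH)$, where the highest weights of $L$ and $\overline{L'}$ coincide so that the relevant weight spaces overlap and no naive central element separates the summands. I would handle this case via the duality $S_*$ established in the preceding lemma: applying $S_*$ converts the extension $0 \to L \to \Uj u \to \overline{L'} \to 0$ into an extension of irreducible lowest weight $B$-modules with identical lowest weights $(-\lambda;\bfH'')$ for the common $\bfH''$ produced by the preceding lemma. Since the lowest weight space of any irreducible lowest weight $B$-module is one-dimensional, the extremal weight space of the middle term is at most two-dimensional, and a careful analysis of the $B$-action on this extremal weight space, combined with the irreducibility of the two end-terms, forces the dual sequence to split. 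Dualizing back via $S^*$ yields the required complement of $L$ inside $\Uj u$, completing the argument.
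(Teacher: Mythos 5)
Your Zorn-plus-obstruction skeleton is sound as far as the formal reductions go, but it breaks down at the one case that actually carries the content. When the irreducible subquotient $\overline{L'}$ of $M/L$ has highest weight $\lambda' < \lambda$, the obstructions $e_{i,j}u \in L_{\lambda'+(\text{positive})}$ and $(h_i-H'_i)u \in L_{\lambda'}$ need not vanish, and you propose to remove them by ``a Casimir-type separation mimicking the $z_1$-eigenvalue argument.'' No such element exists in the paper for $r \geq 2$: the element $z_1$ of Section \ref{r=1case} is built by hand from $h'_1$ and $k_1^{\pm1}$ for $\Uj_1$ only, and producing central (or even suitably quasi-central) elements of the coideal subalgebra $\Uj_r$ whose eigenvalues separate the highest weights $(\lambda;\bfH)$ and $(\lambda';\bfH')$ is a genuinely hard problem, not a routine transcription. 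This step is therefore a real gap rather than a deferrable detail. Your self-extension case is also incomplete: after applying $S_*$ you assert that an analysis of the two-dimensional extremal weight space ``forces the dual sequence to split,'' but the dual sequence presents exactly the same splitting problem you started with, so nothing is gained by dualizing there. (Two smaller points: at this stage of the paper complete reducibility is not yet available, so the existence of an \emph{irreducible} --- as opposed to highest weight --- submodule of $M/L$ also needs an argument; and the case $\lambda'=\lambda$ is better handled directly by the commutativity and semisimplicity of the $h_i$'s on the top weight space, which follow from Proposition \ref{diagonalizable} and the commutativity proposition in Section \ref{CompleteReducibility}, since those proofs only use that all weights of the module are $\leq \lambda$.)

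The paper's proof avoids every one of these difficulties by using the duality in the opposite direction and globally: by the preceding lemma, $S_*(L)$ is an irreducible lowest weight $B$-module and occurs as a submodule of $S_*(M)$; applying $S^*$ to that inclusion yields a surjection $M \twoheadrightarrow L$ of $A$-modules, and the composite $L \xrightarrow{\iota} M \twoheadrightarrow L$ is nonzero, hence an isomorphism by Schur's lemma, so $\iota$ has a retraction and $M \simeq L \oplus (M/L)$. No Zorn's lemma, no case division on $\lambda'$ versus $\lambda$, and no Casimir element are needed; the entire weight-comparison analysis you set up is replaced by the single observation that an irreducible submodule dualizes to an irreducible submodule of the dual. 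If you want to keep your framework, the honest conclusion is that the case $\lambda' < \lambda$ should itself be settled by this duality argument --- at which point the rest of your scaffolding becomes unnecessary.
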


\begin{proof}
It suffices to show that the short exact sequence
\begin{align}
0 \xrightarrow[]{} L \xrightarrow[]{\iota} M \xrightarrow[]{\pi} M/L \xrightarrow[]{} 0 \nonumber
\end{align}
splits. By the previous lemma, $S_*(M)$ has an irreducible submodule $S_*(L)$. Applying $S^*$ to the inclusion $S_*(L) \hookrightarrow S_*(M)$, we obtain a surjection $M \twoheadrightarrow L$ of $A$-modules. Since the composite map $L \xrightarrow[]{\iota} M \twoheadrightarrow L$ is nonzero, it follows from Schur's lemma that this composite map is an isomorphism of $A$-modules. By composing the inverse of this isomorphism with the surjection $M \twoheadrightarrow L$, we obtain a retraction of $\iota$. This proves the lemma.
\end{proof}

Now, the complete reducibility of the $\Uj$-modules in $\Oj$, and the consequences below, follow from a standard argument; see, for example, {\cite[Section 3.5]{HK02}}.

\begin{theo}\label{4.4.3}
Every $\Uj$-module in $\Oj$ is completely reducible.
\end{theo}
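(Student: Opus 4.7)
The plan is to follow the standard pattern used to deduce complete reducibility from a splitting lemma, as in \cite[Section 3.5]{HK02}. The strategy has three steps.

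First, I would verify that $\Oj$ is closed under subquotients: each of the four defining conditions (M1)--(M4) is straightforwardly inherited by submodules and quotient modules, so in particular any direct summand of a module in $\Oj$ is again an object of $\Oj$.

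Second, I would show that every nonzero $M \in \Oj$ contains a simple submodule. By (M3) one chooses $\lambda \in \Lambdaj$ maximal with $M_\lambda \neq 0$; maximality forces $e_i M_\lambda = 0$ for every $i \in \Ij$. Consequently $h_1, \ldots, h_r$ preserve the finite-dimensional space $M_\lambda$ and pairwise commute on it, by the identity $[h_1,h_2]_0 \in \Uj(e_2, e_2 h_1, e_2 h_1^2)$ together with its braid translates via the $\tau_i$'s. Arguing as in the proof of Theorem \ref{Classification} (which reduces to the rank-one case using the $\tau_i$'s and Propositions \ref{EigenvalueOfh1} and \ref{diagonalizable}), we extract a simultaneous eigenvector $v \in M_\lambda$ of $h_1, \ldots, h_r$ whose eigenvalues are forced to be $\Q(p,q)$-rational, so that $v$ is a highest weight vector. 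The cyclic submodule $W := \Uj v \subset M$ is then a highest weight module in $\Oj$; by Theorem \ref{Classification} the highest weight $(\lambda;\bfH)$ is admissible and $L(\lambda;\bfH)$ belongs to $\Oj$; and the local nilpotency of the $f_i$'s on $W$ (condition (M4)) forces the kernel of the canonical surjection $V(\lambda;\bfH) \twoheadrightarrow W$ to contain the unique maximal submodule of the Verma module, hence to equal it. Thus $W \cong L(\lambda;\bfH)$ is simple.

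Third, I would conclude by a Zorn's lemma argument applied to the set of decompositions $M = N \oplus Q$ in which $N$ is a direct sum of simple submodules of $M$, ordered by refinement. A maximal such decomposition must have $Q = 0$: otherwise $Q$ lies in $\Oj$ by the first step, Step~2 produces a simple submodule $L' \subset Q$, and the splitting lemma immediately preceding the theorem (applied inside $Q$) yields $Q = L' \oplus Q'$, producing a strictly finer decomposition of $M$ and contradicting maximality. Hence $M = N$ is semisimple.

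The main obstacle is Step~2, specifically the irreducibility of the highest weight submodule $W$. This is the $\Uj$-analog of the classical fact that an integrable highest weight module with dominant integral highest weight over an ordinary quantum group is irreducible; here one has to combine the classification of admissible highest weights (Theorem \ref{Classification}) with the local-nilpotency condition (M4) to pin down the kernel of $V(\lambda;\bfH) \twoheadrightarrow W$, once the simultaneous diagonalization of the $h_i$'s on the top weight space has been carried out via the braid elements $\tau_i$.
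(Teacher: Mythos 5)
Your overall route is the one the paper intends: the paper's own contribution is the splitting lemma immediately preceding the theorem (an irreducible submodule of any $M \in \Oj$ is a direct summand, proved via the duality functor $S_*$), and it then delegates the remaining semisimplicity argument to \cite[Section 3.5]{HK02}. Your Steps 1 and 3 are exactly that delegated argument and are fine.

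The genuine gap is in Step 2, at the point you yourself flag as the main obstacle. The extraction of a simultaneous eigenvector $v \in M_\lambda$ of $h_1,\ldots,h_r$ is correct (the commutation lemma and Propositions \ref{EigenvalueOfh1}, \ref{diagonalizable} only use that $M_\mu = 0$ for $\mu > \lambda$). But your assertion that local nilpotency of the $f_i$'s forces the kernel of $V(\lambda;\bfH) \twoheadrightarrow W := \Uj v$ to \emph{contain} the maximal submodule of the Verma module is not a consequence of (M4); it is exactly the statement that every highest weight module in $\Oj$ is irreducible, which the paper records as a corollary of this whole machinery rather than as an elementary consequence of integrability. (Already for ordinary quantum groups of rank $\geq 2$, the irreducibility of the maximal integrable quotient of a Verma module is a nontrivial theorem, not something read off from local nilpotency.) As written, Step 2 assumes a statement of the same depth as the one being proved. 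The claim is true, but the proof should run through the splitting lemma itself: if $W$ had a nonzero maximal proper submodule $N$, choose $\mu$ maximal among the weights of $N$ and a highest weight vector $u \in N_\mu$, set $V' := \Uj u$ with maximal proper submodule $N'$ and irreducible quotient $L'$; then $W/N'$ lies in $\Oj$ and contains $L' = V'/N'$ as an irreducible submodule, so the splitting lemma yields a homomorphism $W \rightarrow W/N' \rightarrow L'$ restricting to the nonzero canonical map on $V'$ --- yet any homomorphism $W \rightarrow L'$ kills $W_\lambda$ (since $L'_\lambda = 0$ for $\mu < \lambda$) and is therefore zero, a contradiction. With that repair Step 2, and hence your whole argument, goes through and coincides with the standard argument the paper invokes.
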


\begin{cor}
Every highest weight module in $\Oj$ is irreducible.
\end{cor}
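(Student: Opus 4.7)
The plan is to deduce the corollary directly from the complete reducibility Theorem \ref{4.4.3} combined with the one-dimensionality of the top weight space of a highest weight module.

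First I would fix a highest weight module $M \in \Oj$ with highest weight vector $v$ of weight $(\lambda;\bfH)$, so that $M = \Uj v$. By the universal property built into the definition of the Verma module $V(\lambda;\bfH)$, there is a surjection $V(\lambda;\bfH) \twoheadrightarrow M$ sending the canonical generator to $v$. Using the triangular decomposition $\Uj \simeq \Uj_{<0} \otimes (\Uj_0 \otimes \Ujo) \otimes \Uj_{>0}$, the ideal defining $V(\lambda;\bfH)$ absorbs $\Uj_0 \otimes \Ujo$ and $(\Uj_{>0})_+$ into the span of the canonical generator, so $V(\lambda;\bfH)$ is spanned as a weight-graded space by the image of $\Uj_{<0}$. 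Since every nonzero ordered monomial in the root vectors $f_{-j,-i}$ strictly lowers the weight below $\lambda$, the weight space $V(\lambda;\bfH)_\lambda$ is one-dimensional. Consequently $M_\lambda$ is at most one-dimensional, and is spanned by $v$.

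Next I would invoke Theorem \ref{4.4.3} to decompose $M$ as a direct sum of irreducible modules, $M = \bigoplus_{k} L_k$. This decomposition is compatible with the weight grading, so
\begin{equation*}
M_\lambda = \bigoplus_{k} (L_k)_\lambda.
\end{equation*}
Since $\dim M_\lambda = 1$, exactly one summand, say $L_{k_0}$, satisfies $(L_{k_0})_\lambda \neq 0$, and in fact $v \in L_{k_0}$. Then $M = \Uj v \subset L_{k_0} \subset M$, forcing $M = L_{k_0}$, which is irreducible.

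The step that requires the most care is the one-dimensionality of $M_\lambda$; everything else is a formal consequence of complete reducibility and the fact that weight-space decompositions respect direct sum decompositions. I do not foresee any real obstacle, since the triangular decomposition has already been established and the relevant negative part is generated by weight-lowering operators.
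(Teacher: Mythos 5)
Your proof is correct and is essentially the argument the paper has in mind: the paper derives this corollary from Theorem \ref{4.4.3} by the ``standard argument'' it cites from \cite[Section 3.5]{HK02}, which is exactly what you have written out (one-dimensionality of the top weight space via the triangular decomposition, then compatibility of the isotypic decomposition with the weight grading). No gaps.
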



\begin{theo}
Let $M \in \Oj$. Irreducible decomposition of $M$ is unique in the following sense. If we have two irreducible decompositions $M = \bigoplus_{j \in J} L_j = \bigoplus_{k \in K} L^k$ for some index sets $J$ and $K$, then there exists a bijection $\phi:J \rightarrow K$ such that $L_j \simeq L^{\phi(j)}$ for all $j \in J$. Moreover, for each $j \in J$, the number of $j' \in J$ such that $L_{j'} \simeq L_j$ is finite.
\end{theo}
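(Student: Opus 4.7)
The plan is to prove uniqueness of the irreducible decomposition by showing that the multiplicity of every isomorphism class $[L]$ of irreducible modules in $\Oj$ is controlled by the dimension of a canonically defined subspace of $M$, which is intrinsic to $M$ and hence independent of the chosen decomposition. By Theorem \ref{Classification} and its accompanying parametrization, every irreducible $L \in \Oj$ is isomorphic to a unique $L(\lambda;\bfH)$; for such $L$ I would introduce the subspace of \emph{$L$-primitive vectors}
$$M^L := \{ v \in M_\lambda \mid (\Uj_{>0})_+ v = 0 \text{ and } h_i v = H_i v \text{ for all } i \in \Ij \} \subseteq M_\lambda.$$
Since $M^L$ sits inside $M_\lambda$, condition $(M2)$ immediately yields $\dim M^L < \infty$.

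The key step is to prove that for any irreducible decomposition $M = \bigoplus_{j \in J} L_j$ and any $L = L(\lambda;\bfH)$, one has
$$\dim M^L = |\{ j \in J \mid L_j \simeq L \}|.$$
Given $v \in M^L$, I would expand $v = \sum_j v_j$ along the direct sum. Since each $L_j$ is $\Uj$-stable in $M$, all three defining conditions of $M^L$ (weight, positive vanishing, and $h_i$-eigenvalue) descend to each component, so $v_j \in (L_j)^L$. If $v_j \neq 0$, then $\Uj v_j$ is a nonzero submodule of the irreducible $L_j$ and hence equals $L_j$; consequently $L_j$ is a nonzero quotient of the Verma module $V(\lambda;\bfH)$, and the uniqueness of its irreducible quotient (Section \ref{2.3}) forces $L_j \simeq L$. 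Conversely, when $L_j \simeq L$ the subspace $(L_j)^L$ coincides with the one-dimensional highest weight space $(L_j)_\lambda$. Hence $M^L = \bigoplus_{j:\,L_j \simeq L}(L_j)^L$, and its dimension is precisely the multiplicity of $[L]$.

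Combining both steps, the multiplicity of $[L]$ in any decomposition equals the intrinsic quantity $\dim M^L$, which is finite by the first paragraph. This at once yields the finiteness assertion and shows that for any two decompositions $M = \bigoplus_j L_j = \bigoplus_k L^k$ the subsets $J([L]) := \{j \in J \mid L_j \simeq L\}$ and $K([L]) := \{k \in K \mid L^k \simeq L\}$ have the same finite cardinality. Partitioning $J = \bigsqcup_{[L]} J([L])$ and $K = \bigsqcup_{[L]} K([L])$ by isomorphism type and choosing an arbitrary bijection on each block then glues together to the desired $\phi : J \to K$ with $L_j \simeq L^{\phi(j)}$. I expect the main obstacle to be the decomposition step for $v \in M^L$, which must carefully use the direct-sum structure so that the eigenvalue conditions genuinely descend summand-wise, together with the uniqueness of the irreducible quotient of a Verma module furnished by the triangular decomposition in Section \ref{2.3}.
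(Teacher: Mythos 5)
Your proposal is correct and is essentially the argument the paper itself invokes: the text proves complete reducibility and then derives this uniqueness statement by citing the standard argument of \cite[Section 3.5]{HK02}, which is exactly your computation of the multiplicity of $[L(\lambda;\bfH)]$ as the (intrinsic, finite) dimension of the space of primitive vectors in $M_\lambda$. The only point worth making explicit is that $(L_j)^L$ is exactly one-dimensional when $L_j\simeq L$, which follows from the triangular decomposition since every root vector in $\Uj_{<0}$ strictly lowers the $\Lambdaj$-weight, so the weight-$\lambda$ space of a highest weight module with highest weight $\lambda$ is spanned by its highest weight vector.
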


%

\subsection{Proof of Lemma \ref{decomp of f_2^n v}}
Throughout this subsection, we fix a $\Uj_2$-module $M \in \Oj$. Recall from the case $r = 1$ that $M$ is decomposed as:
\begin{align}
M &= \bigoplus_{\substack{a \in \Z \\ b ,n \in \Z_{\geq 0}}} M_{a,b,n}, \nonumber\\
M_{a,b,0} &= \{ u \in M \mid e_1 u = 0, \ k_1 u = q^{a} u, \ h_1 u = [b]\{a-b-1\} u \}, \nonumber\\
M_{a,b,n} &= f_1^{(n)}(M_{a,b,0}). \nonumber
\end{align}

Recall that $h_1 = [e_1,f_1]_1$ and $h_2 = \tau_2(h_1)$. Set $f'_2 := q^{-2} \bigl[e_1,[f_1,f_2]_1 \bigr]_1 - p\inv q\inv f_2 k_1\inv$. For each $a \in \Z$ and $b, n \in \Z_{\geq 0}$, we define $f'_{2,i}(a,b,n) \in \Uj$, $i = 1, 2, 3$, by
\begin{align}
f'_{2,1}(a,b,n) &:= q^{b-n-1} \overline{f'_2} + (pq^{a-b} - p\inv q^{-a+b})f_2 - q^{-b+n+1}f'_2, \nonumber\\
f'_{2,2}(a,b,n) &:= p q^{a-b-n-2} \overline{f'_2} - (q^{b+1} + q^{-b-1}) f_2 + p\inv q^{-a+b+n+2} f'_2, \nonumber\\
f'_{2,3}(a,b,n) &:= q^{-n-2} \overline{f'_2} + (p q^{a-2b-1} - p\inv q^{-a+2b+1}) f_2 - q^{n+2} f'_2. \nonumber
\end{align}
Also, we define three linear maps $f'_{2,i}$, $i = 1,2,3$, by
\begin{align}
f'_{2,i}(m) := f'_{2,i}(a,b,n)m \qu \text{for } m \in M_{a,b,n}. \nonumber
\end{align}

Set $h''_1 := h_1 + \frac{p\inv q k_1\inv}{q-q\inv}$.
\begin{lem}\label{A.1.1}
We have the following:
\begin{enumerate}
\item $[h''_1,f_2]_1 = q^2 f'_2$.
\item $[h''_1,\overline{f'_2}]_1 = q^2 f_2$.
\item $[h''_1,f'_2]_{-1} = -p \left(q^{-3}\overline{f'_2} - [2] f'_2 - f_2 \left(q\inv(q-q\inv)\overline{h''_1} + [2] p\inv q\inv k_1\inv \right) \right) k_1$.
\end{enumerate}
\end{lem}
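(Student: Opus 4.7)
The plan is to verify each identity by direct computation from the defining relations \eqref{defrel}, with part (1) serving as a warm-up that clarifies the role of the correction term $\frac{p\inv q k_1\inv}{q-q\inv}$ in $h''_1$.

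For part (1), I would first observe that $e_1 f_2 = f_2 e_1$ (the non-exceptional $(i,j)=(1,2)$ instance of \eqref{defrel}), so expanding $h_1 = e_1 f_1 - q f_1 e_1$ collapses the commutator to
\[
[h_1, f_2]_1 = e_1[f_1,f_2]_1 - q[f_1,f_2]_1 e_1 = [e_1, [f_1, f_2]_1]_1.
\]
By the very definition $f'_2 = q^{-2}[e_1,[f_1,f_2]_1]_1 - p\inv q\inv f_2 k_1\inv$ this equals $q^2 f'_2 + p\inv q\, f_2 k_1\inv$. On the other hand, $k_1\inv f_2 = q\inv f_2 k_1\inv$ yields
\[
\left[\tfrac{p\inv q k_1\inv}{q-q\inv}, f_2\right]_1 = -p\inv q\, f_2 k_1\inv,
\]
exactly cancelling the residual term and producing $[h''_1, f_2]_1 = q^2 f'_2$.

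For part (2) the strategy is the same. First apply $\psij$ to the definition of $f'_2$ to obtain $\overline{f'_2} = q^2[e_1,[f_1,f_2]_{-1}]_{-1} - pq\, f_2 k_1$. The calculation of $[h_1, \overline{f'_2}]_1$ then proceeds in parallel with (1), but is somewhat more delicate because of the $-pq f_2 k_1$ summand and the inner bracket $[f_1, f_2]_{-1}$ in place of $[f_1, f_2]_1$; here Lemma~\ref{h1} (computing $[h_1, f_1]_{-1}$) enters to handle the inner bracket. Again the residual $k_1$-tail is absorbed by the commutator of the $\frac{p\inv q k_1\inv}{q-q\inv}$ correction with the $-pq f_2 k_1$ summand of $\overline{f'_2}$.

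For part (3), a genuine long computation is required; the tempting shortcut of substituting $f'_2 = q^{-2}[h''_1, f_2]_1$ from (1) into the left-hand side only yields the tautology $[h''_1, f'_2]_{-1} = [h''_1, f'_2]_{-1}$. Instead I would expand $f'_2$ in terms of $e_1, f_1, f_2, k_1\inv$, compute $h''_1 f'_2$ and $f'_2 h''_1$ separately, and systematically reduce the resulting cubic and quartic monomials using both the ordinary Serre relations and the deformed Serre relation at $i=1$; it is the $-(q+q\inv)e_1\{pqk_1\}$ summand of the latter that ultimately produces the $\overline{h''_1}$ and $k_1\inv$ contributions on the right-hand side, while the overall factor $-pk_1$ emerges from collecting all $k_1$-powers on the right. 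The main obstacle is purely combinatorial bookkeeping of the $p,q$-power coefficients, which must combine precisely into the stated closed form; mechanical verification via GAP with the Quagroup package, as the author employs elsewhere in the paper, provides an independent check.
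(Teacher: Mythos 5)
Your approach is exactly what the paper does: its entire proof of Lemma \ref{A.1.1} is ``This is easy and straightforward,'' i.e.\ direct verification from the defining relations \eqref{defrel}, and your fully worked part (1) — reducing $[h_1,f_2]_1$ to $[e_1,[f_1,f_2]_1]_1$ via $e_1f_2=f_2e_1$ and cancelling the $p\inv q\,f_2k_1\inv$ tail against the commutator of the correction term (using $k_1\inv f_2=q\inv f_2k_1\inv$) — checks out. Your outline for (2) and (3), including the observation that the deformed Serre relation at $i=1$ is the source of the $\overline{h''_1}$ and $k_1\inv$ terms in (3), is consistent and in fact supplies more detail than the paper itself.
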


\begin{proof}
This is easy and straightforward.
\end{proof}

\begin{prop}\label{EigenvectorsOfh1}
Let $a \in \Z$, $b, n \in \Z_{\geq 0}$, and $m \in M_{a,b,n}$. Then, we have
\begin{align}
f'_{2,1}(m) \in M_{a+1,b+1,n}, \qu f'_{2,2}(m) \in M_{a+1,b,n}, \qu f'_{2,3}(m) \in M_{a-2,b-1,n-1}. \nonumber
\end{align}
\end{prop}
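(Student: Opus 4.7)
My strategy is to characterize each target weight space via a pair of eigenvalues and verify that each $f'_{2,i}(a,b,n) m$ is the corresponding joint eigenvector. From the $r=1$ analysis in Section \ref{r=1case}, $M$ decomposes as a $\Uj_1$-module into copies of the irreducibles $L(a;b)$ of Theorem \ref{IrreducibleModulesForr=1}, and the central-like element $z_1 = h'_1 + \frac{[2]pq}{1-q^2}k_1 + \frac{[2]p\inv q\inv}{1-q^{-4}} k_1\inv$ acts on $M_{a,b,n}$ by the scalar $z_1(a,b,n)$, with the map $(a,b,n)\mapsto z_1(a,b,n)$ injective. Hence $M_{a,b,n}$ is precisely the joint eigenspace of $k_1$ and $z_1$ with eigenvalues $q^{a-3n}$ and $z_1(a,b,n)$, and it suffices to verify these eigenvalues on each $f'_{2,i}(a,b,n) m$.

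The $k_1$-check is immediate: each of $f_2, \overline{f'_2}, f'_2$ has $k_1$-weight $q$ (for $f_2$, one has $k_1 f_2 k_1\inv = q^{-(\beta_1,\gamma_2)} f_2 = q f_2$, and the same weight is directly visible in the other two expressions). Therefore $k_1 \cdot f'_{2,i}(a,b,n) m = q^{(a+1)-3n} f'_{2,i}(a,b,n) m$ for $i=1,2,3$, which matches the common $k_1$-eigenvalue $q^{(a+1)-3n} = q^{(a-2)-3(n-1)}$ of the three claimed target spaces.

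For the $z_1$-check, I will first derive three commutation relations
\[
z_1 f_2 = q f_2 z_1 + q^2 f'_2, \qu z_1 \overline{f'_2} = q \overline{f'_2} z_1 + q^2 f_2, \qu z_1 f'_2 = q\inv f'_2 z_1 + Y,
\]
where the first two follow from parts (1) and (2) of Lemma \ref{A.1.1} by observing that the contributions of the Cartan terms in $z_1$ cancel, exactly as in the proof of Lemma \ref{z1-eigen}, and where $Y$ is an explicit element of $\Uj$ of the form $\alpha \overline{f'_2} + \beta f'_2 + f_2(\gamma + \delta \overline{h''_1})$ produced by part (3) of the same lemma. Applying these to $m \in M_{a,b,n}$ and reducing $\overline{h''_1} m$ to a scalar multiple of $m$ via the identity $\overline{h_1} = h_1 + (q-q\inv) f_1 e_1$ together with an $r=1$-type recurrence for moving $\overline{h_1}$ past $f_1^{(n)}$ (derived from $[\overline{h_1}, f_1]_{1} = -[2]\{pqk_1\} f_1$, the image under $\psij$ of Lemma \ref{h1}), one obtains a $3 \times 3$ scalar matrix for the action of $z_1$ on the span $\{ f_2 m, \overline{f'_2} m, f'_2 m \}$. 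Its three eigenvalues work out to $z_1(a+1,b+1,n), z_1(a+1,b,n), z_1(a-2,b-1,n-1)$, and the corresponding eigenvectors coincide, up to scalar, with $f'_{2,1}(a,b,n) m, f'_{2,2}(a,b,n) m, f'_{2,3}(a,b,n) m$. The principal obstacle lies in the third relation and the attendant explicit evaluation of $\overline{h''_1} m$, since Lemma \ref{A.1.1}(3) is considerably more intricate than parts (1)--(2); once the eigenvalue formula for $\overline{h''_1}$ on $m$ is in hand, the matching of eigenvectors against the explicit coefficients $q^{b-n-1}, pq^{a-b-n-2}, q^{-n-2}, \ldots$ in the definitions of $f'_{2,i}(a,b,n)$ amounts to a direct but laborious bookkeeping.
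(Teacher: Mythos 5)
Your proposal is correct and follows essentially the same route as the paper: the paper likewise uses Lemma \ref{A.1.1} to express the action of a Cartan-type element (it works with $h''_1$ rather than $z_1$; since $z_1 = h''_1 + \frac{[2]pq}{1-q^2}k_1$, the two differ by a scalar on each $k_1$-weight space) as an explicit $3\times 3$ matrix on the span of $\overline{f'_2}m$, $f_2 m$, $f'_2 m$, and then verifies by direct or computer calculation that the coefficient vectors of the $f'_{2,i}(a,b,n)$ are eigenvectors with eigenvalues $h''_1(a+1,b+1,n)$, $h''_1(a+1,b,n)$, $h''_1(a-2,b-1,n-1)$. Your explicit appeal to the injectivity of $(a,b,n) \mapsto z_1(a,b,n)$ to upgrade the eigenvalue computation to membership in the spaces $M_{a',b',n'}$ is a point the paper leaves implicit, but otherwise the two arguments coincide.
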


\begin{proof}
Since $h_1$ and $k_1$ act on $m$ as scalar multiplication, so does $h''_1$; explicitly, we have $h''_1 m = h''_1(a,b,n) m$, where
\begin{align}
h''_1(a,b,n) := [n+1][b-n]\{a-b-n-1\} - q[n][b-n+1]\{a-b-n\} + \frac{p\inv q^{-a+3n+1}}{q-q\inv}. \nonumber
\end{align}
By Lemma \ref{A.1.1}, we have
\begin{align}
h''_1 \overline{f'_2} m &= q h''_1(a,b,n) \overline{f'_2} m + q^2 f_2 m, \nonumber\\
h''_1 f_2 m &= q h''_1(a,b,n) f_2 m + q^2 f'_2 m, \nonumber\\
h''_1 f'_2 m &= q\inv h''_1(a,b,n) f'_2 m - p \left(q^{-3}\overline{f'_2} - [2] f'_2 - f_2 \left(q\inv(q-q\inv)\overline{h''_1(a,b,n)} + [2] p\inv q\inv q^{-a+3n} \right) \right) q^{a-3n} \nonumber\\
&= -pq^{a-3n-3}\overline{f'_2} m + pq^{a-3n} \left( q\inv(q-q\inv)\overline{h''_1(a,b,n)} + [2]p\inv q^{-a+3n-1} \right) f_2 m \nonumber\\
&\ + (q\inv h''_1(a,b,n) + pq^{a-3n}[2])f'_2 m. \nonumber
\end{align}
Therefore, $h''_1$ defines a linear endomorphism on the vector space spanned by $\{ \overline{f'_2} m, f_2 m, f'_2 m \}$ whose representation matrix is
\begin{align}\label{MatrixOfh1}
\begin{pmatrix}
q h''_1(a,b,n) & 0 & -pq^{a-3n-3} \\
q^2 & q h''_1(a,b,n) & pq^{a-3n-1}(q-q^{-1})\overline{h''_1(a,b,n)} + q^{-1}[2] \\
0 & q^2 & q\inv h''_1(a,b,n) + pq^{a-3n}[2] 
\end{pmatrix}. 
\end{align}
Hence, in order to prove Proposition \ref{EigenvectorsOfh1}, it suffices to show that the following three vectors
\begin{align}
\begin{pmatrix}
q^{b-n-1} \\
pq^{a-b} - p\inv q^{-a+b} \\
-q^{-b+n+1} \\
\end{pmatrix}, \qqu 
\begin{pmatrix}
p q^{a-b-n-2} \\
-(q^{b+1} + q^{-b-1}) \\
p\inv q^{-a+b+n+2} \\
\end{pmatrix}, \qqu 
\begin{pmatrix}
q^{-n-2} \\
pq^{a-2b-1} - p\inv q^{-a+2b+1} \\
-q^{n+2} \\
\end{pmatrix} \nonumber
\end{align}
are eigenvectors of the matrix \eqref{MatrixOfh1} with eigenvalues $h''_1(a+1,b+1,n)$, $h''_1(a+1,b,n)$, and $h''_1(a-2,b-1,n-1)$, respectively. This can be checked by using a computer, or possibly by direct calculation.
\end{proof}

We normalize $f'_{2,i}$ as follows:
\begin{align}
f_{2,1}(a,b,n) &:= \frac{1}{(q^{b+1} - q^{-b-1}) \{a-2b-1\}}f'_{2,1}(a,b,n), \nonumber\\
f_{2,2}(a,b,n) &:= -\frac{1}{\{a-b\} \{a-2b-1\}}f'_{2,2}(a,b,n), \nonumber\\
f_{2,3}(a,b,n) &:= -\frac{1}{(q^{b+1} - q^{-b-1}) \{a-b\}}f'_{2,3}(a,b,n), \nonumber
\end{align}
and define linear maps $f_{2,i}$, $i = 1,2,3$, by $f_{2,i}(m) = f_{2,i}(a,b,n)m$ for $m \in M_{a,b,n}$. Then, for each $m \in M_{a,b,n}$, we have $f_2 m = (f_{2,1} + f_{2,2} + f_{2,3})m$. Thanks to this equality and Proposition \ref{EigenvectorsOfh1}, in order to compute $f_{2,i}(m)$, it is enough to decompose $f_2m$ into three $h_1$-eigenvectors with distinct eigenvalues. The computation becomes easier when $n=0$ since in this case, $f_{2,3}(m) = 0$. Also, it follows that $f_2m \in M_{a+1,b+1,0} \oplus M_{a+1,b,0}$ for $m \in M_{a,b,0}$. Repeating this, we have
\begin{align}\label{f_2^l}
f_2^{(n)}m \in \bigoplus_{k = 0}^n M_{a+n,b+k,0} \qu \text{for } n \in \Z_{\geq 0},\ m \in M_{a,b,0}.
\end{align}
This completes the proof of Lemma \ref{decomp of f_2^n v}.

\part{Crystal basis theory for $\Uj$}\label{part 2}
\section{Combinatorics}\label{Combinatorics}
\subsection{Partitions and Young tableaux}
A partition of $n \in \N$ of length $l \in \N$ is a nonincreasing sequence $\lm = (\lm_1,\ldots,\lm_l)$ of nonnegative integers satisfying $\sum_{i=1}^l \lm_i = n$; we call $n$ the size of $\lm$. Let $|\lm|$ and $\ell(\lm)$ denote the size and the length of $\lm$, respectively. We denote by $\Par_l(n)$ the set of partitions of $n$ of length $l$.

We often identify a partition with a Young diagram in a usual way. Let $(L,\preceq)$ be a totally ordered set. A semistandard tableau of shape $\lm \in \Par_l(n)$ in letters $L$ is a filling of the Young diagram $\lm$ with elements of $L$, which weakly increases (with respect to the total order $\preceq$) from left to right along the rows, and strictly increases from the top to the bottom along the columns. A semistandard tableau of shape $\lm$ is said to be standard if $L = \{ 1,\ldots,|\lm| \}$, and the filling strictly increases along the rows.

A bipartition of $n \in \N$ of length $(l;m) \in \N^2$ is an ordered pair $\bflm := (\bflm^-;\bflm^+)$ of partitions such that $\ell(\bflm^-) = l$, $\ell(\bflm^+) = m$, and $|\bflm^-|+|\bflm^+| = n$. Set $|\bflm| := |\bflm^-| + |\bflm^+|$. We denote by $P_{(l;m)}(n)$ the set of bipartitions of $n$ of length $(l;m)$. For totally ordered sets $(L^-;\preceq^-)$ and $(L^+;\preceq^+)$, a semistandard tableau of shape $\bflm \in P_{(l;m)}(n)$ in letters $(L^-;L^+)$ is an ordered pair $(T^-;T^+)$, where $T^\pm$ is a semistandard tableau of shape $\bflm^\pm$ in letters $L^\pm$. A semistandard tableau of shape $\bflm$ is said to be standard if $L^-, L^+ \subset \{ 1,\ldots,|\bflm| \}$, $L^- \sqcup L^+ = \{ 1,\ldots,|\bflm| \}$, and the fillings strictly increase along the rows.

Set
\begin{itemize}
\item $P(n) = P_r(n) := \Par_{2r+1}(n)$: the set of partitions of $n$ of length $2r+1$.
\item $P := \bigsqcup_{n \in \N} P(n)$: the set of partitions of length $2r+1$.
\item $\Pj(n) = \Pj_r(n) := P_{(r+1;r)}(n)$: the set of bipartitions of $n$ of length $(r+1;r)$.
\item $\Pj := \bigsqcup_{n \in \N} \Pj(n)$: the set of bipartitions of length $(r+1;r)$.
\item $\SST(\lm)$: the set of semistandard tableaux of shape $\lm \in P$ in letters $I := \{ -r < \cdots -1 < 0 < 1 < \cdots < r \}$.
\item $\ST(\lm)$: the set of standard tableaux of shape $\lm \in P$.
\item $\SST(\bflm)$: the set of semistandard tableaux of shape $\bflm \in \Pj$ in letters $(I^-;I^+)$, where $I^- := \{ 0 \prec^- -1 \prec^- \cdots \prec^- -r \}$, and $I^+ := \{ 1 \prec^+ \cdots \prec^+ r \}$.
\item $\ST(\bflm)$: the set of standard tableaux of shape $\bflm \in \Pj$.
\end{itemize}

For $\bflm \in \Pj$, we refer the $i$-th row of $\bflm^-$ to as the $-(i-1)$-th row of $\bflm$, and the $j$-th row of $\bflm^+$ to as the $j$-th row of $\bflm$. Also, for $i \in I$, set $\bflm_i$ to be the length of the $i$-th row of $\bflm$, i.e.,
$$
\bflm_i := \begin{cases}
\bflm^-_{-i+1} \qu & \IF i \leq 0, \\
\bflm^+_i \qu & \IF i > 0.
\end{cases}
$$

\subsection{Parametrization of the irreducible $\Uj$-modules}
Let $L(\bfa;\bfb)$ be the irreducible highest weight $\Uj$-module corresponding to $\bfa = (a_1,\ldots,a_r)$, $\bfb = (b_1,\ldots,b_r)$, with $a_1 \in \Z$, $a_i, b_j \in \Z_{\geq 0}$, and $b_i \leq a_i$ for $i = 2, \ldots, r$ and $j = 1,\ldots, r$. Set
\begin{align}
&\bflm^- := \left( \sum_{i=1}^r b_i, \sum_{i=2}^r b_i, \ldots, b_r, 0 \right) + a_1^-\rho_{r+1}, \nonumber\\
&\bflm^+ := \left( \sum_{i=2}^r (a_i- b_i), \sum_{i=3}^r (a_i-b_i), \ldots, a_r-b_r, 0 \right) - a_1^+\rho_{r}, \nonumber
\end{align}
where $a_1^- := \max\{ a_1-\left( 2\sum_{i=1}^r b_i - \sum_{i=2}^r a_i \right), 0 \}$, $a_1^+ := \min\{ a_1-\left( 2\sum_{i=1}^r b_i - \sum_{i=2}^r a_i \right), 0 \}$, $\rho_n := (1,1,\ldots,1)$ ($n$ components), and the addition is defined componentwise. The assignment $(\bfa;\bfb) \mapsto \bflm$ gives a bijection from $\{ (\bfa;\bfb) \mid a_1 \in \Z,\ b_1 \in \Z_{\geq 0},\ 0 \leq b_i \leq a_i,\ i \geq 2 \}$ to the set of bipartitions of length $(r+1;r)$ containing at least one $0$; the inverse map $\pi$ is given by
\begin{align}\label{pi}
a_1 = 2\bflm_0-\bflm_{-1}-\bflm_1,\ a_i = \bflm_{-(i-1)}-\bflm_{-i}+\bflm_{i-1}-\bflm_i,\ b_i = \bflm_{-(i-1)}-\bflm_{-i}.
\end{align}
We write $L(\bflm) := L(\pi(\bflm))$. If we define $\pi(\bflm)$ by equation \eqref{pi} for a bipartition $\bflm$ of length $(r+1;r)$ (not necessarily containing $0$), then we have $\pi(\bflm) = \pi(\bfmu)$ if and only if $\bfmu = \bflm + (n\rho_{r+1};n\rho_r)$ for some $n \in \Z$. We denote this relation by $\bflm \sim_\pi \bfmu$, and fix a complete set $\Pj_\pi$ of representatives for $\Pj/\sim_\pi$. Set $L(\bfmu) := L(\pi(\bfmu))$. We call $L(\bfmu)$ the irreducible highest weight $\Uj$-module with highest weight $\bfmu$. Note that there exists a natural bijection $\SST(\bflm) \rightarrow \SST(\bfmu)$ if $\bflm \sim_\pi \bfmu$.

\begin{ex}\normalfont
Let $r = 3$, $\bfa = (2,2,3)$, $\bfb = (2,0,1)$. Then, the corresponding bipartition is $(4,2,2,1;4,2,0)$, and the associated Young diagram is
\begin{align}
\left(
\ytableausetup{centertableaux} \begin{ytableau} \empty & & & \\ & \\ & \\ \\ \end{ytableau};
\ytableausetup{centertableaux} \begin{ytableau}  \empty & & &  \\ & \\ \end{ytableau}
\right). \nonumber
\end{align}
\end{ex}

%
%

\subsection{Readings of tableaux}\label{jcrystal}
Let $\lm \in P$ and $T \in \SST(\lm)$. We denote by $\ME(T)$ the Middle-Eastern reading of $T$. Also, for $\bflm \in \Pj$ and $\bfT \in \SST(\bflm)$, set $\Read(\bfT) := (\EM(\bfT^-),\ME(\bfT^+))$, where $\EM(\bfT^-)$ is obtained by reversing $\ME(\bfT^-)$. For example,
\begin{align}
R\left(
\ytableausetup{centertableaux} \begin{ytableau} 0 & 0 & -1 & -4 \\ -1 & -2 \\ -3 & -3 \\ -4 \\ \end{ytableau};
\ytableausetup{centertableaux} \begin{ytableau} 1 & 2 & 2 & 4  \\ 3 & 4 \\ \end{ytableau}
\right) = (-4,-3,-3,-1,-2,0,0,-1,-4,4,2,2,1,4,3). \nonumber
\end{align}

Recall that $I = \{ -r,\ldots,r \}$. For $\bfs = (s_1,\ldots,s_n) \in I^{n}$ and $i \in \{ -r+1,\ldots,r \}$, define $\bfs_{i-\hf}$ to be the sequence of $(i-1)$'s and $i$'s obtained by the following way. First, ignore all $s_j$ such that $s_j \neq i-1,i$. Next, delete the adjacent pairs $(i-1,i)$. The resulting sequence is $\bfs_{i-\hf}$. Note that $\bfs_{i-\hf}$ is of the form $(i,\ldots,i,i-1,\ldots,i-1)$.

Also,  for $i \in \Ij \setminus \{1\}$, we define $\bfs_i$ as follows. First, consider the concatenated sequence $(\bfs_{-(i-\hf)}^{\rev},\bfs_{i-\hf})$, where $\bfs_{-(i-\hf)}^{\rev}$ is obtained by reversing $\bfs_{-(i-\hf)}$. Next, delete the adjacent pairs $(-(i-1),i)$. The resulting sequence is $\bfs_i$. Note that $\bfs_i$ is either of the form $(-i,\ldots,-i,i,\ldots,i,i-1,\ldots,i-1)$ or of the form $(-i,\ldots,-i,-(i-1),\ldots,-(i-1),i-1,\ldots,i-1)$.

Finally, we define $\bfs_1$ to be $\bfs_{0-\hf}^{\rev}$. For example, if $\bfs = (-4,-3,-3,-1,-2,0,0,-1,-4,4,2,2,1,4,3)$, then we have
\begin{align}
\begin{split}
&\bfs_{-(4-\hf)} = \bfs_{-3-\hf} = (-3,-4), \\
&\bfs_{4-\hf} = (4,4,3), \\
&\bfs_4 = (-4,4,3), \\
&\bfs_1 = \bfs_{0-\hf}^{\rev} = (-1,0).
\end{split} \nonumber
\end{align}

\section{Quasi-$\jmath$-crystal bases}\label{WeakCrystal}
\subsection{Crystal bases}
In this subsection, we briefly recall some basic properties of crystal bases for $\U$-modules in the full subcategory $\Oint$ of the BGG-category $\clO$ consisting of the integrable modules (see \cite{K90}, \cite{HK02}). We denote the Kashiwara operators and the other structure maps by
$$
\Etil_i,\ \Ftil_i,\ \vep_i,\ \vphi_i, \ \wt, \qu i \in \bbI.
$$

For each $n \in \N$, the set $I^n$ is equipped with a crystal structure of type $A_{2r}$ as follows. For $\bfs \in I^n$ and $i \in \bbI$, $\Etil_i \bfs$ (resp., $\Ftil_i \bfs$) is obtained from $\bfs$ by replacing the rightmost $i+\hf$ in $\bfs_i$ with $i-\hf$ (resp., the leftmost $i-\hf$ in $\bfs_i$ with $i+\hf$), and is $0$ if there are no $i+\hf$ (resp., $i-\hf$) in $\bfs_i$, where $0$ denotes a formal symbol. Also, $\vep_i(\bfs)$ and $\vphi_i(\bfs)$ are the numbers of $i+\hf$'s and $i-\hf$'s in $\bfs_i$, respectively. Finally, we have $\wt(\bfs) = \sum_{j = 1}^n s_j \vep_j$.

The set of isomorphism classes of irreducible $\U$-modules in $\Oint$ is parametrized by the set of partitions $\lm$ of length $2r+1$ with $\lm_{2r+1} = 0$; we denote by $L(\lm)$ the irreducible highest weight $\U$-module with highest weight $\lm$. For each $\mu \in P$, set $L(\mu) := L(\mu - \mu_{2r+1}\rho_{2r+1})$.

For each $\lm \in P$, $L(\lm)$ has a unique crystal basis $(\clL(\lm),\clB(\lm))$. $\clB(\lm)$ is identical to $\SST(\lm)$. The Kashiwara operators and other maps act on $\SST(\lm)$ as follows: For $T \in \SST(\lm)$, $\Etil_iT \in \SST(\lm)$ (resp., $\Ftil_i T \in \SST(\lm)$) is the unique semistandard tableau such that $\ME(\Etil_iT) = \Etil_i \ME(T)$ (resp., $\ME(\Ftil_i T) = \Ftil_i \ME(T)$) if $\Etil_i \ME(T) \neq 0$ (resp., $\Ftil_i \ME(T)$), and $\Etil_i T = 0$ (resp., $\Ftil_i T = 0$) otherwise. Also, we have
$$
\vep_i(T) = \vep_i(\ME(T)), \ \vphi_i(T) = \vphi_i(\ME(T)), \ \wt(T) = \wt(\ME(T)).
$$

\subsection{Quasi-$\jmath$-crystal bases}
Recall that $\Uj = \Uj_r$ has $(r-1)$ $\mathfrak{sl}_2$-triples: $(f_i, k_i, e_i)$ for $i = 2, \ldots, r$. Hence, one can define Kashiwara operators, $\ftil_i$ and $\etil_i$, in the same way as in the crystal basis theory for quantum groups. Also, by the results from Section \ref{r=1case}, we can define Kashiwara operators, $\ftil_1$ and $\etil_1$. Let us give the precise definition of these operators.

\begin{defi}\normalfont
Let $M$ be a $\Uj$-module. By the complete reducibility of $\Uj_1$-modules in $\Oj$, one can uniquely write $M \simeq \bigoplus_{\bflm \in (\Pj_1)_\pi} L(\bflm)^{\oplus m_\bflm}$ for some $m_\bflm \in \N$. Let $v_{\bflm,i}$, $1 \leq i \leq m_\bflm$ be a basis of the weight space of $L(\bflm)^{\oplus m_\bflm}$ of highest weight. We define linear operators $\ftil_1$ and $\etil_1$ on $M$ by
\begin{align}
\ftil_1(f_1^{(n)}v_{\bflm,i}) = f_1^{(n+1)}v_{\bflm,i}, \qqu \etil_i(f_1^{(n)}v_{\bflm,i}) = f_1^{(n-1)}v_{\bflm,i}. \nonumber
\end{align}
Note that this definition is independent of the choice of $v_{\bflm,i}$'s.
\end{defi}

Set $\A := \left\{ f/g \in \Q(p,q) \mid f, g \in p\Q[p,q,q\inv] + \Q[q],\ g \notin p \Q[p,q,q\inv] + q \Q[q] \right\}$; namely, $\A$ consists of all those $h \in \Q(p,q)$ for which $\lim_{q \rightarrow 0} (\lim_{p \rightarrow 0} h)$ exists. (Recall that $p$ and $q$ are independent.)

\begin{defi}\normalfont
Let $M$ be a $\Uj$-module and $\clL$ an $\A$-submodule of $M$. We say that $\clL$ is a quasi-$\jmath$-crystal lattice of $M$ if
\begin{enumerate}
\item[$(\qL1)$] $\clL$ is a free $\A$-module of rank $\dim_{\Q(p,q)} M$, and $\Q(p,q) \bigotimes_{\A} \clL = M$,
\item[$(\qL2)$] $\clL = \bigoplus_{\lambda \in \Lambda^\jmath} \clL_\lambda$, where $\clL_\lambda := \clL \cap M_\lambda$,
\item[$(\qL3)$] $\ftil_i(\clL) \subset \clL$ and $\etil_i(\clL) \subset \clL$ for all $i \in \Ij$.
\end{enumerate}
\end{defi}

If $\clL$ is a quasi-$\jmath$-crystal lattice of $M$, then the Kashiwara operators induce $\Q$-linear maps, denoted by the same symbols, on $\clL/q\clL$.

\begin{defi}\normalfont
Let $M$ be a $\Uj$-module, $\clL$ an $\A$-submodule of $M$, and $\clB$ a subset of $\clL/q\clL$. We say that $(\clL, \clB)$ is a quasi-$\jmath$-crystal basis if
\begin{enumerate}
\item[$(\qB1)$] $\clL$ is a quasi-$\jmath$-crystal lattice of $M$,
\item[$(\qB2)$] $\clB$ is a $\Q$-basis of $\clL/q\clL$,
\item[$(\qB3)$] $\clB = \bigsqcup_{\lambda \in \Lambda^\jmath} \clB_\lambda$, where $\clB_\lambda := \clB \cap (\clL_\lambda / q\clL_\lambda)$,
\item[$(\qB4)$] $\ftil_i(\clB) \subset \clB \sqcup \{0\}$ and $\etil_i(\clB) \subset \clB \sqcup \{0\}$ for all $i \in \Ij$,
\item[$(\qB5)$] for each $b,b' \in \clB$ and $i \in \Ij$, one has $\ftil_i(b) = b'$ if and only if $b = \etil_i(b')$.
\end{enumerate}
\end{defi}

\begin{defi}\normalfont
For a quasi-$\jmath$-crystal basis $(\clL,\clB)$ and $i \in \Ij$, we define three maps $\varphi_i : \clB \rightarrow \Z_{\geq 0}$, $\varepsilon_i : \clB \rightarrow \Z_{\geq 0}$, and $\wtj : \clB \rightarrow \Lambdaj$ by
$$
\varphi_i(b) := \max\{ n \mid \ftil_i^n(b) \neq 0 \}, \ \varepsilon_i(b) := \max\{ n \mid \etil_i^n(b) \neq 0 \}, \ \wtj(b) := \lambda \IF b \in \clB_\lambda.
$$
\end{defi}

\begin{ex}\normalfont
Let $r = 1$. For each $\bflm \in \Pj_1$, the irreducible $\Uj_1$-module $L(\bflm)$ has the following quasi-$\jmath$-crystal basis. Fix a highest weight vector $v \in L(\bflm)$. Let $\clL(\bflm)$ denote the $\A$-lattice spanned by $\{ f_1^{(n)} v \mid 0 \leq n \leq \bflm_0-\bflm_{-1} \}$, and set $\clB(\bflm) := \{ f_1^{(n)} v + \clL(\bflm)/q\clL(\bflm) \mid 0 \leq n \leq \bflm_0-\bflm_{-1} \}$. Then, the Kashiwara operators $\ftil_1$ and $\etil_1$ act on $\clL(\bflm)$ by:
\begin{align}
\ftil_1(f_1^{(n)} v) = f_1^{(n + 1)} v, \qu \etil_1(f_1^{(n)} v) = f_1^{(n - 1)} v. \nonumber
\end{align}
It is straightforward to check that $(\clL(\bflm),\clB(\bflm))$ is indeed a quasi-$\jmath$-crystal basis of $L(\bflm)$. In addition, one has $\varphi_1(f_1^{(n)}v + q\clL) = \bflm_0-\bflm_{-1}-n$, $\varepsilon_1(f_1^{(n)}v + q\clL) = n$, and $\wtj(f_1^{(n)}v + q\clL) = (2\bflm_0-\bflm_{-1}-\bflm_1-3n)\delta_1$.
\end{ex}

\begin{defi}\normalfont
Let $M$ be a $\Uj$-module and $(\clL, \clB)$ a quasi-$\jmath$-crystal basis of $M$. The quasi-$\jmath$-crystal graph associated with $(\clL, \clB)$ is the colored directed graph with vertex set $\clB$ and edges $b \xrightarrow[]{i} b'$, where $b, b' \in \clB,\ i \in \Ij$ are such that $\ftil_i b = b'$.
\end{defi}

We often identify $\clB$ with its quasi-$\jmath$-crystal graph. Hence, phrases such as ``$\clB$ is connected'' and ``a connected component of $\clB$'' make sense.

\begin{prop}
Let $M \in \Oj$ be a $\Uj$-module with a quasi-$\jmath$-crystal basis $(\clL,\clB)$. For each $i \in \Ij$ and $m \in \clL_\lm$, consider the expression $m = \sum_{j=0}^N f_i^{(j)}m_j$, where $m_j \in M_{\lm+j\gamma_i} \cap \Ker e_i$. Then, the following hold:
\begin{enumerate}
\item $m_j \in \clL$ for all $j = 0,\ldots,N$.
\item If $m + q\clL \in \clB$, then there exists a unique $j_0$ such that $u_j \in q\clL$ for all $j \neq j_0$, and $m + q\clL = m_{j_0} + q\clL$.
\end{enumerate}
\end{prop}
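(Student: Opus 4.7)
My plan is to prove (1) by a standard downward induction using the Kashiwara operators, and (2) by reducing modulo $q\clL$ and exploiting the combinatorial bijection provided by axiom $(\qB5)$. In both parts, the analysis is reduced to the $U_q(\mathfrak{sl}_2)$-representation theory when $i\ne 1$ (via the triple $(e_i,f_i,k_i)$ inside $\Uj$) and to the $r=1$ theory of Section \ref{r=1case} when $i=1$; in either setting, the Kashiwara operators act on the string decomposition by $\etil_i(f_i^{(j)}m_j)=f_i^{(j-1)}m_j$ and $\ftil_i(f_i^{(j)}m_j)=f_i^{(j+1)}m_j$ whenever $m_j\in\Ker e_i$, and in particular $f_i^{(j)}m_j=\ftil_i^j m_j$.

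For (1), I would induct downward on $N:=\max\{j:m_j\ne 0\}$. Applying $\etil_i^N$ to $m$ isolates $m_N$, which lies in $\clL$ by axiom $(\qL3)$; since $f_i^{(N)}m_N=\ftil_i^N m_N$ also lies in $\clL$ by $(\qL3)$, the difference $m-f_i^{(N)}m_N=\sum_{j<N}f_i^{(j)}m_j$ lies in $\clL$ as well, and the induction hypothesis handles the remaining terms.

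For (2), set $b:=m+q\clL\in\clB$, $j_0:=\vep_i(b)$, and $b_0:=\etil_i^{j_0}b\in\clB\cap\Ker\etil_i$. Reducing $m=\sum_j\ftil_i^j m_j$ modulo $q\clL$ yields $b=\sum_j\ftil_i^j(m_j+q\clL)$. Since $m_j\in\Ker e_i$, each $m_j+q\clL$ is annihilated by $\etil_i$; axioms $(\qB2)$, $(\qB4)$, $(\qB5)$ together force $\Ker\bigl(\etil_i\colon\clL/q\clL\to\clL/q\clL\bigr)$ to be the $\Q$-span of $\clB\cap\Ker\etil_i$ (because $(\qB5)$ makes $\etil_i$ injective on those $b'\in\clB$ with $\etil_i b'\ne 0$). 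Writing $m_j+q\clL=\sum_{b''}c_{j,b''}b''$ with $b''\in\clB_{\lm+j\gamma_i}\cap\Ker\etil_i$, iterated application of $(\qB5)$ shows that the map $(j,b'')\mapsto\ftil_i^j b''$, restricted to pairs with $\ftil_i^j b''\ne 0$, is a bijection onto $\clB$; matching the coefficient of each basis vector in $b=\sum_{j,b''}c_{j,b''}\ftil_i^j b''$ forces $c_{j_0,b_0}=1$ and $c_{j,b''}=0$ for every other pair with $\ftil_i^j b''\ne 0$.

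The main obstacle is to rule out nonzero coefficients $c_{j,b''}$ at basis elements with $\ftil_i^j b''=0$, which a priori would leave $m_j\notin q\clL$ while contributing nothing to $b$. I would handle this by fixing the canonical version of the decomposition, in which each $m_j$ is restricted to the sum of highest-weight spaces of those $U_q(\mathfrak{sl}_2)$- (respectively $\Uj_1$-)irreducible components of $M$ whose weight support actually contains $\lm$; under this reading $m_j+q\clL$ is automatically supported on $b''$ with $\vphi_i(b'')\ge j$, so that the would-be ``invisible'' coefficients are zero by definition. Combining this compatibility with the coefficient matching above yields $m_{j_0}\equiv b_0\pmod{q\clL}$ and $m_j\in q\clL$ for all $j\ne j_0$, and hence $m+q\clL=\ftil_i^{j_0}(m_{j_0}+q\clL)$, which is the intended content of (2).
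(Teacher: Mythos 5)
Your proposal is correct in substance, and part (1) is exactly the standard argument that the paper invokes (the paper's proof is a one-line reference to the ordinary crystal basis theory; the detailed template appears in its proof of the immediately following proposition, where $\ftil_1^N u = f_1^{(N)}u_N$ is used to peel off the top component before inducting). Your part (2), however, takes a genuinely more roundabout route. The standard argument is short: set $j_0:=\vep_i(b)$; then $\etil_i^{j_0+1}m=\sum_{j>j_0}f_i^{(j-j_0-1)}m_j\in q\clL$, so part (1) applied to $q^{-1}\etil_i^{j_0+1}m$ gives $m_j\in q\clL$ for $j>j_0$; hence $\etil_i^{j_0}b=m_{j_0}+q\clL\in\clB$, and $(\qB5)$ gives $b=\ftil_i^{j_0}(m_{j_0}+q\clL)=f_i^{(j_0)}m_{j_0}+q\clL$, whence $\sum_{j\ne j_0}f_i^{(j)}m_j\in q\clL$ and one more application of part (1) finishes. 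This never requires expanding $m_j+q\clL$ in the basis $\clB$, so the "invisible coefficients" obstacle you wrestle with simply does not arise.

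Your resolution of that obstacle is the one under-justified step. Choosing the canonical $m_j$ (supported on strings through $\lm$) is a condition inside $M$; the claim that $m_j+q\clL$ then lies in the span of $\{b''\in\clB\cap\Ker\etil_i \mid \vphi_i(b'')\ge j\}$ is a statement about $\clL/q\clL$ and is not true "by definition" — it needs an argument. It does hold: for the canonical $m_j$ one has $\etil_i^j\ftil_i^jm_j=m_j$ in $\clL$, hence also in $\clL/q\clL$, while by $(\qB5)$ the operator $\etil_i^j\ftil_i^j$ fixes every $b''$ with $\vphi_i(b'')\ge j$ and annihilates every $b''$ with $\vphi_i(b'')<j$; comparing the two expressions kills the unwanted coefficients. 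With that sentence supplied your coefficient-matching argument closes up (note also that your map $(j,b'')\mapsto\ftil_i^jb''$ is an injection into $\clB_\lm$, not a bijection onto $\clB$, though injectivity is all you use). So the proof is correct once this gap is filled, but the simpler route above is what the paper has in mind.
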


\begin{proof}
The assertion follows from the same argument as the ordinary crystal basis theory.
\end{proof}

\begin{prop}
Let $M \in \Oj$ be a $\Uj$-module with a quasi-$\jmath$-crystal basis $(\clL,\clB)$. Let $\lm \in \Lambdaj$ and set $a := (\lm,\beta_1)$. For each $u \in \clL_\lm \cap \Ker e_1$, consider the unique expression $u = \sum_{b=0}^N u_b$, where $u_b \in M_\lm$ is a $\Uj_1$-highest weight vector of such that $k_1u_b = q^au_b$ and $h_1u_b = [b]\{ a-b-1 \}u_b$. Then, the following hold:
\begin{enumerate}
\item $u_b \in \clL$ for all $b = 0,\ldots,N$.
\item If $u + q\clL \in \clB$, then there exists a unique $b_0$ such that $u_b \in q\clL$ for all $b \neq b_0$, and $u + q\clL = u_{b_0} + q\clL$.
\end{enumerate}
\end{prop}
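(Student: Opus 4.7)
The plan is to prove both parts by induction on $N$, using the Kashiwara operators $\ftil_1, \etil_1$ themselves as a projector onto the top summand $u_N$. The key observation, immediate from the definition of $\ftil_1, \etil_1$ via the $\Uj_1$-irreducible decomposition, is that each $u_b$ is a $\Uj_1$-highest weight vector generating a submodule isomorphic to $L(a;b)$; hence $\etil_1 u_b = 0$ and $\ftil_1^n u_b = f_1^{(n)} u_b$, which is nonzero exactly when $0 \leq n \leq b$. In particular, $\ftil_1^N$ annihilates every $u_b$ with $b < N$, while $\etil_1^N \ftil_1^N u_N = u_N$.

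For part (1), I would induct on $N$. The base case $N=0$ is trivial. For the inductive step, the observation above gives $\ftil_1^N u = f_1^{(N)} u_N$, which lies in $\clL$ by $(\qL3)$. Applying $\etil_1^N$, which also preserves $\clL$ by $(\qL3)$, recovers $u_N = \etil_1^N \ftil_1^N u \in \clL$. The remainder $u - u_N = \sum_{b < N} u_b$ lies in $\clL_\lm \cap \Ker e_1$ with strictly fewer summands, so the induction hypothesis delivers $u_b \in \clL$ for every $b$.

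For part (2), assume $u + q\clL \in \clB$ and again induct on $N$, splitting into two cases. If $u_N \notin q\clL$, then $(\qB4)$ gives $u_N + q\clL = \etil_1^N \ftil_1^N (u + q\clL) \in \clB$; moreover, both $u + q\clL$ and $u_N + q\clL$ are annihilated by $\etil_1$ (since $\etil_1 u = \sum \etil_1 u_b = 0$ in $M$), and $\ftil_1^N$ sends both to the same nonzero image $f_1^{(N)} u_N + q\clL \in \clB$. Iterating the injectivity guaranteed by $(\qB5)$ then forces $u + q\clL = u_N + q\clL$, i.e., $u - u_N \in q\clL$. Writing $u - u_N = q v$ with $v \in \clL_\lm \cap \Ker e_1$ and applying part (1) to $v$ gives $u_{b'} = q v_{b'} \in q\clL$ for every $b' < N$, so $b_0 = N$ works uniquely. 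If instead $u_N \in q\clL$, then $u' := u - u_N \equiv u \pmod{q\clL}$ still represents the same basis element, and $u' = \sum_{b < N} u_b$ has top index at most $N-1$, so the induction hypothesis produces the desired $b_0 < N$.

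The main obstacle is conceptual rather than technical. A naive Lagrange-interpolation projection onto $h_1$-eigenspaces fails because the eigenvalues $[b]\{a-b-1\}$ and their pairwise differences involve $p\inv$-terms and hence do not lie in the ring $\A$. Replacing the algebraic projection by the composite $\etil_1^N \ftil_1^N$ of Kashiwara operators circumvents this integrality obstruction entirely, since these operators preserve $\clL$ by $(\qL3)$ and combinatorially isolate the top $h_1$-eigencomponent.
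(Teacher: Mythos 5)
Your argument is correct and is essentially the paper's own proof: part (1) is identical (apply $\ftil_1^N$, note $\ftil_1^N u = f_1^{(N)}u_N$, recover $u_N = \etil_1^N\ftil_1^N u \in \clL$ via $(\qL3)$, subtract and induct), and part (2) uses the same mechanism of isolating components with $\ftil_1^n$, $\etil_1^n$ together with $(\qB4)$–$(\qB5)$. The only cosmetic difference is that the paper identifies $b_0$ directly as $\vphi_1(u+q\clL)$ and argues downward from $\ftil_1^{b_0+1}u \in q\clL$, whereas you peel off the top index with a case split on whether $u_N \in q\clL$; both routes are equivalent.
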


\begin{proof}
We prove the assertion by induction on $N$. When $N=0$, there is nothing to prove. When $N > 0$, consider $\ftil_1^Nu \in \clL$. Since we have $\ftil_1^N u = f_1^{(N)}u = f_1^{(N)}u_N$, it holds that $f_1^{(N)}u_N \in \clL$. Hence we have $u_N = \etil_1^N f_1^{(N)}u_N \in \clL$. This implies that $u-u_N = \sum_{b=0}^{N-1}u_b$ and $u-u_N \in \clL \cap \Ker e_1$, and hence, by induction hypothesis, we have $u_b \in \clL$ for all $b$. Now, let us assume that $u + q\clL \in \clB$. Set $b_0 := \vphi_1(u+q\clL)$. Since $0 \neq \ftil_1^{b_0}u = \sum_{b=b_0}^N f_1^{(b_0)}u_b$, it holds that $0 \leq b_0 \leq N$. Then, we have $\sum_{b=b_0}^N f_1^{(b_0)}u_b \in \clL \setminus q\clL$, and $\sum_{b=b_0+1}^N f_1^{(b_0+1)}u_b = \ftil_1^{b_0+1}u \in q\clL$. Thus, we have $u_b \in q\clL$ for all $b > b_0$, and $f_1^{(b_0)}u_{b_0} + q\clL = \ftil_1^{b_0}(u+q\clL)$ (equivalently, $u_{b_0}+q\clL = u+q\clL$). Then, we have $u-u_{b_0} \in q\clL$, and hence, $u_b \in q\clL$ for all $b \neq b_0$. This completes the proof.
\end{proof}

Now, the following theorem can be proved in a similar way to the ordinary crystal basis theory.

\begin{theo}
Let $M \in \Oj$ be a $\Uj_1$-module. Then, $M$ has a quasi-$\jmath$-crystal basis $(\clL,\clB)$. Moreover, if $M \simeq \bigoplus_{\bflm \in (\Pj_1)_\pi} L(\bflm)^{\oplus m_\bflm}$ for some $m_\bflm \in \Z_{\geq 0}$, then there exists an isomorphism $M \rightarrow \bigoplus_{\bflm \in (\Pj_1)_\pi} L(\bflm)^{\oplus m_\bflm}$ of $\Uj_1$-modules which induces an isomorphism
$$
(\clL,\clB) \rightarrow \left( \bigoplus_{\bflm \in (\Pj_1)_\pi} \clL(\bflm)^{\oplus m_\bflm}, \bigoplus_{\bflm \in (\Pj_1)_\pi} \clB(\bflm)^{\oplus m_\bflm} \right).
$$
\end{theo}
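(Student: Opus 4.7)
The plan is to combine the complete reducibility of $\Uj_1$-modules in $\Oj$ with the explicit quasi-$\jmath$-crystal basis of each irreducible $L(\bflm)$ exhibited in the Example above, and then to recover any given quasi-$\jmath$-crystal basis from this data.

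For existence, apply the complete reducibility theorem to write $M \simeq \bigoplus_{\bflm \in (\Pj_1)_\pi} L(\bflm)^{\oplus m_\bflm}$. Each summand $L(\bflm)$ already carries the quasi-$\jmath$-crystal basis $(\clL(\bflm), \clB(\bflm))$ from the Example, and the direct sum
$$
\left( \bigoplus_{\bflm} \clL(\bflm)^{\oplus m_\bflm},\ \bigoplus_{\bflm} \clB(\bflm)^{\oplus m_\bflm} \right)
$$
satisfies $(\qL 1)$--$(\qL 3)$ and $(\qB 1)$--$(\qB 5)$ termwise, yielding a quasi-$\jmath$-crystal basis of $M$.

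For the recognition statement, let $(\clL, \clB)$ be an arbitrary quasi-$\jmath$-crystal basis of $M$. Define, for each $\bflm \in (\Pj_1)_\pi$,
$$
\clB^{\mathrm{hw}}_{\bflm} := \{ b \in \clB \mid \etil_1 b = 0,\ \wtj(b) = (2\bflm_0 - \bflm_{-1} - \bflm_1)\delta_1,\ \vphi_1(b) = \bflm_0 - \bflm_{-1} \},
$$
and set $m_\bflm := |\clB^{\mathrm{hw}}_\bflm|$. For each $b \in \clB^{\mathrm{hw}}_\bflm$, choose any lift $u \in \clL$ of $b$; then $u \in \clL \cap \Ker e_1$, so the second Proposition preceding the theorem applies, decomposing $u$ as a sum of $\Uj_1$-highest weight vectors lying in $\clL$ with prescribed $h_1$-eigenvalues. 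By part $(2)$ of that proposition, a unique summand $v_b$ satisfies $v_b + q\clL = b$, and its $k_1$- and $h_1$-eigenvalues are exactly those of the highest weight vector of $L(\bflm)$. By Theorem 3.1.5 the cyclic submodule $\Uj_1 v_b$ is isomorphic to $L(\bflm)$.

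It remains to verify that $\sum_{\bflm}\sum_{b\in \clB^{\mathrm{hw}}_\bflm} \Uj_1 v_b$ is a direct sum equal to $M$; once this is done, sending the standard highest weight generator of the $b$-th copy of $L(\bflm)$ to $v_b$ is the desired $\Uj_1$-module isomorphism, and the formulas $\ftil_1(f_1^{(n)} v_b) = f_1^{(n+1)} v_b$ from the Example ensure that the direct-sum quasi-$\jmath$-crystal basis is carried onto $(\clL, \clB)$. Directness follows because the $v_b$ descend to distinct elements of $\clB$, hence are $\Q$-linearly independent modulo $q\clL$, hence $\Q(p,q)$-linearly independent by $(\qL 1)$. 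The harder point, and the main obstacle, is that the submodules generated by the $v_b$ exhaust $M$: equivalently, $|\clB^{\mathrm{hw}}_\bflm|$ must agree with the Jordan--H\"older multiplicity of $L(\bflm)$ in $M$. I would establish this by showing, via the first Proposition preceding the theorem (decomposition of $m \in \clL_\lambda$ along $\Ftil_i$-strings), that every $b \in \clB$ is of the form $\ftil_1^n b'$ for some $b' \in \bigsqcup_\bflm \clB^{\mathrm{hw}}_\bflm$, then comparing weight-space dimensions of $\clL/q\clL$ with those of $M$ coming from complete reducibility. This mirrors Kashiwara's argument for ordinary crystal bases, with the classification in Section 3.1 playing the role of the $U_q(\mathfrak{sl}_2)$ representation theory.
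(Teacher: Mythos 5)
Your approach is the one the paper intends: the paper offers no written proof beyond the remark that the theorem ``can be proved in a similar way to the ordinary crystal basis theory,'' and the two propositions immediately preceding it are exactly the technical inputs your argument uses, so in substance you are reconstructing the intended Kashiwara-style proof. One step, however, is false as literally written: from $\etil_1 b = 0$ you cannot conclude that an arbitrary lift $u \in \clL$ of $b$ lies in $\Ker e_1$; that condition only says $\etil_1 u \in q\clL$, whereas the proposition you want to apply (the decomposition of an element of $\clL_\lambda \cap \Ker e_1$ into $h_1$-eigenvectors) requires a genuine $e_1$-singular vector. The repair is immediate and uses the other proposition, which you invoke anyway for the exhaustion step: write $u = \sum_j f_1^{(j)} m_j$ with $m_j \in \Ker e_1$; all $m_j$ lie in $\clL$, and since $\etil_1 b = 0$ the unique surviving component is the $j=0$ one, so $m_0 \in \clL \cap \Ker e_1$ is a lift of $b$ to which the eigenvector-decomposition proposition does apply. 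With that substitution the rest of your argument --- directness of $\sum_b \Uj_1 v_b$ from the linear independence of the $v_b$ modulo $q\clL$ combined with the irreducibility of each cyclic submodule, and exhaustion via the string decomposition plus a weight-space dimension count --- goes through and is the standard $\mathfrak{sl}_2$-type argument, with the $k_1$- and $h_1$-eigenvalue classification of the $r=1$ section playing the role of the usual $U_q(\mathfrak{sl}_2)$ weight theory.
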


\subsection{Tensor product rule}\label{5.2}
Recall that $\Uj$ is a right coideal of $\U$, i.e., $\Delta(\Uj) \subset \Uj \otimes \U$. Hence, we are interested in the $\Uj$-module structure of the tensor product of a $\Uj$-module and a $\U$-module. Let $\bfV = \bfV_r$ denote the vector representation of $\U$. It is spanned by $\{ u_i \mid i \in I \}$, and is equipped with a $\U$-module structure by:
\begin{align}
F_{j} u_i = \delta_{j-\hf,i} u_{i+1}, \qu E_{j} u_i = \delta_{j+\hf,i} u_{i-1}, \qu K_j u_i = q^{(\alpha_j,\epsilon_i)} u_i. \nonumber
\end{align}
If we set $\bfL = \bfL_r := \bigoplus_{i \in I} \A u_i$, $\bfB = \bfB_r := \{ u_i + q\bfL_r \mid i \in I  \}$, then, $(\bfL,\bfB)$ is an ordinary crystal basis of $\bfV$.


We first consider the case $r = 1$. Recall that the irreducible $\Uj_1$-module $L(\bflm)$, $\bflm \in \Pj_1$ has a quasi-$\jmath$-crystal basis $(\clL(\bflm),\clB(\bflm))$. If $L(\bflm) = L(a;b)$ for some $a \in \Z$ and $b \in \Z_{\geq }0$, then we write $\clL(a;b) = \clL(\bflm)$, $\clB(a;b) = \clB(\bflm)$.

\begin{prop}\label{f1n}
Let $a \in \Z$, $b \in \Z_{\geq 0}$. Then we have an isomorphism
\begin{align}
L(a;b) \otimes \bfV \simeq L(a+2;b+1) \oplus L(a-1;b) \oplus L(a-1;b-1) \nonumber
\end{align}
of $\Uj_1$-modules. Moreover, $(\clL(a;b) \otimes \bfL, \clB(a;b) \otimes \bfB)$ is a quasi-$\jmath$-crystal basis of $L(a;b) \otimes \bfV$.
\end{prop}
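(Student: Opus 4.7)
The plan is to first establish the $\Uj_1$-module decomposition by exhibiting three explicit highest weight vectors and invoking complete reducibility (Theorem \ref{4.4.3}) together with the classification Theorem \ref{IrreducibleModulesForr=1}, and then to deduce the quasi-$\jmath$-crystal basis statement by tracking these vectors modulo $q(\clL(a;b)\otimes\bfL)$.

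For the decomposition, I would first note the dimension check: $\dim L(a;b)\otimes\bfV = 3(b+1) = (b+2)+(b+1)+b$, matching the claimed summands. Using $\Delta(e_1) = e_1\otimes K_{\hf}\inv + 1\otimes E_{\hf} + p\inv k_1\inv \otimes F_{-\hf} K_{\hf}\inv$, one immediately verifies that $w_1 := v_0\otimes u_0$ is annihilated by $e_1$ (each of $e_1 v_0$, $E_{\hf}u_0$, $F_{-\hf}u_0$ vanishes) and satisfies $k_1 w_1 = q^{a+2}w_1$. Computing $h_1 w_1 = (e_1 f_1 - q f_1 e_1)w_1$ via the comultiplication and the formulas in Lemma \ref{h1} and Lemma \ref{e1f1^n} should yield $h_1 w_1 = [b+1]\{a-b\}w_1$, identifying the $\Uj_1$-submodule $\Uj_1 w_1$ with $L(a+2;b+1)$. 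In the weight space of weight $(a-1)\delta_1$ we have a three-dimensional space with basis $v_0\otimes u_1$, $v_0\otimes u_{-1}$, and $v_1\otimes u_0$. Imposing $e_1 w = 0$ on a general linear combination gives a two-dimensional solution space spanned by explicit vectors $w_2, w_3$, whose $h_1$-eigenvalues are $[b]\{a-b-2\}$ and $[b-1]\{a-b-1\}$ by direct computation; the classification in Theorem \ref{IrreducibleModulesForr=1} then identifies $\Uj_1 w_2 \simeq L(a-1;b)$ and $\Uj_1 w_3 \simeq L(a-1;b-1)$. Complete reducibility now forces the claimed direct-sum decomposition.

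For the quasi-$\jmath$-crystal basis claim, condition (qL1) is immediate since $\clL(a;b)\otimes\bfL$ is a free $\A$-module of rank $3(b+1)$ with correct scalar extension, and the weight decomposition (qL2) follows from the fact that each $v_n\otimes u_i$ is a $k_1$-eigenvector. The main tasks are (qL3), (qB4), and (qB5). To verify these I would compute the action of $\ftil_1$ and $\etil_1$ on each $v_n\otimes u_i$ by rewriting $v_n\otimes u_i$ in the direct-sum decomposition above: each such tensor admits an expansion in the bases of $L(a+2;b+1)$, $L(a-1;b)$, $L(a-1;b-1)$ with coefficients in $\Q(p,q)$. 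One checks that these coefficients in fact lie in $\A$ (so that $\clL(a;b)\otimes\bfL$ is the direct sum of the $\jmath$-crystal lattices of the three summands), and that modulo $q$ the transition matrix from $\{v_n\otimes u_i\}$ to the union of the three $\jmath$-crystal bases is a signed permutation. This reduces $(\clL(a;b)\otimes\bfL,\clB(a;b)\otimes\bfB)$ to the direct sum $\bigl(\clL(a+2;b+1)\oplus\clL(a-1;b)\oplus\clL(a-1;b-1),\ \clB(a+2;b+1)\sqcup\clB(a-1;b)\sqcup\clB(a-1;b-1)\bigr)$, from which conditions (qB1)--(qB5) follow.

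The main obstacle is the explicit determination of $w_2,w_3$ and the verification that the resulting change-of-basis matrix has entries in $\A$ and is non-degenerate mod $q$: the coefficients that arise when solving the $e_1$-annihilation equations involve ratios of the form $\{a-b-1\}$, $\{a-b\}$ and $[b]$-type factors, which must be regular at $p=q=0$ (or at worst cancel against matching numerators). Once these explicit coefficients are in hand the rest is routine bookkeeping, so the real content of the proof is the careful eigenvalue computation for $h_1$ on the three candidate highest weight vectors and the verification that the associated change-of-basis matrix lies in $\on{GL}(\A)$.
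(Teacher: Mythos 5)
Your proposal follows essentially the same route as the paper: exhibit the three highest weight vectors $v\otimes u_0$, and two explicit combinations in the weight-$(a-1)\delta_1$ space, identify them via their $h_1$-eigenvalues ($[b+1]\{a-b\}$, $[b]\{a-b-2\}$, $[b-1]\{a-b-1\}$, matching the paper) together with Theorem \ref{IrreducibleModulesForr=1}, count dimensions, and then verify the lattice/basis claim by checking that the change of basis between $\{f_1^{(n)}v\otimes u_i\}$ and the bases generated by the new highest weight vectors lies in $\on{GL}(\A)$ and is a permutation modulo $q$. The only cosmetic differences are that the paper deduces $e_1$-annihilation from Corollary \ref{CharacterizationOf1highestvec} rather than solving $e_1w=0$ directly, and computes the transition matrix in the opposite (equivalent) direction by expanding $f_1^{(n)}$ of the new vectors in the tensor basis.
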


\begin{proof}
Let $v \in L(a;b)$ be a highest weight vector, and set
\begin{align}
\begin{split}
\vo &:= v \otimes u_0, \\
\vi &:= v \otimes u_1 - \frac{q^{-b+1}(q-q\inv)}{\{a-b-1\}} f_1v \otimes u_0 - pq^{a-2b} v \otimes u_{-1}, \\
\vmi &:= f_1v \otimes u_0 - q^b[b] v \otimes u_{-1} - pq^{a-b-2}[b] v \otimes u_1.
\end{split} \nonumber
\end{align}
Then, by direct calculation, we obtain
\begin{align}
\begin{split}
h_1 \vo &= [b+1]\{(a+2)-(b+1)-1\} \vo, \\
h_1 \vi &= [b]\{(a-1)-b-1\} \vi, \\
h_1 \vmi &= [b-1]\{(a-1)-(b-1)-1\} \vmi. \\
\end{split} \nonumber
\end{align}
These equations, together with Corollary \ref{CharacterizationOf1highestvec} and Theorem \ref{IrreducibleModulesForr=1}, show that $\Uj_1 \vo \simeq L(a+2;b+1)$, $\Uj_1 \vi \simeq L(a-1;b)$, and $\Uj_1 \vmi \simeq L(a-1;b-1)$. Since $\dim(L(a;b) \otimes \bfV) = 3b = (b+1) + b + (b-1) = \sum_{k=-1}^1 \dim \Uj_1 v \fbox{$k$}$, we see that $L(a;b) \otimes \bfV = \Uj_1 \vo \oplus \Uj_1 \vmi \oplus \Uj_1 \vi$. Also, we calculate as:
\begin{align}
\begin{split}
f_1^{(n)}(\vo) &= f_1^{(n-1)}v \otimes u_{-1} + q^n f_1^{(n)}v \otimes u_0 + pq^{a-n+1} f_1^{(n-1)}v \otimes u_1 \\
&\in \begin{cases}
v \otimes u_0 + q\clL(a;b) \otimes \bfL & \IF n = 0, \\
f_1^{(n-1)}v \otimes u_{-1} + q\clL(a;b) \otimes \bfL & \IF 0 \leq n \leq b+1,
\end{cases} \\
f_1^{(n)}(\vi) & = \frac{q^{-n}\{ a-b-n-1 \}}{\{ a-b-1 \}}f_1^{(n)}v \otimes u_1 - \frac{q^{-b+n+1}(q^{n+1}-q^{-n-1})}{\{ a-b-1 \}}f_1^{(n+1)}v \otimes u_0 \\
&\hspace{6.5cm} -\frac{pq^{a-2b}\{ a-b-n-1 \}}{\{ a-b-1 \}}f_1^{(n)}v \otimes u_{-1} \\
&\in f_1^{(n)}v \otimes u_1 + q\clL(a;b) \otimes \bfL \qu \IF 0 \leq n \leq b, \\
f_1^{(n)}(\vmi) &= q^{n}[n+1]f_1^{(n+1)}v \otimes u_0 - q^b[b-n]f_1^{(n)}v \otimes u_{-1} - pq^{a-b-n-2}[b-n]f_1{(n)}v \otimes u_1\\
&\in f_1^{(n+1)}v \otimes u_0 + q\clL(a;b) \otimes \bfL \qu \IF 0 \leq n \leq b-1.
\end{split} \nonumber
\end{align}
Since $\ftil_1^n(v \fbox{$k$}) = f_1^{(n)}(v \fbox{$k$})$, $k \in \{ 0,\pm1 \}$, these equations imply that the $\A$-span of $\{ \ftil_1^n(v \fbox{$k$}) \mid k \in \{ 0,\pm1 \},\ n \in \Z_{\geq 0} \}$ coincides with $\clL(a;b) \otimes \bfL$, and that $\{ \ftil_1^n(v \fbox{$k$}) + q\clL(a;b) \otimes \bfL \mid k \in \{ 0,\pm1 \},\ n \in \Z_{\geq 0} \} \setminus \{0\}$ is identical to $\clB(a;b) \otimes \bfB$. Now, it is easy to verify that $(\clL(a;b) \otimes \bfL, \clB(a;b) \otimes \bfB)$ is a quasi-$\jmath$-crystal basis of $L(a;b) \otimes \bfV$. This proves the proposition.
\end{proof}

We give the quasi-$\jmath$-crystal graph of $\clB(a;b) \otimes \bfB$:
\begin{align}
\xymatrix{
                     & u_{-1} \ar[r]^{-\hf}         & u_0 & u_1     \\
v \ar[d]_1           & \bullet \ar[d]_1& \bullet \ar[l]_1 & \bullet \ar[d]_1 \\
\ftil_1(v) \ar[d]_1  & \bullet \ar[d]_1& \bullet \ar[d]_1 & \bullet \ar[d]_1 \\
\vdots \ar[d]_1      & \vdots  \ar[d]_1& \vdots  \ar[d]_1 & \vdots  \ar[d]_1\\
\ftil_1^b(v)         & \bullet         & \bullet          & \bullet
}. \nonumber
\end{align}

Let $N \in \N$. Applying the proposition above repeatedly, we see that the tensor product module $\bfV^{\otimes N}$ has a quasi-$\jmath$-crystal basis $(\bfL^{\otimes N},\bfB^{\otimes N})$; we denote $u_{i_1} \otimes \cdots \otimes u_{i_N} + q\bfL^{\otimes N} \in \bfB^{\otimes N}$, $i_1,\ldots,i_N \in I$, by $(i_1,\ldots,i_N)$. With this notation, we identify $\bfB^{\otimes N}$ with $I^N$.

\begin{lem}
Let $\bfs \in \bfB^{\otimes N}$. Then, $\etil_1 \bfs$ (resp., $\ftil_1 \bfs$) is obtained from $\bfs$ by replacing the rightmost $-1$ in $\bfs_1$ with $0$ (resp., the leftmost $0$ in $\bfs_1$ with $-1$), and is $0$ if there are no $-1$ (resp., $0$) in $\bfs_1$. Also, $\vep_1(\bfs)$ and $\vphi_1(\bfs)$ are the number of $-1$ and $0$ in $\bfs_1$, respectively.
\end{lem}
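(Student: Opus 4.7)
The plan is to prove the lemma by induction on $N$. For the base case $N=1$, one checks directly using the comultiplication \eqref{comult} that $u_0$ is a $\jmath$-highest weight vector of the $\Uj_1$-summand $L(2;1)\subset \bfV$ with $\ftil_1 u_0 = u_{-1}$ at the level of the quasi-$\jmath$-crystal basis, that $u_{-1}$ satisfies $\etil_1 u_{-1} = u_0$, and that $\ftil_1$ and $\etil_1$ annihilate every other $u_i$. These agree with the combinatorial description applied to $\bfs=(i)$: one has $\bfs_1=(0)$ when $i=0$, $\bfs_1=(-1)$ when $i=-1$, and $\bfs_1=\emptyset$ otherwise.

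For the inductive step, I will invoke a tensor product rule generalizing Proposition \ref{f1n} from $\bfV_1$ to $\bfV_r$: for any $M\in\Oj$ with quasi-$\jmath$-crystal basis $(\clL,\clB)$, the pair $(\clL\otimes\bfL,\clB\otimes\bfB)$ is a quasi-$\jmath$-crystal basis of $M\otimes\bfV$, and for $b\in\clB$, $i\in I$,
\begin{align*}
\ftil_1(b\otimes u_i) &= \begin{cases} b\otimes u_{-1} & \text{if } i=0 \text{ and } \vep_1(b)=0, \\ \ftil_1(b)\otimes u_i & \text{otherwise}, \end{cases} \\
\etil_1(b\otimes u_i) &= \begin{cases} b\otimes u_0 & \text{if } i=-1 \text{ and } \vep_1(b)=0, \\ 0 & \text{if } i=0 \text{ and } \vep_1(b)\leq 1, \\ \etil_1(b)\otimes u_i & \text{otherwise}, \end{cases}
\end{align*}
with the convention that $\ftil_1(b)\otimes u_i$ and $\etil_1(b)\otimes u_i$ are $0$ whenever the crystal operator on $b$ is $0$. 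This rule follows from decomposing $M$ into its $\Uj_1$-isotypic components and applying the explicit formulas from the proof of Proposition \ref{f1n} to each $L(a';b')$-summand, noting that the remaining one-dimensional trivial $\Uj_1$-summands of $\bfV_r$ (spanned by $u_i$, $|i|\geq 2$) act transparently.

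With the tensor product rule in hand, the inductive step reduces to a case analysis. Write $\bfs'=(\bfs,i_N)\in\bfB^{\otimes N}$ for $\bfs\in\bfB^{\otimes(N-1)}$, and note that by induction, $\vep_1(\bfs)$ equals the number of $-1$'s in $\bfs_{-\hf}$. For $\ftil_1$: (i) if $i_N\notin\{-1,0\}$, appending $i_N$ leaves $\bfs_{-\hf}$ unchanged, so both the tensor rule and the combinatorial rule output $(\ftil_1\bfs,i_N)$; (ii) if $i_N=-1$, the appended $-1$ lands at the right end of $\bfs_{-\hf}$ without triggering cancellation, preserving the rightmost $0$ and yielding $(\ftil_1\bfs,-1)$; (iii) if $i_N=0$ with $\vep_1(\bfs)>0$, the appended $0$ cancels against the last $-1$ of $\bfs_{-\hf}$, preserving the position of the rightmost $0$ and yielding $(\ftil_1\bfs,0)$; (iv) if $i_N=0$ with $\vep_1(\bfs)=0$, the appended $0$ becomes the new rightmost unpaired $0$ and is replaced by $-1$, giving $(\bfs,-1)$. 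The case split for $\etil_1$ is parallel and includes the key annihilation instance $i_N=0$, $\vep_1(\bfs)=1$, where the appended $0$ cancels the lone remaining $-1$ so $\bfs'_{-\hf}$ has no $-1$; this matches the middle branch of the tensor rule for $\etil_1$. The formulas $\vep_1(\bfs)=\#\{-1\text{'s in }\bfs_1\}$ and $\vphi_1(\bfs)=\#\{0\text{'s in }\bfs_1\}$ then follow by iterating $\etil_1$ and $\ftil_1$.

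The main obstacle is establishing the tensor product rule in the required generality, in particular confirming the axioms of a quasi-$\jmath$-crystal basis for $(\clL\otimes\bfL,\clB\otimes\bfB)$ in the presence of $p$-deformed correction terms such as $pq^{a-b-2}[b]\,v\otimes u_1$ appearing in the expressions for $\vmi$ and $\vi$. Compatibility of these $p$-corrections with the ring $\A$ is the essential content of the proof of Proposition \ref{f1n}; once granted there, the passage from $\bfV_1$ to $\bfV_r$ is only bookkeeping, since $e_1$ and $f_1$ act nontrivially on $\bfV_r$ only within the subspace $\Span\{u_{-1},u_0,u_1\}$.
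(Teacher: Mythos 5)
Your proof is correct and takes essentially the same route as the paper, whose entire proof of this lemma is "By induction on $N$, and apply Proposition \ref{f1n}"; your write-up simply makes explicit the base case, the specialization of the tensor product rule to a single letter $u_i$ (which the paper records as Theorem \ref{TensorRuleForr=1} immediately afterwards), and the bracketing bookkeeping in the inductive step.
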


\begin{proof}
By induction on $N$, and apply Proposition \ref{f1n}.
\end{proof}

More generally, we obtain the following theorem. As in the ordinary crystal basis theory, the proof is given by embedding the crystal basis of a $\U_3$-module into $(\bfL^{\otimes N}, \bfB^{\otimes N})$ for a suitable $N$.

\begin{theo}\label{TensorRuleForr=1}
Let $M$ be a $\Uj_1$-module with a quasi-$\jmath$-crystal basis $(\clL, \clB)$, and $N$ a $\U_3$-module with a crystal basis $(\clL', \clB')$. Then, $M \otimes N$ has a quasi-$\jmath$-crystal basis $(\clL \otimes \clL', \clB \otimes \clB')$, on which the Kashiwara operators act as follows:
\begin{align}
\ftil_1(b \otimes b') = \begin{cases}
b \otimes \Etil_{-\hf}(b') \qu & \IF \varepsilon_1(b) < \varepsilon_{-\hf}(b'), \\
\ftil_1(b) \otimes b' \qu & \IF \varepsilon_1(b) \geq \varepsilon_{-\hf}(b'),
\end{cases} \nonumber\\
\etil_1(b \otimes b') = \begin{cases}
b \otimes \Ftil_{-\hf}(b') \qu & \IF \varepsilon_1(b) \leq \varepsilon_{-\hf}(b'), \\
\etil_1(b) \otimes b' \qu & \IF \varepsilon_1(b) > \varepsilon_{-\hf}(b').
\end{cases} \nonumber
\end{align}
\end{theo}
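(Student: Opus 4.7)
The plan is to reduce to the case $M = L(a;b)$ irreducible and $N = \bfV$, handle this explicitly via Proposition~\ref{f1n}, and then transfer the rule to arbitrary $N$. By the complete reducibility of $\Uj_1$-modules in $\Oj$ (Theorem~\ref{4.4.3}) and the uniqueness of the direct sum decomposition of quasi-$\jmath$-crystal bases, we may take $M = L(a;b)$. For $N$, we invoke the standard Kashiwara embedding: every $\U_3$-crystal basis embeds, after using complete reducibility of $\U_3$-modules in $\Oint$, into some $(\bfL^{\otimes k}, \bfB^{\otimes k})$, and the tensor rule on $M \otimes \bfV^{\otimes k}$ will be obtained by iterating the case $N = \bfV$, combined with the ordinary $\U_3$-tensor rule on the rightmost factors via the associativity $M \otimes \bfV^{\otimes k} \simeq (M \otimes \bfV^{\otimes(k-1)}) \otimes \bfV$.

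For the core case $M = L(a;b)$, $N = \bfV$, Proposition~\ref{f1n} already establishes that $(\clL(a;b) \otimes \bfL, \clB(a;b) \otimes \bfB)$ is a quasi-$\jmath$-crystal basis, via the decomposition $L(a;b) \otimes \bfV \simeq L(a+2;b+1) \oplus L(a-1;b) \oplus L(a-1;b-1)$ with highest weight vectors $\vo,\vi,\vmi$ and explicit formulas for $f_1^{(n)}(\vo), f_1^{(n)}(\vi), f_1^{(n)}(\vmi)$ modulo $q(\clL(a;b) \otimes \bfL)$. Writing $v_n := \ftil_1^n(v) + q\clL(a;b) \in \clB(a;b)$ for $0 \leq n \leq b$, these formulas identify the quasi-$\jmath$-crystal graph of $\clB(a;b) \otimes \bfB$ as the disjoint union of chains
\begin{align*}
& v_0 \otimes u_0 \xrightarrow{1} v_0 \otimes u_{-1} \xrightarrow{1} v_1 \otimes u_{-1} \xrightarrow{1} \cdots \xrightarrow{1} v_b \otimes u_{-1}, \\
& v_0 \otimes u_1 \xrightarrow{1} v_1 \otimes u_1 \xrightarrow{1} \cdots \xrightarrow{1} v_b \otimes u_1, \\
& v_1 \otimes u_0 \xrightarrow{1} v_2 \otimes u_0 \xrightarrow{1} \cdots \xrightarrow{1} v_b \otimes u_0,
\end{align*}
corresponding respectively to the three irreducible summands.

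It remains to match this graph with the combinatorial rule. Note that $\vep_1(v_n) = n$, $\vep_{-\hf}(u_0) = 1$, $\vep_{-\hf}(u_{\pm 1}) = 0$, and $\Etil_{-\hf}(u_0) = u_{-1}$. The condition $\vep_1(v_n) < \vep_{-\hf}(u_k)$ holds only for $(n,k) = (0,0)$, in which case the rule gives $\ftil_1(v_0 \otimes u_0) = v_0 \otimes u_{-1}$, matching the first arrow of the first chain. In every other case $\vep_1(v_n) \geq \vep_{-\hf}(u_k)$ and the rule gives $\ftil_1(v_n) \otimes u_k = v_{n+1} \otimes u_k$ (understood as $0$ when $n = b$), matching all remaining edges. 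The rule for $\etil_1$ follows immediately from axiom $(\qB5)$.

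I expect the main obstacle to lie in the associativity bookkeeping in the reduction from $N = \bfV^{\otimes k}$ to general $N$: one must verify that iterating the $M \otimes \bfV$ rule, combined with the $\U_3$-tensor rule on the right-hand factors, reproduces exactly the stated formula on $M \otimes \bfV^{\otimes k}$ expressed in terms of $\vep_1$ on the leftmost $\Uj$-factor and $\vep_{-\hf}$ on the rightmost $\U$-factor. This is a standard but somewhat technical crystal-theoretic comparison, and it is the only step where the precise shape of the rule must be tracked carefully.
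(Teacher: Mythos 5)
Your proposal is correct and follows essentially the same route as the paper: the paper's own proof is the one-line remark that one embeds the crystal basis of the $\U_3$-module into $(\bfL^{\otimes N},\bfB^{\otimes N})$ and iterates the $N=\bfV$ case, which is exactly Proposition \ref{f1n} together with your reduction to $M=L(a;b)$. Your explicit matching of the three chains in the quasi-$\jmath$-crystal graph of $\clB(a;b)\otimes\bfB$ against the $\varepsilon_1$-versus-$\varepsilon_{-\hf}$ rule is accurate and supplies the detail the paper leaves implicit.
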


Now, we turn to the case of a general $r$. Recall that Kashiwara operators $\ftil_i$ and $\etil_i$ for $i \neq 1$ are defined by means of the $\mathfrak{sl}_2$-triple $( f_i,k_i,e_i )$. Therefore, the next proposition follows from a standard argument; see, for example, {\cite[Section 4.4]{HK02}}.

\begin{prop}\label{TensorRuleForGeneralr}
Let $M$ be a $\Uj$-module having a quasi-$\jmath$-crystal basis $(\clL,\clB)$. Then $(\clL \otimes \bfL, \clB \otimes \bfB)$ is a quasi-$\jmath$-crystal basis of $M \otimes \bfV$, on which the Kashiwara operators act as follows: $\ftil_1$ and $\etil_1$ acts as described in Theorem \ref{TensorRuleForr=1}; for $i \in \Ij \setminus \{1\}, b \in \clB, j \in \{ -r, -r+1, \ldots, r \}$,
\begin{align}
\begin{split}
\ftil_i(b \otimes u_j) &= \begin{cases}
0 & \IF j = i \AND \ftil_i^2(b) = 0, \\
b \otimes u_i & \IF j = i-1 \AND \ftil_i(b) = 0, \\
b \otimes u_{-i} & \IF j=-(i-1) \AND \etil_i(b) = 0, \\
\ftil_i(b) \otimes u_j & \OW,
\end{cases} \\
\etil_i(b \otimes u_j) &= \begin{cases}
b \otimes u_{i-1} & \IF j=i \AND \ftil_i(b) = 0, \\
0 & \IF j=-(i-1) \AND \etil_i^2(b) = 0, \\
b \otimes u_{-(i-1)} & \IF j=-i \AND \etil_i(b) = 0, \\
\etil_i(b) \otimes u_j & \OW.
\end{cases}
\end{split} \nonumber
\end{align}
\end{prop}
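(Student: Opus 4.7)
The plan is to verify the tensor rule case by case, according to whether $i = 1$ or $i \in \Ij \setminus \{1\}$, after first reducing to the case $M = L(\bflm)$ irreducible via Theorem~\ref{4.4.3}. Granting irreducibility, the quasi-$\jmath$-crystal axioms $(\qB1)$--$(\qB5)$ for $(\clL \otimes \bfL, \clB \otimes \bfB)$ follow routinely once the stated explicit formulas for $\ftil_i$ and $\etil_i$ on $b \otimes u_j$ are established.

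For $\ftil_1$ and $\etil_1$, the key observation is that the coproducts $\Delta(e_1), \Delta(f_1), \Delta(k_1)$ in \eqref{comult} involve, on the right tensor factor, only the $\U_3 = U_q(\mathfrak{sl}_3)$-subalgebra of $\U$ generated by $E_{\pm \hf}, F_{\pm \hf}, K_{\pm \hf}^{\pm 1}$. As a module over this subalgebra, $\bfV$ decomposes as $\bfV_1 \oplus \bigoplus_{|j| \geq 2} \Q(p,q) u_j$, where $\bfV_1 = \langle u_{-1}, u_0, u_1 \rangle$ is the natural $\U_3$-module and the remaining summands are trivial. Accordingly $M \otimes \bfV$ splits as a $\Uj_1$-module, and Theorem~\ref{TensorRuleForr=1} applies directly to $M \otimes \bfV_1$; on the trivial summands $M \otimes \Q(p,q) u_j$ the $\Uj_1$-action factors through the left factor, producing the ``otherwise'' clause.

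For $i \in \Ij \setminus \{1\}$, the subalgebra $\langle e_i, k_i, f_i \rangle \subset \Uj$ is an ordinary $U_q(\mathfrak{sl}_2)$, under which $\bfV$ splits as two two-dimensional irreducibles $\langle u_{i-1}, u_i \rangle$ and $\langle u_{-(i-1)}, u_{-i} \rangle$ (with highest-weight vectors $u_{i-1}$ and $u_{-(i-1)}$, and $f_i$ sending them to $u_i$ and $u_{-i}$ respectively), together with trivial one-dimensional summands for $j \notin \{\pm i, \pm(i-1)\}$. For the trivial summands, two of the three terms in $\Delta(e_i)$ and $\Delta(f_i)$ annihilate $u_j$, leaving only $e_i(m \otimes u_j) = q^c (e_i m) \otimes u_j$ (and similarly for $f_i$), which yields the ``otherwise'' clause in the crystal limit. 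For $j \in \{\pm i, \pm(i-1)\}$, I would expand $\Delta(e_i)(m \otimes u_j)$ and $\Delta(f_i)(m \otimes u_j)$ explicitly using \eqref{comult} and the action of $E_{\pm(i-\hf)}, F_{\pm(i-\hf)}, K_{\pm(i-\hf)}^{\pm 1}$ on $\bfV$, then decompose $b \otimes u_j$ into simultaneous eigenvectors of $k_i$ and $h_i$ in the spirit of Lemma~\ref{decomp of f_2^n v} and Proposition~\ref{EigenvectorsOfh1}, and identify the unique surviving component modulo $q$.

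The main obstacle is precisely this analysis on the two-dimensional summands. Because $\Delta(e_i)$ and $\Delta(f_i)$ have three terms rather than the usual two, the ordinary $U_q(\mathfrak{sl}_2)$ tensor-product rule does not apply verbatim. Moreover, the chains $\langle u_{i-1}, u_i \rangle$ and $\langle u_{-(i-1)}, u_{-i} \rangle$ interact asymmetrically with the middle term $1 \otimes E_{i-\hf}$ and the third term $k_i^{-1} \otimes F_{-(i-\hf)} K_{i-\hf}^{-1}$ of $\Delta(e_i)$, which produces the asymmetric conditions $\ftil_i^2(b) = 0$ vs.\ $\etil_i^2(b) = 0$ (and $\ftil_i(b) = 0$ vs.\ $\etil_i(b) = 0$) between the ``positive'' indices $j \in \{i-1,i\}$ and the ``negative'' indices $j \in \{-(i-1),-i\}$ in the statement. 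Tracking which eigenvector of $h_i$ lies in $\clL \otimes \bfL \setminus q(\clL \otimes \bfL)$ for each of these four values of $j$ is the technical heart of the proof; once done, the $r=1$ argument from Theorem~\ref{TensorRuleForr=1} combined with standard $U_q(\mathfrak{sl}_2)$-crystal machinery, as in \cite[Section~4.4]{HK02}, completes the verification.
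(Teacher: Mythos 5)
Your division of labour — delegate $i=1$ to Theorem~\ref{TensorRuleForr=1} after observing that $\Delta(e_1),\Delta(f_1)$ only see the $\U_3$-subalgebra on the right factor, and reduce $i\geq 2$ to a rank-one computation on the two-dimensional chains $\langle u_{i-1},u_i\rangle$ and $\langle u_{-(i-1)},u_{-i}\rangle$ of $\bfV$ — is exactly the paper's (the paper dismisses the whole thing as ``a standard argument, see [HK02, Section 4.4]'', since for $i\neq 1$ the operators come from the genuine $\mathfrak{sl}_2$-triple $(e_i,k_i,f_i)$). Your treatment of the $i=1$ case and of the trivial one-dimensional summands is correct.

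Where you go astray is in locating the ``technical heart'' for $i\geq 2$. Diagonalizing $h_i$ in the spirit of Lemma~\ref{decomp of f_2^n v} and Proposition~\ref{EigenvectorsOfh1} is both unnecessary and not the decomposition that computes $\etil_i,\ftil_i$ for $i\geq 2$: those operators are defined by ordinary $f_i$-string decompositions $m=\sum_n f_i^{(n)}m_n$ with $e_im_n=0$, whereas $h_i$-eigenvalues only enter the theory at $i=1$, where $(e_1,k_1,f_1)$ fails to be an $\mathfrak{sl}_2$-triple. Moreover, the three-term coproduct is not really an obstacle. On the positive chain the third term of $\Delta(e_i)$ and the second term of $\Delta(f_i)$ vanish and $K_{-(i-\hf)}$ acts trivially, so the restriction is literally the standard coproduct $e_i\otimes K_{i-\hf}\inv+1\otimes E_{i-\hf}$, $f_i\otimes 1+k_i\otimes F_{i-\hf}$ with $K_{i-\hf}=k_i$ on that chain; on the negative chain the second term of $\Delta(e_i)$ and the third term of $\Delta(f_i)$ vanish and $K_{i-\hf}$ acts trivially, giving $e_i\otimes 1+k_i\inv\otimes F_{-(i-\hf)}$, $f_i\otimes K_{-(i-\hf)}\inv+1\otimes E_{-(i-\hf)}$ with $K_{-(i-\hf)}=k_i\inv$, which is the opposite coproduct. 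So the usual rank-one tensor rules for $\clB\otimes\clB(1)$ and for $\clB(1)\otimes\clB$ apply verbatim to the two chains respectively; this is exactly what produces the asymmetry between the conditions on $\ftil_i(b),\ftil_i^2(b)$ for $j=i-1,i$ and on $\etil_i(b),\etil_i^2(b)$ for $j=-(i-1),-i$, and is why the paper can legitimately call the argument standard. Your plan would still reach the correct statement via the fallback to [HK02, Section 4.4], but the $h_i$-eigenvector analysis should be dropped.
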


The action of $\ftil_i$ for $i \neq 1$ is visualized as:
\begin{align}
\xymatrix{
 & u_{-i} \ar[r]^{-(i-\hf)} & u_{-(i-1)} & u_j & u_{i-1} \ar[r]^{i-\hf} & u_i \\
b \ar[d]_i & \bullet \ar[d]_i & \bullet \ar[l]_i & \bullet \ar[d]_i & \bullet \ar[d]_i & \bullet \ar[d]_i \\
\ftil_i(b) \ar[d]_i & \bullet \ar[d]_i & \bullet \ar[d]_i & \bullet \ar[d]_i & \bullet \ar[d]_i & \bullet \ar[d]_i \\
\vdots \ar[d]_i & \vdots \ar[d]_i & \vdots \ar[d]_i & \vdots \ar[d]_i & \vdots \ar[d]_i & \vdots \ar[d]_i \\
\ftil_i^{\varphi_i(b)-1}(b) \ar[d]_i & \bullet \ar[d]_i & \bullet \ar[d]_i & \bullet \ar[d]_i & \bullet \ar[d]_i & \bullet \\
\ftil_i^{\varphi_i(b)}(b) & \bullet & \bullet & \bullet & \bullet \ar[r]^i & \bullet
}. \nonumber
\end{align}

The following theorem describes the tensor product rule for the Kashiwara operators $\ftil$'s and $\etil$'s in full generality. The proof is given by embedding the crystal basis of a $\U$-module into $(\bfL^{\otimes N}, \bfB^{\otimes N})$ for a suitable $N$.

\begin{theo}\label{TensorRule}
Let $M$ be a $\Uj$-module having a quasi-$\jmath$-crystal basis $(\clL, \clB)$, and $N$ a $\U$-module having a crystal basis $(\clL', \clB')$. Then, $M \otimes N$ has a quasi-$\jmath$-crystal basis $(\clL \otimes \clL', \clB \otimes \clB')$, on which the Kashiwara operators act as follows: for $b \in \clB$ and $b' \in \clB'$,
\begin{align}
\ftil_1(b \otimes b') &= \begin{cases}
b \otimes \Etil_{-\hf}(b') \qu & \IF \varepsilon_1(b) < \varepsilon_{-\hf}(b'), \\
\ftil_1(b) \otimes b' \qu & \IF \varepsilon_1(b) \geq \varepsilon_{-\hf}(b'),
\end{cases} \nonumber\\
\etil_1(b \otimes b') &= \begin{cases}
b \otimes \Ftil_{-\hf}(b') \qu & \IF \varepsilon_1(b) \leq \varepsilon_{-\hf}(b'), \\
\etil_1(b) \otimes b' \qu & \IF \varepsilon_1(b) > \varepsilon_{-\hf}(b'),
\end{cases} \nonumber\\
\ftil_i(b \otimes b') &= \begin{cases}
b \otimes \Etil_{-(i - \hf)}(b') \qu & \IF \varepsilon_{i - \hf}(b') < \varphi_i(b) \AND \varepsilon_i(b) < \varepsilon_{-(i - \hf)}(b'),  \Or \\
& \IF \varepsilon_{i - \hf}(b') \geq \varphi_i(b) \AND \varepsilon_i(b) + \varepsilon_{i-\hf}(b') - \varphi_i(b) < \varepsilon_{-(i-\hf)}(b'), \\
\ftil_i(b) \otimes b' \qu & \IF \varepsilon_{i - \hf}(b') < \varphi_i(b) \AND \varepsilon_i(b) \geq \varepsilon_{-(i-\hf)}(b'), \\
b \otimes \Ftil_{i-\hf}(b') \qu & \IF \varepsilon_{i - \hf}(b') \geq \varphi_i(b) \AND \varepsilon_i(b) + \varepsilon_{i-\hf}(b') - \varphi_i(b) \geq \varepsilon_{-(i-\hf)}(b'),
\end{cases} \nonumber\\
\etil_i(b \otimes b') &= \begin{cases}
b \otimes \Ftil_{-(i - \hf)}(b') \qu & \IF \varepsilon_{i - \hf}(b') \leq \varphi_i(b) \AND \varepsilon_i(b) \leq \varepsilon_{-(i - \hf)}(b'),  \Or \\
& \IF \varepsilon_{i - \hf}(b') > \varphi_i(b) \AND \varepsilon_i(b) + \varepsilon_{i-\hf}(b') - \varphi_i(b) \leq \varepsilon_{-(i-\hf)}(b'), \\
\etil_i(b) \otimes b' \qu & \IF \varepsilon_{i - \hf}(b') \leq \varphi_i(b) \AND \varepsilon_i(b) > \varepsilon_{-(i-\hf)}(b'), \\
b \otimes \Etil_{i-\hf}(b') \qu & \IF \varepsilon_{i - \hf}(b') > \varphi_i(b) \AND \varepsilon_i(b) + \varepsilon_{i-\hf}(b') - \varphi_i(b) > \varepsilon_{-(i-\hf)}(b').
\end{cases} \nonumber
\end{align}
\end{theo}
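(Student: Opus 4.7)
The plan is to reduce to the case $N = \bfV^{\otimes d}$ and then induct on $d$, using Proposition \ref{TensorRuleForGeneralr} as the one-step building block. By complete reducibility of $\U$-modules in $\Oint$ together with the standard embedding $L(\mu) \hookrightarrow \bfV^{\otimes|\mu|}$ as a $\U$-summand, any crystal basis $(\clL', \clB')$ of a $\U$-module $N$ may be realized as a sub-crystal of $(\bfL^{\otimes d}, \bfB^{\otimes d})$ for a suitable $d$. Because the Kashiwara operators $\etil_i, \ftil_i$ preserve $\Uj$-submodules, and the right-hand sides of the stated formulas depend only on the abstract crystal-operator data of $b'$, it suffices to prove the theorem under the assumption $N = \bfV^{\otimes d}$ with its standard crystal basis.

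For the induction, the base case $d = 1$ follows directly from Proposition \ref{TensorRuleForGeneralr}: I would specialize the four-case formulas of the theorem to $b' = u_j \in \bfB$, where the values $\vep_{\pm(i-\hf)}(b')$ and $\vphi_{\pm(i-\hf)}(b')$ are each $0$ or $1$ depending on $j$, and verify case-by-case that this recovers the explicit description of Proposition \ref{TensorRuleForGeneralr}; the $i = 1$ case reduces to Theorem \ref{TensorRuleForr=1}. For the inductive step, I would write $\bfV^{\otimes(d+1)} = \bfV^{\otimes d} \otimes \bfV$ and invoke coassociativity $M \otimes \bfV^{\otimes(d+1)} \simeq (M \otimes \bfV^{\otimes d}) \otimes \bfV$ of $\Uj$-modules, apply the inductive hypothesis to $M \otimes \bfV^{\otimes d}$, then apply Proposition \ref{TensorRuleForGeneralr} to the outer tensor with $\bfV$, and finally rewrite everything in terms of $b$ and $b' = b'_1 \otimes b'_2 \in \bfB^{\otimes(d+1)}$ via the ordinary tensor product rule for $\U$-crystals applied to $b'_1 \otimes b'_2$.

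The hard part will be the combinatorial bookkeeping in the inductive step, especially for $i \neq 1$, where four cases appear in the statement. One must compare the internal $\jmath$-crystal quantities $\vep_i(b \otimes b'_1)$ and $\vphi_i(b \otimes b'_1)$ produced by the inductive hypothesis against the $\U$-crystal quantities $\vep_{\pm(i-\hf)}(b'_2)$, and then re-express everything purely in terms of $b$ and $b' = b'_1 \otimes b'_2$ using the recursive formulas for $\vep_{\pm(i-\hf)}$ on tensor products from ordinary crystal theory. I expect the cleanest way to organize this to be at the level of signatures: encode $b'$ by its reduced $(i-\hf)$- and $-(i-\hf)$-sequences, and observe that the $\jmath$-signature $\bfs'_i$, obtained from the concatenation $({\bfs'}_{-(i-\hf)}^{\rev}, \bfs'_{i-\hf})$ by cancelling adjacent $(-(i-1),i)$ pairs as in Section \ref{jcrystal}, determines which of the three possible outcomes ($\ftil_i$ acting on $b$, $\Etil_{-(i-\hf)}$ acting on $b'$, or $\Ftil_{i-\hf}$ acting on $b'$) is selected in each of the four regimes. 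This is essentially the usual signature-bracketing argument of ordinary crystal tensor theory, folded along the $\jmath$-involution built into the coideal structure of $\Uj$.
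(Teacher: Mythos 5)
Your proposal is correct and follows essentially the same route as the paper, whose proof is exactly the one-line instruction to embed the crystal basis of $N$ into $(\bfL^{\otimes d},\bfB^{\otimes d})$ for suitable $d$ and then iterate the one-step rule of Proposition \ref{TensorRuleForGeneralr}. The additional detail you supply (induction on $d$ via coassociativity and the signature bookkeeping for the four cases when $i\neq 1$) is precisely the standard argument the paper leaves implicit.
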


\begin{cor}\label{crystal=qjcrystal}
Let $N \in \Oint$ be a $\U$-module with a crystal basis $(\clL',\clB')$. Then, $(\clL',\clB')$ is also a quasi-$\jmath$-crystal basis of $N$. Furthermore, for each $b \in \clB$ and $i \in \Ij$, we have the following:
\begin{align}
\begin{split}
\ftil_1(b) &= \Etil_{-\hf}(b), \\
\etil_1(b) &= \Ftil_{-\hf}(b), \\
\ftil_i(b) &= \begin{cases}
\Etil_{-(i - \hf)}(b) \qu & \IF \varepsilon_{i-\hf}(b) < \varepsilon_{-(i-\hf)}(b), \\
\Ftil_{i-\hf}(b) \qu & \IF \varepsilon_{i-\hf}(b) \geq \varepsilon_{-(i-\hf)}(b),
\end{cases} \nonumber\\
\etil_i(b) &= \begin{cases}
\Ftil_{-(i - \hf)}(b) \qu & \IF \varepsilon_{i-\hf}(b) \leq \varepsilon_{-(i-\hf)}(b), \\
\Etil_{i-\hf}(b) \qu & \IF \varepsilon_{i-\hf}(b) > \varepsilon_{-(i-\hf)}(b).
\end{cases} \nonumber
\end{split} \nonumber
\end{align}
\end{cor}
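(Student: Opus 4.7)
The plan is to deduce this corollary directly from the tensor product rule (Theorem \ref{TensorRule}) by taking $M$ to be the trivial $\Uj$-module. First I would observe that the trivial one-dimensional $\Uj$-module $\Q(p,q)$ (isomorphic to $L(0;0)$ in the classification of Theorem \ref{Classification}) admits an obvious quasi-$\jmath$-crystal basis $(\clL_0,\clB_0) := (\A \cdot 1,\{b_0\})$, where $b_0 := 1 + q\clL_0$. All generators $e_i,f_i$ act by zero on this module, so $\etil_i b_0 = \ftil_i b_0 = 0$, giving $\varepsilon_i(b_0) = \varphi_i(b_0) = 0$ for every $i \in \Ij$.

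Next I would use the canonical isomorphism $\Q(p,q) \otimes N \simeq N$ of $\Uj$-modules (which holds because $\Delta(x) = 1 \otimes x + (\text{terms annihilating }1 \otimes \cdot)$ for $x \in \Uj$ acting on an element $1 \otimes v$; more formally, by coassociativity and the definition of the tensor product $\Uj$-module structure coming from $\Delta(\Uj) \subset \Uj \otimes \U$). Under this identification, the tensor product quasi-$\jmath$-crystal basis $(\clL_0 \otimes \clL', \clB_0 \otimes \clB')$ guaranteed by Theorem \ref{TensorRule} is identified with $(\clL', \clB')$. This already proves the first assertion, namely that $(\clL',\clB')$ is a quasi-$\jmath$-crystal basis of $N$ viewed as a $\Uj$-module.

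Finally I would specialize the explicit formulas of Theorem \ref{TensorRule} to $b = b_0$ and $b' \in \clB'$. For $i = 1$, the inequality $\varepsilon_1(b_0) < \varepsilon_{-\hf}(b')$ reduces to $0 < \varepsilon_{-\hf}(b')$, in which case $\ftil_1(b_0 \otimes b') = b_0 \otimes \Etil_{-\hf}(b')$; the opposite inequality $0 \geq \varepsilon_{-\hf}(b')$ forces $\varepsilon_{-\hf}(b') = 0$, so $\Etil_{-\hf}(b') = 0$, and the alternative value $\ftil_1(b_0) \otimes b' = 0$ agrees with $\Etil_{-\hf}(b')$. Hence in all cases $\ftil_1(b') = \Etil_{-\hf}(b')$, and the argument for $\etil_1$ is symmetric. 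For $i \in \Ij \setminus \{1\}$, the conditions in the four-case formula collapse because $\varepsilon_i(b_0) = \varphi_i(b_0) = 0$: cases one and three require $\varepsilon_{i-\hf}(b') < 0$ and are vacuous, while cases two and four reduce exactly to the dichotomy $\varepsilon_{i-\hf}(b') < \varepsilon_{-(i-\hf)}(b')$ versus $\varepsilon_{i-\hf}(b') \geq \varepsilon_{-(i-\hf)}(b')$, yielding the stated formulas; $\etil_i$ is handled in the same way.

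There is no real obstacle here beyond carefully checking the case collapse; the content is packaged in Theorem \ref{TensorRule}. The only subtlety worth flagging is the consistency check at the boundary (when $\Etil_{-\hf}(b)$ or $\Ftil_{-\hf}(b)$ vanishes), where one must verify that the convention $\ftil_1(b_0) = 0$ matches the stated uniform formula; this is exactly the observation that the ``other branch'' of the tensor product rule produces $0$ in each such boundary case.
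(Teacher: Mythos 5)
Your proposal is correct and is exactly the paper's argument: the paper proves this corollary by applying Theorem \ref{TensorRule} with $M$ the trivial $\Uj$-module $L(\emptyset;\emptyset)$, which is precisely your specialization. Your verification of the case collapse (using $\varepsilon_i(b_0)=\varphi_i(b_0)=0$ and the boundary consistency when $\varepsilon_{-\hf}(b')=0$) just spells out details the paper leaves implicit.
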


\begin{proof}
Apply Theorem \ref{TensorRule} for $M = L(\emptyset;\emptyset)$, which is the trivial module of $\Uj$.
\end{proof}

Recall that $\bfB^{\otimes N}$ is identified with $I^N = \{ -r,\ldots,r \}^N$, and its crystal structure is described in the beginning of this section. Applying Corollary \ref{crystal=qjcrystal} to the crystal basis $(\bfL^{\otimes N},\bfB^{\otimes N})$, we obtain a quasi-$\jmath$-crystal structure of $\bfB^{\otimes N}$.

\begin{cor}\label{qjcry for words}
The quasi-$\jmath$-crystal basis $\bfB^{\otimes N} = I^N$ obtained from Corollary \ref{crystal=qjcrystal} is described as follows: for $\bfs \in I^N$, $\etil_1 \bfs$ (resp., $\ftil_1 \bfs$) is obtained from $\bfs$ by replacing the rightmost $-1$ in $\bfs_1$ with $0$ (resp., the leftmost $0$ in $\bfs_1$ with $-1$), and is $0$ if there are no $-1$ (resp., $0$) in $\bfs_1$. For $i \in \Ij \setminus \{1\}$, $\etil_i \bfs$ is obtained from $\bfs$ by replacing the rightmost $i$ in $\bfs_i $with $i-1$ if $i \in \bfs_i$, or by replacing the rightmost $-i$ in $\bfs_i$ with $-(i-1)$ if $i \notin \bfs_i$, or is $0$ if $i,-i \notin \bfs_i$. Finally, $\ftil_i \bfs$ is obtained from $\bfs$ by replacing the leftmost $-(i-1)$ in $\bfs_i $with $-i$ if $-(i-1) \in \bfs_i$, or by replacing the leftmost $i-1$ in $\bfs_i$ with $i$ if $-(i-1) \notin \bfs_i$, or is $0$ if $i-1,-(i-1) \notin \bfs_i$.
\end{cor}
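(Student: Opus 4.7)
The plan is to apply Corollary \ref{crystal=qjcrystal} to the $\bfU$-crystal basis $(\bfL^{\otimes N},\bfB^{\otimes N}) \simeq I^N$, whose $\bfU$-crystal operators $\Etil_j,\Ftil_j$ ($j \in \bbI$) are already described combinatorially via the cancellation-reading $\bfs_j$ in Section \ref{WeakCrystal}, and to read off the action of $\ftil_i,\etil_i$ in terms of the sequences $\bfs_i$ introduced in Section \ref{jcrystal}. The case $i=1$ is immediate from the definition $\bfs_1 = \bfs_{-\hf}^{\rev}$ together with $\ftil_1 = \Etil_{-\hf}$ and $\etil_1 = \Ftil_{-\hf}$; reversal swaps ``leftmost'' and ``rightmost''.

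For $i \in \Ij \setminus \{1\}$, set $a := \vep_{i-\hf}(\bfs)$ and $c := \vep_{-(i-\hf)}(\bfs)$, the number of $i$'s in $\bfs_{i-\hf}$ and the number of $-(i-1)$'s in $\bfs_{-(i-\hf)}$, respectively. Using the normal forms of $\bfs_{i-\hf}$ and of $\bfs_{-(i-\hf)}$ described in Section \ref{jcrystal}, the concatenation $(\bfs_{-(i-\hf)}^{\rev},\bfs_{i-\hf})$ has the explicit shape
\begin{align*}
(\,\underbrace{-i,\ldots,-i}_{\vphi_{-(i-\hf)}(\bfs)},\ \underbrace{-(i-1),\ldots,-(i-1)}_{c},\ \underbrace{i,\ldots,i}_{a},\ \underbrace{i-1,\ldots,i-1}_{\vphi_{i-\hf}(\bfs)}\,),
\end{align*}
so the $(-(i-1),i)$-cancellation producing $\bfs_i$ removes exactly $\min(a,c)$ adjacent pairs from the middle block. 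Consequently, in $\bfs_i$ the $-i$'s and $i-1$'s pass through untouched; when $a \geq c$ the surviving $i$'s are the rightmost $a-c$ copies of $i$ from $\bfs_{i-\hf}$; and when $c > a$ the surviving $-(i-1)$'s are the leftmost $c-a$ copies of $-(i-1)$ from $\bfs_{-(i-\hf)}^{\rev}$, equivalently the rightmost $c-a$ copies of $-(i-1)$ in $\bfs_{-(i-\hf)}$. This identification of positions between $\bfs_i$ and $\bfs_{\pm(i-\hf)}$ is the one substantive bookkeeping step.

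With that dictionary in hand, translating Corollary \ref{crystal=qjcrystal} is mechanical. For $\ftil_i$: the case $a < c$ coincides with ``$-(i-1) \in \bfs_i$'', and there $\ftil_i = \Etil_{-(i-\hf)}$ acts on the rightmost $-(i-1)$ of $\bfs_{-(i-\hf)}$, which is the leftmost $-(i-1)$ of $\bfs_i$; the case $a \geq c$ coincides with ``$-(i-1) \notin \bfs_i$'', and there $\ftil_i = \Ftil_{i-\hf}$ acts on the leftmost $i-1$ of $\bfs_{i-\hf}$, which is the leftmost $i-1$ of $\bfs_i$, being zero precisely when $\vphi_{i-\hf}(\bfs)=0$, i.e.\ $i-1 \notin \bfs_i$. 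The analysis of $\etil_i$ is entirely symmetric: $a > c$ (i.e.\ $i \in \bfs_i$) yields $\Etil_{i-\hf}$ acting on the rightmost $i$ of $\bfs_i$, while $a \leq c$ (i.e.\ $i \notin \bfs_i$) yields $\Ftil_{-(i-\hf)}$ acting on the leftmost $-i$ of $\bfs_{-(i-\hf)}$, which is the rightmost $-i$ of $\bfs_i$, being zero iff $-i \notin \bfs_i$. The only real care required is keeping track of the orientation reversal hidden in $\bfs_{-(i-\hf)}^{\rev}$, and the explicit form above makes this transparent; I anticipate no further obstacle.
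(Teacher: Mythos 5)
Your proposal is correct and follows exactly the route the paper intends: the paper states this corollary without proof as a direct application of Corollary \ref{crystal=qjcrystal} to $(\bfL^{\otimes N},\bfB^{\otimes N})$, and your argument simply supplies the combinatorial bookkeeping (the normal forms of $\bfs_{\pm(i-\hf)}$, the effect of the $(-(i-1),i)$-cancellation, and the left/right swap under the reversal in $\bfs_{-(i-\hf)}^{\rev}$), all of which checks out.
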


%

\section{Quasi-$\jmath$-crystal basis of $\bfV^{\otimes d}$}\label{jcry}
In this section, we construct a quasi-$\jmath$-crystal basis for each irreducible highest weight $\Uj$-module. For this purpose, we need some results concerning the left cell representations of the Hecke algebra of type $B$.
\subsection{Hecke algebra of type $B$}
Let $d \in \Z_{>0}$, and $W = W_d = \la s_0,s_1,\ldots,s_{d-1} \ra$ denote the Weyl group of type $B_d$, and $\Hc = \Hc(W)$ the associated Hecke algebra with unequal parameter $p,q$. Namely, $\Hc$ is the unital associative algebra over $\Z[p,p\inv,q,q\inv]$ spanned by $\{ H_w \mid w \in W \}$ with the product given by
$$
H_i H_w = \begin{cases}
H_{s_iw} \qu & \IF s_iw > w, \\
H_{s_iw} - (q_i-q_i\inv) H_w \qu & \IF w < s_iw,
\end{cases}
$$
where $<$ denotes the Bruhat order, and $q_i = q$ if $i = 1,\ldots,d-1$, while $q_0 = p$.

There exists a unique $\Z$-algebra automorphism $\ol{\ \cdot \ }$ of $\Hc$ such that $\ol{H_w} = H_{w\inv}\inv$.

\begin{theo}[{\cite[Theorem 1.1]{KL79}}, {\cite[Theorem 5.2]{L03}}]\label{KL-bases}
For each $w \in W$, there exists a unique $C_w \in \Hc$ such that
\begin{enumerate}
\item $\overline{C_w} = C_w$.
\item $C_w = H_w + \sum_{y < w} c_{y,w} H_y$ for some $c_{y,w} \in p\Z[p,q,q\inv] \oplus q\Z[q]$.
\end{enumerate}
\end{theo}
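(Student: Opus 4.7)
The plan is to follow the classical Kazhdan--Lusztig strategy, adapted to the unequal-parameter setting by Lusztig, using induction on the Bruhat order. As a preparatory step I would record the upper-triangularity of the bar involution in the basis $\{H_y\}_{y \in W}$: there exist Laurent polynomials $R_{x,y} \in \Z[p^{\pm 1},q^{\pm 1}]$, vanishing unless $x \leq y$ and with $R_{y,y} = 1$, such that $\overline{H_y} = \sum_{x \leq y} R_{x,y} H_x$. This is an easy consequence of $\overline{H_s} = H_s\inv$ together with the braid word expansion of $H_w$.

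For uniqueness, suppose $C_w$ and $C'_w$ both satisfy (1) and (2). Their difference $D = \sum_{y<w} d_y H_y$ is bar-invariant, with each $d_y$ in the module $A := p\Z[p,q,q\inv] \oplus q\Z[q]$. Picking $y$ maximal in the support of $D$ and comparing $H_y$-coefficients in the identity $D = \overline{D}$, the upper-triangularity of the $R$-polynomials forces $d_y = \overline{d_y}$. A direct monomial inspection gives $A \cap \overline{A} = 0$ inside $\Z[p^{\pm 1},q^{\pm 1}]$, so $d_y = 0$, a contradiction; hence $D = 0$.

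For existence, I would induct on $\ell(w)$, with base $C_e := H_e$. For the inductive step, pick a generator $s$ with $sw < w$ and form $X := (H_s + q_s\inv)\,C_{sw}$, which is bar-invariant (since $H_s + q_s\inv$ is, by a direct computation from $\overline{H_s} = H_s - (q_s - q_s\inv)$) and expands as $H_w$ plus a $\Z[p^{\pm 1},q^{\pm 1}]$-linear combination of $H_y$'s with $y<w$. The plan is then to subtract bar-invariant corrections of the form $\mu_{y,w}\, C_y$ (with $y<w$ and $sy<y$) in descending order of $\ell(y)$, where $\mu_{y,w} \in \Z[p^{\pm 1},q^{\pm 1}]$ is chosen so that the resulting coefficient at $H_y$ falls in $A$. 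Feasibility rests on the algebraic decomposition $\Z[p^{\pm 1},q^{\pm 1}] = A \oplus \overline{A} \oplus \Z$, verified monomial by monomial, which identifies the bar-invariant part as $\{a + \overline{a} + c : a \in A,\ c \in \Z\}$; each $\mu_{y,w}$ is then uniquely determined by requiring it to cancel the bar-invariant part of the current coefficient at $H_y$.

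The main obstacle, compared to the equal-parameter KL theorem, is the asymmetric shape of the target module $A$, which treats $p$ and $q$ differently (reflecting the distinguished role of $s_0$ in type $B$). In particular, establishing the triple decomposition $\Z[p^{\pm 1},q^{\pm 1}] = A \oplus \overline{A} \oplus \Z$ requires a careful monomial-by-monomial check, unlike the cleaner equal-parameter case where everything is governed by $q\Z[q]$. Once this bookkeeping is in place, both halves of the theorem follow from the standard KL template.
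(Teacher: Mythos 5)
The paper itself offers no proof of this statement (it is quoted from \cite{KL79} and \cite{L03}), so the only question is whether your argument stands on its own. Its skeleton is the correct, standard one: triangularity of the bar involution via $R$-polynomials, uniqueness from $A \cap \ol{A} = 0$ where $A := p\Z[p,q,q\inv] \oplus q\Z[q]$, and existence via the decomposition $\Z[p^{\pm1},q^{\pm1}] = A \oplus \ol{A} \oplus \Z$ (which holds: a monomial $p^aq^b$ lies in $A$, $\ol{A}$, or $\Z$ according as $(a,b)$ is lexicographically positive, negative, or zero — this is exactly Lusztig's setting for $\Gamma = \Z^2$ with the lexicographic order). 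However, the inductive existence step fails as written because of a sign error. From the quadratic relation $H_s^2 = 1 - (q_s - q_s\inv)H_s$ one gets $H_s\inv = H_s + (q_s - q_s\inv)$, so $\ol{H_s} = H_s + (q_s - q_s\inv)$, \emph{not} $H_s - (q_s - q_s\inv)$ as you assert. Consequently $H_s + q_s\inv$ is not bar-invariant (one computes $\ol{H_s + q_s\inv} = H_s + 2q_s - q_s\inv$), and your $X$ is not bar-invariant either, so there is nothing to correct toward. The bar-invariant degree-one element is $H_s + q_s$; note this is also forced by condition (2) of the theorem at $w = s$, since $q_s \in A$ while $q_s\inv \in \ol{A}$, and it is consistent with Lemma \ref{property of x_J}(2) of the paper, which identifies $C_{s_i}$ with $x_{\{i\}} = H_{s_i} + q_i$. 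Replacing $H_s + q_s\inv$ by $H_s + q_s$ repairs the step.

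A secondary issue: you restrict the corrections $\mu_{y,w} C_y$ to those $y < w$ with $sy < y$. For bare existence this restriction buys nothing and creates an obligation you do not discharge: subtracting $\mu_z C_z$ perturbs the coefficient at $H_y$ by $-\mu_z c_{y,z}$, and since $\mu_z$ is bar-invariant it has an $\ol{A}$-component, so these perturbations need not stay in $A$; showing that the coefficients at $H_y$ with $sy > y$ nevertheless end up in $A$ requires an extra argument (e.g.\ the eigenvalue identity $H_s Z = q_s\inv Z$ for $Z \in C_s\Hc$, propagated through the corrections). It is cleaner to allow a correction at every $y < w$, processed in decreasing order of length; your observation that the bar-invariant elements of $\Z[p^{\pm1},q^{\pm1}]$ are exactly those of the form $a + \ol{a} + c$ with $a \in A$, $c \in \Z$ then determines each $\mu_{y,w}$ uniquely and closes the induction.
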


$\{ C_w \}_{w \in W}$ forms a linear basis of $\Hc(W_d)$; we call it the Kazhdan-Lusztig basis.

\subsection{Cell representations}
Let us recall from \cite{KL79} and \cite{L03} the notion of left cells of $W$ and the associated cell representations.
\begin{defi}\normalfont
Let $y,w \in W$.
\begin{enumerate}
\item $y \rightarrow_L w$ if the coefficient of $C_y$ in the expansion of $H_s C_w$ with respect to the basis $\{ C_x \mid x \in W \}$ is nonzero for some $s \in S$.
\item $y \leq_L w$ if there exist $y = y_0,y_1,\ldots,y_l = w \in W$ such that $y_{i-1} \rightarrow_L y_i$.
\item $y \Lcell w$ if $y \leq_L w$ and $w \leq_L y$.
\item $y <_L w$ if $y \leq_L w$ and $y \not\Lcell w$.
\item Each equivalence class of $W$ with respect to $\Lcell$ is called a left cell of $W$. We denote by $L(W)$ the set of left cells.
\end{enumerate}
\end{defi}

For each $X \in L(W)$ and $x \in X$, set
\begin{align}
&C_{\leq_L X} = \bigoplus_{y \leq_L x} \Z[p^{\pm1},q^{\pm1}] C_y, \qu C_{<_L X} = \bigoplus_{y <_L x} \Z[p^{\pm1},q^{\pm1}] C_y, \qu C^L_X = C_{\leq_L X} / C_{<_L X}. \nonumber
\end{align}
Note that these are independent of the choice of $x \in X$. We denote the image of $m \in C_{\leq_L X}$ under the quotient map $C_{\leq_L X} \rightarrow C^L_X$ by $[m]_X$. Then, $C^L_X$ has a basis $\{ [C_x]_X \mid x \in X \}$.

\begin{theo}[{\cite[Theorem 7.7]{BI03}}]
Let $X \in L(W)$. Then, $C^L_X$ is an irreducible $\Hc$-module.
\end{theo}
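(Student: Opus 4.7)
The plan is to invoke the Bonnaf\'e--Iancu combinatorial description of left cells in $W_d$ with unequal parameters $p,q$, and then identify each cell module with a Specht-type module of $\Hc$, whose irreducibility is standard in the generic setting. More precisely, I would use the generalized Robinson--Schensted correspondence for $W_d$ (the domino insertion developed in \cite{BI03}) to produce a bijection $w \mapsto (P(w),Q(w))$ between $W_d$ and pairs of standard bitableaux of the same shape $\bflm$, where $\bflm$ is a bipartition of $d$. The key combinatorial input is that
\[
y \Lcell w \iff Q(y) = Q(w),
\]
which is the cell-theoretic content of \cite{BI03}; it rests on Lusztig's asymptotic $\bfa$-function and on the (now established in our parameter regime) Lusztig conjectures for type $B$ with the parameter choices made here.

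Given this, fix a standard bitableau $Q$ of shape $\bflm$ and let $X_Q := \{ w \in W_d \mid Q(w) = Q \}$ be the corresponding left cell. Then $C^L_{X_Q}$ is a free $\Z[p^{\pm1},q^{\pm1}]$-module with basis $\{[C_w]_{X_Q} \mid w \in X_Q\}$, and under the correspondence $[C_w]_{X_Q} \leftrightarrow P(w)$ the set $X_Q$ is in bijection with the standard bitableaux of shape $\bflm$. By inspecting the Kazhdan--Lusztig multiplication rule for $H_sC_w$ modulo $C_{<_L X_Q}$, one checks that the resulting $\Hc$-action is the Specht-type module $S^{\bflm}$ for the type $B$ Hecke algebra (the Dipper--James--Murphy cellular module at parameters $(p,q)$).

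The irreducibility of $S^{\bflm}$ is then a classical consequence of semisimplicity at generic parameters: since $p$ and $q$ are independent indeterminates in this paper, $\Hc$ is split semisimple over its base ring after extending scalars to $\Q(p,q)$, and the family $\{ S^{\bflm} \mid \bflm \text{ a bipartition of } d \}$ forms a complete set of pairwise nonisomorphic irreducible $\Hc$-modules, parametrized by bipartitions of $d$. Hence $C^L_{X_Q} \simeq S^{\bflm}$ is irreducible.

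The main obstacle is the second step, namely establishing the equivalence $y \Lcell w \iff Q(y) = Q(w)$ for the Bonnaf\'e--Iancu $Q$-symbol; this is the genuinely hard combinatorial/cell-theoretic theorem of \cite{BI03}, and it is what makes the unequal-parameter case substantively different from the equal-parameter type $A$ setting (where one could appeal directly to Kazhdan--Lusztig \cite{KL79} and the classical RSK correspondence). Once the cells are identified with $Q$-fibers, matching the cell module to a Specht module is a bookkeeping exercise using Theorem \ref{KL-bases}, and generic semisimplicity then finishes the proof.
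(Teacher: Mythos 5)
The paper gives no proof of this statement at all: it is imported verbatim as \cite[Theorem 7.7]{BI03}, so there is no internal argument to compare yours against. Your outline is a fair reconstruction of how the result is actually established — identify the left cells as the fibers of the recording bitableau $Q(\cdot)$, observe that the cell module attached to a cell of shape $\bflm$ has dimension $\sharp\ST(\bflm)$, and conclude irreducibility from split semisimplicity of $\Q(p,q)\otimes\Hc$ together with the count $\sum_{\bflm}(\sharp\ST(\bflm))^2 = |W_d|$. Two corrections to how you describe the input from \cite{BI03}, though. First, the generalized Robinson--Schensted correspondence relevant here is \emph{not} domino insertion (that is the equal-parameter story); in the asymptotic/unequal-parameter regime of \cite{BI03} one splits a signed permutation into its negative and positive entries and applies ordinary RSK to each part separately, producing a pair $(Q^-(w),Q^+(w))$ of standard bitableaux — exactly the description the present paper records in Proposition \ref{Left cells of Wn}. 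Second, the determination of the left cells in the asymptotic case is proved in \cite{BI03} by direct computation with the Kazhdan--Lusztig basis, not by appealing to Lusztig's conjectures P1--P15 ``now established in our parameter regime''; invoking those conjectures both overstates what is needed and misattributes the method. Neither issue breaks your logical skeleton, but since the entire content of your proposal is a gloss on \cite{BI03}, the gloss should match what that paper actually does.
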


\subsection{Parabolic Kazhdan-Lusztig bases}\label{Parabolic Kazhdan-Lusztig bases}
Throughout this subsection, we fix a subset $J \subset \{ 0,1,\ldots,d-1 \}$ arbitrarily. Let $W_J$ denote the parabolic subgroup of $W$ generated by $\{ s_j \mid j \in J \}$, ${}^JW$ the minimal length coset representatives in $W_J \backslash W$, and $w_J \in W_J$ the longest element. Also, we set
\begin{align}
x_J\index{xJ@$x_J$} := \sum_{w \in W_J} q_{w_J}q_w\inv H_w \in \Hc. \nonumber
\end{align}

\begin{lem}\label{property of x_J}
Let $j \in J$. Then, the following hold.
\begin{enumerate}
\item $x_J H_j = q_j\inv x_J$.
\item $x_J = C_{w_J}$. In particular, $\ol{x_J} = x_J$.
\end{enumerate}
\end{lem}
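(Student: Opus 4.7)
The plan is to handle the two parts separately: (1) by a direct pairing computation, and (2) by verifying the defining properties of $C_{w_J}$ from Theorem \ref{KL-bases}.

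For (1), I expand $x_J H_j$ using the right-multiplication rule $H_w H_j = H_{ws_j}$ when $ws_j > w$ and $H_w H_j = H_{ws_j} - (q_j - q_j\inv) H_w$ when $ws_j < w$, then group the resulting sum into pairs $\{u, us_j\}$ with $u < us_j$ in $W_J$ (so $q_{us_j} = q_u q_j$). A short calculation, using $q_u\inv - q_v\inv (q_j - q_j\inv) = q_v\inv q_j\inv$ with $v := us_j$, shows that the contribution of each such pair to $x_J H_j$ is exactly $q_j\inv$ times its contribution to $x_J$; summing over pairs gives the identity.

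For part (2), the triangularity condition is immediate: the coefficient of $H_{w_J}$ in $x_J$ is $q_{w_J} q_{w_J}\inv = 1$, and for $w \in W_J$ with $w < w_J$, the fact that $w$ is a proper subword of every reduced expression of $w_J$ forces $q_{w_J} q_w\inv$ to be a monomial $p^a q^b$ with $a, b \geq 0$ and $a + b > 0$, hence in $p\Z[p, q, q\inv] \oplus q\Z[q]$. Coefficients of $H_y$ for $y \notin W_J$ vanish, which is trivially in the required set.

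The remaining step, which I expect to be the main subtlety, is to prove $\ol{x_J} = x_J$. Applying the bar involution to the identity in (1) and using $\ol{H_j} = H_j\inv = H_j + q_j - q_j\inv$ gives $\ol{x_J} H_j = q_j\inv \ol{x_J}$ as well. Since $W_J$ is parabolic, every $y \leq w$ with $w \in W_J$ again lies in $W_J$, so $\ol{H_w}$ belongs to the span of $\{H_y : y \in W_J\}$ for any $w \in W_J$; hence $\ol{x_J}$ lies in this Hecke subalgebra. I would then solve the eigenvector equation $H H_j = q_j\inv H$ for $H = \sum_{w \in W_J} a_w H_w$ by analyzing each pair $\{u, us_j\}$: in both the ``$vs_j > v$'' and ``$vs_j < v$'' cases, matching the coefficient of $H_v$ on both sides yields $a_{us_j} = q_j\inv a_u$, and induction on length shows that the eigenspace is free of rank one over $\Z[p^{\pm 1}, q^{\pm 1}]$ and spanned by $x_J$. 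Finally, the coefficient of $H_{w_J}$ in $\ol{x_J}$ equals $1$ (only $w = w_J$ can contribute to that coefficient, since $\ol{H_w} = H_w + \sum_{y < w} r_{y,w} H_y$), forcing $\ol{x_J} = x_J$. Uniqueness in Theorem \ref{KL-bases} then concludes $x_J = C_{w_J}$.
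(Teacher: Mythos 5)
Your proof is correct. For part (1) it is the same argument the paper intends: the paper's proof says only that (1) ``follows from a direct calculation and the fact that $W_J = \{ w \in W_J \mid w < s_jw \} \sqcup \{ w \in W_J \mid s_jw < w \}$'', which is exactly your pairing into $\{u, us_j\}$; your identity $q_u\inv - q_v\inv(q_j - q_j\inv) = q_v\inv q_j\inv$ with $q_v = q_u q_j$ is the whole computation. For part (2) you genuinely diverge: the paper simply cites Xi \cite[Proposition 1.17 (2)]{X94}, whereas you give a self-contained argument. Your route — triangularity from $q_{w_J}q_w\inv = q_{w\inv w_J}$ being a positive monomial, then bar-invariance by showing $\ol{x_J}$ lies in the parabolic subalgebra and is a simultaneous $q_j\inv$-eigenvector for right multiplication by all $H_j$, $j \in J$, hence a scalar multiple of $x_J$ pinned down by the coefficient of $H_{w_J}$ — is standard and sound, and it has the merit of making the lemma independent of an external reference. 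One small point of precision: the rank-one claim for the eigenspace holds only for the \emph{simultaneous} system over all $j \in J$ (for a single $j$ the relations $a_{us_j} = q_j\inv a_u$ leave $|W_J|/2$ free parameters), and the induction on length that forces $a_w = q_w\inv a_e$ uses that every $w \in W_J$ is reached from $e$ by a reduced word in the generators indexed by $J$; your write-up clearly intends this, but it is worth stating explicitly.
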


\begin{proof}
The assertion $(1)$ follows from a direct calculation and the fact that $W_J = \{ w \in W_J \mid w < s_jw \} \sqcup \{ w \in W_J \mid s_jw < w \}$. The proof of $(2)$ can be found in \cite[Proposition 1.17 (2)]{X94}.
\end{proof}

By Lemma \ref{property of x_J} $(1)$, the right ideal $x_J \Hc$ of $\Hc$ has a basis $\{ x_J H_w \mid w \in {}^JW \}$. Also, by Lemma \ref{property of x_J} $(2)$, $x_J \Hc$ is closed under the involution $\ol{\ \cdot \ }$. Hence, we can construct an analog of the Kazhdan-Lusztig basis in the ideal $x_J \Hc$:

\begin{theo}[{\cite[Proposition 3.2]{Deo87}}]\label{parabolic KL basis}
For each $w \in {}^JW$, there exists a unique ${}^JC_w \in x_J \Hc$ such that
\begin{enumerate}
\item $\ol{{}^JC_w} = {}^JC_w$.
\item ${}^JC_w = x_J(H_w + \sum_{\substack{y \in {}^JW \\ y < w}} {}^Jc_{y,w} H_y)$ for some ${}^Jc_{y,w} \in p\Z[p,q^{\pm1}] \oplus q\Z[q]$.
\end{enumerate}
\end{theo}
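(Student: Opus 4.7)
The plan is to follow Deodhar's standard induction, adapted to the unequal-parameter Hecke algebra. The whole argument is an upper-triangular bar-invariance bootstrap in the right ideal $x_J \Hc$, modeled on the proof of Theorem \ref{KL-bases}.

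First I would fix the ambient structure. Using the decomposition $\Hc = \bigoplus_{w \in {}^JW} \Hc(W_J) H_w$ as a free left $\Hc(W_J)$-module, together with Lemma \ref{property of x_J}(1), the set $\{ x_J H_w \mid w \in {}^JW \}$ is a $\Z[p^{\pm 1}, q^{\pm 1}]$-basis of $x_J \Hc$. Since $\ol{x_J} = x_J$ (Lemma \ref{property of x_J}(2)) and $\ol{\,\cdot\,}$ is a ring involution of $\Hc$, the ideal $x_J \Hc$ is bar-invariant.

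Next I would establish the crucial upper-triangularity: for each $w \in {}^JW$,
\begin{align*}
\ol{x_J H_w} = x_J H_w + \sum_{\substack{y \in {}^JW \\ y < w}} R_{y,w}\, x_J H_y, \qquad R_{y,w} \in \Z[p^{\pm 1}, q^{\pm 1}].
\end{align*}
This I would prove by induction on $\ell(w)$. For $w = e$ it is trivial. For general $w$, choose a simple reflection $s_i$ with $ws_i < w$; since $w \in {}^JW$, we also have $ws_i \in {}^JW$. Write $x_J H_w = (x_J H_{ws_i}) H_i$, apply $\ol{\,\cdot\,}$, use the explicit expansion of $\ol{H_i} = H_i + (q_i - q_i^{-1})$, and invoke the inductive expansion for $\ol{x_J H_{ws_i}}$. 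The product $(x_J H_y) H_i$ for $y \in {}^JW$ with $y < w$ either equals $x_J H_{ys_i}$ (if $ys_i \in {}^JW$) or, when $ys_i \in W_J \cdot {}^JW$ reduces via Lemma \ref{property of x_J}(1) to a scalar multiple of $x_J H_{y'}$ for some $y' \in {}^JW$ with $y' \leq y$; in either case the result stays within the span of $\{x_J H_z \mid z \in {}^JW,\ z \leq w\}$.

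With upper-triangularity in hand, I would run the Kazhdan--Lusztig induction. Set ${}^JC_e := x_J$, which is bar-invariant. Assume inductively that ${}^JC_y$ has been constructed for all $y < w$ in ${}^JW$, with
\begin{align*}
{}^JC_y = x_J\Big(H_y + \sum_{\substack{z \in {}^JW \\ z < y}} {}^Jc_{z,y} H_z\Big),\qquad {}^Jc_{z,y} \in p\Z[p,q^{\pm 1}] \oplus q\Z[q],
\end{align*}
so that $\{{}^JC_y \mid y \in {}^JW,\ y \leq w\}$ is a basis of the bar-invariant lattice $\bigoplus_{y \leq w} \Z[p^{\pm 1}, q^{\pm 1}] x_J H_y$. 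I would then seek
\begin{align*}
{}^JC_w := x_J H_w + \sum_{\substack{y \in {}^JW \\ y < w}} \gamma_{y,w}\, {}^JC_y
\end{align*}
bar-invariant with $\gamma_{y,w} \in p\Z[p,q^{\pm 1}] \oplus q\Z[q]$. Writing out the bar-invariance condition recursively (from $y = w$ downward in Bruhat order), each $\gamma_{y,w}$ is forced to satisfy $\gamma_{y,w} - \ol{\gamma_{y,w}}$ equals a known element of $\Z[p^{\pm 1}, q^{\pm 1}]$ computed from the previously fixed data. The decomposition
\begin{align*}
\Z[p^{\pm 1}, q^{\pm 1}] = \big(\text{bar-invariants}\big) \oplus \big((p\Z[p,q^{\pm 1}] \oplus q\Z[q]) - \ol{(\cdot)}\big)
\end{align*}
then gives existence and uniqueness of each $\gamma_{y,w}$. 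Uniqueness of ${}^JC_w$ itself is the standard minimality principle: any bar-invariant $D \in x_J \Hc$ whose expansion in the $\{x_J H_y\}$-basis has vanishing top coefficient and lies in $p\Z[p,q^{\pm 1}] \oplus q\Z[q]$ everywhere must be zero, by induction on the Bruhat order.

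The main obstacle is the upper-triangularity step: tracking how the braid relations interact with the coset structure, and in particular verifying that the reduction of $(x_J H_y) H_i$ back into $\{x_J H_z \mid z \in {}^JW\}$ via Lemma \ref{property of x_J}(1) never introduces terms with $z \not\leq w$. The rest is a mechanical adaptation of the unequal-parameter Kazhdan--Lusztig argument of \cite{L03}.
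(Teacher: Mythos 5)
The paper does not actually prove this statement: it is imported wholesale from Deodhar \cite[Proposition 3.2]{Deo87} (in the unequal-parameter form of \cite{L03}), so there is no in-paper argument to compare against. Your proposal reconstructs the standard proof, and the skeleton is correct: freeness of $\Hc$ over $\Hc(W_J)$ together with Lemma \ref{property of x_J} gives the basis $\{ x_J H_w \mid w \in {}^JW \}$ and the bar-stability of $x_J\Hc$; the unitriangularity of the bar involution in this basis follows by your induction on length, using that $w \in {}^JW$ and $ws_i < w$ force $ws_i \in {}^JW$, the coset trichotomy (either $ys_i \in {}^JW$, or $ys_i = s_j y$ with $j \in J$ and Lemma \ref{property of x_J}(1) absorbs the factor), and the lifting property of the Bruhat order to keep every term indexed by some $z \leq w$; the Kazhdan--Lusztig bootstrap then produces and characterizes ${}^JC_w$ exactly as for Theorem \ref{KL-bases}.

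Two points should be tightened. First, the displayed identity $\Z[p^{\pm1},q^{\pm1}] = (\text{bar-invariants}) \oplus \bigl( (p\Z[p,q^{\pm1}] \oplus q\Z[q]) - \ol{(\cdot)} \bigr)$ is false as stated: already $p$ cannot be so decomposed over $\Z$, since one would need $(p+p^{-1})/2$. What you actually need, and what is true, is the three-term splitting $\Z[p^{\pm1},q^{\pm1}] = \ol{\Pi} \oplus \Z \oplus \Pi$ with $\Pi := p\Z[p,q^{\pm1}] \oplus q\Z[q]$, from which every $a$ with $\ol{a} = -a$ is \emph{uniquely} of the form $\gamma - \ol{\gamma}$ with $\gamma \in \Pi$. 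Second, for the recursion to close you must verify that the prescribed value of $\gamma_{y,w} - \ol{\gamma_{y,w}}$ is genuinely anti-invariant under the bar involution; this follows by applying the involution to $\ol{D} - D$ for the partially corrected element $D$ and comparing top coefficients via the unitriangularity with unit diagonal, and it deserves a sentence. Neither issue invalidates the approach, which is the same as Deodhar's.
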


Clearly, $\{ {}^JC_w \mid w \in {}^JW \}$ is a linear basis of $x_J \Hc$, called the parabolic Kazhdan-Lusztig basis.

\begin{prop}[{\cite[Proposition 3.4]{Deo87}}]\label{JCw}
Let $w \in {}^JW$. Then, we have ${}^JC_w = C_{w_Jw}$.
\end{prop}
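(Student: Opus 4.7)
The plan is to verify that the ordinary Kazhdan--Lusztig element $C_{w_Jw}$ meets both characterizing properties of ${}^JC_w$ listed in Theorem \ref{parabolic KL basis}, and then conclude by the uniqueness asserted there. Bar-invariance $\overline{C_{w_Jw}}=C_{w_Jw}$ is immediate from Theorem \ref{KL-bases}(1); the real content is to put $C_{w_Jw}$ in $x_J\Hc$ and then to compute its expansion on the basis $\{x_JH_y:y\in{}^JW\}$, checking both the leading coefficient and the bound on the remaining ones.

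First I would show that $C_{w_Jw}\in x_J\Hc$. Because $w\in{}^JW$ gives $\ell(w_Jw)=\ell(w_J)+\ell(w)$, every $s_j$ with $j\in J$ is a left descent of $w_Jw$. Using the standard fact that $C_y\in C_{s_j}\Hc$ whenever $s_jy<y$ (a consequence of the product formula $C_{s_j}C_y$ in the Kazhdan--Lusztig basis), an induction on $\ell(w_J)$ places $C_{w_Jw}$ inside the right ideal $C_{w_J}\Hc$, and by Lemma \ref{property of x_J}(2) this is exactly $x_J\Hc$.

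Next I would read off the coefficients. Every $z\le w_Jw$ has a unique factorisation $z=u\cdot y$ with $u\in W_J$ and $y\in{}^JW$, and $H_z=H_uH_y$ since $\ell(uy)=\ell(u)+\ell(y)$. Combining this with the definition $x_J=\sum_{u\in W_J}q_{w_J}q_u\inv H_u$, one computes
\[
x_JH_y=\sum_{u\in W_J}q_{w_J}q_u\inv H_{uy},
\]
so that in the unique expansion $C_{w_Jw}=\sum_{y\in{}^JW}d_y\,x_JH_y$, reading off the coefficient of $H_{w_Jy}$ (the choice $u=w_J$) yields
\[
d_w=1,\qquad d_y=c_{w_Jy,\,w_Jw}\quad\text{for }y\in{}^JW,\ y<w,
\]
where $c_{\cdot,\cdot}$ denotes the KL coefficients of Theorem \ref{KL-bases}(2). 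Since $c_{w_Jy,\,w_Jw}\in p\Z[p,q,q\inv]\oplus q\Z[q]$, the bound in Theorem \ref{parabolic KL basis}(2) is satisfied, and uniqueness forces ${}^JC_w=C_{w_Jw}$.

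The main obstacle is the first step: establishing $C_{w_Jw}\in x_J\Hc$ in the unequal-parameter setting. In the equal-parameter case this descent argument is classical, but with two independent parameters $p,q$ the product formulas $C_{s_j}C_y$ acquire extra terms and one should invoke the unequal-parameter Kazhdan--Lusztig framework (as in \cite{L03}) to justify the descent. An equivalent fallback is to prove directly the identity $c_{uy,\,w_Jw}=q_{w_J}q_u\inv\,c_{w_Jy,\,w_Jw}$ for all $u\in W_J$ and $y\in{}^JW$, $y\le w$, by induction on $\ell(u)$, using Lemma \ref{property of x_J}(1) and the bar-invariance of $C_{w_Jw}$; that identity is precisely the condition that makes the expansion $C_{w_Jw}=\sum_y d_y\,x_JH_y$ exist with the $d_y$ named above.
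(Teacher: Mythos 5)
Your argument is correct in outline, but it runs the uniqueness argument in the opposite direction from the paper, and that reversal is what creates the one step you could not complete. The paper starts from ${}^JC_w$, expands $x_J H_y = \sum_{u \in W_J} q_{w_J}q_u\inv H_{uy}$ in the standard basis, and observes that ${}^JC_w - H_{w_Jw}$ is supported on $H_z$ with $z < w_Jw$ and has all coefficients in $p\Z[p,q^{\pm1}] \oplus q\Z[q]$ (for $(u,y) \neq (w_J,w)$ the coefficient is $q_{w_J}q_u\inv\,{}^Jc_{y,w}$, and $q_{w_J}q_u\inv$ is a nonempty monomial in $p,q$ when $u \neq w_J$); since $\ol{{}^JC_w} = {}^JC_w$, the uniqueness in Theorem \ref{KL-bases} immediately gives ${}^JC_w = C_{w_Jw}$, with no input beyond the two theorems already stated. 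You instead verify the defining conditions of Theorem \ref{parabolic KL basis} for $C_{w_Jw}$, which obliges you first to prove $C_{w_Jw} \in x_J\Hc$ --- precisely the step you flag as the main obstacle and only sketch. That statement is true and provable in the unequal-parameter setting (every $s_j$, $j \in J$, is a left descent of $w_Jw$, so $H_jC_{w_Jw} = q_j\inv C_{w_Jw}$, and $x_J\Hc$ is exactly the common $q_j\inv$-eigenspace for left multiplication by the $H_j$, $j \in J$), but it is an external input from \cite{L03} rather than a consequence of anything established in this paper, so as written your proof is not self-contained at that point; your fallback identity $c_{uy,w_Jw} = q_{w_J}q_u\inv c_{w_Jy,w_Jw}$ is the right reformulation but would still need the eigenvector property (not Lemma \ref{property of x_J}(1), which concerns $x_J$ itself) to run the induction. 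Once that membership is granted, your coefficient bookkeeping ($d_w = 1$, $d_y = c_{w_Jy,w_Jw} \in p\Z[p,q^{\pm1}] \oplus q\Z[q]$) is correct and the uniqueness in Theorem \ref{parabolic KL basis} closes the argument. The moral of the comparison: checking the characterization of the object that is \emph{a priori} in the smaller space (${}^JC_w \in x_J\Hc \subset \Hc$) against the uniqueness theorem for the bigger space costs nothing, whereas the reverse direction forces you to prove a containment first.
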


\begin{proof}
We have
\begin{align}
{}^JC_w &= x_J \sum_{\substack{y \in {}^JW \\ y \leq w}} {}^Jc_{y,w}H_y = \sum_{\substack{y \in {}^JW \\ y \leq w}} \sum_{x \in W_J} q_{w_J}q_x\inv {}^Jc_{y,w} H_{xy}. \nonumber
\end{align}
This shows that ${}^JC_w - H_{w_Jw} \in \bigoplus_{z < w_Jw} (p\Z[p,q^{\pm1}] \oplus q\Z[q]) H_z$. Hence, by Theorem \ref{KL-bases}, ${}^JC_w$ coincides with $C_{w_Jw}$.
\end{proof}

\begin{prop}\label{from KL to parabolic KL}
Let $y \in W$. Then, we have
\begin{align}
x_J C_y = \sum_{\substack{w \in {}^JW \\ w_Jw \leq_L y}} \alpha_w {}^JC_w, \nonumber
\end{align}
for some $\alpha_w \in \Z[p^{\pm1},q^{\pm1}]$.
\end{prop}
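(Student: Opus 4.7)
The plan is to combine Proposition \ref{JCw} (${}^JC_w = C_{w_Jw}$) with the standard monotonicity property of the preorder $\leq_L$ under left multiplication by arbitrary elements of $\Hc$. By Theorem \ref{parabolic KL basis} the set $\{{}^JC_w \mid w \in {}^JW\}$ is a $\Z[p^{\pm1},q^{\pm1}]$-basis of the right ideal $x_J\Hc$, and $x_JC_y$ lies in this ideal, so there exist unique coefficients $\alpha_w \in \Z[p^{\pm1},q^{\pm1}]$ with $x_JC_y = \sum_{w \in {}^JW} \alpha_w\,{}^JC_w = \sum_{w \in {}^JW} \alpha_w\, C_{w_Jw}$, where the second equality uses Proposition \ref{JCw}. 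The content of the proposition is therefore the support condition $\alpha_w = 0$ whenever $w_Jw \not\leq_L y$.

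Next I would establish the auxiliary claim that for every $h \in \Hc$, the expansion $hC_y = \sum_z \beta_z C_z$ in the Kazhdan--Lusztig basis has $z \leq_L y$ whenever $\beta_z \neq 0$. For $h = H_{s_i}$ with $s_i$ a simple reflection this is immediate from the very definition of $\rightarrow_L$, which implies $z \leq_L y$. For $h = H_w$ with reduced expression $w = s_{i_1}\cdots s_{i_l}$ one has $H_w = H_{s_{i_1}}\cdots H_{s_{i_l}}$; iterating the case of a simple reflection and invoking the transitivity of $\leq_L$ gives the claim for $H_wC_y$. An arbitrary $h$ is a $\Z[p^{\pm1},q^{\pm1}]$-linear combination of the $H_w$'s, so the general case follows by linearity of the expansion.

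Applying this auxiliary claim to $h = x_J \in \Hc$ forces every $z = w_Jw$ for which $\alpha_w \neq 0$ to satisfy $w_Jw \leq_L y$, giving exactly the required support condition. The only substantive step is the generating-set reduction in the second paragraph, but this is the classical observation that $\leq_L$ has been set up precisely so that left multiplication by the $H_{s_i}$'s respects it; no serious obstacle is expected. One could alternatively short-circuit the argument slightly by using Lemma \ref{property of x_J}~(2) to write $x_J = C_{w_J}$ and then invoking the analogous monotonicity statement for products of Kazhdan--Lusztig basis elements, but the approach above keeps the logical prerequisites to a minimum.
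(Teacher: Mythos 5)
Your proof is correct and follows essentially the same route as the paper: expand $x_JC_y$ in the parabolic Kazhdan--Lusztig basis, identify ${}^JC_w = C_{w_Jw}$ via Proposition \ref{JCw}, and compare with the expansion $x_JC_y = \sum_{z \leq_L y}\beta_z C_z$ coming from the definition of $\leq_L$. The paper simply cites ``the definition of $\leq_L$'' for that second expansion, whereas you spell out the reduction to simple reflections, products, and linearity; that is exactly the intended justification.
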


\begin{proof}
Let us write
\begin{align}
x_J C_y = \sum_{w \in {}^JW} \alpha_w {}^JC_w = \sum_{w \in {}^JW} \alpha_w C_{w_Jw} \qu \Forsome \alpha_w \in \Z[\Gamma]. \nonumber
\end{align}
On the other hand, by the definition of $\leq_L$, we can write
\begin{align}
x_J C_y = \sum_{z \leq_L y} \beta_z C_z \qu \Forsome \beta_z \in \Z[\Gamma]. \nonumber
\end{align}
Combining these two equations, we obtain the desired assertion.
\end{proof}

\subsection{Functor $\clFj$}
Set $\bfH := \Q(p,q) \otimes_{\Z[p^{\pm1},q^{\pm1}]} \Hc$. Recall from \cite{BWW16} that we have established the $q$-Schur duality between $\Uj$ and $\bfH$, namely, we defined a right action of $\bfH$ on $\bfV^{\otimes d}$ which commutes with the left action of $\Uj$. As a right $\bfH$-module, $\bfV^{\otimes d}$ decomposes as
$$
\bfV^{\otimes d} \simeq \bigoplus_{f \in I_+^d} x_{J(f)} \bfH,
$$
where $I = \{ -r,\ldots,r \}$, $I_+^d := \{ f = (f_1,\ldots,f_d) \in I^d \mid 0 \leq f_1 \leq \cdots\leq f_d \}$ and $J(f) := \{ j \mid f_j = 0 \text{ or } f_j = f_{j+1} \}$. From this result, we obtain an exact functor $\clFj$ from the category of finite-dimensional $\bfH$-modules to $\Oj$ defined by $\clFj(M) := \bfV^{\otimes d} \otimes_{\bfH} M = \bigoplus_{f \in I_+^d} x_{J(f)} M$.

In \cite{BWW16}, it is proved that the Kazhdan-Lusztig basis $\{ {}^{J(f)} C_w \mid f \in I_+^d,\ w \in {}^{J(f)} W \}$ coincides with the $\jmath$-canonical basis $\{ b^{\jmath}_f \mid f \in I^d \}$ of $\bfV^{\otimes d} = \clFj(\bfH)$, i.e., we have $b^{\jmath}_{fw} = {}^{J(f)} C_w$ if $f \in I_+^d$ and $w \in {}^{J(f)}W$. This implies that $\bfL^{\otimes d} = \Span_{\A} \{ {}^{J(f)} C_w \mid f \in I_+^d,\ w \in {}^{J(f)}W \}$, and $\bfB^{\otimes d} = \{ {}^{J(f)}C_w + q\bfL^{\otimes d} \mid f \in I_+^d,\ w \in {}^{J(f)}W \}$.

\begin{prop}
Let $X \in L(W)$ and $x \in X$.
\begin{enumerate}
\item $\bfC_{\leq_L X} := \clFj(\Q(p,q) \otimes_{\Z[p^{\pm1},q^{\pm1}]} C_{\leq_L X})$ is a $\Uj$-submodule of $\bfV^{\otimes d}$ with a basis
$$\{ {}^{J(f)} C_w \mid f \in I^d_+,\ w \in {}^{J(f)}W \AND w_{J(f)}w \leq_L x \}.$$
\item $\bfC_{<_L X} := \clFj(\Q(p,q) \otimes_{\Z[p^{\pm1},q^{\pm1}]} C_{<_L X})$ is a $\Uj$-submodule of $\bfV^{\otimes d}$ with a basis
$$\{ {}^{J(f)} C_w \mid f \in I^d_+,\ w \in {}^{J(f)}W \AND w_{J(f)}w <_L x \}.$$
\item $\bfC^L_X := \bfC_{\leq_L X}/\bfC_{<_L X}$ is an irreducible $\Uj$-subquotient of $\bfV^{\otimes d}$ with a basis $\{ [{}^{J(f)} C_w]_X \mid f \in I^d_+ \AND w \in {}^{J(f)}W \cap w_{J(f)}X \}$, where $[m]_X$ denotes the image of $m \in \bfC_{\leq_L X}$ under the canonical projection $\bfC_{\leq_L X} \rightarrow \bfC^L_X$.
\end{enumerate}
\end{prop}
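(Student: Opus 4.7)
The plan is to derive all three parts from exactness of $\clFj$ together with the explicit dictionary between the Kazhdan-Lusztig basis of $\Hc$ and the parabolic Kazhdan-Lusztig bases of the summands $x_{J(f)}\Hc$. Since $C_{\leq_L X}$ and $C_{<_L X}$ are left $\Hc$-submodules of $\Hc$ by the very definition of $\leq_L$, exactness embeds $\bfC_{\leq_L X}$ and $\bfC_{<_L X}$ as $\Uj$-submodules of $\clFj(\Hc) = \bfV^{\otimes d}$. Under the right-$\Hc$-module decomposition $\bfV^{\otimes d} \simeq \bigoplus_{f \in I^d_+} x_{J(f)}\Hc$, these images become $\bigoplus_f x_{J(f)} C_{\leq_L X}$ and $\bigoplus_f x_{J(f)} C_{<_L X}$ respectively, so the problem reduces to identifying each $x_{J(f)} C_{\leq_L X}$ and $x_{J(f)} C_{<_L X}$ as spans of parabolic Kazhdan-Lusztig basis elements.

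For part (1), I would compute $x_{J(f)} C_{\leq_L X}$ summand-by-summand. Proposition \ref{from KL to parabolic KL} immediately gives the inclusion into $\Span\{ {}^{J(f)}C_w \mid w \in {}^{J(f)}W,\ w_{J(f)}w \leq_L x \}$. The reverse inclusion is the key point: using Lemma \ref{property of x_J}(1), one checks that $x_{J(f)}^2 = c_{J(f)} \cdot x_{J(f)}$ for a nonzero scalar $c_{J(f)} \in \Q(p,q)$ (a sum of positive monomials in $p,q$), and then Proposition \ref{JCw} identifies ${}^{J(f)}C_w = C_{w_{J(f)}w}$, which lies in $C_{\leq_L X}$ whenever $w_{J(f)}w \leq_L x$. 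Together these give
\begin{align}
{}^{J(f)}C_w = c_{J(f)}^{-1} \cdot x_{J(f)} \cdot C_{w_{J(f)}w} \in x_{J(f)} C_{\leq_L X}. \nonumber
\end{align}
Linear independence is automatic, because $\{ {}^{J(f)}C_w \mid f \in I^d_+,\ w \in {}^{J(f)}W \}$ is already a basis of $\bfV^{\otimes d}$, namely the $\jmath$-canonical basis of \cite{BWW16}. Part (2) is handled identically with $<_L$ in place of $\leq_L$, noting that $w_{J(f)}w \leq_L y$ and $y <_L x$ force $w_{J(f)}w <_L x$ by a standard argument with $\Lcell$.

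For part (3), applying the exact functor $\clFj$ to the short exact sequence $0 \to C_{<_L X} \to C_{\leq_L X} \to C^L_X \to 0$ realizes $\bfC^L_X$ as $\clFj(C^L_X)$, and subtracting the basis of (2) from the basis of (1) leaves precisely the classes $[{}^{J(f)}C_w]_X$ with $w_{J(f)}w \Lcell x$. Since $w_{J(f)}$ is an involution, the resulting index set is exactly $\{ (f,w) \mid f \in I^d_+,\ w \in {}^{J(f)}W \cap w_{J(f)}X \}$, matching the claim. Irreducibility will then come from irreducibility of the cell module $C^L_X$ as an $\Hc$-module, together with the $(\Uj,\Hc)$-Schur-Weyl duality of \cite{BWW16}: the functor $\clFj$ sends nonzero irreducible $\Hc$-modules occurring in $\bfV^{\otimes d}$ to irreducible $\Uj$-modules. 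The main obstacle is justifying this last step cleanly, that is, extracting from the double centralizer property of \cite{BWW16} the precise statement that $\clFj$ preserves irreducibility on the relevant subcategory (and observing that if $\bfC^L_X = 0$ the claim is vacuous and the stated basis is empty); the remainder of the argument is pure bookkeeping on top of Propositions \ref{JCw} and \ref{from KL to parabolic KL}.
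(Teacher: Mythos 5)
Your proof takes essentially the same route as the paper's (very terse) one: parts (1) and (2) rest on Propositions \ref{JCw} and \ref{from KL to parabolic KL} together with exactness of $\clFj$, and part (3) is the formal consequence; your computation $x_{J(f)}^2 = c_{J(f)}x_{J(f)}$ correctly supplies the reverse inclusion that the paper leaves implicit. The irreducibility step you flag is resolved exactly as you suggest — semisimplicity of $\bfH$ over $\Q(p,q)$ and the double centralizer property of \cite{BWW16} force $\clFj$ to send the irreducible cell module $C^L_X$ to an irreducible $\Uj$-module or to zero — and the paper does not spell this out either.
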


\begin{proof}
Parts $(1)$ and $(2)$ follows from Propositions \ref{JCw} and \ref{from KL to parabolic KL}. Part $(3)$ follows from $(1)$ and $(2)$.
\end{proof}

\begin{cor}
We obtain the irreducible decomposition $\bfV^{\otimes d} \simeq \bigoplus_{\substack{X \in L(W) \\ \bfC^L_X \neq 0}} \bfC^L_X$.
\end{cor}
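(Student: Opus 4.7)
The plan is to lift the cell filtration of the left regular representation of $\bfH$ to $\bfV^{\otimes d}$ via the exact functor $\clFj$, and then split the resulting filtration using complete reducibility in $\Oj$.

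First refine the preorder $\leq_L$ on $L(W)$ to a total order $X_1, X_2, \ldots, X_N$ such that $X_i <_L X_j$ implies $i < j$, and set $N_0 := 0$ and $N_i := \Q(p,q) \otimes_{\Z[p^{\pm1},q^{\pm1}]} C_{\leq_L X_i}$ for $1 \leq i \leq N$. By the definition of $\leq_L$, each $N_i$ is a left $\bfH$-submodule of $\bfH$, so $0 = N_0 \subset N_1 \subset \cdots \subset N_N = \bfH$ is a filtration of $\bfH$ by left $\bfH$-submodules whose successive subquotients are the cell modules $\Q(p,q) \otimes_{\Z[p^{\pm1},q^{\pm1}]} C^L_{X_i}$. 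Since $\bfH$ is semisimple over $\Q(p,q)$ (the parameters $p,q$ being generic indeterminates), every left $\bfH$-module is flat, so $\clFj = \bfV^{\otimes d} \otimes_{\bfH}(-)$ is exact. Applying it yields a filtration
\begin{align}
0 = \clFj(N_0) \subset \clFj(N_1) \subset \cdots \subset \clFj(N_N) = \bfV^{\otimes d} \nonumber
\end{align}
of $\bfV^{\otimes d}$ by $\Uj$-submodules with successive subquotients $\bfC^L_{X_i}$, each of which is zero or irreducible as a $\Uj$-module by the preceding proposition.

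Since $\bfV^{\otimes d}$ is a finite-dimensional integrable $\Uj$-module, it belongs to $\Oj$; Theorem \ref{4.4.3} then gives complete reducibility, so the above filtration splits as a direct sum. This yields $\bfV^{\otimes d} \simeq \bigoplus_{i=1}^N \bfC^L_{X_i} = \bigoplus_{X \in L(W)} \bfC^L_X$, and discarding the zero summands yields the stated decomposition. The main subtlety is the exactness of $\clFj$, which is assured by the semisimplicity of $\bfH$ over $\Q(p,q)$; alternatively, one could bypass exactness by applying complete reducibility directly to $\bfV^{\otimes d}$ and matching the resulting irreducible summands to the $\bfC^L_X$ via a multiplicity count.
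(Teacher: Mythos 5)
Your overall strategy --- filter $\bfV^{\otimes d}$ compatibly with the left cell preorder so that the subquotients are the $\bfC^L_X$, then split the filtration by complete reducibility (Theorem \ref{4.4.3}) --- is exactly what the corollary rests on, and the exactness of $\clFj$ you invoke is already asserted in the paper. However, there is a concrete error in the construction of the filtration. After refining $\leq_L$ to a total order $X_1,\ldots,X_N$, you set $N_i := \Q(p,q)\otimes C_{\leq_L X_i}$ and claim $N_1 \subset N_2 \subset \cdots \subset N_N = \bfH$. This fails: $C_{\leq_L X_i} \subset C_{\leq_L X_j}$ holds only when $X_i \leq_L X_j$, not merely when $i < j$ in your chosen refinement, and $\leq_L$ is in general only a partial (pre)order on $L(W)$ with many incomparable cells. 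For incomparable $X_i, X_j$ neither of $C_{\leq_L X_i}$, $C_{\leq_L X_j}$ contains the other, so your $N_\bullet$ is not a chain; likewise $N_N = \bfH$ would force $X_N$ to be a unique $\leq_L$-maximum, which need not exist. The standard repair is to take the partial sums $M_i := \sum_{j \leq i} \Q(p,q)\otimes C_{\leq_L X_j}$ (equivalently, the span of the $C_y$ with $y \in X_1 \cup \cdots \cup X_i$). With your ordering convention ($X_i <_L X_j \Rightarrow i<j$) each $M_i$ is a left ideal, $M_N = \bfH$, and one checks $M_i \cap (\Q(p,q)\otimes C_{\leq_L X_i}) \supseteq \Q(p,q)\otimes C_{<_L X_i}$ with equality, so $M_i/M_{i-1} \simeq \Q(p,q)\otimes C^L_{X_i}$ as before. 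With this substitution the rest of your argument goes through.

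Everything else is sound and in the spirit of the paper, which leaves the proof implicit: the preceding proposition already exhibits $\bfC_{\leq_L X}$ and $\bfC_{<_L X}$ as $\Uj$-submodules of $\bfV^{\otimes d}$ with explicit parabolic Kazhdan--Lusztig bases, and those bases show directly that the nonzero $\bfC^L_X$ exhaust a basis of $\bfV^{\otimes d}$ without repetition; combining this with complete reducibility in $\Oj$ gives the direct sum. Your detour through the left regular representation and flatness of $\bfH$-modules is a legitimate alternative justification of the same filtration, but it is not needed once the proposition is in hand.
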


\begin{prop}\label{qjc of CLX}
Let $X \in L(W)$. Then, $\bfC^L_X$ has a quasi-$\jmath$-crystal basis.
\end{prop}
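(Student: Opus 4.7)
The plan is to construct the quasi-$\jmath$-crystal basis of $\bfC^L_X$ by restricting and taking the induced quotient of the quasi-$\jmath$-crystal basis $(\bfL^{\otimes d},\bfB^{\otimes d})$ of the ambient tensor module $\bfV^{\otimes d}$. The essential input is the identification, due to \cite{BWW16}, of the $\jmath$-canonical basis of $\bfV^{\otimes d}$ with the parabolic Kazhdan--Lusztig basis $\{{}^{J(f)}C_w\}$, which yields the explicit description
$$
\bfL^{\otimes d} = \bigoplus_{f \in I_+^d,\ w \in {}^{J(f)}W} \A \cdot {}^{J(f)}C_w, \qquad \bfB^{\otimes d} = \{\, {}^{J(f)}C_w + q\bfL^{\otimes d} \,\}.
$$

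First, fix $x \in X$ and set
$$
\clL_{\leq_L X} := \bfL^{\otimes d} \cap \bfC_{\leq_L X}, \qquad \clL_{<_L X} := \bfL^{\otimes d} \cap \bfC_{<_L X}.
$$
Using the parabolic Kazhdan--Lusztig description of $\bfC_{\leq_L X}$ and $\bfC_{<_L X}$ established just above the statement, these intersections are free $\A$-modules with $\A$-bases $\{{}^{J(f)}C_w \mid w_{J(f)}w \leq_L x\}$ and $\{{}^{J(f)}C_w \mid w_{J(f)}w <_L x\}$, respectively. I then define
$$
\clL_X := \clL_{\leq_L X}/\clL_{<_L X} \hookrightarrow \bfC^L_X, \qquad \clB_X := \bigl\{\, [{}^{J(f)}C_w]_X + q\clL_X \;\bigm|\; f \in I_+^d,\ w \in {}^{J(f)}W \cap w_{J(f)}X \,\bigr\}.
$$
Conditions $(\qL1)$, $(\qL2)$, $(\qB2)$, $(\qB3)$ then follow immediately from the explicit $\A$-basis and from the fact that the $\jmath$-canonical basis is compatible with the $\Lambdaj$-weight decomposition.

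The heart of the proof is verifying $(\qL3)$ and $(\qB4)$, $(\qB5)$. Since $\bfC_{\leq_L X}$ and $\bfC_{<_L X}$ are $\Uj$-submodules, they are in particular stable under each $\Uj_1$-triple $(f_i,k_i,e_i)$ for $i \geq 2$, so the Kashiwara operators $\ftil_i, \etil_i$ for $i \in \Ij \setminus \{1\}$ preserve them. For $i = 1$, the operators $\ftil_1, \etil_1$ are defined via the isotypic decomposition of the restricted $\Uj_1$-action, and since the complete reducibility of Section \ref{CompleteReducibility} (Theorem \ref{4.4.3}) ensures that any $\Uj_1$-submodule is a direct summand compatible with the isotypic decomposition, $\ftil_1$ and $\etil_1$ also preserve $\bfC_{\leq_L X}$ and $\bfC_{<_L X}$. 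Intersecting with the preservation of $\bfL^{\otimes d}$, these operators preserve $\clL_{\leq_L X}$ and $\clL_{<_L X}$, hence descend to $\clL_X$; axioms $(\qB4)$, $(\qB5)$ for $\clB_X$ then follow from those of $\bfB^{\otimes d}$ modulo the vanishing of the basis elements that fall into $\bfC_{<_L X}$.

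The main obstacle I expect is the verification that the Kashiwara operators on the subquotient $\bfC^L_X$ obtained by restriction--and--quotient actually coincide with the intrinsic Kashiwara operators attached to $\bfC^L_X$ as an object of $\Oj$; for $i \neq 1$ this is automatic because the $\mathfrak{sl}_2$-action descends, but for $i = 1$ one has to check that the $\Uj_1$-isotypic decomposition of $\bfC_{\leq_L X}$ projects to that of $\bfC^L_X$. This will follow from Theorem \ref{4.4.3} together with the observation that $\Uj$-equivariant surjections between objects of $\Oj$ intertwine $\ftil_1$ and $\etil_1$, reducing the compatibility to a routine check on each irreducible $L(\bflm)$ summand.
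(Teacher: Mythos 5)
Your proposal is correct and follows essentially the same route as the paper: take the $\A$-lattice spanned by the parabolic Kazhdan--Lusztig basis elements indexed by $\{w_{J(f)}w \leq_L x\}$ (equivalently $\bfL^{\otimes d} \cap \bfC_{\leq_L X}$), observe that the Kashiwara operators preserve both $\bfL^{\otimes d}$ and the $\Uj$-submodules $\bfC_{\leq_L X}$, $\bfC_{<_L X}$, and pass to the quotient. The only difference is that you spell out why $\etil_1,\ftil_1$ preserve submodules via complete reducibility and the compatibility of the $\Uj_1$-isotypic decomposition with subquotients, a point the paper leaves implicit in the phrase ``the Kashiwara operators preserve the submodules.''
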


\begin{proof}
Let $x \in X$. Set $\clL_{\leq_L X}$ to be the $\Ao$-span of $\{ {}^{J(f)} C_w \mid f \in I^d_+,\ w \in {}^{J(f)}W \AND w_{J(f)}w \leq_L x \}$, and $\clB_{\leq_L X} := \{ {}^{J(f)} C_w + q\clL_{\leq_L X} \mid f \in I^d_+,\ w \in {}^{J(f)}W \AND w_{J(f)}w \leq_L x \}$. Since, $\clL_{\leq_L X} \subset \bfL^{\otimes d}$, we have $\xtil_i(\clL_{\leq_L X}) \subset \bfL^{\otimes d}$ for all $x \in \{ e,f \}$, $i \in \Ij$. On the other hand, as the Kashiwara operators preserve the submodules, we have $\xtil_i(\clL_{\leq_L X}) \subset \bfL^{\otimes d} \cap \bfC_{\leq_L X} = \clL_{\leq_L X}$. This proves that $(\clL_{\leq_L X},\clB_{\leq_L X})$ is a quasi-$\jmath$-crystal basis of $\bfC_{\leq_L X}$. Similarly, one can prove that $\clL_{<_L X} := \Span_{\Ao} \{ {}^{J(f)} C_w \mid f \in I^d_+,\ w \in {}^{J(f)}W \AND w_{J(f)}w <_L x \}$ and $\clB_{<_L X} := \{ {}^{J(f)} C_w + q\clL_{<_L X} \mid f \in I^d_+,\ w \in {}^{J(f)}W \AND w_{J(f)}w <_L x \}$ forms a quasi-$\jmath$-crystal basis of $\bfC_{<_L X}$. Setting $\clL(X) := \Span_{\Ao} \{ [{}^{J(f)} C_w]_X \mid f \in I^d_+,\ w \in {}^{J(f)}W \cap w_{J(f)}X \}$, and $\clB(X) := \{ [{}^{J(f)} C_w]_X + q\clL(X) \mid f \in I^d_+,\ w \in {}^{J(f)}W \cap w_{J(f)}X \}$, we see that $(\clL(X),\clB(X))$ is a quasi-$\jmath$-crystal basis of $\bfC^L_X$.
\end{proof}

\begin{prop}\label{irr decomp of Vd}
Suppose that $\bfV^{\otimes d} \simeq \bigoplus_{t \in T} L(\bflm_t)$ for some index set $T$ and $\bflm_t \in \Pj$. Then, for each $t \in T$, there exist highest weight vectors $v_t \in \bfV^{\otimes d}$ satisfying the following:
\begin{enumerate}
\item $L_t := \Uj v_t \simeq L(\bflm_t)$.
\item $(\clL_t,\clB_t)$ is a quasi-$\jmath$-crystal basis of $L_t$ isomorphic to $(\clL(X_t),\clB(X_t))$, where $\clL_t := \bfL^{\otimes d} \cap L_t$, $\clB_t := \bfB^{\otimes d} \cap \clL_t/q\clL_t$, and $X_t \in L(W)$ such that $\bfC^L_{X_t} \simeq L_t$.
\item $\bfL^{\otimes d} = \bigoplus_{t \in T} \clL_t$, $\bfB^{\otimes d} = \bigsqcup_{t \in T} \clB_t$.
\end{enumerate}
\end{prop}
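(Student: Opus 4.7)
The plan is to combine the cell decomposition with the complete reducibility theorem, then lift the quasi-$\jmath$-crystal basis of each cell module from a subquotient to an actual submodule of $\bfV^{\otimes d}$. By the corollary preceding Proposition \ref{qjc of CLX}, the cell decomposition $\bfV^{\otimes d} \simeq \bigoplus_{X} \bfC^L_X$ together with Theorem \ref{4.4.3} and the uniqueness of irreducible decomposition in $\Oj$ yields a bijection between $T$ and the set of left cells $X$ with $\bfC^L_X \neq 0$, matching $L(\bflm_t) \simeq \bfC^L_{X_t}$.

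Next, I would enumerate the cells $X_1,\ldots,X_N$ along a linear extension of $\leq_L$ (so $X_s \leq_L X_t$ implies $s \leq t$) and inductively construct submodules $L_t \subset \bfV^{\otimes d}$ with $L_t \simeq L(\bflm_t)$. At stage $t$, the module $\bfC_{<_L X_t}$ coincides with $\bigoplus_{s <_L t} L_s$ by induction, and Theorem \ref{4.4.3} lets us split $\bfC_{\leq_L X_t} = L_t \oplus \bfC_{<_L X_t}$ as $\Uj$-modules, with $L_t \simeq \bfC^L_{X_t}$. Taking $v_t \in L_t$ to be a highest weight vector then secures property (1). Iterating, $\bfV^{\otimes d} = \bigoplus_{t} L_t$ as $\Uj$-modules.

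The main obstacle is ensuring that the splittings align with the crystal-lattice structure, i.e.\ that $\clL_t := \bfL^{\otimes d} \cap L_t$ is mapped isomorphically onto $\clL(X_t)$ by the quotient $\bfC_{\leq_L X_t} \to \bfC^L_{X_t}$. The key observation is that, by the proof of Proposition \ref{qjc of CLX}, $\clL_{\leq_L X_t}$ and $\clL_{<_L X_t}$ are the $\A$-spans of the KL basis elements ${}^{J(f)}C_w$ they contain, so the quotient sends $\clL_{\leq_L X_t}$ onto $\clL(X_t)$ with kernel $\clL_{<_L X_t}$. Since $\bfC^L_{X_t}$ is irreducible, its highest weight space is one-dimensional and meets $\clL(X_t) \setminus q\clL(X_t)$ in a distinguished line (the one represented by a highest weight element of $\clB(X_t)$). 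I would choose $v_t \in \clL_{\leq_L X_t}$ to be a highest weight vector of $\bfC_{\leq_L X_t}$ that projects to this distinguished line (adjusting by elements of $\bfC_{<_L X_t}$, which is harmless by construction of $L_t$, and relying on the lift of $\clB(X_t)$ through the surjection $\clL_{\leq_L X_t} \twoheadrightarrow \clL(X_t)$). Defining $L_t := \Uj v_t$ then forces $\clL_t$ to map onto $\clL(X_t)$ and have the same rank, hence the projection restricts to the required isomorphism of quasi-$\jmath$-crystal bases.

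Property (2) then follows from the tensor product rule (Theorem \ref{TensorRule}) applied to $\bfV^{\otimes d}$: the Kashiwara operators preserve each $L_t$ once $v_t$ is chosen in $\clL_{\leq_L X_t}$, so $\clL_t$ is closed under them and $\clB_t \subset \bfB^{\otimes d}$ is identified with $\clB(X_t)$. Property (3) is a consequence: since $\bfL^{\otimes d}$ is the $\A$-span of the KL basis and this basis is partitioned among the $L_t$ (each ${}^{J(f)}C_w$ lies in a unique $L_t$ modulo lower cells, and our choice of $L_t$ was made so that this lift is $\A$-integral), summing over $t$ reconstructs $\bfL^{\otimes d}$ and $\bfB^{\otimes d}$, giving the disjoint union of the $\clB_t$. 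The delicate point throughout is the integrality of the chosen section, which is guaranteed by the fact that $\clL_{\leq_L X_t}/\clL_{<_L X_t} = \clL(X_t)$ is itself free of the correct rank over $\A$.
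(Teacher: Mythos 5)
Your overall architecture (induct along a linear extension of $\leq_L$, split $\bfC_{\leq_L X_t}$ off from $\bfC_{<_L X_t}$ by complete reducibility, take $L_t$ to be the complement and $v_t$ a highest weight vector in it) matches the paper's, but there is a genuine gap at exactly the step you flag as delicate: the claim that $\clL_t := \bfL^{\otimes d}\cap L_t$ maps \emph{onto} $\clL(X_t)$. Injectivity of $\clL_t \rightarrow \clL_{\leq_L X_t}/\clL_{<_L X_t}$ is clear, but surjectivity fails for a general module complement: if $V$ is a vector-space complement to $\Q(p,q)\clL'$ inside $\Q(p,q)\clL$ with $\clL/\clL'$ free over $\A$, the image of $\clL\cap V$ can be a proper sublattice of $\clL/\clL'$ (already for $\clL=\A e_1\oplus\A e_2$, $\clL'=\A e_1$, $V=\Q(p,q)(e_2+q\inv e_1)$, where the image is $q\A \ol{e_2}$). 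Choosing the single vector $v_t$ integrally and projecting to the right line does not resolve this: once $v_t$ is fixed, $L_t=\Uj v_t$ is determined and there is no further freedom to adjust, and the fallback of generating $\clL(X_t)$ by Kashiwara operators applied to $v_t$ (plus Nakayama) is unavailable because quasi-$\jmath$-crystal graphs of irreducible modules are in general \emph{disconnected} --- the connecting operators $\etil_{i'},\ftil_{i'}$ are only defined later, and their definition uses this very proposition. The justification you offer, that $\clL_{\leq_L X_t}/\clL_{<_L X_t}$ is free of the correct rank, does not address any of this.

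The paper closes the gap with an ingredient your proposal lacks: a contravariant bilinear form $(\cdot,\cdot)$ on $\bfV^{\otimes d}$, normalized on the $\U$-highest weight vectors of the type $A$ decomposition, for which $\clL_s=\{m\in L_s\mid (m,m)\in\A\}$ and $(u,u)\in 1+q\A$ for crystal basis elements. Since $\bfC_{\leq_L X_t}$ and $\bfC_{<_L X_t}$ are spanned by Kazhdan--Lusztig basis elements, the form is nondegenerate on $\bfC_{\leq_L X_t}$, and the orthogonal complement $C$ of $\bfC_{<_L X_t}$ is then shown to be spanned by vectors of the form ${}^{J(f)}C_w+u_{f,w}$ with $u_{f,w}$ lying in $q$ times the lattice of the lower pieces. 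This explicit integral positioning of the complement is what makes $\clL_t\rightarrow\clL(X_t)$ surjective and is also what makes your assertion (3) (that the KL basis is partitioned $\A$-integrally among the $L_t$) true. To repair your proof you would need either this form or some other device producing a complement spanned by elements congruent to the ${}^{J(f)}C_w$ modulo $q\bfL^{\otimes d}$.
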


\begin{proof}
Let us write $\bfV^{\otimes d} \simeq \bigoplus_{s \in S} L(\lm_s)$ (isomorphism of $\U$-modules) for some index set $S$ and $\lm \in P(d)$. By the ordinary crystal basis theory, there exist $\U$-highest weight vectors $v_s \in \bfV^{\otimes d}$, $s \in S$ such that $L_s := \U v_s \simeq L(\lm_s)$, $(\clL_s,\clB_s)$ is the unique crystal basis of $L_s$ containing $v_s$, where $\clL_s := \bfL^{\otimes d} \cap L_s$ and $\clB_t := \bfB^{\otimes d} \cap \clL_t/q\clL_t$, and that we have $\bfL^{\otimes d} = \bigoplus_{s \in S} \clL_s$, $\bfB^{\otimes d} = \bigsqcup_{s \in S} \clB_s$. Then, there exists a unique bilinear form $(\cdot,\cdot)$ on $\bfV^{\otimes d}$ satisfying the following;
\begin{align}
\begin{split}
&(v_s,v_s) = 1 \qu \Forall s \in S, \\
&(xm,n) = (m,\tau(x)n) \qu \Forall x \in \U,\ m,n \in \bfV^{\otimes d},
\end{split} \nonumber
\end{align}
where $\tau$ is the anti-algebra automorphism of $\bfU$ defined by
$$
\tau(E_i) = qF_iK_i\inv, \qu \tau(F_i) = q\inv K_iE_i, \qu  \tau(K_i) = K_i.
$$
It is known that $\clL_s = \{ m \in L_s \mid (m,m) \in \A \}$, and for each $u \in \clL_s$ with $u + q\clL_s \in \clB_s$, we have $(u,u) \in 1 + q\A$ for all $s \in S$.

Recall that $\bfV^{\otimes d} \simeq \bigoplus_{\substack{X \in L(W) \\ \bfC^L_X \neq 0}} L(\bflm_X)$, where $\bflm_X \in \Pj$ is such that $\bfC^L_X \simeq L(\bflm_X)$. Hence, we may identify $T$ with $\{ X \in L(W) \mid \bfC^L_X \neq 0 \}$, and equip $T$ with a partial order $\leq_L$. Now, we prove the assertions $(1)$ and $(2)$ by induction on $t$ with respect to $\leq_L$. When $t$ is minimal, the assertion follows from the fact that $\bfC^L_t$ is an irreducible submodule of $\bfV^{\otimes d}$ isomorphic to $L(\bflm_t)$ spanned by some Kazhdan-Lusztig basis elements.

Let us assume that $t \in T$ is not minimal, the assertions $(1)$ and $(2)$ hold for all $t' <_L t$, and that $\bfC_{<_L t}$ is generated by $v_{t'}$, $t' <_L t$. Consider the $\Uj$-submodule $\bfC_{\leq_L t} \subset \bfV^{\otimes d}$. Since $\bfC_{\leq_L t}$ is spanned by some Kazhdan-Lusztig basis elements, the bilinear form $(\cdot,\cdot)|_{\bfC_{\leq_L t}}$ is nondegenerate. Let $C \subset \bfC_{\leq_L t}$ denote the orthogonal complement of $\bfC_{<_L t}$ with respect to $(\cdot,\cdot)|_{\bfC_{\leq_L t}}$. Then, $C$ is spanned by vectors of the form ${}^{J(f)} C_w + u_{f,w}$ with $f \in I_+^d$, $w \in {}^{J(f)}W$, $w_{J(f)}w <_L x$ for some $x \in t$, $u_{f,w} \in \bigoplus_{t' <_L t} q\clL(\bflm_t)$. Moreover, $C$ is a $\Uj$-submodule of $\bfC_{\leq_L t}$ isomorphic to $\bfC^L_t$. Hence, there exists $v_t \in \bfV^{\otimes d}$ satisfying the assertions $(1)$ and $(2)$.

By the construction above, the assertion $(3)$ is now obvious. Thus, the proof completes.
\end{proof}

\begin{rem}\label{B_t = Left cell}\normalfont
By the construction, we have $\clB_t = \{ f w \mid f \in I^d_+,\ w \in {}^{J(f)}W \cap w_{J(f)} t \}$ for all $t \in T$.
\end{rem}

\subsection{Left cells for type $A$ and $B$}
Now, let us explicitly describe the left cells of the Weyl group $\Ss_d$ of type $A_{d-1}$ and $W_d$ of type $B_d$. We let $W_d$ act on $\Z^d$ by
$$
(i_1,\ldots,i_d) s_j = \begin{cases}
(-i_1,i_2,\ldots,i_d) \qu & \IF j = 0, \\
(i_1,\ldots,i_{j+1},i_j,\ldots,i_d) \qu & \IF j \neq 0.
\end{cases}
$$ And we regard $\Ss_d$ as the subgroup of $W_d$ generated by $s_j$, $j \neq 0$. First, let us recall Schensted bumping algorithm. Given a semistandard tableau $T$ of shape $\lm \in P$ and a colored box $\fbox{$i$}$, define a new semistandard tableau $T \leftarrow \fbox{$i$}$ as follows;
\begin{enumerate}
\item Let $j$ be the smallest number satisfying $T(j,1) \geq i$.
\item Replace $\fbox{$T(j,1)$}$ by $\fbox{$i$}$; if there is no such $j$, then put $\fbox{$i$}$ at the bottom of the first column, and stop the algorithm.
\item Repeat $(1)$-$(2)$ for the next column with the role of $\fbox{$i$}$ replaced by $\fbox{$T(j,1)$}$.
\end{enumerate}

\begin{defi}\normalfont
Given a sequence $\bfw = (w_1,\ldots,w_d)$ of elements of a totally ordered set $(L,\leq)$ and an increasing sequence $\bfr = (r_1,\ldots,r_d)$ of positive integers, we define two tableaux $P(\bfw)$ and $Q_\bfr(\bfw)$ as follows. First, $P(\bfw)$ is defined to be $( \cdots ((\fbox{$w_1$} \leftarrow \fbox{$w_2$}) \leftarrow \fbox{$w_3$}) \cdots ) \leftarrow \fbox{$w_n$}$; hence $P(\bfw)$ is a semistandard tableau in letters $L$ of some shape. Next, $Q_\bfr(\bfw)$ is defined to be the standard tableau in letters $\{ r_1,\ldots,r_d \}$ of the same shape as $P(\bfw)$ whose $(i,j)$-entry is $r_k$ if a new box is added in a position $(i,j)$ when we bump the box $\fbox{$w_k$}$ to the semistandard tableau $P(w_1,\ldots,w_{k-1})$.

We call $P(\bfw)$ the insertion tableau of $\bfw$, and $Q_\bfr(\bfw)$ the recording tableau of $\bfw$ in letters $\bfr$.
\end{defi}

\begin{defi}\normalfont
Let $x \in \Ss_d$ and $y \in W_d$ and define $x_i,y_i \in \{ 1,\ldots,d \}$ and $\vep_i \in \{ +,- \}$ by $(1,\ldots,d)x = (x_1,\ldots,x_d)$, and $(1,\ldots,d)y = (\vep_1y_1,\ldots,\vep_d,y_d)$.
\begin{enumerate}
\item Set $P(x)$ to be the insertion tableau of $(x_1,\ldots,x_d)$, and $Q(x)$ to be the recording tableau of $(x_1,\ldots,x_d)$ in letters $(1,\ldots,d)$.
\item Let $\{ 1,\ldots,d \} = \{ i_1,\ldots,i_k \} \sqcup \{ j_1,\ldots,j_l \}$ be such that $i_1 < \cdots < i_k$, $j_1 < \cdots < j_l$, $\vep_{i_n} = +$, and $\vep_{j_m} = -$. Then, set $P^-(y)$ to be the insertion tableau of $(-y_{j_1},\ldots,-y_{j_l})$, $Q^-(y)$ the recording tableau of $(-y_{j_1},\ldots,-y_{j_l})$ in letters $(j_1,\ldots,j_l)$, $P^+(y)$ the insertion tableau of $(y_{i_1},\ldots,y_{i_k})$, and $Q^+(y)$ the recording tableau of $(y_{i_1,\ldots,y_{i_k}})$ in letters $(i_1,\ldots,i_k)$.
\end{enumerate}
\end{defi}

\begin{prop}\label{Left cells of Wn}
Let $x,y \in \Ss_d$ and $z,w \in W_d$.
\begin{enumerate}
\item $x$ and $y$ are in the same left cell of $\Ss_d$ if and only if $Q(x) = Q(y)$.
\item $z$ and $w$ are in the same left cell of $W_d$ if and only if $(Q^-(z),Q^+(z)) = (Q^-(w),Q^+(w))$.
\end{enumerate}
\end{prop}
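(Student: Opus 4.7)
The plan is to reduce both parts of the proposition to well-known cell-theoretic results. For part (1), this is the classical Kazhdan-Lusztig description of left cells of $\mathfrak{S}_d$. I would prove it by the standard route via Knuth equivalence: first establish that Knuth (resp.\ dual Knuth) moves preserve the insertion tableau $P(x)$ (resp.\ the recording tableau $Q(x)$), which is a purely combinatorial fact about Schensted bumping; second, show that the $\mu$-function entries $c_{y,w}$ coming from Theorem \ref{KL-bases} satisfy $x \to_L y$ precisely when $y$ can be obtained from $x$ by a dual Knuth move (equivalently, by multiplication on the left by an appropriate $H_s$). Combining these gives that $x \simL y$ if and only if $Q(x) = Q(y)$. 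As an alternative, one may simply cite the cell-theoretic content of \cite{KL79}.

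For part (2), the Hecke algebra $\Hc(W_d)$ carries two independent indeterminate parameters $p$ and $q$, so its left-cell structure is in the \emph{asymptotic} (equivalently, generic) regime. In this setting the analog of the Kazhdan-Lusztig theorem is due to Bonnafé and Iancu \cite{BI03}, who prove exactly that left cells of $W_d$ (with asymmetric parameters) are parametrized by pairs of standard tableaux of the same bi-shape, and two signed permutations lie in the same left cell if and only if their pairs of recording tableaux under the signed Robinson-Schensted correspondence coincide. Since the ``signed'' insertion $(y_1^{\vep_1},\dots,y_d^{\vep_d}) \mapsto (P^-,P^+,Q^-,Q^+)$ described in our definition agrees with the $A$-type RSK applied separately to the positive and negative subsequences, it matches the Bonnafé-Iancu correspondence. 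Hence part (2) follows from their theorem.

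The main obstacle, and essentially the only thing requiring care, is bookkeeping: one must verify that the conventions of \cite{BI03} (which element of $W_d$ is assigned to which signed permutation, the orientation of the Bruhat order, the normalization of the Kazhdan-Lusztig basis, and the choice of ``asymptotic'' vs.\ two-indeterminate parameters) exactly match ours. In particular, since our Kazhdan-Lusztig basis $\{ C_w \}$ is defined via Theorem \ref{KL-bases} using $H_w$ (not $H_w\inv$), one should confirm that the resulting notion of $\leqslant_L$ agrees with the one used in \cite{BI03} up to the standard left/right duality. Once the conventions are aligned, both parts of the proposition are direct consequences of the cited results.
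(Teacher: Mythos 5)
Your overall strategy is the same as the paper's: part (1) is quoted from the type $A$ theory (the paper cites \cite[Theorem 6.5.1]{BB05}; your Knuth/dual-Knuth outline is the standard proof of that result), and part (2) is deduced from \cite[Theorem 7.7]{BI03}. However, the one substantive step in the paper's proof of (2) is precisely the step you assert rather than carry out, and your assertion is not correct as stated. The tableau $Q^-(y)$ of this paper is the recording tableau of the \emph{negated} subsequence $(-y_{j_1},\ldots,-y_{j_l})$, whereas the tableau furnished by \cite{BI03} (the paper calls it $B^-(y)$) is the recording tableau of $(y_{j_1},\ldots,y_{j_l})$ itself. Negating every entry reverses all relative orders, so if $\sigma_y\in\Ss_l$ encodes the relative order of the negative subsequence and $w_0^{\Ss_l}$ is the longest element, then $B^-(y)=Q(\sigma_y)$ while $Q^-(y)=Q(\sigma_y w_0^{\Ss_l})$; these are different tableaux in general (their shapes are transposed). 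So the two signed correspondences do \emph{not} match, and ``aligning conventions'' is not a routine bookkeeping check that can be waved through — as written, your deduction ``hence part (2) follows from their theorem'' does not go through.

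What rescues the statement, and what is missing from your proof, is that the two recording data nevertheless induce the same equivalence relation on $W_d$: one has $Q(\sigma_z)=Q(\sigma_w)$ if and only if $Q(\sigma_z w_0^{\Ss_l})=Q(\sigma_w w_0^{\Ss_l})$, because for every left cell $X$ of $\Ss_l$ the set $Xw_0^{\Ss_l}$ is again a left cell; combined with part (1) this shows that the Bonnaf\'{e}--Iancu criterion in terms of $(B^+,B^-)$ is equivalent to the criterion in terms of $(Q^+,Q^-)$ stated in the proposition. You need to state and prove (or at least invoke) this $w_0$-twist argument; without it the reduction to \cite{BI03} is incomplete. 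Part (1) as you outline it is fine.
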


\begin{proof}
$(1)$ is found in \cite[Theorem 6.5.1]{BB05}. Let us prove $(2)$. Let $y \in W_n$, and $i_1,\ldots,i_k$, $j_1,\ldots,j_l$ be as above. Set $A^-(y)$ to be the insertion tableau of $(y_{j_1},\ldots,y_{j_l})$, $B^-(y)$ the recording tableau of $(y_{j_1},\ldots,y_{j_l})$ in letters $(j_1,\ldots,j_l)$, $A^+(y) := P^+(y)$, and $B^+(y) := Q^+(y)$. Then, by \cite[Theorem 7.7]{BI03}, $z$ and $w$ are in the same left cell of $W_n$ if and only if $(B^+(z),B^-(z)) = (B^+(w),B^-(w))$. Hence, it suffices to show that we have $(B^+(z),B^-(z)) = (B^+(w),B^-(w))$ if and only if $(Q^-(z),Q^+(z)) = (Q^-(w),Q^+(w))$. To do so, let us investigate the relation between $B^-(y)$ and $Q^-(y)$. Let $x_1 < \cdots < x_l$ be the rearrangement of $y_{j_1},\ldots,y_{j_l}$. Then, there exists a unique $\sigma_y \in \Ss_l$ such that $y_{j_m} = x_{\sigma_y(m)}$. By the definition of the recording tableaux, we have $B^-(y) = Q(\sigma_y)$ and $Q^-(y) = Q(\sigma_y w^{\Ss_l}_0)$, where $w^{\Ss_l}_0 \in \Ss_l$ denotes the longest element. Therefore, the equivalence of the two equations $B^-(z) = B^-(w)$ and $Q^-(z) = Q^-(w)$ is equivalent to the equivalence of the two equations $Q(\sigma_z) = Q(\sigma_w)$ and $Q(\sigma_z w^{\Ss_l}_0) = Q(\sigma_w w^{\Ss_l}_0)$, which follows from the fact that for each left cell $X$ of $\Ss_l$, the set $Xw^{\Ss_l}_0$ is also a left cell, and then, use $(1)$.
\end{proof}

\subsection{Irreducible decomposition of a quasi-$\jmath$-crystal bases of $\bfV^{\otimes d}$}\label{irr decomp of V^d}
Let $X \in L(W)$, and $\clB(X)$ be as in the proof of Proposition \ref{qjc of CLX}. Now, let us describe $\clB(X)$ in terms of bitableaux. For this purpose, we need a crystal version of Robinson-Schensted correspondence, which states the following: under the identification $\bfB^{\otimes d} = I^d = \{ -r,\ldots,r \}^d$, the map $u_1 \otimes \cdots \otimes u_d \mapsto (P(u_1,\ldots,u_d),Q_{1,\ldots,d}(u_1,\ldots,u_d))$ gives an isomorphism of the ordinary crystals of type $A_{2r}$.
$$
\bfB^{\otimes d} \rightarrow \bigoplus_{\lm \in P(d)} \bigoplus_{Q \in \ST(\lm)} \bigl( \SST(\lm) \times \{Q\} \bigr),
$$
where on the right-hand side, the Kashiwara operators act only on the first factor.

Next, let us see the crystal structure of $\bfB^{\otimes d}$ as a crystal of type $A_r \times A_{r-1}$. When $d=1$, we have the following irreducible decomposition; $\bfB = \bfB^- \oplus \bfB^+$, where $\bfB^- = \{ u_i \mid -r \leq i \leq 0 \}$, $\bfB^+ = \{ u_i \mid 1 \leq i \leq r \}$ with the crystal graph
$$
-r \xrightarrow[]{-(r-\hf)} \cdots \xrightarrow[]{-(2-\hf)} -1 \xrightarrow[]{-(1-\hf)} 0 \qu \qu 1 \xrightarrow[]{2-\hf} 2 \xrightarrow[]{3-\hf} \cdots \xrightarrow[]{r-\hf} r.
$$
Hence, we have
$$
\bfB^{\otimes d} = \bigoplus_{(\vep_1,\ldots,\vep_d) \in \{ +,- \}^d} (\bfB^{\vep_1} \otimes \cdots \otimes \bfB^{\vep_d}),
$$
and for each $(\vep_1,\ldots,\vep_d) \in \{ +,- \}^d$, we have
$$
\bfB^{\vep_1} \otimes \cdots \otimes \bfB^{\vep_d} \simeq (\bfB^-)^{\otimes k} \otimes (\bfB^+)^{\otimes l},
$$
where $k$ and $l$ denotes the number of $-$ and $+$ in $(\vep_1,\ldots,\vep_d)$, respectively. For each $u = u_1 \otimes \cdots \otimes u_d$, there exist unique $f_u \in I_d^+$ and $w_u \in W$ such that $f_u w_u = (u_1,\ldots,u_d)$, and $\ell(w_u)$ is maximal with this property. Therefore, the map
$$
u = u_1 \otimes \cdots \otimes u_d \mapsto \bigl( P^-(w_u), Q^-(w_u) \bigr) \otimes \bigl( P^+(w_u), Q^+(w_u) \bigr)
$$
gives an isomorphism of crystals of type $A_r \times A_{r-1}$:
\begin{align}
\begin{split}
\bfB^{\vep_1} \otimes \cdots \otimes \bfB^{\vep_d} &\simeq \left( \bigoplus_{\lm^- \in P(k)} \bigoplus_{Q^- \in \ST_{i_1,\ldots,i_k}(\lm^-)} (\clB(\lm^-) \times \{Q^-\}) \right) \\
&\hspace{2cm}\otimes \left( \bigoplus_{\lm^+ \in P(l)} \bigoplus_{Q^+ \in \ST_{j_1,\ldots,j_l}(\lm^+)} (\clB(\lm^+) \times \{Q^+\}) \right),
\end{split} \nonumber
\end{align}
where $i_1 < \cdots < i_k$ is such that $\vep_{i_n} = -$, and $j_1 < \cdots < j_k$ is such that $\vep_{j_m} = +$. Thus, we obtain the irreducible decomposition of $\bfB^{\otimes d}$ as a crystal of type $A_r \times A_{r-1}$:
$$
\bfB^{\otimes d} = \bigoplus_{\bflm \in \Pj(d)} \bigoplus_{(Q^-;Q^+) \in \ST(\bflm)} \{ u_1 \otimes \cdots \otimes u_d \mid Q(u_1 \otimes \cdots \otimes u_d) = (Q^-;Q^+) \},
$$
where $Q(u_1 \otimes \cdots \otimes u_d) := (Q^-(w_u),Q^+(w_u))$ with the notation above. For each $\bflm \in \Pj(d)$ and $(Q^-;Q^+) \in \ST(\bflm)$, set
$$
\SST(\bflm) \times \{ (Q^-;Q^+) \} := \{ u_1 \otimes \cdots \otimes u_d \mid Q(u_1 \otimes \cdots \otimes u_d) = (Q^-;Q^+) \}.
$$
Note that the Kashiwara operators act only on $\SST(\bflm)$, and therefore, it is isomorphic to $\SST(\bflm)$, which is connected (as a crystal of type $A_r \times A_{r-1}$).

\begin{prop}\label{comp of B^d as AA}
The connected components of $\bfB^{\otimes d}$ as a crystal of type $A_r \times A_{r-1}$ are $\SST(\bflm) \times \{ (Q^-;Q^+) \}$, $\bflm \in \Pj(d)$, $(Q^-;Q^+) \in \ST(\bflm)$.
\end{prop}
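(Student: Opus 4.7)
The plan is to derive the proposition by combining the explicit sign-tuple decomposition of $\bfB^{\otimes d}$ displayed just before the statement with the classical crystal-theoretic Robinson-Schensted correspondence, applied separately to the positive and negative parts.

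First, I would record a decoupling statement for the crystal structure of type $A_r \times A_{r-1}$. The Kashiwara operators $\etil_{-(i-\hf)}, \ftil_{-(i-\hf)}$ for $1 \leq i \leq r$ (the $A_r$-factor) annihilate $\bfB^+$, and those $\etil_{i-\hf}, \ftil_{i-\hf}$ for $2 \leq i \leq r$ (the $A_{r-1}$-factor) annihilate $\bfB^-$. By the classical tensor product rule, on any summand $\bfB^{\vep_1} \otimes \cdots \otimes \bfB^{\vep_d}$ the first family of operators acts only through the $\bfB^-$-tensor factors, while the second family acts only through the $\bfB^+$-tensor factors. Hence the summand is isomorphic, as a crystal of type $A_r \times A_{r-1}$, to the external product of $(\bfB^-)^{\otimes k}$ (type $A_r$) with $(\bfB^+)^{\otimes l}$ (type $A_{r-1}$), where $k,l$ count the minuses and pluses in $(\vep_1,\ldots,\vep_d)$.

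Next, I would apply the crystal Robinson-Schensted correspondence separately to each of the two factors to obtain decompositions
\[
(\bfB^-)^{\otimes k} \simeq \bigoplus_{\lm^-}\bigoplus_{Q^-} \SST(\lm^-) \times \{Q^-\}, \qquad (\bfB^+)^{\otimes l} \simeq \bigoplus_{\lm^+}\bigoplus_{Q^+} \SST(\lm^+) \times \{Q^+\},
\]
with $Q^\pm$ ranging over the appropriate standard tableaux. Each $\SST(\lm^\pm)$ is connected in its type, and the external tensor product of two connected crystals of distinct commuting types is itself connected (walk first in the first factor, then in the second). This yields connectedness and stability of each block. Summing over sign-tuples and repackaging $(\lm^-;\lm^+)$ as $\bflm \in \Pj(d)$ and the pair $(Q^-,Q^+)$ as a single element of $\ST(\bflm)$ produces exactly the sets $\SST(\bflm) \times \{(Q^-;Q^+)\}$ in the statement. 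Exhaustion is immediate since every $u = u_1 \otimes \cdots \otimes u_d$ determines a unique $(\vep_i)$ (and hence unique $f_u, w_u$), and pairwise disjointness follows because the assignment $u \mapsto (\bflm, Q(u))$ is well-defined.

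The one step requiring careful bookkeeping, rather than any new idea, is the compatibility check: I must verify that the recording tableaux output by the two RS steps above agree with the pair $(Q^-(w_u), Q^+(w_u))$ defined in the excerpt via $w_u$. This comes down to confirming that the subwords $(-y_{j_1},\ldots,-y_{j_l})$ and $(y_{i_1},\ldots,y_{i_k})$ used to define $Q^\pm(w_u)$ match the subwords isolated by the $(\vep_i)$-decomposition, with positions labelled by the original indices in $\{1,\ldots,d\}$; this is a direct unwinding of the definitions of $f_u$ and $w_u$ (with $w_u$ of maximal length). I expect this compatibility verification to be the only mildly technical point in an otherwise structural argument.
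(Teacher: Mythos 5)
Your proposal is correct and follows essentially the same route as the paper: the paper likewise decomposes $\bfB^{\otimes d}$ by sign-tuples into summands $\bfB^{\vep_1}\otimes\cdots\otimes\bfB^{\vep_d}\simeq(\bfB^-)^{\otimes k}\otimes(\bfB^+)^{\otimes l}$, applies the crystal Robinson--Schensted correspondence separately to the two factors via the map $u\mapsto\bigl(P^-(w_u),Q^-(w_u)\bigr)\otimes\bigl(P^+(w_u),Q^+(w_u)\bigr)$, and concludes connectedness because the Kashiwara operators act only on the $\SST(\bflm)$-component. The decoupling observation and the recording-tableau compatibility you flag are exactly the (implicit) content of the paper's construction preceding the proposition.
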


Recall from Proposition \ref{Left cells of Wn} that the left cells are described in terms of the recording tableaux. Thus, we obtain the following.

\begin{theo}\label{jcry decomposition of bfBd}
The connected components of $\bfB^{\otimes d}$ as a crystal of type $A_r \times A_{r-1}$ coincide with $\{ \clB_t \mid t \in T \}$, where $\clB_t$ and $T$ are as in Proposition \ref{irr decomp of Vd}. Moreover, if $\SST(\bflm) \times \{ (Q^-;Q^+) \} = \clB_t$ for some $\bflm,(Q^-;Q^+),t$, then we have $L_t \simeq L(\bflm)$, where $L_t$ is as in Proposition \ref{irr decomp of Vd}. In particular, $L(\bflm)$ is an irreducible component of $\bfV^{\otimes d}$ if and only if $\bflm \sim_\pi \bfmu$ for some $\bfmu \in \Pj(d)$.
\end{theo}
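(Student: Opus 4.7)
The strategy is to match the two partitions of $\bfB^{\otimes d}$ — the quasi-$\jmath$-crystal decomposition $\bigsqcup_t \clB_t$ and the $A_r \times A_{r-1}$-crystal decomposition $\bigsqcup_{(\bflm, Q^-, Q^+)} \SST(\bflm) \times \{(Q^-;Q^+)\}$ — by showing both are governed by the recording-tableau pair $Q(u) = (Q^-(w_u), Q^+(w_u))$. First I would observe from Corollary \ref{qjcry for words} that every $\jmath$-Kashiwara operator preserves the sign pattern $(\vep_1, \ldots, \vep_d) \in \{+,-\}^d$ of each word in $I^d$: $\etil_1, \ftil_1$ only swap $0 \leftrightarrow -1$ inside $\bfB^-$, while for $i \geq 2$ the operators $\etil_i, \ftil_i$ act either inside $\{i-1, i\} \subset \bfB^+$ or inside $\{-i, -(i-1)\} \subset \bfB^-$. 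Consequently each $\clB_t$ sits inside a single $A_r \times A_{r-1}$ component.

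For the coincidence, I would use Remark \ref{B_t = Left cell} to characterize $\clB_t$ as $\{u \in \bfB^{\otimes d} : w_u \in t\}$, where for $u = u_{i_1} \otimes \cdots \otimes u_{i_d}$ the element $w_u = w_{J(f)}w$ comes from the unique factorization $(i_1, \ldots, i_d) = fw$ with $f \in I_+^d$ and $w \in {}^{J(f)}W$. By Proposition \ref{Left cells of Wn}(2) left-cell membership is controlled precisely by $(Q^-(w_u), Q^+(w_u)) = Q(u)$, so $\clB_t = \{u : Q(u) = (Q^-_t, Q^+_t)\}$; and this is, by the very definition preceding Proposition \ref{comp of B^d as AA}, the $A_r \times A_{r-1}$ component $\SST(\bflm_t) \times \{(Q^-_t;Q^+_t)\}$, where $\bflm_t$ is the bipartition of shapes. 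Combined with the refinement above, the two partitions coincide, establishing the first assertion.

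For $L_t \simeq L(\bflm_t)$, I would locate a $\jmath$-highest weight vector in $L_t$ whose class is the Yamanouchi-type bitableau of $\SST(\bflm_t)$ (the $j$-th row filled with $j$ for each $j \in I$), which is the $A_r \times A_{r-1}$ highest weight element of $\clB_t$. Its $A_{2r}$-weight is $\sum_j \bflm_j \epsilon_j$, and a short pairing against $\beta_i = \epsilon_{i-1} - \epsilon_i - \epsilon_{-i} + \epsilon_{-(i-1)}$ recovers exactly the parameters $a_i$ prescribed by \eqref{pi}. Combined with Theorem \ref{Classification}, this forces $L_t \simeq L(\bflm_t)$; the final ``in particular'' claim then follows from Proposition \ref{irr decomp of Vd}, modulo checking that every bipartition $\bfmu \in \Pj(d)$ is realised as $\bflm_t$ for some $t \in T$ with $\bfC^L_t \neq 0$. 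The main obstacle is verifying that the $h_i$-eigenvalues on the highest weight vector match those prescribed in Theorem \ref{Classification}: since $h_i = \tau_i \cdots \tau_2(h_1)$ is built through the braid action $\tau_j$, direct evaluation on a tensor is cumbersome, and I expect to circumvent it by combining the classification with a dimension/character comparison using the factorisation $\SST(\bflm_t) \simeq \SST(\bflm_t^-) \times \SST(\bflm_t^+)$.
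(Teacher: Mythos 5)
Your identification of the two decompositions of $\bfB^{\otimes d}$ is exactly the paper's argument for the first assertion: Remark \ref{B_t = Left cell} exhibits $\clB_t$ as the set of words lying over the left cell $t$, Proposition \ref{Left cells of Wn}(2) says left-cell membership is governed by the recording pair $(Q^-,Q^+)$, and Proposition \ref{comp of B^d as AA} says the same pair indexes the $A_r\times A_{r-1}$ components. (Your first paragraph is not needed for this and does not quite prove what it claims: preservation of the sign pattern by the quasi-$\jmath$-operators is strictly weaker than staying inside a single $A_r\times A_{r-1}$ component, and $\clB_t$ need not be connected under those operators in any case.)

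The gap is in the second assertion. As you yourself note, the $A_{2r}$-weight of the highest weight vector only recovers the parameters $a_i=(\beta_i,\lambda)$; the isomorphism class of $L_t$ also depends on the $h_i$-eigenvalues, i.e.\ on the integers $b_1,\ldots,b_r$ of Theorem \ref{Classification}, and these are not determined by the weight. Your proposed ``dimension/character comparison'' cannot close this: at this point in the paper no dimension or character formula for $L(\bfa;\bfb)$ is available (the statement $\dim L(\bflm)=\sharp\SST(\bflm)$ is a consequence of the theorem being proved, not an input), so there is nothing to compare against. The paper instead reads the $b_i$ off the quasi-$\jmath$-crystal itself: $\vphi_1(b_t)=\bfmu_0-\bfmu_{-1}$ gives $b_1$, and, using that $\tau_2\inv(v_t)$ (resp.\ $T_i\inv(v_t)$) is a highest weight vector for a copy of $\Uj_1$ (resp.\ $\Uj_2$) --- facts extracted from the proofs of Lemma \ref{proof of (*)} and Theorem \ref{Classification} --- one obtains $\vphi_1(\ftil_2^{\max}(\ftil_3^{\max}\ftil_2^{\max})\cdots(\ftil_i^{\max}\ftil_{i-1}^{\max})b_t)=\bfmu_0-\bfmu_{-i}$ for all $i$. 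Comparing these quantities together with $\wtj(b_t)$ against the same data computed for $T_\bflm$ yields $\bfmu\sim_\pi\bflm$. Some argument of this kind, pinning down the $h_i$-eigenvalue data of $v_t$, is indispensable; without it the proof of $L_t\simeq L(\bflm)$ is incomplete. (Your treatment of the final ``in particular'' is fine: once the first two assertions hold, every $\bflm\in\Pj(d)$ occurs as some $\clB_t$ because the components $\SST(\bflm)\times\{(Q^-;Q^+)\}$ exhaust $\bfB^{\otimes d}$, so the extra check you flag is automatic.)
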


\begin{proof}
The first statement follows from Remark \ref{B_t = Left cell}, Proposition \ref{Left cells of Wn} $(2)$, and Proposition \ref{comp of B^d as AA}. Let $t \in T$, $\bflm \in \Pj(d)$ and $(Q^-;Q^+) \in \ST(\bflm)$ be such that $\SST(\bflm) \times \{ (Q^-;Q^+) \} = \clB_t$. Then, the connected component $\SST(\bflm) \times \{ (Q^-;Q^+) \}$ contains a unique element $b_\bflm$ such that
$$
\Ftil_{-(i-\hf)}b_\bflm = \Etil_{j-\hf}b_\bflm = 0 \Forall i = 1,\ldots,r \AND j = 2,\ldots,r.
$$
Under the isomorphism $\SST(\bflm) \times \{ (Q^-;Q^+) \} \simeq \SST(\bflm)$ of crystals of type $A_r \times A_{r-1}$, the element $b_\bflm$ is identified with $(T_\bflm^-;T_\bflm^+) \in \SST(\bflm)$ defined by
$$
T_\bflm^-(i,j) = -(i-1), \qu T_\bflm^+(i,j) = i,
$$
where $T(i,j)$ denotes the entry of the box of $T$ which lies in the $i$-th row and $j$-th column. Since $\wtj(b_\bflm)$ is maximal among $\wtj(\clB_t)$, we have $b_t := v_t + q\clL_t = b_\bflm$, where $v_t \in L_t$ is the $\Uj$-highest weight vector. Suppose that $L_t = L(\bfmu)$ for some $\bfmu \in \Pj$. Then, we have $\vphi_1(b_t) = \bfmu_0-\bfmu_{-1}$. Recall from the proof of Lemma \ref{proof of (*)} that $\tau_2\inv(v_t)$ is a $\Uj_1$-highest weight vector of highest weight $(\bfmu_0,\bfmu_{-2};\bfmu_2)$. This implies that $\vphi_1(\ftil_2^{\max)}b_t) = \bfmu_0-\bfmu_{-2}$. Also, from the proof of Theorem \ref{Classification}, $T_i\inv(v_t)$ is a $\Uj_2$-highest weight vector of highest weight $(\bfmu_0,\bfmu_{-(i-1)},\bfmu_{-i};\bfmu_{i-1},\bfmu_i)$ for all $i \geq 3$. Hence, we have $\vphi_1(\ftil_2^{\max} (\ftil_3^{\max} \ftil_2^{\max}) (\ftil_4^{\max}\ftil_3^{\max}) \cdots (\ftil_i^{\max}\ftil_{i-1}^{\max})b_t) = \bfmu_0-\bfmu_{-i}$. Applying the same argument to $T_\bflm$, we obtain the following:
$$
\wtj(b_t) = \wtj(T_\bflm), \qu \bfmu_0-\bfmu_{-i} = \bflm_0 - \bflm_{-i} \Forall i = 1,\ldots,r.
$$
Solving this system of equations, we conclude that $\bfmu \sim_\pi \bflm$, and hence, $L_t \simeq L(\bflm)$.
\end{proof}

\begin{cor}
The assignment $(\bfa,\bfb) \mapsto [L(\bfa;\bfb)]$, where $[L(\bfa;\bfb)]$ denotes the isomorphism class of $L(\bfa;\bfb)$, gives a bijection from $\{ (\bfa,\bfb) \in \Z^r \times \Z_{\geq 0}^r \mid a_i \geq b_i,\ i \in \Ij \setminus\{1\} \}$ to the set of isomorphism classes of irreducible $\Uj$-modules in $\Oj$.
\end{cor}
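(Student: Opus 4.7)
The plan is to upgrade the earlier corollary, which already establishes a bijection from $\{(\bfa,\bfb) \in \Z^r \times \Z_{\geq 0}^r \mid a_i \geq b_i,\ i \in \Ij \setminus\{1\},\ L(\bfa;\bfb) \neq 0\}$ to the isomorphism classes of irreducibles in $\Oj$. So the only thing remaining is to verify that the nonvanishing condition $L(\bfa;\bfb) \neq 0$ is automatic once the inequalities $a_i \geq b_i$ hold for $i \geq 2$.

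Given such a pair $(\bfa,\bfb)$, I would first invoke the bijection of subsection 5.2 to produce a bipartition $\bflm \in \Pj$ (of length $(r+1;r)$, containing at least one $0$) with $\pi(\bflm) = (\bfa;\bfb)$, so that $L(\bfa;\bfb) = L(\bflm)$ by definition. Because the map $\pi$ is invariant under the shift $\bflm \mapsto \bflm + (n\rho_{r+1};n\rho_r)$ for $n \in \Z$, I can replace $\bflm$ by any shift in its $\sim_\pi$-equivalence class. In particular, for all sufficiently large $n \in \Z_{\geq 0}$, the shifted bipartition $\bfmu := \bflm + (n\rho_{r+1};n\rho_r)$ has all nonnegative entries, and so $\bfmu \in \Pj(d)$ with $d := |\bfmu|$.

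At this point I would apply Theorem \ref{jcry decomposition of bfBd}, whose final statement says exactly that $L(\bflm)$ occurs as an irreducible component of $\bfV^{\otimes d}$ provided $\bflm \sim_\pi \bfmu$ for some $\bfmu \in \Pj(d)$. Since we have just exhibited such a $\bfmu$, we conclude $L(\bflm) \neq 0$, hence $L(\bfa;\bfb) \neq 0$. Combining this with the earlier corollary gives the desired bijection.

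The only place where something substantive could go wrong is in the appeal to Theorem \ref{jcry decomposition of bfBd}; but since this has already been established using the left-cell/Kazhdan--Lusztig analysis of $\bfV^{\otimes d}$ in the preceding subsections, the present statement really is a clean corollary. The main conceptual point to check is just that every allowed $(\bfa,\bfb)$ does correspond to a bipartition (up to the $\rho$-shift) with nonnegative entries of arbitrarily large total size, which is immediate from the explicit form of $\bflm$ given in subsection 5.2.
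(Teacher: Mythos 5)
Your proposal is correct and follows exactly the route the paper intends: the earlier corollary reduces everything to showing $L(\bfa;\bfb)\neq 0$, and the final statement of Theorem \ref{jcry decomposition of bfBd} supplies this since the explicit bipartition $\bflm$ with $\pi(\bflm)=(\bfa;\bfb)$ already lies in $\Pj(d)$ for $d=|\bflm|$ (so your $\rho$-shift is harmless but not even needed). The paper leaves this corollary without a written proof precisely because it is this immediate combination.
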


\begin{cor}
The isomorphism classes of $\Oj$ are parametrized by $\Pj/\sim_\pi$.
\end{cor}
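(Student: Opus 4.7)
The plan is to combine the parametrization of irreducibles given in the previous corollary with the explicit bijection between pairs $(\bfa,\bfb)$ and $\Pj/\sim_\pi$ already implicit in Subsection 5.2. By the complete reducibility established in Theorem \ref{4.4.3}, every $\Uj$-module in $\Oj$ is, up to isomorphism, determined by the multiplicities of its simple composition factors; so I interpret the statement as classifying the isomorphism classes of simple objects of $\Oj$, which is precisely what needs to be repackaged in bipartition language.

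The main step is to analyze the map $\pi \colon \Pj \to \Z^r \times \Z_{\geq 0}^r$ given by
$$\pi(\bflm) = (\bfa,\bfb), \qu a_1 = 2\bflm_0-\bflm_{-1}-\bflm_1, \qu a_i = \bflm_{-(i-1)}-\bflm_{-i}+\bflm_{i-1}-\bflm_i, \qu b_i = \bflm_{-(i-1)}-\bflm_{-i}.$$
Three properties of $\pi$ are to be checked: (i) for every $\bflm \in \Pj$, the resulting pair satisfies $b_i \geq 0$ and $a_i - b_i = \bflm_{i-1} - \bflm_i \geq 0$ for all $i \in \Ij \setminus \{1\}$, so that $\pi$ lands in the parameter set appearing in the previous corollary; (ii) every such pair lies in the image of $\pi$, via the explicit construction $(\bfa;\bfb) \mapsto \bflm$ using $a_1^\pm$ and the vectors $\rho_{r+1}, \rho_r$ described in Subsection 5.2; and (iii) for $\bflm, \bfmu \in \Pj$ one has $\pi(\bflm) = \pi(\bfmu)$ if and only if $\bflm \sim_\pi \bfmu$. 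Items (i) and (ii) are direct inspection of the defining formulas. For (iii), the forward direction holds because each shift $(n\rho_{r+1}; n\rho_r)$ lies in the kernel of every entry difference appearing in $\pi$, so $\pi$ is constant on $\sim_\pi$-orbits; the reverse direction is the more substantive point, and follows because two preimages of the same pair share the same increments within $\bflm^-$ and within $\bflm^+$, forcing each of $\bfmu^\pm$ to differ from $\bflm^\pm$ by a constant, while the formula for $a_1$ forces these two constants to coincide, producing a single integer $n$ with $\bfmu = \bflm + (n\rho_{r+1}; n\rho_r)$.

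Once (i)--(iii) are in hand, $\pi$ descends to a bijection
$$\ol{\pi} \colon \Pj/\sim_\pi \;\xrightarrow{\;\sim\;}\; \{ (\bfa,\bfb) \in \Z^r \times \Z_{\geq 0}^r \mid a_i \geq b_i \text{ for all } i \in \Ij \setminus \{1\} \}.$$
Composing with the bijection $(\bfa,\bfb) \mapsto [L(\bfa;\bfb)]$ of the previous corollary, and invoking the convention $L(\bflm) := L(\pi(\bflm))$ fixed in Subsection 5.2, yields the asserted bijection between $\Pj/\sim_\pi$ and the isomorphism classes of irreducible $\Uj$-modules in $\Oj$. The only mildly delicate step is the uniqueness of $n$ in part (iii); no new representation-theoretic input is needed, the argument being purely combinatorial.
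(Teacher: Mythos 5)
Your argument is correct and follows the same route the paper intends: the corollary is an immediate repackaging of the preceding corollary (the bijection $(\bfa,\bfb)\mapsto[L(\bfa;\bfb)]$, with nonvanishing already secured by Theorem \ref{jcry decomposition of bfBd}) via the combinatorial bijection between $\Pj/\sim_\pi$ and the admissible pairs $(\bfa,\bfb)$ set up in the subsection on parametrization of the irreducibles. Your verification of properties (i)--(iii) of $\pi$, in particular that the formula for $a_1$ forces the two additive constants on $\bflm^\pm$ to coincide, correctly fills in the details the paper leaves implicit.
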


\begin{cor}\label{quasi-j-crystal on SST}
Let $\bflm \in \Pj(d)$. Then, $\SST(\bflm)$ is equipped with a unique quasi-$\jmath$-crystal structure for which the map $R:\SST(\bflm) \rightarrow \bfB^{\otimes d}$ defined in subsection \ref{jcrystal} is compatible all of structure maps for quasi-$\jmath$-crystal bases. Also, the irreducible highest weight $\Uj$-module $L(\bflm)$ with highest weight $\bflm$ has a quasi-$\jmath$-crystal basis which is identical to $\SST(\bflm)$. Moreover, $\SST(\bflm)$ is equipped with a quasi-$\jmath$-crystal basis structure given below.
\begin{enumerate}
\item For each $T \in \SST(\bflm)$, $\wtj(T) = \sum_{i = -r}^r (\text{the number of occurrence of $i$ in $T$}) \epsilon_i$.
\item For each $T \in \SST(\bflm)$ and $i \in \Ij$, we have
\begin{align}
\begin{split}
\ftil_1(T) &= \Etil_{-\hf}(T), \\
\etil_1(T) &= \Ftil_{-\hf}(T), \\
\ftil_i(T) &= \begin{cases}
\Etil_{-(i - \hf)}(T) \qu & \IF \varepsilon_{i-\hf}(T) < \varepsilon_{-(i-\hf)}(T), \\
\Ftil_{i-\hf}(T) \qu & \IF \varepsilon_{i-\hf}(T) \geq \varepsilon_{-(i-\hf)}(T),
\end{cases} \nonumber\\
\etil_i(T) &= \begin{cases}
\Ftil_{-(i - \hf)}(T) \qu & \IF \varepsilon_{i-\hf}(T) \leq \varepsilon_{-(i-\hf)}(T), \\
\Etil_{i-\hf}(T) \qu & \IF \varepsilon_{i-\hf}(T) > \varepsilon_{-(i-\hf)}(T),
\end{cases} \nonumber
\end{split} \nonumber
\end{align}
here, we identify $T \in \SST(\bflm)$ with $R(T) \in \bfB^{\otimes d}$, on which the actions of $\Etil_i, \Ftil_i$, $i \in \bbI$ make sense.
\end{enumerate}
\end{cor}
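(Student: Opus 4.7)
The plan is to transport a quasi-$\jmath$-crystal basis structure from a connected component of the crystal $\bfB^{\otimes d}$ back to $\SST(\bflm)$ along the reading map $R$. First I would verify that $R$ is injective and intertwines the ordinary type $A_r \times A_{r-1}$ Kashiwara operators $\Etil_{\pm(i-\hf)}, \Ftil_{\pm(i-\hf)}$ acting on $\bfB^{\otimes d}$ (via the tensor product rule for crystals) with the standard actions on $\SST(\bflm)$ coming from the separate crystal structures of $\bflm^+$ (in the alphabet $I^+$) and $\bflm^-$ (in the alphabet $I^-$). For the $\bflm^+$ factor this is the classical compatibility of the Middle-Eastern reading $\ME$ with Kashiwara operators on semistandard tableaux; for the $\bflm^-$ factor the same statement applied to the opposite totally ordered alphabet $I^-$ yields the compatibility of the Eastern-Middle reading $\EM$.

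By Proposition \ref{comp of B^d as AA} the image $R(\SST(\bflm))$ is contained in a single connected component of the type $A_r \times A_{r-1}$ crystal $\bfB^{\otimes d}$; inspection of the canonical highest-weight element $T_\bflm$ defined by $T_\bflm^-(i,j) = -(i-1)$ and $T_\bflm^+(i,j) = i$ shows that $R(T_\bflm)$ has insertion data of shape $\bflm$, so the component has the form $\SST(\bflm) \times \{(Q^-;Q^+)\}$ for a distinguished pair of recording tableaux. By Theorem \ref{jcry decomposition of bfBd} this component coincides with some $\clB_t$ with $L_t \simeq L(\bflm)$, and by Proposition \ref{irr decomp of Vd} it carries a quasi-$\jmath$-crystal basis structure (namely that of $L_t \subset \bfV^{\otimes d}$). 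Pulling this structure back along the bijection $R : \SST(\bflm) \to \clB_t$ equips $\SST(\bflm)$ with a quasi-$\jmath$-crystal basis structure compatible with all of the structure maps; uniqueness is immediate from injectivity of $R$. In particular, $L(\bflm) \simeq L_t$ has a quasi-$\jmath$-crystal basis identified with $\SST(\bflm)$. The explicit descriptions in (1) and (2) then follow by applying Corollary \ref{crystal=qjcrystal} on $\bfB^{\otimes d}$, which expresses $\etil_i, \ftil_i$ purely in terms of the Levi operators $\Etil_{\pm(i-\hf)}, \Ftil_{\pm(i-\hf)}$, and transporting back through the intertwining of the first step; the weight formula in (1) comes from the restriction of $\wtj$ from $\bfB^{\otimes d}$.

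The main obstacle lies in identifying $R(\SST(\bflm))$ with the specific component $\clB_t$ corresponding to $L(\bflm)$. The subtlety is that Theorem \ref{jcry decomposition of bfBd} characterizes the components of $\bfB^{\otimes d}$ via the factorization $w = f \cdot w'$ with $f \in I_+^d$ and $w' \in {}^{J(f)}W$, using the pair of Robinson-Schensted insertion tableaux $(P^-(w), P^+(w))$, whereas $R$ is defined directly through the $\EM$ and $\ME$ readings. The cleanest way to reconcile the two is to check the match on the single element $T_\bflm$: since $\ME$ of a semistandard tableau reinserts to the tableau itself under Schensted insertion, and the analogous statement for $\EM$ applied to the reversed alphabet $I^-$ holds, both recipes assign $T_\bflm$ to a connected component whose insertion tableaux have shape $\bflm$ and whose highest quasi-$\jmath$-weight equals $\wtj(T_\bflm)$, so Proposition \ref{comp of B^d as AA} combined with the weight computation at the end of the proof of Theorem \ref{jcry decomposition of bfBd} pins down the component uniquely as the $\clB_t$ with $L_t \simeq L(\bflm)$.
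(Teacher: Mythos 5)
Your proposal is correct and follows essentially the same route the paper intends: the corollary is stated without a separate proof precisely because it is the combination of Proposition \ref{comp of B^d as AA}, Proposition \ref{irr decomp of Vd}, Theorem \ref{jcry decomposition of bfBd}, and Corollary \ref{crystal=qjcrystal}, transported to $\SST(\bflm)$ along the reading map $R$. Your extra care in matching the RSK-based labelling of components with the reading-based one via the highest weight element $T_\bflm$ is exactly the point that makes the identification work, and it is consistent with the computation already carried out in the proof of Theorem \ref{jcry decomposition of bfBd}.
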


\subsection{$\jmath$-crystal bases}
In general, a quasi-$\jmath$-crystal graph of an irreducible $\Uj$-module is neither connected nor unique. In this subsection, we introduce the notion of $\jmath$-crystal bases as quasi-$\jmath$-crystal bases satisfying some additional conditions. And we prove the existence and uniqueness theorem for $\jmath$-crystal bases, and that they are connected.

Let $\bflm \in \Pj(d)$, and take a left cell $X \in L(W)$ satisfying $\bfC^L_X \simeq L(\bflm)$. Recall that $\bfC^L_X$ has a basis $\{ [{}^{J(f)} C_w]_X \mid f \in I_d^+,\ w \in X \cap w_{J(f)}{}^{J(f)} W \}$, and it is in one-to-one correspondence with $\SST(\bflm)$; we denote by $b_T$ the basis element corresponding to $T \in \SST(\bflm)$. For each $i \in \{ 2,\ldots,r \}$, we define linear endomorphisms $\etil_{i'}$ and $\ftil_{i'}$ on $L(\bflm)$ by
\begin{align}
\begin{split}
\etil_{i'}(b_T) &= \begin{cases}
b_{\Etil_{i-\hf} T} \qu & \IF \etil_j T = 0 \Forall j = 1,\ldots,i-1 \AND \Etil_{j-\hf} T = 0 \Forall j = 2,\ldots,i-1, \\
0 \qu & \OW,
\end{cases} \\
\ftil_{i'}(b_T) &= \begin{cases}
b_{T'} \qu & \IF \etil_{i'} b_{T'} = b_T, \\
0 \qu & \OW.
\end{cases}
\end{split} \nonumber
\end{align}

Note that the condition $\etil_j T = 0 \Forall j =1,\ldots,i-1 \AND \Etil_{j-\hf} T = 0 \Forall j = 2,\ldots,i-1$ is equivalent to $\etil_j T = 0 \Forall j =1,\ldots,i-1 \AND \etil_{j'} T = 0 \Forall j = 2, \ldots, i-1$.

Let $X' \in L(W)$ be such that $C^L_{X'} \simeq C^L_X$. Since the linear map $[C_{w'}]_{X'} \mapsto [C_w]_X$, $w' \in X'$, $w \in X$ with $P^\pm(w') = P^\pm(w)$ gives an isomorphism $C^L_{X'} \rightarrow C^L_X$ of $\Hc$-modules, the definition of $\etil_{i'}$ and $\ftil_{i'}$ are independent of the choice of $X$ as long as we have $\bfC^L_X \simeq L(\bflm)$.

Also, we define linear endomorphisms $\etil_{i'}$ and $\ftil_{i'}$, $i \in \{ 2,\ldots,r \}$ on each $\Uj$-module in $\Oj$ by the complete reducibility.

\begin{rem}\normalfont
In a future work, we will give more intrinsic definitions of $\etil_{i'}$ and $\ftil_{i'}$.
\end{rem}

\begin{defi}\normalfont
Let $M \in \Oj$ be a $\Uj$-module with a quasi-$\jmath$-crystal basis $(\clL,\clB)$. We say that $(\clL,\clB)$ is a $\jmath$-crystal basis if it satisfies the following:
\begin{enumerate}
\item[($\jmath$C 1)] $\clL$ is preserved by the operators $\etil_{i'}$ and $\ftil_{i'}$, $i \in \{ 2,3,\ldots,r \}$.
\item[($\jmath$C 2)] We have $\etil_{i'}(\clB) \subset \clB \sqcup \{0\}$ and $\ftil_{i'}(\clB) \subset \clB \sqcup \{0\}$ for all $i' \in \{ 2,3,\ldots,r \}$.
\end{enumerate}
\end{defi}

Let $\bflm \in \Pj$, and $v \in L(\bflm)$ be a highest weight vector. Set
\begin{align}
\begin{split}
&\clL(\bflm) := \Span_{\Ao} \{ \ftil_{i_1} \cdots \ftil_{i_l}v \mid l \in \Z_{\geq 0},\ i_1,\ldots,i_l \in \Ij \sqcup \{ 2',\ldots,r' \} \}, \\
&\clB(\bflm) := \{ \ftil_{i_1} \cdots \ftil_{i_l}v + q\clL(\bflm) \mid l \in \Z_{\geq 0},\ i_1,\ldots,i_l \in \Ij \sqcup \{ 2',\ldots,r' \} \} \setminus \{0\}.
\end{split} \nonumber
\end{align}

\begin{theo}
Let $\bflm \in \Pj$. Then, $(\clL(\bflm),\clB(\bflm))$ is a unique $\jmath$-crystal basis of $L(\bflm)$.
\end{theo}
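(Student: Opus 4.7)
The plan is to realize $L(\bflm)$ as an irreducible summand of $\bfV^{\otimes d}$ (with $d = |\bflm|$) via Proposition \ref{irr decomp of Vd} and Theorem \ref{jcry decomposition of bfBd}. Under this identification $L(\bflm)$ carries the quasi-$\jmath$-crystal basis canonically indexed by $\SST(\bflm)$ from Corollary \ref{quasi-j-crystal on SST}; I expect to show $(\clL(\bflm),\clB(\bflm))$ coincides with this basis.

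\textbf{Existence.} I would first verify properties ($\jmath$C 1) and ($\jmath$C 2) directly on $\SST(\bflm)$. The operators $\etil_{i'}, \ftil_{i'}$ are defined via the Kazhdan--Lusztig basis $\{[{}^{J(f)}C_w]_X\}$ of $\bfC^L_X$; transported to $\SST(\bflm)$ through the bijection underlying Theorem \ref{jcry decomposition of bfBd}, they act as $\Etil_{i-\hf}$ and $\Ftil_{i-\hf}$ on those $T$ satisfying $\etil_j T = \etil_{j'} T = 0$ for $j < i$, and as $0$ elsewhere. Since $\Etil_{i-\hf}$ and $\Ftil_{i-\hf}$ preserve $\SST(\bflm)$, the required stability conditions hold. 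I would then show that $(\clL(\bflm),\clB(\bflm))$ equals this basis, which reduces to the combinatorial claim that every $T \in \SST(\bflm)$ is reachable from the highest-weight tableau $T_\bflm$ by a sequence of $\ftil_i, \etil_i, \ftil_{i'}, \etil_{i'}$. By Corollary \ref{quasi-j-crystal on SST}, the operators $\ftil_i, \etil_i$ realize the $A_{2r}$ operators $\Ftil_{\pm(i-\hf)}, \Etil_{\pm(i-\hf)}$ subject to a case split on $\vep_{i-\hf}(T)$ versus $\vep_{-(i-\hf)}(T)$; the $\ftil_{i'}, \etil_{i'}$ are designed to supply exactly those $\Ftil_{i-\hf}, \Etil_{i-\hf}$ moves that $\ftil_i, \etil_i$ cannot take. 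Combined with connectedness of $\SST(\bflm)$ as an $A_r \times A_{r-1}$ crystal (Theorem \ref{jcry decomposition of bfBd}), any $T$ can thus be driven back to $T_\bflm$.

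\textbf{Uniqueness.} Let $(\clL',\clB')$ be any $\jmath$-crystal basis of $L(\bflm)$. Because the top weight space of $L(\bflm)$ is one-dimensional, $(\qB3)$ implies there is a unique highest-weight element $b_0 \in \clB'$; rescaling $v$ we may assume $b_0 = v + q\clL'$. Properties $(\qB4)$ and ($\jmath$C 2) say that $\clB'$ is closed under all four families of Kashiwara operators, so the connectedness argument above forces $\clB(\bflm) \subset \clB'$, and equality follows from $|\clB'| = \dim L(\bflm) = |\clB(\bflm)|$.

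\textbf{Main obstacle.} The key difficulty is the connectedness of $\SST(\bflm)$ under $\ftil_i, \etil_i, \ftil_{i'}, \etil_{i'}$. The operator $\ftil_i$ commits to either the $\Etil_{-(i-\hf)}$ or the $\Ftil_{i-\hf}$ branch depending on local data, so one must construct explicit paths that interleave $\ftil_{i'}$ moves to reach every tableau; the restriction that $\ftil_{i'}$ acts only on tableaux satisfying the vanishing condition $\etil_j = \etil_{j'} = 0$ for $j < i$ complicates direct arguments. I expect to handle this by induction on $r$, lifting the case $r=1$ through the parabolic-type subalgebras $\tau_i\cdots\tau_2(\Uj_1)$ invoked in the proof of Theorem \ref{Classification}.
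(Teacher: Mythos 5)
Your overall architecture matches the paper's: realize $L(\bflm)$ as $\bfC^L_X$ inside $\bfV^{\otimes d}$, check ($\jmath$C 1)--($\jmath$C 2) on the Kazhdan--Lusztig basis indexed by $\SST(\bflm)$, and reduce everything to the connectedness of $\SST(\bflm)$ under the enlarged operator set $\{\etil_i,\ftil_i,\etil_{i'},\ftil_{i'}\}$. The problem is that you correctly identify this connectedness as the crux and then do not prove it: you only announce an intention to argue by induction on $r$ via the subalgebras $\tau_i\cdots\tau_2(\Uj_1)$, which is a substantially harder route (the interaction between the branch-switching in $\ftil_i$ and the vanishing hypotheses in the definition of $\ftil_{i'}$ is exactly what such an induction would have to control, and you give no indication of how). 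The paper's argument is much more direct and avoids this entirely: define the statistic $d(T):=\sum_{i,j}\bigl(|T(i,j)|-|T_0(i,j)|\bigr)\geq 0$, where $T_0$ is the highest weight tableau, observe that $d(T)=0$ forces $T=T_0$, that $d(T)>0$ implies some $\etil_i T\neq 0$ or $\etil_{i'}T\neq 0$, and that each such raising operator drops $d$ by exactly $1$; induction on $d(T)$ then connects every $T$ to $T_0$ and simultaneously shows $\SST(\bflm)=\{\ftil_{i_1}\cdots\ftil_{i_l}T_0\}\setminus\{0\}$, which is precisely the statement that $(\clL(\bflm),\clB(\bflm))$ coincides with $(\clL(X),\clB(X))$. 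Without either this or a worked-out version of your induction on $r$, the existence half of your proof is incomplete.

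A secondary point: your uniqueness paragraph concludes $\clB(\bflm)\subset\clB'$ from closure of $\clB'$ under the operators, but this presupposes that the induced actions of $\etil_{i'},\ftil_{i'}$ on $\clL'/q\clL'$ agree with those on $\clL(\bflm)/q\clL(\bflm)$, i.e.\ it silently identifies the two lattices before proving they are equal. The standard repair (as in ordinary crystal basis theory) is to first show $\clL(\bflm)\subset\clL'$ from the spanning description $\clL(\bflm)=\Span_{\A}\{\ftil_{i_1}\cdots\ftil_{i_l}v\}$ and closure of $\clL'$, and then obtain the reverse inclusion by a rank comparison; this is routine but should be said, since the theorem explicitly asserts uniqueness.
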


\begin{proof}
Let $X \in L(W)$ be such that $\bfC^L_X \simeq L(\bflm)$. By the definition of $\etil_{i'}$ and $\ftil_{i'}$, it is clear that they preserve $\clL(X)$, and induce maps $\clB(X) \rightarrow \clB(X) \sqcup \{0\}$. Therefore, $(\clL(X),\clB(X))$ is a $\jmath$-crystal basis of $\bfC^L_X$.

Next, we show that $\clB(X)$ is connected as a $\jmath$-crystal basis. To do so, it is convenient to identify $\clB(X)$ with $\SST(\bflm)$. Let $T_0$ denote the unique highest weight vector of $\SST(\bflm)$. For $T \in \SST(\bflm)$, set
$$
d(T) := \sum_{i,j} (|T(i,j)| - |T_0(i,j)|),
$$
where $T(i,j)$ denotes the integer in the $(i,j)$-box of $T$. Then, we have $d(T) \geq 0$, and $d(T) = 0$ if and only if $T = T_0$. We prove that $T$ is connected to $T_0$ by induction on $d(T)$. When $d(T) = 0$, we have $T = T_0$, and there is nothing to prove. When $d(T) > 0$, there exists $i \in \Ij$ such that $\etil_i T \neq 0$ or $\etil_{i'} T \neq 0$. Hence, $T$ is connected to either $\etil_i T$ or $\etil_{i'} T$. Since $d(\etil_i T) = d(\etil_{i'} T) = d(T) - 1$, the induction proceeds. This proves that $\SST(\bflm)$ is connected. Moreover, we obtain
$$
\SST(\bflm) = \{ \ftil_{i_1} \cdots \ftil_{i_l} T_0 \mid l \in \Z_{\geq 0},\ i_1,\ldots,i_l \in \Ij \sqcup \{ 2',\ldots,r' \} \} \setminus \{0\}.
$$

Let $v \in \bfC^L_X$ be the unique highest weight vector satisfying $v + q\clL(X) = T_0$. By above argument, for each $T \in \SST(\bflm)$, there exist $i_1,\ldots,i_l \in \Ij \sqcup \{ 2',\ldots, r' \}$ such that $T = \ftil_{i_1} \cdots \ftil_{i_l} T_0$. This implies that $\ftil_{i_1} \cdots \ftil_{i_l} v + q\clL(X) = T$, and therefore, $\clL(X)$ is spanned by such vectors. Thus, the proof completes.
\end{proof}

\begin{cor}
$(\bfL^{\otimes d}, \bfB^{\otimes d})$ is a $\jmath$-crystal basis of $\bfV^{\otimes d}$. Under the identification $\bfB^{\otimes d} = I^d$, the actions of $\etil_i,\ftil_i$, $i \in \Ij$ are described by Corollary \ref{qjcry for words}, and those of $\etil_{i'},\ftil_{i'}$, $i \in \Ij \setminus \{1\}$ are given as follows: $\etil_{i'} \bfs$ is obtained from $\bfs$ by replacing the rightmost $i$ in $\bfs_i$ with $i-1$ if $\etil_j \bfs = 0$ for all $j = 1,\ldots,i-1$ and $\etil_{j'} \bfs = 0$ for all $j = 2,\ldots,i-1$, and is $0$ otherwise. Finally, 
$$
\ftil_{i'} \bfs = \begin{cases}
\bfs' \qu  \IF \etil_{i'} \bfs' = \bfs, \\
0 \qu  \OW.
\end{cases}
$$
\end{cor}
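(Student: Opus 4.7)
The strategy is to reduce everything to the irreducible case via Proposition~\ref{irr decomp of Vd}, then invoke the preceding existence and uniqueness theorem for $\jmath$-crystal bases on irreducible modules, and finally translate the action of $\Etil_{i-\hf}$ (whose combinatorial description on $\bfB^{\otimes d}$ is already known of type $A$) into the asserted rule for $\etil_{i'}$.

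First, by Proposition~\ref{irr decomp of Vd} we may write $\bfV^{\otimes d} = \bigoplus_{t \in T} L_t$ with $L_t \simeq L(\bflm_t)$, and $(\bfL^{\otimes d}, \bfB^{\otimes d}) = \bigoplus_t (\clL_t, \clB_t)$ as quasi-$\jmath$-crystal bases. Theorem~\ref{jcry decomposition of bfBd} identifies each $\clB_t$ with $\SST(\bflm_t)$ via $R$, and Corollary~\ref{quasi-j-crystal on SST} shows that the quasi-$\jmath$-crystal structure transported to $\SST(\bflm_t)$ is the standard one. The preceding theorem (the unique $\jmath$-crystal basis of $L(\bflm_t)$ is $(\clL(\bflm_t), \clB(\bflm_t))$) then forces $(\clL_t, \clB_t) = (\clL(\bflm_t), \clB(\bflm_t))$ under the chosen isomorphism $L_t \to L(\bflm_t)$. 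In particular each $(\clL_t, \clB_t)$ is stable under $\etil_{i'}$ and $\ftil_{i'}$, and summing over $t$ verifies $(\jmath \mathrm{C}1)$ and $(\jmath \mathrm{C}2)$ for $(\bfL^{\otimes d}, \bfB^{\otimes d})$.

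To deduce the combinatorial description, fix $i \in \Ij \setminus \{1\}$ and $\bfs \in \bfB^{\otimes d} = I^d$, lying in some component $\clB_t$. By the very definition of $\etil_{i'}$ on $L(\bflm_t)$, we have $\etil_{i'}(\bfs) = \Etil_{i-\hf}(\bfs)$ precisely when $\etil_j \bfs = 0$ for $1 \leq j < i$ and $\etil_{j'} \bfs = 0$ for $2 \leq j < i$, and $\etil_{i'}(\bfs) = 0$ otherwise. Since $\Etil_{i-\hf}$ acts on $\bfB^{\otimes d}$ by replacing the rightmost $i$ in the subsequence $\bfs_{i-\hf}$ with $i-1$, it remains only to match this letter with the rightmost $i$ in $\bfs_i$. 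The main (but routine) obstacle is this matching: inspecting the construction of $\bfs_i$ from the concatenation $(\bfs_{-(i-\hf)}^{\rev}, \bfs_{i-\hf})$ followed by deletion of adjacent $(-(i-1), i)$-pairs, one observes that the surviving $i$-letters form a right-justified suffix of the $i$-block of $\bfs_{i-\hf}$, so the rightmost $i$ of $\bfs_{i-\hf}$ always coincides with the rightmost $i$ of $\bfs_i$ whenever either exists.

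Finally, the rule for $\ftil_{i'}$ is dictated by axiom $(\qB5)$ together with the relation $\etil_{i'}(\ftil_{i'} b) = b$ whenever $\ftil_{i'} b \neq 0$: it yields $\ftil_{i'}\bfs = \bfs'$ when $\etil_{i'}\bfs' = \bfs$ for some $\bfs' \in \bfB^{\otimes d}$, and $\ftil_{i'}\bfs = 0$ otherwise, which is the statement of the corollary.
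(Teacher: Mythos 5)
Your overall route is the one the paper intends (the corollary is stated there without proof): split $\bfV^{\otimes d}$ into the cell modules, observe that $\bfL^{\otimes d}$ and $\bfB^{\otimes d}$ are the $\A$-span, respectively the reduction mod $q$, of the parabolic Kazhdan--Lusztig basis and hence decompose as $\bigoplus_t(\clL_t,\clB_t)$, apply the preceding theorem to each summand, and unwind the definition of $\etil_{i'}$. One small logical remark: invoking the \emph{uniqueness} of $\jmath$-crystal bases to conclude $(\clL_t,\clB_t)=(\clL(\bflm_t),\clB(\bflm_t))$ is not quite the right mechanism, since a priori $(\clL_t,\clB_t)$ is only a quasi-$\jmath$-crystal basis and uniqueness says nothing about those; but Proposition \ref{irr decomp of Vd}(2) already identifies $(\clL_t,\clB_t)$ with $(\clL(X_t),\clB(X_t))$, which the theorem's proof exhibits as the $\jmath$-crystal basis, so this is harmless.

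The genuine gap is in your final matching step. You assert that the rightmost $i$ of $\bfs_{i-\hf}$ coincides with the rightmost $i$ of $\bfs_i$ ``whenever either exists.'' Only one implication is true: if $\bfs_i$ contains an $i$, then, because the cancellation of adjacent $(-(i-1),i)$ pairs removes the \emph{leftmost} $i$'s of the $i$-block, its rightmost $i$ is indeed the rightmost $i$ of $\bfs_{i-\hf}$. The converse fails, and it can fail even when the hypotheses $\etil_j\bfs=0$ ($j<i$) and $\etil_{j'}\bfs=0$ ($2\le j<i$) hold. Take $r=2$, $d=3$, $i=2$, $\bfs=(-1,0,2)$: here $\bfs_1=\emptyset$, so $\etil_1\bfs=0$ and the conditions are satisfied; $\bfs_{2-\hf}=(2)$, so $\Etil_{2-\hf}\bfs=(-1,0,1)\neq 0$ and the definition gives $\etil_{2'}\bfs=(-1,0,1)$; yet $\bfs_2=\emptyset$ contains no $2$ to replace, so the rule ``replace the rightmost $i$ in $\bfs_i$'' returns nothing. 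Thus the two descriptions genuinely differ on such $\bfs$, and no amount of ``routine inspection'' can bridge them: the occurrence of $\bfs_i$ in the combinatorial rule must be read as $\bfs_{i-\hf}$ (equivalently, one would have to show that configurations like the above never occur under the stated conditions, which the example rules out). Your proof as written establishes the corollary only with $\bfs_{i-\hf}$ in place of $\bfs_i$.
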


Now, the existence and uniqueness theorem for $\jmath$-crystal basis can be proved in the same way as the ordinary crystal basis theory.

\begin{theo}
Let $M \in \Oj$ be a $\Uj$-module. Then, $M$ has a $\jmath$-crystal basis $(\clL,\clB)$. If $M \simeq \bigoplus_{\bflm \in \Pj_\pi} L(\bflm)^{\oplus m_\bflm}$ for some $m_\bflm \in \Z_{\geq 0}$, then there exists an isomorphism $M \rightarrow \bigoplus_{\bflm \in \Pj_\pi} L(\bflm)^{\oplus m_\bflm}$ inducing an isomorphism $(\clL,\clB) \rightarrow (\bigoplus_{\bflm \in \Pj_\pi} \clL(\bflm)^{\oplus m_\bflm}, \bigoplus_{\bflm \in \Pj_\pi} \clB(\bflm)^{\oplus m_\bflm})$.
\end{theo}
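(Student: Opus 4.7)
The plan is to reduce to the irreducible case via complete reducibility (Theorem~\ref{4.4.3}) and then transport the canonical $\jmath$-crystal bases $(\clL(\bflm),\clB(\bflm))$ along a suitably chosen $\Uj$-module isomorphism. Fix a decomposition $M \simeq \bigoplus_{\bflm \in \Pj_\pi} L(\bflm)^{\oplus m_\bflm}$. For existence, the right-hand side carries the $\jmath$-crystal basis $\bigl(\bigoplus \clL(\bflm)^{\oplus m_\bflm},\ \bigoplus \clB(\bflm)^{\oplus m_\bflm}\bigr)$, and pulling back through any such isomorphism gives a $\jmath$-crystal basis of $M$: axioms $(\qL1)$--$(\qL3)$, $(\qB1)$--$(\qB5)$ and $(\jmath\mathrm{C}1)$--$(\jmath\mathrm{C}2)$ are preserved since all Kashiwara operators $\etil_i,\ftil_i,\etil_{i'},\ftil_{i'}$ respect direct sums of $\Uj$-modules by construction.

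For the structure statement, suppose $(\clL,\clB)$ is an arbitrary $\jmath$-crystal basis of $M$. First, identify the set $\clB^{hw}$ of highest weight elements, that is, those $b \in \clB$ such that $\etil_i(b)=0$ for all $i \in \Ij$ and $\etil_{i'}(b)=0$ for all $i \in \Ij \setminus \{1\}$. For each $b \in \clB^{hw}$, lift it to a vector $v_b \in \clL$ with $v_b + q\clL = b$; by adjusting $v_b$ modulo $q\clL$ if necessary and using condition $(M3)$ to ensure finite processes, one can arrange that $v_b$ is a genuine $\Uj$-highest weight vector in $M$ (annihilated by $(\Uj_{>0})_+$ and a simultaneous eigenvector for $k_1,\ldots,k_r,h_1,\ldots,h_r$). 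Then $\Uj v_b$ is a highest weight module which, being an object of $\Oj$ with a nonzero $\jmath$-crystal-lattice intersection, equals its unique irreducible quotient $L(\bflm_b)$ for some $\bflm_b \in \Pj_\pi$.

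Next, set $\clL_b := \clL \cap \Uj v_b$ and $\clB_b := \{ c \in \clB \mid c \text{ lies in the connected component of } b \}$, so $\clB_b = \{ \xtil_{i_1}\cdots \xtil_{i_l}(b) \mid l \geq 0,\ i_j \in \Ij \sqcup \{2',\ldots,r'\},\ \xtil \in \{\etil,\ftil\} \} \setminus \{0\}$. One checks that $(\clL_b,\clB_b)$ satisfies the axioms of a $\jmath$-crystal basis of $\Uj v_b \simeq L(\bflm_b)$, and by the uniqueness of $\jmath$-crystal basis for $L(\bflm_b)$ proved just above (and the fact that its $\jmath$-crystal graph $\clB(\bflm_b)$ is connected with a single source), we obtain an isomorphism $\Uj v_b \xrightarrow{\sim} L(\bflm_b)$ identifying $(\clL_b,\clB_b)$ with $(\clL(\bflm_b),\clB(\bflm_b))$. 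Summing over $b \in \clB^{hw}$, the assembled map $\bigoplus_{b} \Uj v_b \to M$ is a $\Uj$-module isomorphism by complete reducibility and a comparison of multiplicities (which matches because $\#\{b \in \clB^{hw} \mid \bflm_b = \bflm\} = m_\bflm$, using that the number of highest weight vertices in any $\jmath$-crystal basis of $L(\bflm)$ equals $1$).

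The main obstacle is showing that $\clL = \bigoplus_b \clL_b$ as $\A$-modules and $\clB = \bigsqcup_b \clB_b$, i.e.\ that these lifts actually recover the prescribed lattice and basis rather than merely a $\jmath$-crystal basis isomorphic to them. Passing to $\clL/q\clL$, the connected components of $\clB$ as a $\jmath$-crystal graph partition it into sets $\clB_b$, each in bijection with some $\clB(\bflm_b)$; this gives $\clB = \bigsqcup_b \clB_b$ and hence $\clL/q\clL = \bigoplus_b (\clL_b + q\clL)/q\clL$. Applying Nakayama's lemma to the $\A$-submodule $\sum_b \clL_b \subset \clL$ (using that $\clL$ is a finitely generated $\A$-module on each weight space by $(\qL2)$ and $(M2)$) then yields $\clL = \bigoplus_b \clL_b$, completing the transport of the $\jmath$-crystal basis under the constructed isomorphism. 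This last Nakayama step, together with ensuring that the lifts $v_b$ can be chosen simultaneously to generate complementary submodules compatible with $\clL$, is where the argument parallels \cite[Section 3.5]{HK02} most delicately.
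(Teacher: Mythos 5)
Your proposal is essentially the paper's own (implicit) proof: the paper gives no details here, saying only that the theorem ``can be proved in the same way as the ordinary crystal basis theory,'' and your reduction to the irreducible case via complete reducibility, the extraction of highest-weight elements of $\clB$, the connected-component decomposition of the $\jmath$-crystal graph, and the Nakayama step on weight spaces constitute exactly that standard argument. You also correctly isolate the one delicate point --- correcting a lift of a highest-weight element of $\clB$ to a genuine $\Uj$-highest weight vector compatibly with $\clL$ --- which is carried out by downward induction on weights using the two decomposition propositions of Section \ref{WeakCrystal}.
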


\section{Applications}\label{Applications}
In this section, we consider how a given $\Uj$-module decomposes into irreducible modules. By the existence and uniqueness of $\jmath$-crystal bases, together with the connectedness (with a single source) of the $\jmath$-crystal basis of an irreducible $\Uj$-module, the problem is reduced to determining the highest weight vectors in the $\jmath$-crystal basis of a given module. We will frequently use results in \cite{Kw09}.

\subsection{Irreducible decomposition of $\bfV^{\otimes d}$}
Set $\IIj := \Ij \sqcup \{ 2',\ldots,r' \}$. The connected components of $\bfB^{\otimes d}$ are in one-to-one correspondence with $\bfs \in \bfB^{\otimes d}$ such that $\etil_i(\bfs) = 0$ for all $i \in \IIj$. Such $\bfs$'s are characterized as follows.

\begin{prop}\label{DoubleYamanouchi}
Let $\bfs = (s_1,\ldots,s_N) \in \bfB^{\otimes d}$. For each $-r \leq j \leq r$ and $1 \leq n \leq d$, set $c_j^{\leq n}(\bfs) := \sharp \{ 1 \leq m \leq n \mid s_m = j \}$ and $c_j^{\geq n}(\bfs) := \sharp \{ n \leq m \leq d \mid s_m = j \}$. Then the following are equivalent:
\begin{enumerate}
\item $\etil_i(\bfs) = 0$ for all $i \in \IIj$.
\item $c_0^{\geq n}(\bfs) \geq c_{-1}^{\geq n}(\bfs)$, $c_{-(j-1)}^{\geq n}(\bfs) \geq c_{-j}^{\geq n}(\bfs)$, and $c_{j-1}^{\leq n}(\bfs) \geq c_j^{\leq n}(\bfs)$ for all $1 \leq n \leq d$ and $j \in \Ij \setminus \{1\}$.
\end{enumerate}
\end{prop}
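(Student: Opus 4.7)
My plan is to translate condition~(1) into a system of ``no-surviving-letter'' conditions on the reduced subwords $\bfs_\alpha$ ($\alpha \in \bbI$) introduced in Section \ref{jcrystal}, and then to rewrite each such condition as a prefix or suffix count inequality via standard bracket-matching. The only subtlety is the inductive character of $\etil_{i'}$, whose very definition is conditional on $\etil_j$ and $\etil_{j'}$ annihilating $\bfs$ for all $j < i$; I will handle this by an induction on $i$.

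First, I would unpack the individual operators on words. By Corollary \ref{qjcry for words}, $\etil_1 \bfs = 0$ iff $\bfs_{-\hf}$ contains no $-1$; and for $i \in \Ij \setminus \{1\}$, $\etil_i \bfs = 0$ iff $\bfs_i$ contains neither $i$ nor $-i$. Writing $a_i$, $c_i$, $d_i$ for the numbers of surviving copies of $i$ in $\bfs_{i-\hf}$, of $-(i-1)$ in $\bfs_{-(i-\hf)}$, and of $-i$ in $\bfs_{-(i-\hf)}$ respectively, the explicit two-possible-forms description of $\bfs_i$ recalled in Section \ref{jcrystal} gives $\etil_i \bfs = 0 \iff d_i = 0$ and $a_i \leq c_i$. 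For $\etil_{i'}$, the defining formula $\etil_{i'}(b_T) = b_{\Etil_{i-\hf}(T)}$ from Section \ref{jcry} (when the side conditions hold) translates under the identification $\bfB^{\otimes d} = I^d$ to $\etil_{i'}\bfs = \Etil_{i-\hf}\bfs$, and the latter vanishes iff $\bfs_{i-\hf}$ contains no $i$, i.e.\ iff $a_i = 0$.

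Armed with these local equivalences, I would prove $(1) \iff (2)$ by induction on $i$. The base $\etil_1 \bfs = 0$ yields the first family of inequalities in (2). For the inductive step, assume $a_j = d_j = 0$ for all $j < i$; then the side conditions defining $\etil_{i'}$ are met, and the joint vanishing $\etil_i \bfs = \etil_{i'} \bfs = 0$ is equivalent to $d_i = 0$ and $a_i = 0$ (the constraint $a_i \leq c_i$ being subsumed by $a_i = 0$). This feeds the induction and produces $a_j = d_j = 0$ for all $j \in \Ij \setminus \{1\}$, in addition to the $\etil_1$ condition.

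The closing step is the standard lattice-word translation: in each reduction $\bfs_\alpha$, the number of surviving copies of the ``open'' letter equals the supremum over $n$ of the excess of opens over closes on the relevant side, with ``opens'' and ``closes'' determined by the deletion rule. For instance, in $\bfs_{-(i-\hf)}$ the pair $(-i,-(i-1))$ plays the role of (open, close), and so $d_i = \max_n \max\{0, c^{\geq n}_{-i}(\bfs) - c^{\geq n}_{-(i-1)}(\bfs)\}$, yielding $d_i = 0 \iff c^{\geq n}_{-(i-1)}(\bfs) \geq c^{\geq n}_{-i}(\bfs)$ for all $n$. Entirely analogous computations for $\bfs_{i-\hf}$ and $\bfs_{-\hf}$ produce the remaining two families of inequalities in (2), and the converse implication is obtained by reversing the same chain of equivalences. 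I expect the main obstacle to be keeping the conditional definition of $\etil_{i'}$ straight, which is resolved cleanly by the induction above.
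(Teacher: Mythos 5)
Your proposal is correct, and it is essentially the argument the paper intends: the paper's own proof is the single sentence ``This follows easily from the $\jmath$-crystal structure of $\bfB^{\otimes d}$,'' and what you have written is precisely the unpacking of that claim --- translating each $\etil_i$ and $\etil_{i'}$ into a statement about the reduced subwords $\bfs_{\pm(i-\hf)}$ and then into prefix/suffix count inequalities via bracket matching, with the induction on $i$ correctly handling the conditional definition of $\etil_{i'}$. In particular you rightly read $\etil_{i'}$ through its definition $\etil_{i'}(b_T)=b_{\Etil_{i-\hf}T}$ (so that its vanishing forces $a_i=0$ rather than merely $a_i\leq c_i$), which is the reading under which the proposition holds.
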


\begin{proof}
This follows easily from the $\jmath$-crystal structure of $\bfB^{\otimes d}$.
\end{proof}

We call an element $\bfs \in \bfB^{\otimes d}$ satisfying condition $(2)$ of Proposition \ref{DoubleYamanouchi} a Yamanouchi biword, since $\bfs$ is a Yamanouchi word when we read only letters $1,2,\ldots,r$ and so is $\bfs^{\rev}$ when we read only letters $0,-1,\ldots,-r$ and then ignore negative signs.

\begin{rem}\label{RScor}\normalfont
What we call a Yamanouchi word is called a lattice permutation in \cite{Kw09}. For a partition $\lambda \in P$, we denote by $\Yam(\lambda)$ the set of Yamanouchi words in letters $1,\ldots,2r+1$ of shape $\lambda$, that is, the number of appearances of $i$ in the word equals $\lambda_i$ for all $i \in \{ 1,\ldots,2r+1 \}$. By the Robinson-Schensted correspondence, we have $\sharp \Yam(\lambda) = \sharp \ST(\lambda)$.
\end{rem}

\begin{prop}\label{FromGraphToHighestWeight}
Let $\bfs \in \bfB^{\otimes d}$ be a Yamanouchi biword. Then, the connected component of $\bfB^{\otimes d}$ containing $\bfs$ is isomorphic to $\SST(\bflm)$, where $\bflm$ is a bipartiotion given by
$$
\bflm_i := \sharp \{ m \mid s_m = i \}, \qu -r \leq i \leq r.
$$
\end{prop}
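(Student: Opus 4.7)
The plan is to reduce to Theorem~\ref{jcry decomposition of bfBd} and its proof.

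\textbf{Step 1 (highest weight identification).} By hypothesis and Proposition~\ref{DoubleYamanouchi}, $\etil_i \bfs = 0$ for every $i \in \IIj$, so $\bfs$ is a source of its connected component in the $\jmath$-crystal $\bfB^{\otimes d}$. By the existence and uniqueness theorem for $\jmath$-crystal bases, together with the fact that the $\jmath$-crystal graph of an irreducible $\Uj$-module is connected with a single source, the component of $\bfB^{\otimes d}$ containing $\bfs$ is isomorphic, as a $\jmath$-crystal, to $\clB(\bfmu) = \SST(\bfmu)$ for a unique representative $\bfmu \in \Pj_\pi$, in a way that sends $\bfs$ to the highest weight tableau $T_\bfmu$. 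What remains is to identify $\bfmu$ with $\bflm$ in $\Pj/\!\sim_\pi$.

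\textbf{Step 2 (matching weights).} Set $c_i := \sharp\{m : s_m = i\}$, so that $\bflm_i = c_i$ by definition. The underlying $\U$-weight of $\bfs$ is $\sum_i c_i \epsilon_i$, hence $\wtj(\bfs)$ is the image of $\sum_i c_i \epsilon_i$ in $\Lambdaj$. This is exactly $\wtj(T_\bflm)$, where $T_\bflm$ denotes the highest weight element of $\SST(\bflm)$ constructed in the proof of Theorem~\ref{jcry decomposition of bfBd}. In particular, pairing against $\beta_i$ yields $(\beta_i, \wtj(\bfs)) = a_i$, where $a_i$ is given by the formula \eqref{pi} applied to $\bflm$.

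\textbf{Step 3 (matching the refined invariants).} Following the strategy of the proof of Theorem~\ref{jcry decomposition of bfBd}, I would compute
\begin{align}
\vphi_1(\bfs) &= c_0 - c_{-1}, \nonumber\\
\vphi_1\bigl( \ftil_2^{\max}(\ftil_3^{\max}\ftil_2^{\max}) \cdots (\ftil_i^{\max}\ftil_{i-1}^{\max}) \bfs \bigr) &= c_0 - c_{-i}, \qquad i = 2, \ldots, r. \nonumber
\end{align}
The first equality follows directly from the description of $\bfs_1 = \bfs_{0-\hf}^{\rev}$: the Yamanouchi inequality $c_0^{\geq n}(\bfs) \geq c_{-1}^{\geq n}(\bfs)$ forces every $-1$ to be cancelled by a preceding $0$ during the reduction, leaving precisely $c_0 - c_{-1}$ unmatched $0$'s. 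The second follows by induction on $i$, by combining the explicit description of the $\ftil_j$-action on biwords (Corollary~\ref{qjcry for words}) with the remaining Yamanouchi inequalities $c_{-(j-1)}^{\geq n}(\bfs) \geq c_{-j}^{\geq n}(\bfs)$. By Step~2 and Step~3, the invariants of $\bfs$ and of $T_\bflm$ coincide, so solving the same system of equations as in the proof of Theorem~\ref{jcry decomposition of bfBd} gives $\bfmu \sim_\pi \bflm$, which completes the identification.

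The main obstacle is Step~3: one must trace the successive reductions defining $\bfs_i$ and verify that the Yamanouchi inequalities guarantee that, at each stage, no letter that should survive is erroneously cancelled. Once the bookkeeping for a single Kashiwara operator is in hand, the iterated computation propagates straightforwardly via the same inductive scheme used for $T_\bflm$ in Theorem~\ref{jcry decomposition of bfBd}, so the remaining work is essentially combinatorial verification.
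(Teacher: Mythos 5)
Your Step 1 is exactly the paper's first step, but from there you and the paper diverge, and the divergence matters. The paper's proof is essentially one line beyond Step 1: the isomorphism from the connected component onto $\SST(\bflm)$ is realized concretely by the insertion map $u \mapsto (P^-(w_u);P^+(w_u))$ from Proposition \ref{comp of B^d as AA}, and Schensted insertion preserves the multiset of letters of a word. Since $\bfs$ is the source, it is sent to the highest weight tableau $T_\bflm$, whose content is by construction $\bflm_i$ copies of the letter $i$; hence $\bflm_i = \sharp\{m \mid s_m = i\}$ immediately, with no need to compute $\wtj$ or any string lengths. You instead try to pin down $\bflm$ by matching the $\jmath$-weight together with the refined invariants $\vphi_1(\ftil_2^{\max}(\ftil_3^{\max}\ftil_2^{\max})\cdots(\ftil_i^{\max}\ftil_{i-1}^{\max})\bfs)$, imitating the proof of Theorem \ref{jcry decomposition of bfBd}. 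That theorem needed this machinery because there the component was only known abstractly as some $L_t$; here the content-preserving identification is already available and renders the machinery unnecessary.

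More importantly, your Step 3 --- which you correctly identify as the crux of your route --- is asserted rather than proved, and the sketch as written does not go through verbatim. The base case $\vphi_1(\bfs) = c_0 - c_{-1}$ is fine: the suffix inequality $c_0^{\geq n}(\bfs) \geq c_{-1}^{\geq n}(\bfs)$ does force every $-1$ to cancel in the reduction defining $\bfs_{-\hf}$. But for the inductive step you appeal to "the remaining Yamanouchi inequalities" applied to $\ftil_2^{\max}\bfs$, $\ftil_3^{\max}\ftil_2^{\max}\bfs$, etc.; these intermediate words are no longer Yamanouchi biwords (they are not sources), so the inequalities of Proposition \ref{DoubleYamanouchi} hold for $\bfs$ but not for them, and one must instead track the exact positions at which $\ftil_j^{\max}$ replaces letters and verify that the surviving $-1$'s, $-2$'s, etc.\ still cancel correctly in the subsequent reductions. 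This positional bookkeeping is genuinely nontrivial and is the content you would have to supply; alternatively, note that these quantities are crystal invariants and so could be computed on $T_\bfmu$ instead --- but then you still need to relate them to the content of $\bfs$, which is the same gap. The cleanest repair is simply to replace Steps 2--3 by the observation that the identification of the component with $\SST(\bflm)$ preserves content.
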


\begin{proof}
By the complete reducibility of $\bfV^{\otimes d}$, the connected component of $\bfB^{\otimes d}$ containing $\bfs$ is isomorphic to $\SST(\bflm)$ for some $\bflm \in \Pj(d)$. Since $\etil_i(\bfs) = 0$ for all $i \in \IIj$, we may identify $\bfs$ with $R(T_{\bflm})$, where $T_{\bflm} \in \SST(\bflm)$ denotes the unique highest weight vector. By comparing $\bfs$ and $R(T_\bflm)$, we obtain $\bflm_i = \sharp \{ m \mid s_m = i \}$ for all $-r \leq i \leq r$. This proves the proposition.
\end{proof}

We denote by $\Yam(\bflm)$ the set of Yamanouchi biwords $\bfs$ in $\bfB^{\otimes |\bflm|}$ satisfying $\bflm_i := \sharp \{ m \mid s_m = 0 \}$, and call each element in $\Yam(\bflm)$ a Yamanouchi biword of shape $\bflm$. By Theorem \ref{jcry decomposition of bfBd}, we know $\sharp \Yam(\bflm) = \sharp \ST(\bflm)$. However, one can prove this equation directly as follows.

Let $\bflm \in \Pj$ and $(T^-;T^+) \in \ST(\bflm)$. We write $L^- = \{ p_1,\ldots,p_{|\bflm^-|} \}$ and $L^+ = \{ q_1,\ldots,q_{|\bflm^+|} \}$, with $p_1 < \cdots < p_{|\bflm^-|}$, $q_1 < \cdots < q_{|\bflm^+|}$. Let $T$ denote the standard Young tableau of shape $\bflm^-$ obtained from $T^-$ by replacing each $p_i$ with $i$. Define $T'$ similarly by using $T^+$. Then, the map $\ST(\bflm) \rightarrow \ST(\bflm^-) \times \ST(\bflm^+) \times \{ (q_1,\ldots,q_{|\bflm^+|}) \mid 1 \leq q_1 < \cdots < q_{|\bflm^+|} \leq |\bflm| \}$ defined by $(T^-,T^+) \mapsto (T,T',(q_1,\ldots,q_{|\bflm^+|}))$ is a bijection.

\begin{theo}
Let $\bflm \in \Pj(d)$. Then, the multiplicity of the irreducible component of $\bfV^{\otimes d}$ isomorphic to $L(\bflm)$ is equal to $\sharp \ST(\bflm)$. Namely, we have an isomorphism
\begin{align}
\bfV^{\otimes d} \simeq \bigoplus_{\bflm \in \Pj(d)} L(\bflm)^{\oplus \sharp \ST(\bflm)} \nonumber
\end{align}
of $\Uj$-modules.
\end{theo}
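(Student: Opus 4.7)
The plan is to combine the $\jmath$-crystal basis machinery developed in Part 2 with a direct combinatorial identification of Yamanouchi biwords with standard bitableaux. By Theorem \ref{4.4.3}, $\bfV^{\otimes d}$ decomposes as a direct sum of irreducibles, and by the existence and uniqueness theorem for $\jmath$-crystal bases together with the connectedness of $\clB(\bflm) \simeq \SST(\bflm)$, the multiplicity of $L(\bflm)$ in $\bfV^{\otimes d}$ equals the number of connected components of the $\jmath$-crystal $\bfB^{\otimes d}$ that are isomorphic to $\SST(\bflm)$. Each such component has a unique source, namely an element annihilated by every $\etil_i$ for $i \in \IIj$; by Proposition \ref{DoubleYamanouchi}, these sources are precisely the Yamanouchi biwords in $\bfB^{\otimes d}$, and by Proposition \ref{FromGraphToHighestWeight}, the component generated by such a biword is isomorphic to $\SST(\bflm)$ exactly when the biword has shape $\bflm$. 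Hence the multiplicity equals $\sharp \Yam(\bflm)$, and it remains to establish $\sharp \Yam(\bflm) = \sharp \ST(\bflm)$.

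For this combinatorial identity I would construct an explicit bijection. Given $\bfs = (s_1,\ldots,s_d) \in \Yam(\bflm)$, record the positions of positive entries as $J^+ := \{j : s_j > 0\}$, a subset of $\{1,\ldots,d\}$ of cardinality $|\bflm^+|$; let $\bfs^+$ be the positive subsequence and $\bfs^-$ the remaining (nonpositive) subsequence. The two halves of the Yamanouchi condition in Proposition \ref{DoubleYamanouchi} decouple: the inequalities $c_{j-1}^{\leq n}(\bfs) \geq c_j^{\leq n}(\bfs)$ say that $\bfs^+$ is a classical Yamanouchi word of shape $\bflm^+$ in letters $\{1,\ldots,r\}$, while the inequalities $c_0^{\geq n}(\bfs) \geq c_{-1}^{\geq n}(\bfs) \geq \cdots \geq c_{-r}^{\geq n}(\bfs)$ say that the reverse of $\bfs^-$, relabeled via $0 \mapsto 1$ and $-i \mapsto i+1$, is a classical Yamanouchi word of shape $\bflm^-$ in letters $\{1,\ldots,r+1\}$. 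By the classical Robinson--Schensted correspondence recalled in Remark \ref{RScor}, these data correspond to standard tableaux of shapes $\bflm^+$ and $\bflm^-$ respectively, giving a bijection
\[
\Yam(\bflm) \;\longleftrightarrow\; \ST(\bflm^-) \times \ST(\bflm^+) \times \bigl\{J \subset \{1,\ldots,d\} : |J| = |\bflm^+|\bigr\}.
\]
Composing with the bijection between the right-hand side and $\ST(\bflm)$ sketched just before the statement of the theorem yields the desired equality.

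The main obstacle is the bookkeeping in translating the two asymmetric Yamanouchi conditions back and forth across the reversal required for the negative part: one must verify carefully that reversing $\bfs^-$ and relabeling turns the $\geq n$ inequalities into the standard lattice-permutation inequalities on $\{1,\ldots,r+1\}$, and, in the inverse direction, that interleaving two classical Yamanouchi words according to the choice of $J^+$ reproduces both halves of Proposition \ref{DoubleYamanouchi}(2). Once this check is in place, the remainder of the argument is a routine application of the results already assembled.
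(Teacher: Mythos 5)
Your proposal is correct and follows essentially the same route as the paper: the paper likewise reduces the multiplicity to $\sharp\Yam(\bflm)$ via Proposition \ref{FromGraphToHighestWeight}, splits a Yamanouchi biword into its reversed nonpositive subsequence, its positive subsequence, and the set of positions of positive entries to get a bijection with $\Yam(\bflm^-)\times\Yam(\bflm^+)\times\{\text{position sets}\}$, and then converts to $\sharp\ST(\bflm)$ using Remark \ref{RScor} and the bijection for $\ST(\bflm)$ stated just before the theorem. The decoupling of the two halves of the condition in Proposition \ref{DoubleYamanouchi} that you flag as the main bookkeeping point is exactly the content of the paper's displayed bijection, so no genuinely new verification is required beyond what the paper records.
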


\begin{proof}
By Proposition \ref{FromGraphToHighestWeight}, the multiplicity of the irreducible component of $\bfV^{\otimes d}$ isomorphic to $L(\bflm)$ is equal to $\sharp \Yam(\bflm)$. Here, the set $\Yam(\bflm)$ is in one-to-one correspondence with $\Yam(\bflm^-) \times \Yam(\bflm^+) \times \{ (q_1, \ldots, q_{|\bflm^+|}) \mid 1 \leq q_1 \leq \cdots \leq q_{|\bflm^+|} \leq d \}$ under the assignment
\begin{align}
\bfs = ( s_1,\ldots,s_d) \mapsto \left((|s_{p_{|\bflm^-|}}|,\ldots,|s_{p_1}|), (s_{q_1},\ldots,s_{q_{|\bflm^+|}}), (q_1,\ldots,q_{|\bflm^+|}) \right), \nonumber
\end{align}
where $s_{p_1},\ldots,s_{p_{|\bflm^-|}} \leq 0$ with $p_1 < \cdots < p_{|\bflm^-|}$, and $s_{q_1},\ldots,s_{q_{|\bflm^+|}} \geq 1$ with $q_1 < \cdots < q_{|\bflm^+|}$. From this and the bijection $\ST(\bflm) \rightarrow \ST(\bflm^-) \times \ST(\bflm^+) \times \{ (q_1,\ldots,q_{|\bflm^+|}) \mid 1 \leq q_1 \leq \cdots \leq q_{|\bflm^+|} \leq |\bflm| \}$, we obtain
\begin{align}
\begin{split}
\sharp \Yam(\bflm) 
&= \sharp \Yam(\bflm^-) \cdot \sharp \Yam(\bflm^+) \cdot \binom{d}{|\bflm^+|} \\
&= \sharp \ST(\bflm^-) \cdot \sharp \ST(\bflm^+) \cdot \binom{d}{|\bflm^+|}  \qqu (\text{By Remark } \ref{RScor})\\
&= \sharp \ST(\bflm),
\end{split} \nonumber
\end{align}
as desired. This proves the theorem.
\end{proof}

By the double centralizer property between $\Uj$ and $\bfH$, we have an irreducible decomposition
\begin{align}\label{RS}
\bfV^{\otimes d} \simeq \bigoplus_{\bflm \in \Pj(d)} L(\bflm) \boxtimes V(\bflm) 
\end{align}
as a $\Uj$-$\bfH$-bimodule, where $V(\bflm)$'s are pairwise nonisomorphic irreducible $\bfH$-modules. According to \cite{H74}, the irreducible $\bfH$-modules are classified by the bipartitions of size $d$. Hoefsmit constructed the irreducible modules by giving the representation matrices for the generators of $\bfH$ explicitly. Later, Dipper and James \cite{DJ92} realized the irreducible $\bfH$-modules $S^{\lm,\mu}$ as ideals of $\bfH$, where $\lm,\mu$ are partitions with $|\lm| + |\mu| = d$. In Appendix \ref{appxB}, we prove that $V(\bflm) \simeq S^{\bflm^-,\bflm^+}$.

\subsection{Littlewood-Richardson rule for $\Uj$}
In this subsection, we consider the irreducible decomposition of $L(\bflm) \otimes L(\mu)$ for $\bflm \in \Pj$, $\mu \in P$. In terms of $\jmath$-crystal bases, we will determine the Yamanouchi biwords in $\clB(\bflm) \otimes \clB(\mu) \subset \bfB^{\otimes |\bflm|+|\mu|}$; here $\clB(\mu) \simeq \SST(\mu)$ denotes the crystal basis of $L(\mu)$ embedded in $\bfB^{\otimes |\mu|}$ by the Middle-Eastern reading. Let $\LR^{\bfnu}_{\bflm,\mu}(r)$ denote the multiplicity of $L(\bfnu)$ in $L(\bflm) \otimes L(\mu)$; clearly, it is equal to the number of Yamanouchi biwords in $\clB(\bflm) \otimes \clB(\mu)$ of shape $\bfnu$.

Let us briefly recall the Littlewood-Richardson rule for ordinary crystal bases in type $A_{n-1}$. Let $\Par_n$ denote the set of partitions of length $n$. For $\mu,\eta,\xi \in \Par_n$, let $\LR^\mu_{\eta,\xi}(n)$ denote the multiplicity of $L(\mu)$ in $L(\eta) \otimes L(\xi)$ as a $U_q(\mathfrak{sl}_n)$-module. A semistandard tableau $T$ of shape $\mu/\eta$ is called a Littlewood-Richardson tableau of shape $\mu/\eta$ with content $\xi$ if $T$ contains $\xi_i$ $i$'s, and if $ME(T)$ is a Yamanouchi word (\cite{Kw09}). Hence $\LR^\mu_{\eta,\xi}(n)$ equals the number of Littlewood-Richardson tableaux of shape $\mu/\eta$ with content $\xi$ in $n$ letters. Also, it is known that the multiplicity of $\clB(\xi)$ in $\clB(\mu/\eta)$ is equal to $\LR^\mu_{\eta,\xi}(n)$.

\begin{theo}\label{LRrule}
Let $\bflm,\bfnu \in \Pj$, $\mu \in P$. Then, we have
\begin{align}
\LR^{\bfnu}_{\bflm,\mu}(r) = \sum_{\substack{\eta \in \Par_{r+1} \\ \eta \subset \mu}} \sum_{\xi \in \Par_r(|\mu/\eta|)} \LR^{\bfnu^-}_{\eta,\bflm^-}(r+1) \LR^\mu_{\eta,\xi}(2r+1) \LR^{\bfnu^+}_{\bflm^+,\xi}(r). \nonumber
\end{align}
\end{theo}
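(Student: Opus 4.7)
The plan is to realize both sides of the identity as counts of combinatorial objects and exhibit a bijection. By Theorem \ref{ThmC} and the existence and uniqueness of $\jmath$-crystal bases, $\LR^{\bfnu}_{\bflm,\mu}(r)$ equals the number of elements $\bfT\otimes S\in\clB(\bflm)\otimes\clB(\mu)$ which are $\jmath$-highest weight of weight equal to the highest weight of $L(\bfnu)$, that is, $\etil_i(\bfT\otimes S)=0$ for every $i\in\IIj$. Identifying $\clB(\bflm)\otimes\clB(\mu)$ with its image in biwords via the readings $\Read(\bfT)=(\EM(\bfT^-),\ME(\bfT^+))$ and $\ME(S)$, this count equals the number of Yamanouchi biwords of shape $\bfnu$ of the form $(\EM(\bfT^-),\ME(\bfT^+),\ME(S))$, via Proposition \ref{DoubleYamanouchi}.

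Given $S\in\SST(\mu)$, I would decompose it into the sub-tableau $S^-$ of entries $\leq 0$ and the complementary skew tableau $S^+$ of entries $>0$. Semistandardness forces $S^-$ to have straight shape $\eta\subset\mu$ with $\ell(\eta)\leq r+1$, and $S^+$ is a semistandard skew tableau of shape $\mu/\eta$ in letters $\{1,\ldots,r\}$ of some content $\xi\in\Par_r(|\mu/\eta|)$. The key step is to show that the Yamanouchi condition on the combined biword decouples under this split into three independent lattice-word conditions:
\begin{enumerate}
\item the $\leq 0$ subsequence $\EM(\bfT^-)\cdot\ME(S^-)$ is a lattice word witnessing an occurrence of $L(\bfnu^-)$ in $L(\bflm^-)\otimes L(\eta)$ for $U_q(\mathfrak{sl}_{r+1})$;
\item the $>0$ subsequence $\ME(\bfT^+)\cdot\ME(S^+)$ is a lattice word witnessing $L(\bfnu^+)$ in $L(\bflm^+)\otimes L(\xi)$ for $U_q(\mathfrak{sl}_r)$;
\item $S^+$, viewed as a skew tableau on its own, is a Littlewood-Richardson tableau of shape $\mu/\eta$ with content $\xi$, witnessing $L(\mu)$ in $L(\eta)\otimes L(\xi)$ for $U_q(\mathfrak{sl}_{2r+1})$.
\end{enumerate}
By the ordinary Littlewood-Richardson rule (as used in \cite{Kw09}), these three conditions are enumerated respectively by $\LR^{\bfnu^-}_{\eta,\bflm^-}(r+1)$, $\LR^{\bfnu^+}_{\bflm^+,\xi}(r)$, and $\LR^{\mu}_{\eta,\xi}(2r+1)$; summing over admissible pairs $(\eta,\xi)$ would give the claimed identity.

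The main obstacle is establishing the decoupling rigorously. The operators $\etil_i$ and $\etil_{i'}$ of the $\jmath$-crystal interpolate between $\Etil_{i-\hf}$ and $\Etil_{-(i-\hf)}$ according to the tensor product rule of Theorem \ref{TensorRule}, and one must show that on a candidate Yamanouchi biword, each such operator's action is localized either to the $\leq 0$ half or the $>0$ half of the biword after the $S^-/S^+$ split. Concretely, this amounts to controlling the statistics $\varepsilon_{\pm(i-\hf)}(\bfT)$ and $\varepsilon_{\pm(i-\hf)}(S)$ so that no operator can ``cross over'' the split boundary; the fact that $\bfT^\pm$ uses letters only on one side makes half of these comparisons trivial, and the Yamanouchi inequalities from Proposition \ref{DoubleYamanouchi} force the remaining ones. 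The subsidiary operators $\etil_{i'}$, which fire only after the lower-indexed $\etil$'s vanish, fit naturally into closing out the lattice condition on the $>0$ half once the $\etil_i$-conditions have already pinned down the $\leq 0$ side, matching the bookkeeping role played by the intermediate partition $\xi$ in the factorization.
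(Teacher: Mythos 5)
Your overall architecture is the paper's: reduce to counting Yamanouchi biwords of shape $\bfnu$ in $\clB(\bflm)\otimes\clB(\mu)$, split $S\in\SST(\mu)$ at zero into $S^-$ of straight shape $\eta$ and $S^+$ of skew shape $\mu/\eta$, and apply the ordinary Littlewood--Richardson rule three times. But the step you yourself flag as the crux --- the decoupling into ``three independent lattice-word conditions'' --- is both left unproved and, as formulated, incorrect. Note first that the separation of the biword condition into a constraint on the $>0$ letters and a constraint on the $\leq 0$ letters requires no work with Theorem \ref{TensorRule}: it is built into Proposition \ref{DoubleYamanouchi}, which imposes prefix inequalities on the positive letters and suffix inequalities on the non-positive ones, and these involve disjoint subwords. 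What the tensor product rule is actually needed for (and what your sketch omits) is that the suffix condition on $\EM(\bfT^-)\cdot\ME(S^-)$ --- equivalently, that its \emph{reverse} $(\EM(S^-),\ME(\bfT^-))$ is a lattice word --- forces $S^-$ to be the unique tableau $T_\eta\in\SST(\eta)$ with $\EM(T_\eta)$ Yamanouchi; this is what makes the two halves independent given $\eta$ and produces the clean factor $\LR^{\bfnu^-}_{\eta,\bflm^-}(r+1)$. Your condition (1) uses the unreversed reading, which would instead pin down $\bfT^-$ and let $S^-$ vary; the cardinality is rescued only by $\LR^{\bfnu^-}_{\eta,\bflm^-}=\LR^{\bfnu^-}_{\bflm^-,\eta}$, so the bijection you describe is with the wrong set.

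More seriously, your conditions (2) and (3) cannot both be imposed on the same tableau $S^+$. If $\ME(S^+)$ is itself a lattice word of content $\xi$ (condition (3)) and $(\ME(\bfT^+),\ME(S^+))$ is also a lattice word (condition (2)), then the prefix property forces $\bfT^+$ to be the highest weight tableau of $\SST(\bflm^+)$ and $\varepsilon_{i-\hf}(S^+)=0$ for all $i$, so the concatenation automatically has content $\bflm^++\xi$; your count would then contribute only when $\bfnu^+=\bflm^++\xi$, which is false in general. The correct bookkeeping is the one in the paper's computation of $Y(\eta)$: decompose $\clB(\mu/\eta)\cong\bigsqcup_{\xi}\clB(\xi)^{\oplus\LR^{\mu}_{\eta,\xi}(2r+1)}$, so that $\LR^{\mu}_{\eta,\xi}(2r+1)$ counts the \emph{connected component} containing $S^+$ (indexed by an LR tableau), while $\LR^{\bfnu^+}_{\bflm^+,\xi}(r)$ counts the highest weight elements of weight $\bfnu^+$ in $\clB(\bflm^+)\otimes\clB(\xi)$ to which the pair $(\bfT^+,S^+)$ corresponds; in particular $S^+$ is generically \emph{not} an LR tableau. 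With these two corrections your argument becomes the paper's proof, but as written the decoupling step would not yield the stated formula.
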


\begin{proof}
Let $(T_1;T_2) \in \clB(\bflm)$ and $T \in \clB(\mu)$. If we read only letters $\leq 0$ in $T$, then it is also a semistandard tableau $T'$ of shape, say $\eta \subset \mu$. Since there are $r+1$ kinds of letters $\leq 0$, we may assume that $\ell(\eta) = r+1$. Suppose that $(T_1;T_2) \otimes T$ is a Yamanouchi biword of shape $\bfnu$. By the definition of Yamanouchi biwords, $(\ME(T_2), \ME(T/T'))$ is a Yamanouchi word of shape $\bfnu^+$ in letters $1,\ldots,r$, and $(\EM(T_1),\ME(T'))^{\rev} = (\EM(T'),\ME(T_1))$ is a Yamanouchi word of shape $\bfnu^-$ in letters $0,1,\ldots,r$ if we ignore negative signs. In addition, by Proposition \ref{TensorRule}, we have $\Ftil_{-(i-\hf)}(T') = 0$ for all $i \in \Ij$. This implies that $\EM(T')$ is a Yamanouchi word of shape $\eta$ if we ignore negative signs, and that $T'$ is determined uniquely by $\eta$ and this condition; hence, we write $T_\eta = T'$. With this notation, for an arbitrary partition $\eta \subset \mu$ of length $r+1$, let $Y(\eta)$ be the number of $(T_2,T)$ such that $T' = T_\eta$ and $(\ME(T_2), \ME(T/T'))$ is a Yamanouchi word of shape $\bfnu^+$ in letters $1,\ldots,r$, and $Z(\eta)$ the number of $T_1$ such that $(\EM(T_\eta),\ME(T_1))$ is a Yamanouchi word of shape $\bfnu^-$ in letters $0,1,\ldots,r$ if we ignore negative signs. Then, by the above, we obtain
\begin{align}
\LR^{\bfnu}_{\bflm,\mu}(r) = \sum_{\substack{\eta \in \Par_{r+1} \\ \eta \subset \mu}} Y(\eta) \cdot Z(\eta); \nonumber
\end{align}
here, $Y(\eta)$ is equal to the cardinality of $\Yam(\bfnu^+) \cap (\clB(\bflm^+) \otimes \clB(\mu/\eta))$ , where $\clB(\mu/\eta)$ denotes the set of semistandard tableaux of shape $\mu/\eta$ in letters $1,\ldots,r$. Therefore, we see that $Y(\eta) = \sum_{\xi \in \Par_r(|\mu/\eta|)} \LR^{\bfnu^+}_{\bflm^+,\xi}(r) \cdot \LR^{\mu}_{\eta,\xi}(2r+1)$ by the Littlewood-Richardson rule for ordinary crystal bases in type $A$.

In order to compute $Z(\eta)$, let us count the number $Z'(\eta)$ of Yamanouchi words in $\clB(\eta) \otimes \clB(\bflm^-)$ of shape $\bfnu^-$ in letters $0,1,\ldots,r$. By the tensor product rule for ordinary crystal bases, if $T_3 \otimes T_4 \in \clB(\eta) \otimes \clB(\bflm^-)$ is a Yamanouchi word, then so is $T_3$. Since $\EM(T_\eta)$ is a Yamanouchi word of shape $\eta$ in letters $0,1,\ldots,r$ if we ignore negative signs, $Z(\eta)$ is equal to $Z'(\eta)$, which, in turn, equals $\LR^{\bfnu^-}_{\eta,\bflm^-}(r+1)$ by the Littlewood-Richardson rule for ordinary crystal basis in type $A$.

Summarizing, we conclude that
\begin{align}
\LR^{\bfnu}_{\bflm,\mu}(r) = \sum_{\substack{\eta \in \Par_{r+1} \\ \eta \subset \mu}} \sum_{\xi \in \Par_r(|\mu/\eta|)} \LR^{\bfnu^+}_{\bflm^+,\xi}(r) \cdot \LR^{\mu}_{\eta,\xi}(2r+1) \cdot \LR^{\bfnu^-}_{\eta,\bflm^-}(r+1), \nonumber
\end{align}
as desired. This proves the theorem.
\end{proof}

In particular, if we take $\bflm$ to be $(\emptyset;\emptyset)$, then the tensor product module $L(\emptyset;\emptyset) \otimes L(\mu)$ is just $L(\mu)$ regarded as a $\Uj$-module. Hence, Theorem \ref{LRrule} gives the branching rule for $\U$-modules restricted to $\Uj$:

\begin{cor}
The multiplicity of $L(\bfnu)$ in $L(\mu)$ is equal to $\LR^{\mu}_{\bfnu^-,\bfnu^+}(r)$.
\end{cor}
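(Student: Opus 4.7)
The plan is to deduce the corollary by specializing Theorem \ref{LRrule} to the case $\bflm = (\emptyset;\emptyset)$. Since $L(\emptyset;\emptyset)$ is the trivial one-dimensional $\Uj$-module, the coideal coproduct forces $L(\emptyset;\emptyset) \otimes L(\mu)$ to be isomorphic to $L(\mu)$ (viewed as a $\Uj$-module via the inclusion $\Uj \subset \U$) as $\Uj$-modules. Hence the multiplicity of $L(\bfnu)$ inside $L(\mu)$ equals $\LR^{\bfnu}_{(\emptyset;\emptyset),\mu}(r)$.

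Next, I would evaluate the right-hand side of Theorem \ref{LRrule} at $\bflm^- = \bflm^+ = \emptyset$. The two ``outer'' factors collapse to Kronecker deltas:
\begin{align}
\LR^{\bfnu^-}_{\eta,\emptyset}(r+1) = \delta_{\eta,\bfnu^-}, \qqu \LR^{\bfnu^+}_{\emptyset,\xi}(r) = \delta_{\xi,\bfnu^+}, \nonumber
\end{align}
because tensoring any irreducible with the trivial module simply reproduces that irreducible. Using these identities to carry out the double summation in Theorem \ref{LRrule} leaves a single surviving term
\begin{align}
\LR^{\bfnu}_{(\emptyset;\emptyset),\mu}(r) = \LR^{\mu}_{\bfnu^-,\bfnu^+}(2r+1), \nonumber
\end{align}
provided the indexing conditions $\bfnu^- \subset \mu$ and $|\mu/\bfnu^-| = |\bfnu^+|$ are met (otherwise the coefficient is automatically zero, which is also consistent).

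Finally I would invoke the stability of the classical Littlewood-Richardson coefficients in the number of letters to identify $\LR^{\mu}_{\bfnu^-,\bfnu^+}(2r+1)$ with $\LR^{\mu}_{\bfnu^-,\bfnu^+}(r)$ as written in the statement; since $\bfnu^- \in \Par_{r+1}$ and $\bfnu^+ \in \Par_r$, the usual combinatorial description by Littlewood-Richardson tableaux produces the same count regardless of whether we allow letters up to $r$, $r+1$, or $2r+1$. There is essentially no obstacle here: all the substantive combinatorial work was already performed in the proof of Theorem \ref{LRrule}, and this corollary amounts to a direct specialization plus a standard stability remark.
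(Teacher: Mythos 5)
Your proposal is correct and follows essentially the same route as the paper: specialize Theorem \ref{LRrule} to $\bflm = (\emptyset;\emptyset)$, observe that the outer factors become $\delta_{\eta,\bfnu^-}$ and $\delta_{\xi,\bfnu^+}$, and conclude. You are in fact slightly more careful than the paper in flagging that the surviving middle factor is literally $\LR^{\mu}_{\bfnu^-,\bfnu^+}(2r+1)$ and that the identification with the coefficient written in the statement rests on the stability of Littlewood--Richardson coefficients in the alphabet size, a point the paper passes over silently.
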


\begin{proof}
By Theorem \ref{LRrule}, we have
\begin{align}
\LR^{\bfnu}_{(\emptyset;\emptyset),\mu}(r) = \sum_{\eta,\xi} \LR^{\bfnu^-}_{\eta,\emptyset}(r+1) \LR^{\mu}_{\eta,\xi}(r) \LR^{\bfnu^+}_{\emptyset,\xi}(r). \nonumber
\end{align}
However, $\LR^{\bfnu^-}_{\eta,\emptyset}(r+1) = \delta_{\eta,\bfnu^-}$ and $\LR^{\bfnu^+}_{\emptyset,\xi}(r) = \delta_{\xi,\bfnu^+}$. This proves the corollary.
\end{proof}

\begin{rem}\normalfont
The multiplicity of $L(\bfnu)$ in $L(\mu)$ can be computed directly by counting the number of the Yamanouchi biword of shape $\bfnu$ in $\clB(\mu)$.
\end{rem}

\appendix
\section{Irreducible decomposition of $\bfV^{\otimes d}$ as a $\Uj$-$\bfH$-bimodule}\label{appxB}
\subsection{The action of $\bfH$ on $\bfV^{\otimes d}$}
We denote by $\Ss_d$ and $\Ss_{a,d-a}$ the subgroup of $W_d$ generated by $s_i$, $i \neq 0$, and $s_i$, $i \neq 0,a$, respectively. Let $\bfH(\Ss_d)$ and $\bfH(\Ss_{a,d-a})$ be the subalgebra of $\bfH$ generated by $H_i$, $i \neq 0$, and $H_i$, $i \neq 0,a$, respectively.

Following \cite{BWW16}, let $\bfH$ act on $\bfV^{\otimes d}$. For a map $f : \{ 1,\ldots,d \} \rightarrow \{ -r,-r+1,\ldots,r \}$, we set $M_f := u_{f(1)} \otimes \cdots \otimes u_{f(d)} \in \bfV^{\otimes d}$. The Weyl group $W_d$ acts on the set of maps from $\{ 1,\ldots,d \}$ to $\{ -r,-r+1,\ldots,r \}$ by:
\begin{align}
\begin{split}
(f \cdot s_j)(i) &= \begin{cases}
f(j+1) & \IF i=j, \\
f(j) & \IF i=j+1, \\
f(i) & \OW,
\end{cases} \\
(f \cdot s_0)(i) &= \begin{cases}
-f(1) & \IF i=1, \\
f(i) & \OW.
\end{cases}
\end{split} \nonumber
\end{align}
Then the Hecke algebra $\bfH$ acts on $\bfV^{\otimes d}$ by:
\begin{align}
\begin{split}
M_f \cdot H_j &= \begin{cases}
q\inv M_f & \IF f(i) = f(i+1), \\
M_{f \cdot s_j} & \IF f(i) < f(i+1), \\
M_{f \cdot s_j} + (q\inv - q)M_f & \IF f(i) > f(i+1),
\end{cases} \\
M_f \cdot H_0 &= \begin{cases}
p\inv M_f & \IF f(1)=0, \\
M_{f \cdot s_0} & \IF f(1)>0, \\
M_{f \cdot s_0} + (p\inv - p)M_f & \IF f(1)<0.
\end{cases}
\end{split} \nonumber
\end{align}

\subsection{Irreducible $\bfH(\Ss_d)$-modules}\label{B.2}
Let us recall from \cite{G86} how to construct the irreducible $\bfH(\Ss_d)$-modules. Note that our convention differs from that in \cite{G86}; because of this, we construct right $\bfH(\Ss_d)$-modules, while Gyoja treated left $\bfH(\Ss_d)$-modules. Let $\lambda = (\lambda_1 \geq \lambda_2 \geq \cdots \geq \lambda_k \geq 0)$ be a partition of $d$ and $\lambda'$ its transposed partition. Let $T_+(\lambda)$ be the standard tableau of shape $\lambda$ defined by $T_+(i,j) = \lambda_1 + \cdots \lambda_{i-1} + j$, and $T_-(\lambda)$ the standard tableau of shape $\lambda$ defined by $T_-(i,j) = \lambda'_1 + \cdots \lambda'_{j-1} + i$. Also, let $I_+$ be the set of those $s_i$ which preserves each row of $T_+$, $I_-$ the set of those $s_i$ which preserves each column of $T_-$, $\Ss_\pm$ the subgroup of $\Ss_d$ generated by $I_\pm$, and $\bfH(\Ss_\pm)$ the subalgebra of $\bfH(\Ss_d)$ corresponding to $\Ss_\pm$. Set
\begin{align}
e_+ := \sum_{x \in \Ss_+} q^{-\ell(x)}H_x, \qqu e_- := \sum_{y \in \Ss_-} (-q)^{\ell(y)}H_y. \nonumber
\end{align}

Let $[+,-] \in \Ss_d$ be the unique element such that $T_+ \cdot [+,-] = T_-$. Then, for each $x \in \Ss_+$ and $y \in \Ss_-$, one has $\ell(x[+,-]y) = \ell(x) + \ell([+,-]) + \ell(y)$. By \cite[Section 2]{G86}, the following holds.

\begin{theo}[\cite{G86}]
The right ideal $S^\lambda$ of $\bfH(\Ss_d)$ generated by $e_+ H_{[+,-]} e_-$ is an irreducible $\bfH(\Ss_d)$-module. Moreover, the set $\{ S^\lambda \mid \lambda \vdash d \}$ provides a complete list of nonisomorphic irreducible $\bfH(\Ss_d)$-modules.
\end{theo}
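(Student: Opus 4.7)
The plan is to establish this theorem by the Hecke-algebra analog of the classical Young symmetrizer construction, as carried out in \cite{G86}. Write $E_\lambda := e_+ H_{[+,-]} e_-$. The central technical input is the following \emph{vanishing lemma}: for $\sigma \in \Ss_d$, if the intersection $\Ss_+ \cap \sigma \Ss_- \sigma^{-1}$ is nontrivial, then $e_+ H_\sigma e_- = 0$. To prove it, I would first verify by direct computation (analogous to Lemma \ref{property of x_J}) that $e_+ H_{s_i} = q^{-1} e_+$ for $s_i \in \Ss_+$ and $H_{s_j} e_- = -q e_-$ for $s_j \in \Ss_-$. Then, if $s_i = \sigma s_j \sigma^{-1}$ with $s_i \in \Ss_+$ and $s_j \in \Ss_-$, the relation $s_i \sigma = \sigma s_j$ in $\Ss_d$ lifts to the identity $H_{s_i} H_\sigma = H_\sigma H_{s_j}$ in $\bfH(\Ss_d)$ (both sides equal either $H_{s_i \sigma}$ or $H_{s_i \sigma} + (q-q^{-1})H_\sigma$, depending on whether $\ell(s_i \sigma) = \ell(\sigma) \pm 1$). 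Sandwiching by $e_+$ on the left and $e_-$ on the right yields $q^{-1} (e_+ H_\sigma e_-) = -q (e_+ H_\sigma e_-)$, forcing $e_+ H_\sigma e_- = 0$.

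The classical combinatorial fact (von Neumann--Specht) that $\Ss_+ \cap \sigma \Ss_- \sigma^{-1} = \{e\}$ holds precisely when $\sigma \in \Ss_+ [+,-] \Ss_-$, combined with the vanishing lemma, yields $e_+ \bfH(\Ss_d) e_- = \Q(p,q)\, E_\lambda$. In particular $E_\lambda^2 = c_\lambda E_\lambda$ for some $c_\lambda \in \Q(p,q)$, and nonvanishing of $c_\lambda$ is checked by specialization at $q = 1$, where $\bfH(\Ss_d)$ becomes $\Q[\Ss_d]$, $E_\lambda$ becomes the classical Young symmetrizer, and $c_\lambda$ becomes $d!/\dim S^\lambda$, which is nonzero. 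Irreducibility of $S^\lambda = E_\lambda \bfH(\Ss_d)$ then follows by a short standard argument: any nonzero submodule $N \subseteq S^\lambda$ contains some $v = E_\lambda h \ne 0$; since $E_\lambda v = E_\lambda^2 h = c_\lambda v$ is a nonzero element of $E_\lambda \bfH(\Ss_d) E_\lambda = \Q(p,q) E_\lambda$, we get $E_\lambda \in N$, hence $N = S^\lambda$.

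For pairwise non-isomorphism with $\lambda \ne \mu$, assume without loss of generality that $\mu \not\trilefteq \lambda$ in the dominance order; Specht's pigeonhole argument (two entries forced into the same row of shape $\mu$ and the same column of shape $\lambda$) forces $\Ss_+^\mu \cap \sigma \Ss_-^\lambda \sigma^{-1} \ne \{e\}$ for every $\sigma$, so $e_+^\mu \bfH(\Ss_d) e_-^\lambda = 0$ by the vanishing lemma, giving $E_\mu \cdot S^\lambda = 0$ while $E_\mu \cdot S^\mu \ni c_\mu E_\mu \ne 0$. For completeness of the list, one uses that $\bfH(\Ss_d)$ is generically semisimple over $\Q(p,q)$ with exactly $p(d)$ irreducibles (via Tits' deformation theorem applied to the semisimple group algebra $\Q[\Ss_d]$); since $p(d)$ pairwise nonisomorphic irreducibles have been produced, the list is exhaustive. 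The principal obstacle is the combinatorial characterization of $\{\sigma : \Ss_+ \cap \sigma \Ss_- \sigma^{-1} = \{e\}\}$ as the double coset $\Ss_+ [+,-] \Ss_-$, which rests on a delicate tableau argument; the Hecke-algebraic deductions are then routine.
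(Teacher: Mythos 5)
The paper does not actually prove this theorem: it is imported verbatim from \cite[Section 2]{G86}, so your proposal can only be compared with Gyoja's argument, whose overall strategy ($q$-deformed Young symmetrizers, one-dimensionality of $e_+ \bfH(\Ss_d) e_-$, Tits deformation for completeness) you have correctly reconstructed. However, two steps fail as written. The central one is the proof of the vanishing lemma. A nontrivial intersection $\Ss_+ \cap \sigma \Ss_- \sigma^{-1}$ is witnessed by a transposition $t \in \Ss_+$ with $t' := \sigma^{-1} t \sigma \in \Ss_-$, but in general neither $t$ nor $t'$ is a simple reflection, and for non-simple reflections the identity $H_t H_\sigma = H_\sigma H_{t'}$ is false (the map $w \mapsto H_w$ is not multiplicative), so your sandwich argument does not apply. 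The standard repair --- and what is really going on in \cite{G86} --- is to first reduce to the distinguished minimal-length representative $d$ of the double coset $\Ss_+ \sigma \Ss_-$, using the length-additive factorization $\sigma = x d y$ to get $e_+ H_\sigma e_- = q^{-\ell(x)} (-q)^{\ell(y)} e_+ H_d e_-$, and then invoke Kilmoyer's theorem that $d^{-1} \Ss_+ d \cap \Ss_-$ is a \emph{standard} parabolic subgroup of $\Ss_-$; only for such $d$ does a nontrivial intersection produce simple reflections $s_j \in I_-$ and $s_i = d s_j d^{-1} \in I_+$ to which your computation applies. You have in effect declared the genuinely $q$-sensitive step ``routine'' while flagging the purely classical tableau combinatorics as the main obstacle.

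Second, the irreducibility argument multiplies on the wrong side. For $v = E_\lambda h \in N$ one has $E_\lambda v = c_\lambda E_\lambda h = c_\lambda v$, which is a general element of $E_\lambda \bfH(\Ss_d)$, not of $E_\lambda \bfH(\Ss_d) E_\lambda = \Q(p,q) E_\lambda$, so the conclusion $E_\lambda \in N$ does not follow. The correct argument uses $N E_\lambda \subseteq E_\lambda \bfH(\Ss_d) E_\lambda = \Q(p,q) E_\lambda$: if $N E_\lambda \neq 0$, then $E_\lambda \in N E_\lambda \subseteq N$ and $N = S^\lambda$; if $N E_\lambda = 0$, then $N^2 \subseteq N \cdot E_\lambda \bfH(\Ss_d) = 0$, and semisimplicity of $\bfH(\Ss_d)$ over $\Q(p,q)$ forces $N = 0$. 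The remaining ingredients --- the identities $e_+ H_i = q^{-1} e_+$ and $H_j e_- = -q e_-$, the characterization of the trivial-intersection locus as the double coset $\Ss_+ [+,-] \Ss_-$, nonvanishing of $c_\lambda$ by specialization at $q = 1$, and the counting argument via Tits' deformation theorem --- are sound.
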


By this theorem, we can realize each $S^\lambda$, $\lambda \vdash d$, as a submodule of $\bfV^{\otimes d}$. We define a map $f_\lambda$ by: $f_\lambda(i) = j$ if $\lambda_{j-1} < i \leq \lambda_{j}$. It is easy to verify that the $\bfH(\Ss_d)$-submodule generated by $M_{f_\lambda}$ is isomorphic to $e_+\bfH(\Ss_d)$. Therefore, the $\bfH(\Ss_d)$-submodule generated by $M_{\lambda,+} := M_{f_\lambda} \cdot (H_{[+,-]} e_-)$ is isomorphic to $S^\lambda$. Since $\ell(x[+,-]y) = \ell(x) + \ell([+,-]) + \ell(y)$ for all $x \in \Ss_+$ and $y \in \Ss_-$, we see that
\begin{align}
M_{\lambda,+} = \sum_{y \in \Ss_-} (-q)^{\ell(y)} M_{f_\lambda \cdot [+,-]y}. \nonumber
\end{align}
Also, by the definitions of $f_\lambda$ and $[+,-]$, it follows that
\begin{align}
M_{f_\lambda \cdot [+,-]} &= (u_1 \otimes u_2 \otimes \cdots \otimes u_{\lambda'_1}) \otimes (u_1 \otimes u_2 \otimes \cdots \otimes u_{\lambda'_2}) \otimes \cdots \otimes (u_1 \otimes u_2 \otimes \cdots \otimes u_{\lambda'_k}). \nonumber
\end{align}
These imply that $M_{\lambda,+} \in M_{f_\lambda \cdot [+,-]} + q\bfL^{\otimes d}$.

By the quantum Schur-Weyl duality of type $A$, the irreducible $\bfH(\Ss_d)$-module $M_{\lambda,+} \bfH(\Ss_d) \simeq S^\lambda$ is contained in the direct sum of some copies of the irreducible highest weight $\U$-module with highest weight corresponding to a partition, say $\mu$. Applying Kashiwara operators $\Etil_i$'s on $M_{\lambda,+}$ repeatedly, one can easily verify that $\mu = \lambda$.

Exchanging the roles of $H_i$ and $H_i\inv$, we obtain $M_{\lambda,-} \in \bfV^{\otimes d}$ such that
\begin{align}
M_{\lambda,-} \in (u_{-\lambda'_1} \otimes \cdots \otimes u_{-2} \otimes u_{-1}) &\otimes (u_{-\lambda'_2} \otimes \cdots \otimes u_{-2} \otimes u_{-1}) \otimes \cdots \nonumber \\
&\otimes (u_{-\lambda'_k} \otimes \cdots \otimes u_{-2} \otimes u_{-1}) + q\bfL^{\otimes d} \nonumber
\end{align}
and $M_{\lambda,-} \bfH(\Ss_d) \simeq S^\lambda$.

\subsection{Irreducible $\bfH$-modules}
In this subsection, we construct the irreducible $\bfH$-modules following \cite{DJ92}. For $1 \leq i < j \leq d-1$, we set
\begin{align}
s_{i,j} := s_i s_{i+1} \cdots s_{j-1}, \qu s_{j,i} := s_{i,j}\inv. \nonumber
\end{align}
Fix two nonnegative integers $a,b$ such that $a+b = d$, and set $w_{a,b} := (s_{d,1})^b \in \Ss_{d}$. Also, we define $v_{a,b} \in \bfH$ by
\begin{align}
v_{a,b} := \prod_{i=1}^a(p + H_{s_{i,1}}H_0H_{s_{1,i}}) H_{w_{a,b}} \prod_{j=1}^b(1-pH_{s_{j,1}}H_0H_{s_{1,j}}). \nonumber
\end{align}

Let $\lambda \vdash a$ and $\mu \vdash b$. By Appendix \ref{B.2}, one can construct the irreducible $\bfH(\Ss_a)$-module $S^\lambda$ in the subalgebra of $\bfH$ generated by $H_1,\ldots,H_{a-1}$, and the irreducible $\bfH(\Ss_b)$-module $S^{\mu}$ in the subalgebra generated by $H_{a+1},\ldots,H_{n-1}$. It follows that $S^\lambda \cdot S^\mu \subset \bfH(\Ss_{a,b})$. Set
\begin{align}
S^{\lambda,\mu} := S^\lambda \cdot S^\mu \cdot v_{a,b} \bfH = v_{a,b} S^\mu \cdot S^\lambda \bfH. \nonumber
\end{align}

\begin{theo}[\cite{DJ92}]
The set $\{ S^{\lambda,\mu} \mid 0 \leq a \leq d,\ \lambda \vdash a,\ \mu \vdash d-a \}$ provides a complete list of pairwise nonisomorphic irreducible $\bfH$-modules.
\end{theo}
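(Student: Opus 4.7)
The plan is to combine a semisimplicity argument with the explicit Specht-module construction and a dimension count. First, I would verify that $\bfH$ is semisimple over $\Q(p,q)$: since $p$ and $q$ are independent indeterminates, Tits' deformation theorem gives $\bfH \otimes_{\Q(p,q)} K \simeq K[W_d]$ over a suitable splitting field $K$, and the group algebra $K[W_d]$ is semisimple by Maschke's theorem. Artin--Wedderburn then forces the number of isomorphism classes of irreducible $\bfH$-modules to equal the number of conjugacy classes of $W_d$, which is exactly the number of bipartitions of $d$---matching the cardinality of the proposed family. Thus completeness of the list will follow once we verify the modules $S^{\lambda,\mu}$ are irreducible and pairwise non-isomorphic.

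Next, I would show that each $S^{\lambda,\mu}$ is a nonzero irreducible $\bfH$-module. Nonzero-ness can be established by tracking a leading term in the expansion of the generator $e_+ H_{[+,-]} e_- \cdot v_{a,b}$ with respect to the standard basis $\{H_w\}_{w \in W_d}$, arguing as in Section \ref{B.2} that the relevant products do not collapse. For irreducibility, one uses that $S^\lambda \cdot S^\mu$ is already known to be an irreducible module over $\bfH(\Ss_{a,b}) \simeq \bfH(\Ss_a) \otimes \bfH(\Ss_b)$ via the classical type $A$ Specht-module theory (applied to each tensor factor). The element $v_{a,b}$ then functions as a quasi-idempotent intertwining the negative and positive sign characters on the two parts of the Young subgroup; a Mackey-type analysis of $v_{a,b} \bfH v_{a,b}$ should identify it, up to scalars, with $\bfH(\Ss_a) \otimes \bfH(\Ss_b)$, and Frobenius reciprocity will then yield irreducibility of the induced/restricted piece $S^{\lambda,\mu}$.

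For pairwise non-isomorphism, I would restrict to $\bfH(\Ss_{a,b})$: the module $S^{\lambda,\mu}$ contains $S^\lambda \boxtimes S^\mu$ as a direct summand, and this determines the pair $(\lambda,\mu)$ by the classical type $A$ classification; the parameter $a$ itself can be read off from the eigenvalues of the Jucys--Murphy-type elements built from $H_0$. Finally, to confirm completeness even without the semisimplicity count, I would use the dimension identity $\dim S^{\lambda,\mu} = \binom{d}{a} f^\lambda f^\mu$ (with $f^\nu$ the number of standard $\nu$-tableaux), combined with the classical $\sum_{\lambda \vdash a}(f^\lambda)^2 = a!$, to compute $\sum_{(\lambda,\mu)}(\dim S^{\lambda,\mu})^2 = 2^d d! = \dim_{\Q(p,q)} \bfH$, closing the argument via Artin--Wedderburn.

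The hard part will be the irreducibility step---specifically, verifying the correct quasi-idempotent behavior of $v_{a,b}$. One must show that multiplication by $v_{a,b}$ correctly exchanges the two possible actions of $H_0$ on the two blocks of $\Ss_{a,b}$, so that the factors $p + H_{s_{i,1}} H_0 H_{s_{1,i}}$ and $1 - p H_{s_{j,1}} H_0 H_{s_{1,j}}$ behave as eigenvector projectors for $H_0$-conjugates. This requires a careful direct calculation using the braid relation $H_0 H_1 H_0 H_1 = H_1 H_0 H_1 H_0$ and the quadratic relation $(H_0 - p\inv)(H_0 + p) = 0$. Everything else in the argument reduces to classical facts about the type $A$ Hecke algebra and general semisimple algebra theory.
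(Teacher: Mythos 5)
The paper does not prove this statement at all: it is imported verbatim from Dipper--James \cite{DJ92}, so there is no internal argument to measure your proposal against. Judged on its own terms, your outline follows the standard route (generic semisimplicity via Tits deformation, the count of bipartitions against conjugacy classes of $W_d$, the dimension identity $\sum_{a}\binom{d}{a}^{2}a!(d-a)! = 2^{d}d!$), and those bookkeeping parts are correct. The counting argument does correctly reduce the theorem to showing that the $S^{\lambda,\mu}$ are nonzero, irreducible, and pairwise nonisomorphic.

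That reduction, however, is exactly where your proposal stops being a proof. The irreducibility step is not a detail to be closed by ``a Mackey-type analysis should identify'' $v_{a,b}\bfH v_{a,b}$ with $\bfH(\Ss_a)\otimes\bfH(\Ss_b)$: establishing that structure (and, before that, establishing that $S^{\lambda}\cdot S^{\mu}\cdot v_{a,b}\neq 0$, since multiplication by $v_{a,b}$ could a priori annihilate the module) is the technical core of \cite{DJ92} and requires the explicit commutation identities between $H_0$-conjugates and the $H_i$, not just the quadratic and braid relations in the abstract. Your non-isomorphism argument has a similar soft spot: the restriction of $S^{\lambda,\mu}$ to $\bfH(\Ss_{a,b})$ contains many constituents besides $S^{\lambda}\boxtimes S^{\mu}$, so one cannot read off $(\lambda,\mu)$ from ``a direct summand''; one must first use the eigenvalues of the $H_{s_{i,1}}H_0H_{s_{1,i}}$ to split off the signed decomposition and only then apply the type $A$ classification. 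You name the right tools, but in both places the decisive computation is deferred with ``should'' rather than carried out, so the proposal is a roadmap to the Dipper--James proof rather than a proof.
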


Let us find a good generator of $S^{\lambda,\mu}$ in $\bfV^{\otimes d}$. Define a map $f_{\lambda,\mu}$ by:
\begin{align}
f_{\lambda,\mu}(i) = \begin{cases}
f_\lambda(i) & \IF 1 \leq i \leq a, \\
f_\mu(i-a) & \IF a+1 \leq i \leq d.
\end{cases} \nonumber
\end{align}
By Appendix \ref{B.2}, we have
\begin{align}
S^\lambda \cdot S^\mu \simeq M_{f_{\lambda,\mu}} S^\lambda \cdot S^\mu \subset M_{f_{\lambda,\mu}} \bfH(\Ss_d), \nonumber
\end{align}
and hence,
\begin{align}
S^{\lambda,\mu} \simeq M_{f_{\lambda,\mu}} S^\lambda \cdot S^\mu v_{a,b} \bfH = M_{f_{\lambda,\mu}} v_{a,b} S^\mu \cdot S^\lambda \bfH. \nonumber
\end{align}
Also, we see that
\begin{align}
M_{f_{\lambda,\mu}} v_{a,b} \in u_{f_\mu(1)} \otimes \cdots \otimes u_{f_\mu(b)} \otimes u_{-f_\lambda(1)} \otimes \cdots \otimes u_{-f_\lambda(a)} + p\bfL^{\otimes d}. \nonumber
\end{align}
Therefore, $M_{f_{\lambda,\mu}} v_{a,b} S^\mu \cdot S^\lambda \bfH$ is generated by $M_{\mu,+} \otimes M_{\lambda,-}$, which is of the form
\begin{align}
M_{\mu,+} \otimes &M_{\lambda,-} \in (u_1 \otimes \cdots \otimes u_{\mu'_1}) \otimes (u_1 \otimes \cdots \otimes u_{\mu'_2}) \otimes \cdots \otimes (u_1 \otimes \cdots \otimes u_{\mu'_l}) \nonumber\\
&\otimes (u_{-\lambda'_1} \otimes \cdots \otimes u_{-1}) \otimes (u_{-\lambda'_2} \otimes \cdots \otimes u_{-1}) \otimes \cdots \otimes (u_{-\lambda'_k} \otimes \cdots \otimes u_{-1}) + q\bfL^{\otimes d}. \nonumber
\end{align}

By the quantum Schur-Weyl duality of type $B$, the irreducible $\bfH$-modules $M_{\mu,+} \otimes M_{\lambda,-} \bfH \simeq S^{\lambda,\mu}$ is contained in the direct sum of some copies of the irreducible highest weight $\Uj$-module $L(\bflm)$ for some $\bflm \in \Pj(d)$. By the descriptions of $M_{\mu,+} + q\bfL^{\otimes a}$ and $M_{\lambda,-} + q\bfL^{\otimes b}$, it is clear that
\begin{align}
\etil_1^{\lambda_1} \cdots \etil_r^{\lambda_r} &(M_{\mu,+} \otimes M_{\lambda,-} + q\bfL^{\otimes d} \in \bfB^{\otimes d}) \nonumber\\
&=  (u_1 \otimes \cdots \otimes u_{\mu'_1}) \otimes (u_1 \otimes \cdots \otimes u_{\mu'_2}) \otimes \cdots \otimes (u_1 \otimes \cdots \otimes u_{\mu'_l}) \nonumber\\
&\qu \otimes (u_{-(\lambda'_1-1)} \otimes \cdots \otimes u_{0}) \otimes (u_{-(\lambda'_2-1)} \otimes \cdots \otimes u_{0}) \otimes \cdots \otimes (u_{-(\lambda'_k-1)} \otimes \cdots \otimes u_{0}) + q\bfL^{\otimes d} \nonumber
\end{align}
is a Yamanouchi biword of shape $(\lambda;\mu)$, and hence, we conclude that $L(\bflm) = L(\lambda;\mu)$.

\begin{theo}
As a $\Uj$-$\bfH$-bimodule, $\bfV^{\otimes d}$ is decomposed as follows:
\begin{align}
\bfV^{\otimes d} \simeq \bigoplus_{\bflm \in \Pj(d)} L(\bflm) \boxtimes S^{\bflm^-,\bflm^+}. \nonumber
\end{align}
\end{theo}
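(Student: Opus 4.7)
The plan is to combine the double centralizer property from \cite{BWW16} with the $\jmath$-crystal realization of the irreducible summands of $\bfV^{\otimes d}$ obtained in the previous subsection. By the quantum Schur-Weyl duality of type $B$, together with the complete reducibility Theorem \ref{4.4.3} and the explicit $\Uj$-module decomposition $\bfV^{\otimes d} \simeq \bigoplus_{\bflm \in \Pj(d)} L(\bflm)^{\oplus \sharp \ST(\bflm)}$, a standard bimodule argument yields a decomposition $\bfV^{\otimes d} \simeq \bigoplus_{\bflm \in \Pj(d)} L(\bflm) \boxtimes V(\bflm)$ in which each $V(\bflm)$ is an irreducible $\bfH$-module and the $V(\bflm)$'s are pairwise nonisomorphic. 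The only remaining task is to identify $V(\bflm)$ as the Dipper-James module $S^{\bflm^-,\bflm^+}$.

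Fix $\bflm = (\lambda;\mu) \in \Pj(d)$. The construction carried out before the theorem statement produced an element $M_{\mu,+} \otimes M_{\lambda,-} \in \bfV^{\otimes d}$ such that (i) the right $\bfH$-submodule it generates is isomorphic to $S^{\lambda,\mu}$, and (ii) after applying $\etil_1^{\lambda_1} \cdots \etil_r^{\lambda_r}$ and reducing modulo $q\bfL^{\otimes d}$, the resulting element of $\bfB^{\otimes d}$ is a Yamanouchi biword of shape $(\lambda;\mu)$. By Theorem \ref{jcry decomposition of bfBd}, Yamanouchi biwords of shape $\bflm$ are precisely the $\jmath$-highest weight elements of $\bfB^{\otimes d}$ lying in connected components isomorphic to the $\jmath$-crystal basis $\clB(\bflm)$ of $L(\bflm)$. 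Consequently, the $\Uj$-submodule generated by $M_{\mu,+} \otimes M_{\lambda,-}$ has a nonzero projection onto the $L(\bflm)$-isotypic summand of $\bfV^{\otimes d}$.

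Since the right $\bfH$-action commutes with the left $\Uj$-action, this projection is a homomorphism of $\bfH$-modules; hence the copy of $S^{\lambda,\mu}$ generated by $M_{\mu,+} \otimes M_{\lambda,-}$ maps into $L(\bflm) \boxtimes V(\bflm)$. As an $\bfH$-module, the latter is a direct sum of copies of $V(\bflm)$, so any nonzero irreducible $\bfH$-submodule of it must be isomorphic to $V(\bflm)$. The image of $S^{\lambda,\mu}$ under this projection is such a submodule (nonzero by the previous paragraph, and irreducible because $S^{\lambda,\mu}$ is), whence $V(\bflm) \simeq S^{\lambda,\mu} = S^{\bflm^-,\bflm^+}$, as required.

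The main technical point is verifying that $M_{\mu,+} \otimes M_{\lambda,-}$ has a nonzero image in the $L(\bflm)$-isotypic component. This is guaranteed by the Kashiwara-operator computation sketched before the theorem: the explicit leading term of $M_{\mu,+} \otimes M_{\lambda,-}$ modulo $q\bfL^{\otimes d}$ is a Yamanouchi biword whose shape can be read off from $\lambda$ and $\mu$, and this shape is $(\lambda;\mu)$ on the nose, so the Yamanouchi biword survives in $\clB(\bflm) \subset \bfB^{\otimes d}$ and the projection is therefore nonzero.
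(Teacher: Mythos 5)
Your proposal is correct and takes essentially the same route as the paper: it relies on the same generator $M_{\mu,+} \otimes M_{\lambda,-}$ of a copy of $S^{\lambda,\mu}$ inside $\bfV^{\otimes d}$, the same Kashiwara-operator computation producing a Yamanouchi biword of shape $(\lambda;\mu)$ to place that copy in the $L(\lambda;\mu)$-isotypic component, and the double centralizer decomposition $\bfV^{\otimes d} \simeq \bigoplus_{\bflm} L(\bflm) \boxtimes V(\bflm)$ to conclude $V(\bflm) \simeq S^{\bflm^-,\bflm^+}$. The only difference is cosmetic: you phrase the final identification via a nonzero $\bfH$-equivariant projection onto the isotypic summand, whereas the paper simply observes that the irreducible $\bfH$-submodule is contained in that summand.
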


\begin{cor}
For each partitions $\lm,\mu$ with $|\lm| + |\mu| = d$, we have
$$
\clFj(S^{\lm,\mu}) = \begin{cases}
L(\lm;\mu) \qu & \IF (\lm;\mu) \in \Pj(d), \\
0 \qu & \OW.
\end{cases}
$$
\end{cor}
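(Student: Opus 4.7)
The strategy is to derive the corollary as a formal consequence of the bimodule decomposition of $\bfV^{\otimes d}$ established in the immediately preceding theorem,
\[
\bfV^{\otimes d} \simeq \bigoplus_{\bflm \in \Pj(d)} L(\bflm) \boxtimes S^{\bflm^-,\bflm^+},
\]
together with the Schur-orthogonality of irreducible $\bfH$-modules. Since $\clFj$ is defined as the exact functor $\bfV^{\otimes d} \otimes_{\bfH} (-)$, applying it to $S^{\lm,\mu}$ and commuting the direct sum past the tensor product gives
\[
\clFj(S^{\lm,\mu}) \simeq \bigoplus_{\bflm \in \Pj(d)} L(\bflm) \otimes_{\Q(p,q)} \left( S^{\bflm^-,\bflm^+} \otimes_{\bfH} S^{\lm,\mu} \right).
\]

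The next step is to evaluate each tensor product factor over $\bfH$. At the generic parameters $p, q$ the algebra $\bfH$ is split semisimple, and by the theorem of Dipper--James recalled in Appendix~\ref{appxB}, the modules $\{ S^{\eta,\xi} \}$ indexed by bipartitions $(\eta;\xi)$ of $d$ form a complete set of pairwise nonisomorphic irreducibles. Hence, by the standard Schur-orthogonality for simples over a split semisimple algebra, $S^{\bflm^-,\bflm^+} \otimes_{\bfH} S^{\lm,\mu}$ is one-dimensional over $\Q(p,q)$ when $(\bflm^-,\bflm^+) = (\lm,\mu)$, and vanishes otherwise. Substituting back, at most one summand in the decomposition survives, and it survives precisely when there exists $\bflm \in \Pj(d)$ with $(\bflm^-;\bflm^+) = (\lm;\mu)$, i.e.\ when $(\lm;\mu) \in \Pj(d)$. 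In that case the surviving term is exactly $L(\bflm) = L(\lm;\mu)$; otherwise the whole sum is zero. This yields the dichotomy asserted in the statement.

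The one bookkeeping point to be pinned down, which I would expect to be the only source of potential confusion rather than a real obstacle, is the left/right $\bfH$-module convention: $\bfV^{\otimes d}$ carries $\bfH$ on the right in the bimodule decomposition, whereas the input to $\clFj$ is implicitly a left $\bfH$-module in the tensor product $\bfV^{\otimes d} \otimes_{\bfH} M$. This is harmless because for the semisimple algebra $\bfH$, the canonical anti-involution (equivalently, linear duality) gives a bijection between the simple right and simple left modules which preserves the bipartition labelling; after applying this identification, the Schur-orthogonality statement above holds as written, and the argument goes through verbatim.
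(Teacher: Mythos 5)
Your argument is correct and is essentially the route the paper intends: the corollary is stated without proof as an immediate consequence of the preceding bimodule decomposition of $\bfV^{\otimes d}$, and the only content is exactly the Schur-orthogonality computation $S^{\bflm^-,\bflm^+} \otimes_{\bfH} S^{\lm,\mu} \simeq \Q(p,q)^{\delta_{(\bflm^-;\bflm^+),(\lm;\mu)}}$ that you carry out. Your remark on the left/right convention is a fair observation about a genuine imprecision in the paper's setup, and your resolution via the standard anti-involution is the right one.
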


\end{document}